\newtheorem{theorem}{Theorem}[section]
\newtheorem{lemma}[theorem]{Lemma}
\newtheorem{Proposition}[theorem]{Proposition}
\newtheorem{Remark}[theorem]{Remark}
\numberwithin{equation}{section}
\def\Lbrack{\left \llbracket}
\def\Rbrack{\right \rrbracket}
\DeclareMathOperator{\diverge}{div}
\providecommand{\norm}[1]{\left\Vert#1\right\Vert}
\def\ls{\lesssim}
\def\dt{\partial_t}
\def\H{{}_0H^1}
\def\Hd{(\H)^\ast}
\def\na{\nabla_\ast}
\def\bna{\bar{\nabla}_\ast}
\def\rj{\Lbrack \rho \Rbrack}
\def\RRvert2{\right \vert\! \right\vert}
\def\Lvert3{\left \vert\!\left\vert\!\left\vert}
\def\Rvert3{\right \vert\!\right\vert\!\right\vert}
\title[The viscous surface-internal wave problem]{The viscous surface-internal wave problem:\\ nonlinear Rayleigh-Taylor instability}
\author{Yanjin Wang}
\address{
School of Mathematical Sciences\\
Xiamen University\\
Fujian 361005, China} \email[Y. J. Wang]{yanjin$\_$wang@xmu.edu.cn}
\thanks{Y. J. Wang was partially supported by National Natural Science Foundation of China-NSAF (No. 10976026)}
\author{Ian Tice}
\address{
Division of Applied Mathematics\\
Brown University \\
Providence, RI 02912, USA } \email[I. Tice]{tice@dam.brown.edu}
\thanks{I. Tice was supported by an NSF Postdoctoral Research Fellowship.}
\begin{document}

\begin{abstract}
We consider the free boundary problem for two layers of immiscible, viscous, incompressible fluid in a uniform gravitational field, lying above a rigid bottom in a three-dimensional horizontally periodic setting.  The effect of surface tension is either taken into account at both free boundaries or neglected at both.  We are concerned with the Rayleigh-Taylor instability, so we assume that the upper fluid is heavier than the lower fluid.  When the surface tension at the free internal interface is below a critical value, which we identify, we establish that the problem under consideration is nonlinearly unstable.
\end{abstract}

\maketitle


\section{Introduction}
\subsection{Formulation of the problem in Eulerian coordinates}

In this paper we study the viscous surface-internal wave problem, which concerns the dynamics of two layers of  distinct, immiscible, viscous, incompressible fluid lying above a flat rigid bottom and below an atmosphere of constant pressure.  We assume that a uniform gravitational field points in the direction of the rigid bottom.  This is a free boundary problem since both the upper surface, where the upper fluid meets the atmosphere, and the internal interface, where the upper and lower fluids meet, are free to evolve in time with the motion of the fluids.  We assume that the upper fluid is heavier than the lower fluid, which makes the problem susceptible to the so-called Rayleigh-Taylor instability.  We will consider the nonlinear Rayleigh-Taylor instability both with and without taking into account the effect of surface tension on the free surfaces.

We will assume that the two fluids occupy the moving domain $\Omega(t)$, which we take to be three-dimensional and horizontally periodic. One fluid $(+)$ fills the upper domain
\begin{equation}\label{omega_plus}
\Omega_+(t)=\{y\in   \mathrm{T}^2\times \mathbb{R}\mid \eta_-(t,y_1,y_2)<y_3< 1 +\eta_+(t,y_1,y_2)\},
\end{equation}
and the other fluid $(-)$ fills the lower domain
\begin{equation}
\Omega_-(t)=\{y\in  \mathrm{T}^2\times \mathbb{R}\mid  -b <y_3<\eta_-(t,y_1,y_2)\}.
\end{equation}
Here we have written the periodic horizontal cross-section as $\mathrm{T}^2=(2\pi L_1\mathbb{T}) \times (2\pi L_2\mathbb{T})$, where $\mathbb{T} = \mathbb{R}/\mathbb{Z}$ is the usual 1--torus and $L_1,L_2>0$ are fixed constants.  We assume that $b >0$ is the constant depth of the rigid bottom but that the surface functions $\eta_\pm$ are unknowns in the problem.  The surface $\Gamma_+(t) = \{y_3= 1  + \eta_+(t,y_1,y_2)\}$ is the moving upper boundary of $\Omega_+(t)$, $\Gamma_-(t) = \{y_3=\eta_-(t,y_1,y_2)\}$ is the moving internal interface between the two fluids, and $\Sigma_b = \{y_3=-b \}$ is the fixed lower boundary of $\Omega_-(t)$.

The fluids are described by their velocity and pressure functions, which are given for each $t\ge0$ by $v_\pm (t,\cdot):\Omega_\pm (t)\rightarrow \mathbb{R}^3$ and $\bar{p}_\pm (t,\cdot):\Omega_\pm (t)\rightarrow \mathbb{R}$, respectively. The fluid motion is governed by the incompressible Navier-Stokes equations:
\begin{equation}\label{e1}
\left\{\begin{array}{ll}\rho_\pm \partial_t v_\pm  + \rho_\pm  v_\pm
\cdot\nabla v_\pm +\diverge S(\bar{p}_\pm,v_\pm)= -g\rho_\pm e_3&\hbox{in
}\Omega_\pm (t)
\\ \diverge v_\pm =0&\hbox{in }\Omega_\pm (t)
\\\partial_t\eta_\pm  =v_{3,\pm}-v_{1,\pm}\partial_{y_1}\eta_\pm
-v_{2,\pm}\partial_{y_2}\eta_\pm &\hbox{on }
\Gamma_\pm(t)
\\S(\bar{p}_+,v_+)n_+=p_{e}n_+-\sigma_+ H_+n_+
\quad &\hbox{on }\Gamma_+(t)\\
v_+=v_-,\quad S(\bar{p}_+,v_+)n_-=S(\bar{p}_-,v_-)n_-+ \sigma_-
H_-n_-
 &\hbox{on }\Gamma_-(t) \\v_-=0 &\hbox{on
}\Sigma_b.\end{array}\right.
\end{equation}
Here the positive constants $\rho_\pm $ denote the densities of the respective fluids, $g>0$ is the acceleration of gravity, $e_3=(0,0,1)^T$,  and $-g \rho_\pm e_3$ is the gravitational force.  Throughout the paper we will write $\rj := \rho_+ - \rho_-$. Since we are interested in the Rayleigh--Taylor instability,  we assume that the upper fluid is heavier than the lower fluid, i.e.,
\begin{equation}
\rho_+>\rho_-\Longleftrightarrow\rj >0.
\end{equation}
The quantity $S(\bar{p}_\pm,v_\pm):= \bar{p}_\pm I-\mu_\pm \mathbb{D}(v_\pm)$ is known as the viscous stress tensor, where $\mu_\pm $ are the viscosities of the respective fluids and we have written $I$ for the $3\times3$ identity matrix and $\mathbb{D}(v_\pm)_{ij}=\partial_jv_{i,\pm} +\partial_iv_{j,\pm}$ for twice the velocity deformation tensor.  We have written  $\diverge S(\bar{p}_\pm,v_\pm)$ for the vector with $i^{th}$ component $\partial_j  S(\bar{p}_\pm,v_\pm)_{ij}$; an easy computation shows that if $\diverge{u_\pm}=0$, then $\diverge S(\bar{p}_\pm,v_\pm) = \nabla \bar{p}_\pm - \mu_\pm \Delta u_\pm$.   The constant $p_{e}$ is the atmospheric pressure, and we take $\sigma_\pm\ge 0$ to be the constant coefficients of surface tension.  We will assume that either $\sigma_\pm =0$ or $\sigma_\pm >0$, i.e. we do not allow only one free surface to experience the effect of surface tension.   In this paper, we let $\nabla_\ast$ denote the horizontal gradient, $\diverge_\ast$ denote the horizontal divergence and $\Delta_\ast$ denote the horizontal Laplace operator. Then the unit normal of $\Gamma_\pm(t)$ (pointing up), $n_\pm$,  is given by
\begin{equation}
n_\pm=\frac{(-\nabla_\ast\eta_\pm,1)}
{\sqrt{1+|\nabla_\ast\eta_\pm|^2}},
\end{equation}
and  $H_\pm$, twice the mean curvature of the surface $\Gamma_\pm(t)$, is given by the formula
\begin{equation}
H_\pm=\diverge_\ast\left(\frac{\nabla_\ast\eta_\pm}
{\sqrt{1+|\nabla_\ast\eta_\pm|^2}}\right).
\end{equation}
The third equation in \eqref{e1} is called the kinematic boundary condition since it implies that the free surfaces are advected with the fluids. Notice that on $\Gamma_-(t)$, the continuity of velocity implies that $v_+ = v_-$, which means that it is the common value of $v_\pm$ that advects the interface. For a more physical description of the equations \eqref{e1} and the boundary conditions in \eqref{e1}, we refer to \cite{3WL}.

Note that the constant $1$ appearing above in the definition of $\Omega_+(t)$ and $\Gamma_+(t)$ is the equilibrium height of the upper fluid, i.e. the height of a solution with $v_\pm =0$, $\eta_\pm =0$, etc.  It is not a loss of generality for us to assume this value is unity.  Indeed, if we were to replace the $1$ in $\Omega_+(t)$ and $\Gamma_+(t)$ by an arbitrary equilibrium height $L_3 >0$, then a standard scaling argument would allow us to rescale the coordinates and unknowns in such a way that $L_3 >0$ would be replaced by $1$.  Of course, in doing so we would have to multiply $L_1,$ $L_2,$ $\mu_\pm$, $\sigma_\pm$, $g$ and $b$ by positive constants. In our above formulation of the problem we assume that this procedure has already been done.

To complete the statement of the problem, we must specify initial conditions. We suppose that the initial surfaces $\Gamma_\pm(0)$ are given by the graphs of the functions $\eta_\pm(0)=\eta_{0,\pm}$, which yield the open sets $\Omega_\pm(0)$ on which we specify the initial data for the velocity, $v_\pm(0)=v_{0,\pm}:\ \Omega_\pm(0) \rightarrow  \mathbb{R}^3$. We will assume that $1+\eta_{0,+}>\eta_{0,-}>-b $ on $\mathrm{T}^2$ and that $\eta_{0,\pm}, v_{0,\pm}$ satisfy certain compatibility conditions (see the discussion before Theorem \ref{LWP}) required for the local well-posedness of the problem.  We assume that the initial surface functions satisfy the ``zero-average'' condition
\begin{equation}
\label{zero0}\int_{\mathrm{T}^2}\eta_{0,\pm}=0.
\end{equation}
It is not a  loss of generality to assume this.  Indeed, if \eqref{zero0} is not satisfied, then we could use the Galilean invariance of the equations  and the flatness of the rigid lower boundary to restore the zero-average condition (see the introduction of \cite{WTK} for details).  For sufficiently regular solutions to the problem, the condition \eqref{zero0} persists in time, i.e.
\begin{equation}
\label{zerot}\int_{\mathrm{T}^2}\eta_{\pm}(t)=0 \text{ for } t\ge 0.
\end{equation}
Indeed, from the equations $\eqref{e1}_2$, $\eqref{e1}_3$, and $\eqref{e1}_6$   we obtain
\begin{equation}
\frac{d}{dt} \int_{\mathrm{T}^2} \eta_-=\int_{\mathrm{T}^2}\partial_t\eta_-=\int_{\Gamma_-(t)}v_-\cdot
n_-=\int_{\Omega_-(t)} \diverge{v_-}=0,
\end{equation}
and
\begin{equation}
\frac{d}{dt} \int_{\mathrm{T}^2} \eta_+ = \int_{\mathrm{T}^2}\partial_t\eta_+ = \int_{\Gamma_+(t)}v_+ \cdot
n_+  = \int_{\Omega_+(t)} \diverge{v_+} + \int_{\Gamma_-(t)} v_- \cdot n_- = 0.
\end{equation}

To simplify the equations we introduce the indicator function $\chi$ and denote
\begin{eqnarray}\label{uni-notation}
\left\{\begin{array}{ll}
v=v_+{\chi_{\Omega_+(t)}}+v_-{\chi_{\Omega_-(t)}},
\quad
\bar{p}=\bar{p}_+{\chi_{\Omega_+}(t)}+\bar{p}_-{\chi_{\Omega_-(t)}},
\\\rho=\rho_+{\chi_{\Omega_+(t)}}+\rho_-{\chi_{\Omega_-(t)}},\quad
\mu=\mu_+{\chi_{\Omega_+(t)}}+\mu_-{\chi_{\Omega_-(t)}}
,\end{array}\right.
\end{eqnarray}
on $\Omega(t) = \Omega_+(t) \cup \Omega_-(t)$.  We similarly denote quantities on $\Gamma(t): = \Gamma_+(t) \cup \Gamma_-(t)$, etc.  We define the modified pressure through $\tilde{p}=\bar{p}+\rho gy_3-p_e-\rho_+g$. Then the modified equations are
\begin{equation}\label{form1}
\left\{\begin{array}{ll}
\rho\partial_t v + \rho v\cdot\nabla v+\nabla \tilde{p}=\mu\Delta v\quad&\text{in }\Omega(t)
\\ \diverge{v}=0\quad&\text{in }\Omega(t)
\\\partial_t\eta =v_3 -v_1\partial_{y_1}\eta -v_2\partial_{y_2}\eta \quad &\text{on }\Gamma (t)
\\(\tilde{p}_+I-\mu_+\mathbb{D}(v_+))n_+  =\rho_+g\eta_+ n_+-\sigma_+ H_+n_+
\quad &\text{on }\Gamma_+(t)\\
v_+=v_-\quad&\text{on }\Gamma_-(t)
\\
(\tilde{p}_+I-\mu_+\mathbb{D}(v_+))n_--(\tilde{p}_-I-\mu_-\mathbb{D}(v_-))n_-=\rj g\eta_-
n_-+ \sigma_- H_-n_- \quad &\text{on }\Gamma_-(t)
\\(v,\eta)\mid_{t=0}=(v_0,\eta_0).
\end{array}\right.
\end{equation}
In this paper we shall always unify the notations by means of \eqref{uni-notation} to suppress the subscript ``$\pm$'' unless clarification is needed.

\subsection{Reformulation via a flattening coordinate transformation}

The movement of the free boundaries $\Gamma_\pm(t)$ and the subsequent change of the domains $\Omega_\pm(t)$  create numerous mathematical difficulties. To circumvent these, as usual, we will transform the free boundary problem under consideration to a problem with a fixed domain and fixed boundary. Since we are interested in the stability and instability of the equilibrium state, we will use the equilibrium domain. We will not use the usual Lagrangian coordinate transformation as in \cite{So, B1}, but rather utilize a special flattening coordinate transformation that we introduced in \cite{WTK}.

To this end, we define the fixed domain
\begin{equation}
\Omega = \Omega_+\cup\Omega_-\text{ with }\Omega_+:=\{0<x_3<1\},\ \Omega_-:=\{-b<x_3<0\},
\end{equation}
for which we have written the coordinates as $x\in \Omega$. We shall write $\Sigma_+:=\{x_3= 1\}$ for the upper boundary, $\Sigma_-:=\{x_3=0\}$ for the internal interface and $\Sigma_b:=\{x_3=-b\}$ for the lower boundary.  Throughout the paper we will write $\Sigma = \Sigma_+ \cup \Sigma_-$.   We think of $\eta_\pm$ as a function on $\Sigma_\pm$ according to $\eta_+: (\mathrm{T}^2\times\{1\}) \times \mathbb{R}^{+} \rightarrow\mathbb{R}$ and $\eta_-:(\mathrm{T}^2\times\{0\}) \times \mathbb{R}^{+} \rightarrow \mathbb{R}$, respectively, where $\mathbb{R}^{+} = (0,\infty)$. We will transform the free boundary problem in $\Omega(t)$ to one in the fixed domain $\Omega $ by using the unknown free surface functions $\eta_\pm$. For this we define
\begin{equation}
\bar{\eta}_+:=\mathcal{P}_+\eta_+=\text{ Poisson extension of }\eta_+ \text{ into }\mathrm{T}^2 \times \{x_3\le 1\}
\end{equation}
and
\begin{equation}
\bar{\eta}_-:=\mathcal{P}_-\eta_-=\text{ specialized Poisson extension of }\eta_-\text{ into }\mathrm{T}^2 \times \mathbb{R},
\end{equation}
where $\mathcal{P}_\pm$ are defined by \eqref{P+def} and \eqref{P-def}.  We now encounter the first key problem of how to define an appropriate coordinate transformation to flatten the two free surfaces together.  Since we only need to transform the third component of the spatial coordinate and keep the other two fixed, we can flatten the domain by a linear combination of the three boundary functions, as introduced by Beale in \cite{B2}. However, this would result in the discontinuity of the Jacobian matrix of the coordinate transformation, which would then lead to severe technical difficulties in the proof of the local well-posedness of the problem. In \cite{WTK}, we overcame this difficulty  by flattening  the coordinate domain via the following special coordinate transformation, writing $\tilde{b} = 1 + x_3/b$:
\begin{equation}\label{cotr}
\left\{\begin{array}{lll}
\Omega_+\ni x\mapsto(x_1,x_2, x_3 +
x_3^2(\bar{\eta}_+ - (1+1/b)\bar{\eta}_-)+ \tilde{b} \bar{\eta}_-)=\Theta_+(t,x)=(y_1,y_2,y_3)\in\Omega_+(t),
\\\Omega_-\ni x\mapsto(x_1,x_2,x_3 + \tilde{b} \bar{\eta}_-)=\Theta_-(t,x)=(y_1,y_2,y_3)\in\Omega_-(t).
\end{array}\right.
\end{equation}
Note that $\Theta(\Sigma_+,t)=\Gamma_+(t),\ \Theta (\Sigma_-,t)=\Gamma_-(t)$ and $\Theta_-(\cdot,t) \mid_{\Sigma_b} = Id \mid_{\Sigma_b}$. In order to write down the equations in the new coordinate system,  we compute
\begin{equation}
\begin{array}{ll} \nabla\Theta =\left(\begin{array}{ccc}1&0&0\\0&1&0\\A &B &J \end{array}\right)
\text{ and }\mathcal{A} := \left(\nabla\Theta
^{-1}\right)^T=\left(\begin{array}{ccc}1&0&-A  K \\0&1&-B  K \\0&0&K
\end{array}\right)\end{array}.
\end{equation}
Here the components in the matrix are
\begin{equation}\label{ABJ_def}
\left\{\begin{array}{lll}
A_+ = x_3^2 \left(\partial_1\bar{\eta}_+ - (1+1/b) \partial_1\bar{\eta}_- \right) + \partial_1\bar{\eta}_- \tilde{b},
\,
B_+ = x_3^2 \left(\partial_2\bar{\eta}_+ - (1+1/b) \partial_2\bar{\eta}_- \right) + \partial_2\bar{\eta}_- \tilde{b},
\\
J_+ = 1 + 2x_3 (\bar{\eta}_+ - (1+1/b) \bar{\eta}_-)+x_3^2(\partial_3\bar{\eta}_+
-(1+1/b) \partial_3\bar{\eta}_-) + \bar{\eta}_-/b +\partial_3\bar{\eta}_- \tilde{b},
\\ A_-=\partial_1 \bar{\eta}_- \tilde{b}, \,
 B_-=\partial_2\bar{\eta}_- \tilde{b} ,
\\  J_-=1+\bar{\eta}_-/b + \partial_3\bar{\eta}_- \tilde{b}, \,  K =J ^{-1}.
\end{array}\right.
\end{equation}
Notice that $J={\rm det}\, \nabla\Theta $ is the Jacobian of the coordinate transformation. We also denote
\begin{equation}\label{WNT_def}
 \begin{split}
    W & = \partial_t\Theta_3 K \\
    \mathcal{N} &= (-\partial_1\eta ,-\partial_2\eta ,1) \\
    \mathcal{T}^i &= e_i + \partial_i\eta e_3 \text{ for }i=1,2\\
    \theta_{ij}&=\partial_j\Theta_i\text{ for }i,j=1,2,3.
 \end{split}
\end{equation}
It is straightforward to check that, because of how we have defined $\bar{\eta}_\pm$, the matrix $\mathcal{A}$ and the other terms defined above are all continuous across the internal interface $\Sigma_-$. This is crucial for the whole analysis in our proof of local well-posedness of the problem in \cite{WTK}. Also, we may remark that one can replace $x_3^2$ in the coordinate transformation \eqref{cotr} by some smooth function $h(x_3)$ with $h(1)=1$ and $h(0)=h'(0)=\cdots=0$, to let $\Theta$ be more regular across the interface $\Sigma_-$. Note that if $\eta $ is sufficiently small (in an appropriate Sobolev space), then the mapping $\Theta $ is a  $C^1$ diffeomorphism.  This allows us to transform the problem to one in the fixed spatial domain $\Omega$ for each $t\ge 0$. We will transform the problem in different ways according to whether we take into account the effect of surface tension.

In the case with surface tension, i.e. $\sigma_\pm>0$, $\eta$ will be in a higher regularity class than $u$. This allows us to use the clever idea introduced by Beale in \cite{B2} to transform the velocity field in a manner that preserves the divergence-free condition. We define the pressure $p$ on $\Omega$ by the composition $p(t,x)=\tilde{p} \circ \Theta(t,x)$, but we define the velocity $u$ on $\Omega$ according to $v_i\circ \Theta(t,x)=K \theta_{ij} u_j(t,x)$ (equivalently, $u_i(t,x)=J \mathcal{A}_{ji} v_j \circ \Theta(t,x)$). The advantages of this transform are twofold.  First, $u$ has divergence zero in $\Omega$ if and only if $v$ has the same property in $\Omega(t)$.  Second, the right-hand side of $\eqref{form1}_3$ is replaced simply by $u_3$. In the new coordinates, the system \eqref{form1} with surface tension becomes the equations for the new unknowns $(u,p,\eta)$ in the following perturbation form, denoting the interfacial jump as $\Lbrack f\Rbrack =f_+\mid_{x_3=0}-f_-\mid_{x_3=0}$:
 \begin{equation}\label{surface}
 \left\{\begin{array}{lll}\rho\partial_t u
 -\mu\Delta u+\nabla p=f\quad&\hbox{in }\Omega
\\ \diverge u=0&\hbox{in }\Omega
\\ \partial_t\eta=u_3 &\hbox{on }\Sigma
\\ ( p_+I-\mu_+\mathbb{D}(u_+)) e_3= (\rho_+g\eta_+  -\sigma_+\Delta_\ast \eta_+)e_3+g_+&\hbox{on }\Sigma_+
\\\Lbrack u\Rbrack=0,\quad \Lbrack pI-\mu\mathbb{D}(u)\Rbrack e_3
=(\rj g\eta_- +\sigma_- \Delta_\ast \eta_-)e_3-g_-&\hbox{on }\Sigma_-
\\ u_-=0 &\hbox{on }\Sigma_b
\\(u,\eta )\mid_{t=0}=(u_0,\eta_0),
\end{array}\right.
\end{equation}
where the nonlinear terms $f=(f^1,f^2,f^3)$ and $g_\pm=(g_\pm^1,g_\pm^2,g_\pm^3)$ are given by
\begin{equation}\label{f}
\begin{split}
f^i&=-\rho J \mathcal{A}_{ji}[ \partial_t(K \theta_{jk})u_k
-W\partial_3(K \theta_{jk})u_k+K \partial_k(K\theta_{jl})u_ku_l]
  \\&\quad+\rho [W \partial_3 u_i -K u_k \partial_k u_i ]
 +\mu
J\mathcal{A}_{ni}\mathcal{A}_{jk}\partial_k(\mathcal{A}_{jl})\partial_l(K\theta_{nm})u_m
  \\&\quad+\mu   \mathcal{A}_{jk}\partial_k(\mathcal{A}_{jl})   \partial_lu_i
 +\mu J\mathcal{A}_{ni}\mathcal{A}_{jk} \mathcal{A}_{jl}\partial_k\partial_l(K\theta_{nm})u_m
 \\&\quad+\mu J\mathcal{A}_{ni}\mathcal{A}_{jk}
\mathcal{A}_{jl}\partial_l(K\theta_{nm})\partial_k u_m +\mu
J\mathcal{A}_{ni}\mathcal{A}_{jk}
\mathcal{A}_{jl}\partial_k(K\theta_{nm})\partial_l u_m
 \\&\quad+\mu   (\mathcal{A}_{jk}
\mathcal{A}_{jl}-\delta_{kl})    \partial_k \partial_l u_i
 +(\delta_{ki}-J\mathcal{A}_{ji}\mathcal{A}_{jk} )\partial_k p,\quad
 i=1,2,3;
 \end{split}
 \end{equation}
 \begin{equation}\label{g_+^i}
\begin{split}
g_+^i&=\mu J\mathcal{A}_{mk}\partial_k(K\theta_{jl})u_l\mathcal{
N}_j\mathcal{T}_m^i
+\mu(\mathcal{A}_{mk}\theta_{jl}-\delta_{ik}\delta_{jl})\partial_k
u_l\mathcal{ N}_j\mathcal{T}_m^i
   \\&\quad + \mu J\mathcal{A}_{jk}\partial_k(K\theta_{ml})u_l\mathcal{ N}_j\mathcal{T}_m^i
+\mu(\mathcal{A}_{jk}\theta_{ml}-\delta_{jk}\delta_{il})\partial_k
u_l\mathcal{ N}_j\mathcal{T}_m^i
 \\&\quad -\mu\partial_i u_1
\partial_1\eta-\mu\partial_i u_2
\partial_2\eta+\partial_3u_l\mathcal{ N}_l\partial_i\eta
 \\& \quad-\mu \partial_1 u_i \partial_1\eta -\mu
\partial_2 u_i \partial_2\eta
 + \mu\partial_k u_3 \mathcal{ N}_k\partial_i\eta,\quad
 i=1,2,
  \end{split}
 \end{equation}
 \begin{equation} \label{g_+^3}
\begin{split}
g_+^3& = 2\mu  (\mathcal{A}_{3k} K\theta_{3l}-\delta_{3k}\delta_{3l})\partial_ku_l
+2\mu  \mathcal{A}_{3k}\partial_k(K\theta_{3l})u_l
\\&\quad  +2\mu
(\mathcal{A}_{ik}\partial_k(K\theta_{jl})u_l
+\mathcal{A}_{ik}K\theta_{jl}\partial_k u_l  )
 (\mathcal{ N}_j\mathcal{ N}_i-\delta_{j3}\delta_{i3})|\mathcal{N}|^{-2}
\\&\quad+2\mu  (\mathcal{A}_{3k}\partial_k(K\theta_{3l}
)u_l+\mathcal{A}_{3k}K\theta_{3l}\partial_k u_l
)(|\mathcal{N}|^{-2}-1)
 \\&\quad- \sigma_+((1+|\nabla_\ast\eta|^2)^{-1/2} -1) \Delta_\ast\eta
 \\&\quad+ \sigma_+(1+|\nabla_\ast\eta|^2)^{-3/2} ((\partial_1\eta)^2\partial_1^2\eta
 +2\partial_1\eta\partial_2\eta\partial_1\partial_2\eta+(\partial_2\eta)^2\partial_2^2\eta);
 \end{split}
 \end{equation}
 \begin{equation}\label{g_-^i}
\begin{split} -g_-^i&=\Lbrack \mu\Rbrack
J\mathcal{A}_{mk}\partial_k(K\theta_{jl})u_l\mathcal{
N}_j\mathcal{T}_m^i
+(\mathcal{A}_{mk}\theta_{jl}-\delta_{ik}\delta_{jl})\Lbrack\mu
\partial_k u_l\Rbrack\mathcal{ N}_j\mathcal{T}_m^i
   \\&\quad  + \Lbrack \mu\Rbrack J\mathcal{A}_{jk}\partial_k(K\theta_{ml})u_l\mathcal{ N}_j\mathcal{T}_m^i
+(\mathcal{A}_{jk}\theta_{ml}-\delta_{jk}\delta_{il})\Lbrack\mu\partial_k
u_l\Rbrack\mathcal{ N}_j\mathcal{T}_m^i
   \\&\quad-\Lbrack\mu\Rbrack\partial_i
u_1
\partial_1\eta-\Lbrack\mu\Rbrack\partial_i u_2
\partial_2\eta+\Lbrack\mu\partial_3u_l\Rbrack\mathcal{
N}_l\partial_i\eta  \\& \quad-\Lbrack\mu\Rbrack \partial_1
u_i \partial_1\eta -\Lbrack\mu \Rbrack\partial_2 u_i \partial_2\eta
 + \Lbrack\mu\partial_k u_3 \Rbrack \mathcal{ N}_k\partial_i\eta,\quad i=1,2,
  \end{split}
 \end{equation}
 \begin{equation}\label{g_-^3}
\begin{split}-g^3& = 2  (\mathcal{A}_{3k} K\theta_{3l}-\delta_{3k}\delta_{3l})\Lbrack\mu\partial_ku_l\Rbrack
+2\Lbrack\mu  \Rbrack \mathcal{A}_{3k}\partial_k(K\theta_{3l})u_l
 \\&\quad  +2
(\Lbrack\mu\Rbrack\mathcal{A}_{ik}\partial_k(K\theta_{jl})u_l
+\mathcal{A}_{ik}K\theta_{jl}\Lbrack\mu\partial_k u_l\Rbrack  )
 (\mathcal{ N}_j\mathcal{ N}_i-\delta_{j3}\delta_{i3})|\mathcal{N}|^{-2}
 \\&\quad+2
(\Lbrack\mu\Rbrack\mathcal{A}_{3k}\partial_k(K\theta_{3l}
)u_l+\mathcal{A}_{3k}K\theta_{3l}\Lbrack\mu\partial_k u_l\Rbrack
)(|\mathcal{N}|^{-2}-1)
  \\&\quad+\sigma_- ((1+|\nabla_\ast\eta|^2)^{-1/2} -1) \Delta_\ast\eta
 \\&\quad-\sigma_- (1+|\nabla_\ast\eta|^2)^{-3/2} ((\partial_1\eta)^2\partial_1^2\eta+2\partial_1\eta\partial_2\eta\partial_1\partial_2\eta+(\partial_2\eta)^2\partial_2^2\eta).
  \end{split}
 \end{equation}
Note that  the coordinate transformation \eqref{cotr} guarantees that $\Lbrack u\Rbrack=0$ on $\Sigma_-$.

In the case without  surface tension, i.e., $\sigma_\pm=0$, $\eta$ will be in the same regularity class as $v$, so we can not transform the velocity field  as above to  preserve  the divergence-free condition. Rather, we define the velocity $u$ and the pressure $p$ on $\Omega$ directly by the composition with $\Theta$: $u(t,x)=v\circ\Theta(t,x)$ and $p=\tilde{p}\circ\Theta(t,x)$. Hence in the new coordinates, the system \eqref{form1} without surface tension becomes the following equations for the new unknowns $(u,p,\eta)$ in the perturbation form:
\begin{equation}\label{nosurface2}
\left\{\begin{array}{lll}\rho\partial_t u-\mu\Delta u+\nabla p=G^1\quad&\hbox{in }\Omega
\\ \diverge u=G^2&\hbox{in }\Omega
\\ \partial_t\eta-u_3=G^4&\hbox{on }\Sigma
\\ ( p_+I-\mu_+\mathbb{D}(u_+)) e_3= \rho_+g\eta_+ e_3+G^3_+&\hbox{on }\Sigma_+
\\ \Lbrack u\Rbrack=0,\quad \Lbrack pI-\mu\mathbb{D}(u)\Rbrack e_3=\rj g\eta_- e_3-G^3_- &\hbox{on }\Sigma_-
\\ u_-=0 &\hbox{on }\Sigma_b
\\(u,\eta)\mid_{t=0}=(u_0,\eta_0),
\end{array}\right.
\end{equation}
where the nonlinear terms are given by
\begin{eqnarray}
&&G^1=\rho W\partial_3u-\rho u\cdot\nabla_\mathcal{A}u+\mu(\Delta_\mathcal{A}-\Delta)u-(\nabla_\mathcal{A}-\nabla)p,\label{G1}
\\&&G^2=(\diverge -\diverge_\mathcal{A})u,\label{G2}
\\&&G^3_+=(p-\rho_+ g \eta_+)(e_3-\mathcal{N}_+)+\mu_+(\mathbb{D}u e_3-\mathbb{D}_{\mathcal{A}_+}u_+ \mathcal{N}_+),\label{G3+}
\\&&G^3_-=-(\Lbrack p \Rbrack-\rj  g \eta_-)(e_3-\mathcal{N}_-)-\Lbrack \mu(\mathbb{D}u e_3-\mathbb{D}_\mathcal{A}u \mathcal{N})\Rbrack,\label{G3-}
\\&&G^4=-u_1\partial_1\eta-u_2\partial_2\eta. \label{G4}
\end{eqnarray}
We may also refer to \cite{GT_per} for more explicit expressions of $G^i$.

\subsection{Main results}

The Rayleigh-Taylor instability arises when steady states of two fluid layers with different densities are accelerated in the direction toward the denser fluid.  The instability is well-known since the classical work of Rayleigh \cite{3R} and of Taylor \cite{3T}.  For a general discussion of the physics related to this topic, we refer to \cite{3K} and the references therein.

The Rayleigh-Taylor instability has attracted much attention in the mathematics community due both to its physical importance and to the mathematical challenges it offers.  Presently, the linear Rayleigh-Taylor instability is well understood, but the nonlinear instability is much less so.  The standard procedure (see, for instance, Chandrasekhar's book \cite{3C}) for analyzing the linear instability is to try to construct ``normal mode'' (horizontal Fourier modes of a fixed spatial frequency that grow exponentially in time) solutions to the linearization of problem \eqref{form1}.  For instance, the inviscid ($\mu_\pm =0$) version of our present problem can be analyzed with normal modes as in \cite{3C}.  It can be shown that without surface tension all the spatial frequencies are unstable.  With surface tension, there is a critical frequency $|\xi|_c=\sqrt{g\rj /\sigma_-}$ so that the spatial frequencies $\xi$ with $0<|\xi|< |\xi|_c$ are unstable, while the spatial frequencies $\xi$ with $|\xi|>|\xi|_c$ are stable.   This in particular implies that  the inviscid version of problem \eqref{form1}, defined in a horizontally infinite setting (i.e., horizontal cross-section $\mathbb{R}^2$ in place of $\mathrm{T}^2$), is linearly unstable.  However, in the horizontally periodic setting, if the domain is sufficiently small so that
\begin{equation}\label{crit_st_def}
\min\{L_1^{-1},L_2^{-1}\} \ge \sqrt{\frac{g\rj }{\sigma_- }}
\Longleftrightarrow\ \sigma_- \ge \sigma_c:= g\rj \max\{L_1^2,L_2^2\},
\end{equation}
then there will be no frequency belonging to the interval $0<|\xi|<|\xi|_c$.  This implies that the inviscid version of problem  \eqref{form1} should be linearly stable.  The same result for (viscous and inviscid) compressible fluids was shown by Guo and Tice \cite{3GT2, 3GT1}.

As a part of our previous paper \cite{WTK}, we proved that the viscous surface-internal wave problem \eqref{form1} is nonlinearly stable for $\sigma_->\sigma_c$ and that the solutions decay to the equilibrium state at an exponential rate. In this paper, we will continue our study of \eqref{form1} by showing that the  problem  is nonlinearly unstable for $\sigma_-<\sigma_c$.

Our main results can be stated as follows. The first theorem establishes  the nonlinear instability for the case with surface tension.

\begin{theorem}\label{maintheorem}
Assume that $\rj >0$, $\sigma_+ >0$, and $0<\sigma_-<\sigma_c$, where $\sigma_c$ is defined by \eqref{crit_st_def}. Let the norm $ \Lvert3   \cdot   \Rvert3_{0}$ be defined by \eqref{norm1} and another (stronger) norm $ \Lvert3   \cdot   \Rvert3_{00}$ be defined by \eqref{norm2}. There exist constants $\theta_0>0$  and $C>0$ such that for any sufficiently small $0< \iota < \theta_0$, there  exist solutions $\begin{pmatrix} u^\iota(t)  \\ \eta^\iota (t) \end{pmatrix}$ to  \eqref{surface} so that
 \begin{equation}
\Lvert3   \begin{pmatrix} u^\iota(0) \\ \eta^\iota(0) \end{pmatrix}
\Rvert3_{00}\le C\iota,\hbox{ but }  \Lvert3     \begin{pmatrix} u^\iota(T^\iota) \\ \eta^\iota(T^\iota) \end{pmatrix}
\Rvert3_{0}\ge \frac{\theta_0}{2}.
 \end{equation}
Here the escape time $T^\iota>0$ is
\begin{equation}
T^\iota:=\frac{1}{\Lambda}\log\frac{\theta_0}{\iota},
\end{equation}
where $\Lambda>0$, defined by \eqref{Lambda}, is the sharp  linear growth rate obtained in Section \ref{linear instability}.
\end{theorem}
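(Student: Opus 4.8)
The plan is to run a bootstrap argument of the type pioneered by Guo--Strauss and adapted to free boundary fluid problems by Guo--Tice: compare the nonlinear flow \eqref{surface} to its fastest growing linear mode, and show that the quadratic character of the nonlinearity keeps the two together until the linear part has grown to macroscopic size.

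First I would record what the linear analysis of Section \ref{linear instability} provides. Since $0<\sigma_-<\sigma_c$ (with $\sigma_c$ as in \eqref{crit_st_def}), only finitely many horizontal frequencies are unstable, so the maximal growth rate $\Lambda>0$ of \eqref{Lambda} is attained by an honest normal mode; this yields a smooth, real-valued solution
\[
\begin{pmatrix} u^l(t)\\ \eta^l(t)\end{pmatrix}=e^{\Lambda t}\begin{pmatrix}\tilde u\\ \tilde\eta\end{pmatrix}
\]
of the linearization of \eqref{surface}, which I normalize so that $\Lvert3 (\tilde u,\tilde\eta)\Rvert3_0=1$, the norm $\Lvert3\cdot\Rvert3_0$ being the one in \eqref{norm1}. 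The sharpness of $\Lambda$ gives the estimate I will actually use: any solution of the linearized problem with forcing obeys a Duhamel bound in which the homogeneous contribution is controlled by $Ce^{\Lambda t}$ times the $\Lvert3\cdot\Rvert3_{00}$-size of the data (the stronger norm \eqref{norm2}). The loss from the stronger norm on the data to the weaker norm $\Lvert3\cdot\Rvert3_0$ on the solution is exactly what makes a two-norm scheme of this kind necessary: in higher-regularity norms the linearized evolution can grow faster than $e^{\Lambda t}$.

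Next, for small $\iota$ I would take $(u^\iota,\eta^\iota)$ to be the solution of \eqref{surface} furnished by the local well-posedness theorem (Theorem \ref{LWP}) with initial data $\iota(\tilde u,\tilde\eta)$, after a correction of size $O(\iota^2)$ needed to enforce the nonlinear compatibility conditions — the normal mode already satisfies the linearized compatibility conditions, and the genuinely higher-order mismatch is harmless. Then $\Lvert3 (u^\iota(0),\eta^\iota(0))\Rvert3_{00}\le C\iota$ by construction. Writing
\[
\begin{pmatrix} u^\iota(t)\\ \eta^\iota(t)\end{pmatrix}=\iota e^{\Lambda t}\begin{pmatrix}\tilde u\\ \tilde\eta\end{pmatrix}+\begin{pmatrix}u^d(t)\\ \eta^d(t)\end{pmatrix},
\]
the difference $(u^d,\eta^d)$ solves the linearized system forced by the nonlinear terms $f$ and $g_\pm$ of \eqref{f} and by the $O(\iota^2)$ correction; these forcing terms are at least quadratic in $(u^\iota,\eta^\iota)$ and its derivatives. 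The heart of the argument is then a continuity argument run simultaneously in the high-regularity norm (kept small via the nonlinear energy--dissipation estimates underlying Theorem \ref{LWP}, which must be tame, i.e.\ at most linear in the top-order quantity) and in $\Lvert3\cdot\Rvert3_0$ (propagated via the sharp linear estimate and Duhamel). While the solution remains small in the high norm it persists, the nonlinearity obeys a quadratic bound in the norms entering the linear estimate, and Duhamel gives, for $t$ with $\iota e^{\Lambda t}\le\theta_0$,
\[
\Lvert3\begin{pmatrix}u^d(t)\\ \eta^d(t)\end{pmatrix}\Rvert3_0\le C\int_0^t e^{\Lambda(t-s)}\bigl(\iota e^{\Lambda s}\bigr)^2\,ds\le C'\iota^2 e^{2\Lambda t}.
\]
Fixing $\theta_0$ small and $T^\iota=\Lambda^{-1}\log(\theta_0/\iota)$, so that $\iota e^{\Lambda T^\iota}=\theta_0$, one checks that none of the exit conditions (loss of existence, loss of high-norm smallness) can fire on $[0,T^\iota]$, hence $T^\iota$ lies in the interval of existence, and by the reverse triangle inequality
\[
\Lvert3\begin{pmatrix}u^\iota(T^\iota)\\ \eta^\iota(T^\iota)\end{pmatrix}\Rvert3_0\ge\iota e^{\Lambda T^\iota}\Lvert3\begin{pmatrix}\tilde u\\ \tilde\eta\end{pmatrix}\Rvert3_0-C'\iota^2 e^{2\Lambda T^\iota}=\theta_0-C'\theta_0^2\ge\frac{\theta_0}{2}.
\]

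I expect the main obstacle to be the nonlinear energy estimate for \eqref{surface} and, inseparably from it, the calibration of the pair of norms. Because the flattening map \eqref{cotr} makes every coefficient of \eqref{surface} depend on $\eta$, the forcing terms $f,g_\pm$ couple $u$ and $\eta$ at top order; with surface tension present $\eta$ lives in a strictly higher regularity class than $u$, and one must check that the viscous dissipation controls exactly the derivatives that actually appear — including the worst terms carrying $\nabla p$ and second derivatives of $u$ — uniformly across the jump conditions on $\Sigma_-$. The subtle point is reconciling this high-regularity bookkeeping with the weaker norm $\Lvert3\cdot\Rvert3_0$ in which the sharp linear growth estimate holds: one has to interpolate through the local well-posedness bounds so that the quadratic estimate on $f,g_\pm$ is at once compatible with the linear estimate and with the continuation criterion of Theorem \ref{LWP}, and then tune $\theta_0$ so the bootstrap genuinely closes before the escape time. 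The $O(\iota^2)$ adjustment of the data to meet the compatibility conditions is routine but must be tracked so as not to spoil the lower bound at $T^\iota$.
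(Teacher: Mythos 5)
Your overall strategy matches the paper's: construct the fastest normal mode for the linearized problem (and since $\sigma_->0$ there are only finitely many unstable frequencies, so $\Lambda$ is actually attained), use a Jang--Tice implicit-function-theorem argument to build compatible initial data that are $O(\iota^2)$-close to $\iota(u_\star,\eta_\star)$, then run a Guo--Strauss two-norm bootstrap comparing the nonlinear flow to the linear growing mode via Duhamel. This is exactly what the paper does in Sections 4.1--4.4 using Propositions \ref{prop1}, \ref{lineargrownth2}, and \ref{intialle}.

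There is, however, a gap in the central step, precisely at the place where your description stays vague. Your Duhamel bound
\begin{equation*}
\Lvert3\begin{pmatrix}u^d(t)\\ \eta^d(t)\end{pmatrix}\Rvert3_{0}\le C\int_0^t e^{\Lambda(t-s)}\bigl(\iota e^{\Lambda s}\bigr)^2\,ds
\end{equation*}
already presupposes the high-norm bound $\Lvert3(u^\iota(s),\eta^\iota(s))\Rvert3_{00}\lesssim\iota e^{\Lambda s}$, since the nonlinearity is controlled by $\Lvert3\cdot\Rvert3_{00}^2$, not by $\Lvert3\cdot\Rvert3_{0}^2$. You assert only that the high norm is ``kept small'' via tame energy estimates, but smallness alone does not yield the needed exponential \emph{tracking}. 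The mechanism is the specific structure of the nonlinear energy estimate (Theorem \ref{energywith}, eq.~\eqref{2es}): after absorption,
\begin{equation*}
\mathcal{E}(t)+\int_0^t\mathcal{D}(s)\,ds\le C\mathcal{E}(0)+C\int_0^t\|\eta(s)\|_0^2\,ds,
\end{equation*}
where crucially the remainder is a time-integral of the \emph{weak} $L^2$ norm of $\eta$, which your other bootstrap exit condition keeps $\lesssim\iota^2 e^{2\lambda s}$. Integrating then gives $\mathcal{E}(t)\lesssim\iota^2 e^{2\lambda t}$ (the paper's \eqref{ins11}), and only at that point does your Duhamel integrand become $(\iota e^{\lambda s})^2$. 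If instead the remainder were, say, $\Lambda'\int\mathcal{E}$ with $\Lambda'>2\lambda$, the bootstrap would fail: the high norm would outgrow the envelope before the escape time. So ``tame, at most linear in the top-order quantity'' is the right instinct but not a sufficient specification; the coefficient in front of the top-order remainder must be compared against $2\lambda$ (this is exactly hypothesis (4) of Theorem \ref{bootstrap2} and the reason the paper carries the $\lambda/2$ constant explicitly in \eqref{energyes}). Finally, a minor bookkeeping point: because the paper passes to the operator form via $\bar{u}=u-F^2-w$, its Duhamel comparison (Proposition \ref{lineargrownth2}, eq.~\eqref{lgth}) picks up an extra pointwise $C\Lvert3 y(t)\Rvert3_{00}^2$ term outside the integral; it is harmless but must be tracked when you invoke the reverse triangle inequality at $T^\iota$.
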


In \cite{WTK} we also proved the nonlinear stability for the problem \eqref{nosurface2} without surface tension ($\sigma_\pm =0$), provided that $\rj <0$, i.e. the lighter fluid is on top.  Our next theorem is the analog of Theorem \ref{maintheorem} for the case without surface tension in the case $\rj >0$.

\begin{theorem}\label{maintheorem2}
Assume that $\rj >0$ and $\sigma_\pm =0$. Let the norm $ \Lvert3   \cdot   \Rvert3_{0}$ be defined by \eqref{norm1} and another (stronger) norm $\Lvert3   \cdot   \Rvert3_{00}$ defined by \eqref{norm3} (with $N\ge 3$ an integer). There exist constants $\theta_0>0$  and $C>0$ such that for any sufficiently small $0< \iota<\theta_0$, there  exist solutions $\begin{pmatrix} u^\iota(t)  \\ \eta^\iota (t) \end{pmatrix}$ to  \eqref{nosurface2} so that
 \begin{equation}
  \Lvert3   \begin{pmatrix} u^\iota(0) \\ \eta^\iota(0) \end{pmatrix}
  \Rvert3_{00}\le C\iota,\hbox{ but }  \Lvert3     \begin{pmatrix} u^\iota(T^\iota) \\ \eta^\iota(T^\iota) \end{pmatrix}
    \Rvert3_{0}\ge \frac{\theta_0}{2}.
 \end{equation}
Here the escape time $T^\iota>0$ is
 \begin{equation}
T^\iota:=\frac{1}{\Lambda_\ast}\log\frac{\theta_0}{\iota},
\end{equation}
 where $\Lambda_\ast>0$, defined by \eqref{littlelambda}, is close to the sharp  linear growth rate $\Lambda$ obtained in Section \ref{linear instability}.
\end{theorem}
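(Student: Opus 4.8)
\smallskip
\noindent\emph{Outline of the argument.} The plan is to run the classical bootstrap scheme for nonlinear instability (in the spirit of Guo--Strauss and Guo--Tice), parallel to the proof of Theorem \ref{maintheorem} but with the functional framework adapted to the system \eqref{nosurface2} without surface tension, which requires substantially different energy estimates. First I would extract a linear growing mode from the analysis of Section \ref{linear instability}: for each admissible horizontal frequency $\xi$ the linearization of \eqref{nosurface2} at the equilibrium admits, via a Rayleigh-quotient variational principle for the associated ODE in $x_3$, a normal-mode solution $e^{\lambda(\xi)t}$ times a smooth profile, with $\lambda(\xi)>0$ real, and $\Lambda=\sup_\xi\lambda(\xi)\in(0,\infty)$ because viscosity damps high frequencies. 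Since the a priori control of the linearized evolution available without surface tension is weaker than in Theorem \ref{maintheorem}, a small buffer is unavoidable: I would fix (via \eqref{littlelambda}) a frequency $\xi_0$ and the rate $\Lambda_\ast:=\lambda(\xi_0)$, chosen sufficiently close to $\Lambda$ that all the estimates below close, in particular with $2\Lambda_\ast>\Lambda$. Let $(u^\ast,\eta^\ast)$ be the real part of the corresponding profile at $t=0$, normalized so that $\Lvert3 (u^\ast,\eta^\ast)\Rvert3_{0}=1$; then $C_1:=\Lvert3 (u^\ast,\eta^\ast)\Rvert3_{00}<\infty$ by elliptic regularity for the mode ODE, $\eta^\ast$ has zero horizontal average since $\xi_0\neq0$, and (after correcting the datum by an $O(\iota^2)$ amount) $\iota(u^\ast,\eta^\ast)$ is admissible for the local well-posedness theory for every small $\iota$.

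Next I would establish closed nonlinear energy estimates for \eqref{nosurface2}, adapting the energy method of \cite{WTK} used there for the stability problem: one should produce a high-order energy $\mathcal{E}(t)$ with $\sqrt{\mathcal{E}(t)}\approx\Lvert3 (u,\eta)(t)\Rvert3_{00}$ --- involving both spatial and temporal derivatives, whence the restriction $N\ge3$ --- together with a dissipation $\mathcal{D}(t)$, such that, so long as $\mathcal{E}$ stays small, the nonlinear terms $G^1,\dots,G^4$ of \eqref{G1}--\eqref{G4} are quadratically bounded, $\|G\|\lesssim\mathcal{E}$, and the solution obeys an energy inequality which, together with the sharp linear growth bound $\|e^{tL}\|\lesssim e^{\Lambda t}$ from Section \ref{linear instability}, yields $\mathcal{E}(t)\lesssim e^{2\Lambda t}\mathcal{E}(0)+\int_0^t e^{2\Lambda(t-s)}\mathcal{E}(s)^{3/2}\,ds$ on the interval of existence while $\mathcal{E}$ is small, with $\Lvert3 (u,\eta)(t)\Rvert3_0\lesssim\sqrt{\mathcal{E}(t)}$.

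Then I would run the bootstrap. Let $(u^\iota,\eta^\iota)$ solve \eqref{nosurface2} with datum $\iota(u^\ast,\eta^\ast)$, let $(u^L,\eta^L)(t):=\iota e^{\Lambda_\ast t}(u^\ast,\eta^\ast)$ be the exact linear normal-mode solution with the same datum, and put $w:=(u^\iota,\eta^\iota)-(u^L,\eta^L)$, which solves the linearized problem with (essentially) zero datum and forcing $(G^1,\dots,G^4)$ evaluated at $(u^\iota,\eta^\iota)$. For small constants $\varepsilon_0\gg\theta_0>0$ introduce the stopping times $T_1=\sup\{t:\ \sqrt{\mathcal{E}(s)}\le\varepsilon_0\ \forall\,s\le t\}$ and $T_2=\sup\{t:\ \Lvert3 w(s)\Rvert3_{00}\le\theta_0^{1/2}\iota e^{\Lambda_\ast s}\ \forall\,s\le t\}$, and set $T^\iota=\frac{1}{\Lambda_\ast}\log\frac{\theta_0}{\iota}$. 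On $[0,\min\{T_1,T_2,T^\iota\}]$ the two hypotheses give $\Lvert3 (u^\iota,\eta^\iota)(s)\Rvert3_{00}\le\Lvert3 (u^L,\eta^L)(s)\Rvert3_{00}+\Lvert3 w(s)\Rvert3_{00}\lesssim\iota e^{\Lambda_\ast s}$, hence $\|G(s)\|\lesssim\iota^2 e^{2\Lambda_\ast s}$, and the inhomogeneous linear estimate yields $\Lvert3 w(t)\Rvert3_{00}\lesssim\iota^2\int_0^t e^{\Lambda(t-s)}e^{2\Lambda_\ast s}\,ds\lesssim\iota^2 e^{2\Lambda_\ast t}$, where the last step uses exactly $2\Lambda_\ast>\Lambda$; since $\iota e^{\Lambda_\ast t}\le\theta_0$ there, this reads $\Lvert3 w(t)\Rvert3_{00}\lesssim\theta_0\cdot\iota e^{\Lambda_\ast t}$, which for $\theta_0$ small improves the $T_2$-bound and also gives $\sqrt{\mathcal{E}(t)}\lesssim(1+\theta_0)\,\iota e^{\Lambda_\ast t}\le2\theta_0<\varepsilon_0$, improving the $T_1$-bound; a standard continuity argument then forces $\min\{T_1,T_2\}\ge T^\iota$ and, by the continuation criterion, the solution exists and remains small on $[0,T^\iota]$. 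Finally $\Lvert3 (u^\iota,\eta^\iota)(0)\Rvert3_{00}=C_1\iota$, while, using $\iota e^{\Lambda_\ast T^\iota}=\theta_0$ and $\Lvert3\cdot\Rvert3_0\lesssim\Lvert3\cdot\Rvert3_{00}$, $\Lvert3 (u^\iota,\eta^\iota)(T^\iota)\Rvert3_0\ge\Lvert3 (u^L,\eta^L)(T^\iota)\Rvert3_0-\Lvert3 w(T^\iota)\Rvert3_0\ge\theta_0-C\theta_0^2\ge\theta_0/2$ for $\theta_0$ small, which is the desired conclusion with $C=C_1$.

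The hard part will be the second step --- the closed nonlinear energy estimates for \eqref{nosurface2} in the \emph{absence} of surface tension. Unlike in \eqref{surface}, here $\eta$ sits at the same regularity level as $u$, the transformed velocity is not divergence free ($\diverge u=G^2$), and the kinematic equation carries the extra nonlinear term $G^4$, so one is forced into a delicate two-tiered energy/dissipation scheme with many temporal derivatives and must contend with an apparent loss of derivatives at the interface $\Sigma_-$. A secondary difficulty, already visible above, is proving in this weaker functional setting the sharp bound $\|e^{tL}\|\lesssim e^{\Lambda t}$ for the linearized evolution, which is precisely what produces the rate $2\Lambda_\ast$ (rather than $2\Lambda$) for $w$; it is the interplay of this bound with the quadratic nonlinearity that forces one to work with an escape rate $\Lambda_\ast$ slightly below the optimal $\Lambda$ rather than at it.
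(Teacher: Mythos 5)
Your overall strategy---extract a growing mode from the variational analysis of Section~\ref{linear instability}, correct the data by an $O(\iota^2)$ term to satisfy the nonlinear compatibility conditions, and run a Guo--Strauss bootstrap up to the escape time $T^\iota=\Lambda_\ast^{-1}\log(\theta_0/\iota)$---is the same as the paper's, and you correctly identify the two nonstandard features: the need to fix an achieved rate $\Lambda_\ast\in(\Lambda/2,\Lambda]$ since the supremum may not be attained when $\sigma_-=0$, and the delicacy of the nonlinear energy estimates in the zero-surface-tension regime. However, the bootstrap you describe does not close, because you propagate the strong norm $\Lvert3\cdot\Rvert3_{00}$ through the linear semigroup, which is not available here.

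The linear growth bound established in Section~\ref{linear instability} (Theorem~\ref{lineargrownth}, restated as \eqref{grownth}) is only in the weak norm: $\Lvert3 e^{t\mathcal{L}}y\Rvert3_0\le C e^{\Lambda t}\|y\|$, where $\Lvert3\cdot\Rvert3_0$ is $L^2$ of $(u,\eta)$ and $\|\cdot\|$ is the intermediate norm \eqref{norm4} built from $H^2\times L^2$. There is \emph{no} $e^{\Lambda t}$ bound for $e^{t\mathcal{L}}$ mapping into the high-order space-time Sobolev framework measured by $\Lvert3\cdot\Rvert3_{00}$; indeed this weak-norm-only control is exactly what necessitates a bootstrap lemma in the first place. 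Your stopping time $T_2$, posed as $\Lvert3 w(s)\Rvert3_{00}\le\theta_0^{1/2}\iota e^{\Lambda_\ast s}$, and the step where you ``apply the inhomogeneous linear estimate'' to obtain $\Lvert3 w(t)\Rvert3_{00}\lesssim\iota^2\int_0^t e^{\Lambda(t-s)}e^{2\Lambda_\ast s}\,ds$, both presuppose a strong-norm semigroup bound that is not established and cannot be expected (the linearized $\eta$-dynamics on $\Sigma_-$ is lossy at the top regularity). The same gap underlies your claimed intermediate inequality $\mathcal{E}(t)\lesssim e^{2\Lambda t}\mathcal{E}(0)+\int_0^t e^{2\Lambda(t-s)}\mathcal{E}(s)^{3/2}\,ds$, which you propose to derive by ``combining'' an energy inequality with $\|e^{tL}\|\lesssim e^{\Lambda t}$: the latter says nothing about $\mathcal{E}$.

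The correct division of labor is this. The strong norm must be controlled \emph{solely} by the nonlinear a priori estimate (Theorem~\ref{energywithoutout}), whose essential structural feature you have not exploited: the coefficient in front of $\int_0^t(\mathcal{E}_{2N}+\mathcal{F}_{2N})$ is tuned, by a careful choice of the interpolation parameter $\varepsilon$, to be exactly $\tfrac{\Lambda_\ast}{2}$, and the estimate carries the feedback term $C\int_0^t\|\eta(s)\|_0^2\,ds$. The second stopping time must then be posed in the \emph{weak} norm, namely $\Lvert3 y^\iota(s)\Rvert3_0\le 2\iota e^{\Lambda_\ast s}$; feeding this bound into the $\|\eta\|_0^2$ term and applying Gronwall gives the strong-norm growth $\Lvert3 y^\iota(t)\Rvert3_{00}^2\lesssim\iota^2 e^{2\Lambda_\ast t}$. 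Duhamel (Proposition~\ref{lineargrownth2}) is used only to control the \emph{weak-norm} deviation of $y^\iota$ from $\iota e^{\Lambda_\ast t}y_\star$, which is exactly where the weak-norm semigroup bound suffices, and the reverse triangle inequality at $t=T^\iota$ also takes place in the weak norm, matching the conclusion of the theorem. Without this reorganization---energy estimate for the strong norm, Duhamel for the weak-norm deviation---the argument as you have written it does not close.
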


\begin{Remark}
The solution is actually a triple $(u,p,\eta)$, and the quantity $\Lvert3   \cdot   \Rvert3_{00}$ employed in Theorems \ref{maintheorem} and \ref{maintheorem2} is actually a norm for $(u,p,\eta)$.  See Remark \ref{norm_remark} for more details on our slight abuse of notion and notation.  Also, although we have suppressed it in the notation, the quantity $\Lvert3   \cdot   \Rvert3_{00}$ appearing in Theorem \ref{maintheorem2} depends on an integer $N\ge 3$ that measures the regularity of solutions ($u \in \ddot{H}^{4N}$, etc).  The choice $N=3$ is the minimal regularity framework in which we can close our nonlinear instability estimates. However, by letting $N$ be arbitrarily large we find that no high-regularity smallness criterion can prevent instability.
\end{Remark}

\begin{Remark}
In both Theorems \ref{maintheorem} and \ref{maintheorem2} the instability occurs in the $\Lvert3 \cdot \Rvert3_{0}$ norm, which is just the $L^2$ norms of $u$ and $\eta$.  This means that although our instability analysis works in a framework with some degree of regularity, the onset of instability occurs at the lowest level of regularity.  Note that an easy refinement of our analysis would allow us to replace $\Lvert3 \cdot \Rvert3_{0}$ with the $L^2$ norm of $\eta_-$.  This highlights the fact that the instability occurs at the internal interface.  We have chosen not to pursue this refinement in order to make $\Lvert3 \cdot \Rvert3_{0}$ consistent with a quantity that appears in our linear analysis.
\end{Remark}

\begin{Remark}
Theorems \ref{maintheorem} and \ref{maintheorem2}, together with our results in \cite{WTK}, establish sharp nonlinear stability criteria for the equilibrium state in the viscous surface-internal wave problem.  We summarize these and the rates of decay to equilibrium in the table below.
\begin{displaymath}
\begin{array}{| c | c | c |  c |}
\hline
 & \rj < 0 & \rj =0 & \rj >0  \\ \hline
 \sigma_\pm =0 &
\begin{array}{c}
\textnormal{nonlinearly stable} \\
\textnormal{almost exponential decay}
\end{array} &
\textnormal{locally well-posed} & 
\textnormal{nonlinearly unstable}     \\ \hline
\begin{array}{c}
0 < \sigma_+  \\ 0 < \sigma_- < \sigma_c
\end{array} &
\begin{array}{c}
\textnormal{nonlinearly stable} \\
\textnormal{exponential decay}
\end{array} &
\begin{array}{c}
\textnormal{nonlinearly stable} \\
\textnormal{exponential decay}
\end{array} &
\textnormal{nonlinearly unstable}      \\ \hline
\begin{array}{c}
0 < \sigma_+  \\  \sigma_c = \sigma_-
\end{array} &
\begin{array}{c}
\textnormal{nonlinearly stable} \\
\textnormal{exponential decay}
\end{array} &
\begin{array}{c}
\textnormal{nonlinearly stable} \\
\textnormal{exponential decay}
\end{array} &
\textnormal{locally well-posed}
\\ \hline
\begin{array}{c}
0 < \sigma_+  \\  \sigma_c < \sigma_-
\end{array} &
\begin{array}{c}
\textnormal{nonlinearly stable} \\
\textnormal{exponential decay}
\end{array} &
\begin{array}{c}
\textnormal{nonlinearly stable} \\
\textnormal{exponential decay}
\end{array} &
\begin{array}{c}
\textnormal{nonlinearly stable} \\
\textnormal{exponential decay}
\end{array}
\\ \hline
\end{array}
\end{displaymath}

Note that our table identifies two critical regimes:  $\sigma_- = \sigma_c$, $\sigma_+ >0$, $\rj >0$; and $\sigma_\pm =0$, $\rj=0$.  In these critical regimes we know from \cite{WTK} that the problem is locally well-posed, but it is  not clear to us what the stability of the system should be.

\end{Remark}

In general, the passage from linear instability to nonlinear instability is delicate for  partial differential equations due to, for instance, severe nonlinearities with possible unbounded derivatives.  In 1995, Guo and Strauss \cite{GS} developed a bootstrap instability framework to treat such an issue. The main idea of the Guo-Strauss approach is to show that on the time scale of the instability, the stronger Sobolev norm  of the solution is actually bounded by the growth of its weaker norm (see also Lemma 1.1 of \cite{GHS} for the abstract framework of this method).  For our Rayleigh-Taylor problem the term $\rj g\eta_-$ is what leads to the instability; since it is of  low order, we are led to use the Guo-Strauss bootstrap framework here.  However, the bootstrap lemma in \cite{GHS} can not be applied directly, so in the proof of Theorems \ref{maintheorem} and \ref{maintheorem2} we use a slight variant.   For the sake of completeness, we record an abstract version of this method in the following theorem.

\begin{theorem}\label{bootstrap2}
Assume the following.
\begin{enumerate}
 \item $L$ is a linear operator on a Banach space $X$ with norm $\Lvert3\cdot\Rvert3_{0}$, and there exists another norm  $\norm{\cdot}$ such that
\begin{equation}\label{l21}
\Lvert3 e^{tL}y\Rvert3_0 \le C_Le^{t\Lambda}\|y\|
\end{equation}
for some $C_L$ and $\Lambda> 0$.

\item $N = N(y)$ is some nonlinear operator on $X$, and there exist another norm $\Lvert3\cdot\Rvert3_{00}$ and a constant $C_N$ such that
\begin{equation}\label{l12}
\|N(y)\|\le C_N\Lvert3y\Rvert3^2_{00}
\end{equation}
for all $y\in X$ with  $\Lvert3y\Rvert3_{00}<\infty$.

\item There is a growing mode $y_0\in X$ with $\Lvert3 y_0\Rvert3_{0}= 1$ and $\Lvert3y_0\Rvert3_{00}<\infty$, and
\begin{equation}\label{l27}
 e^{Lt} y_0=e^{\lambda t}y_0
 \end{equation}
for some $\lambda>0$ with $\frac{\Lambda}{2}<\lambda\le \Lambda$.

\item There exists a small constant $\delta$ such that for any solution $y(t)$ to the equation
\begin{equation}\label{equation}
\frac{d}{dt}y+Ly=N(y)
\end{equation}
with $\Lvert3 y(t)\Rvert3_{00}\le\delta$ for all $t\in [0,T]$, there exists $C_\delta> 0$ so that the following  energy estimate holds:
\begin{equation}
\Lvert3y(t)\Rvert3_{00}^2\le  C_\delta\Lvert3y(0)\Rvert3_{00}^2
   +  \frac{\lambda}{2} \int_0^t\Lvert3y(s)\Rvert3_{00}^2\,ds +   C_\delta\int_0^t\Lvert3y(s)\Rvert3_{00}^3\,ds
   + C_\delta\int_0^t \Lvert3y(s)\Rvert3_{0}^2\,ds
\end{equation}
for all $t \in [0,T]$.

\end{enumerate}
Let $y^\iota$ be a solution to \eqref{equation} with  initial data $y^\iota (0) = \iota  y_0$.  Then there exists a sufficiently small (fixed) number $\theta_0>0$, which depends explicitly on $C_L$, $C_N$, $C_\delta$, $\lambda$, $\Lambda$, $y_0$, $\delta$ but is independent of $\iota $, so that for any sufficiently small $\iota>0$,
\begin{equation}
 \Lvert3 y^\iota(T^\iota )\Rvert3_0\ge  \frac{\theta_0}{2}>0,
\end{equation}
where the escape time $T^\iota>0$ is defined by
\begin{equation}
 T^\iota := \frac{1}{\lambda}\log \frac{\theta_0}{\iota }.
\end{equation}
\end{theorem}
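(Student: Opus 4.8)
The plan is to run the Guo--Strauss bootstrap in the form dictated by the hypotheses: compare the solution $y^\iota$ issuing from $\iota y_0$ with the pure linear growing mode $\iota e^{\lambda t}y_0$, show that the nonlinear correction remains of the higher order $(\iota e^{\lambda t})^2$ throughout the instability time scale, and extract the escape from the weak norm at the time $T^\iota$ at which $\iota e^{\lambda t}$ reaches the fixed threshold $\theta_0$. First I would fix $A_0=2$ and, for $y^\iota$ (continued as long as $\Lvert3 y^\iota\Rvert3_{00}\le\delta$), introduce the competing times
\[
T_1:=\sup\{t\ge0:\ \Lvert3 y^\iota(s)\Rvert3_{00}\le\delta\text{ for }s\in[0,t]\},\qquad
T_2:=\sup\{t\ge0:\ \Lvert3 y^\iota(s)\Rvert3_0\le A_0\iota e^{\lambda s}\text{ for }s\in[0,t]\},
\]
together with $T^\iota=\frac1\lambda\log\frac{\theta_0}{\iota}$, so that $\iota e^{\lambda T^\iota}=\theta_0$. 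Since $\Lvert3 y^\iota(0)\Rvert3_{00}=\iota\Lvert3 y_0\Rvert3_{00}$ and $\Lvert3 y^\iota(0)\Rvert3_0=\iota<A_0\iota$, for $\iota$ small we have $T_1,T_2>0$; the whole point is to prove $\min\{T_1,T_2\}>T^\iota$ for a suitably small fixed $\theta_0$, after which the conclusion follows at once.

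Second, on $[0,\min\{T_1,T_2\}]$ I would close the strong-norm estimate using hypothesis~(4). There $\Lvert3 y^\iota\Rvert3_{00}\le\delta$, so the cubic term is absorbed by the quadratic one, $\int_0^t\Lvert3 y^\iota\Rvert3_{00}^3\le\delta\int_0^t\Lvert3 y^\iota\Rvert3_{00}^2$; after possibly shrinking $\delta$ so that $C_\delta\delta\le\lambda/4$, the coefficient multiplying $\int_0^t\Lvert3 y^\iota\Rvert3_{00}^2$ becomes $\le\frac34\lambda<2\lambda$. Meanwhile the $T_2$-bound controls the last term of~(4) as $\int_0^t\Lvert3 y^\iota\Rvert3_0^2\le\frac{A_0^2\iota^2}{2\lambda}e^{2\lambda t}$. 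A Gr\"onwall/comparison argument against this $O(\iota^2e^{2\lambda t})$ forcing then yields a constant $C_\ast$, depending on $C_\delta,\lambda,A_0,\Lvert3 y_0\Rvert3_{00}$ but \emph{not} on $\iota$, with
\[
\Lvert3 y^\iota(t)\Rvert3_{00}\le C_\ast\,\iota e^{\lambda t}\qquad\text{on }[0,\min\{T_1,T_2\}].
\]

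Third, I would insert this into the Duhamel representation $y^\iota(t)=\iota e^{\lambda t}y_0+\int_0^t e^{(t-s)L}N(y^\iota(s))\,ds$, whose linear part is $\iota e^{\lambda t}y_0$ by \eqref{l27}. Using \eqref{l21}, \eqref{l12}, the bound just obtained, and $2\lambda-\Lambda>0$ (which is where $\lambda>\Lambda/2$ enters),
\begin{align*}
\Lvert3 \int_0^t e^{(t-s)L}N(y^\iota(s))\,ds\Rvert3_0
&\le C_LC_N\int_0^t e^{(t-s)\Lambda}\Lvert3 y^\iota(s)\Rvert3_{00}^2\,ds\\
&\le\frac{C_LC_NC_\ast^2}{2\lambda-\Lambda}\big(\iota e^{\lambda t}\big)^2=:C_{\ast\ast}\big(\iota e^{\lambda t}\big)^2
\end{align*}
on $[0,\min\{T_1,T_2\}]$, so, since $\Lvert3 y_0\Rvert3_0=1$, both
\[
\iota e^{\lambda t}\big(1-C_{\ast\ast}\iota e^{\lambda t}\big)\le\Lvert3 y^\iota(t)\Rvert3_0\le\iota e^{\lambda t}\big(1+C_{\ast\ast}\iota e^{\lambda t}\big).
\]

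Finally I would run the continuity argument. Fix $\theta_0$ so small that $C_\ast\theta_0<\delta$ and $C_{\ast\ast}\theta_0\le\frac12$. If $\min\{T_1,T_2\}\le T^\iota$, then $\iota e^{\lambda t}\le\theta_0$ on $[0,\min\{T_1,T_2\}]$, so the strong-norm bound gives $\Lvert3 y^\iota\Rvert3_{00}\le C_\ast\theta_0<\delta$ strictly, and the upper Duhamel bound gives $\Lvert3 y^\iota\Rvert3_0\le\iota e^{\lambda t}(1+C_{\ast\ast}\theta_0)<A_0\iota e^{\lambda t}$ strictly; these two strict inequalities contradict the maximality in the definitions of $T_1$ and $T_2$ (and the uniform bound $\Lvert3 y^\iota\Rvert3_{00}<\delta$ also lets us continue $y^\iota$ up to $T^\iota$). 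Hence $\min\{T_1,T_2\}>T^\iota$, all of the above holds on $[0,T^\iota]$, and the lower Duhamel bound at $t=T^\iota$, where $\iota e^{\lambda T^\iota}=\theta_0$, gives $\Lvert3 y^\iota(T^\iota)\Rvert3_0\ge\theta_0(1-C_{\ast\ast}\theta_0)\ge\theta_0/2$, as claimed; the constants are chosen in the order $A_0$, then $\delta$, then $\theta_0$ (via $C_\ast,C_{\ast\ast}$), then $\iota$, so there is no circularity. I expect the delicate point to be the Gr\"onwall step of the second part: one must genuinely exploit that, after absorbing the cubic term, the coefficient of $\int_0^t\Lvert3 y^\iota\Rvert3_{00}^2$ in~(4) is \emph{strictly} below $2\lambda$, together with the fact that $\int_0^t\Lvert3 y^\iota\Rvert3_0^2$ is only $O(\iota^2e^{2\lambda t})$ thanks to the $T_2$-bound --- were either to fail, $\Lvert3 y^\iota\Rvert3_{00}$ could grow at a rate $\ge2\lambda$ and the nonlinear Duhamel term would cease to be subdominant to $\iota e^{\lambda t}$ before the escape time.
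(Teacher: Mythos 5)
Your proposal is correct and follows essentially the same bootstrap strategy the paper uses for Theorems~\ref{maintheorem} and \ref{maintheorem2}, which the authors state is the proof pattern for Theorem~\ref{bootstrap2}: you introduce the same two competing escape times (strong norm threshold and weak norm threshold $2\iota e^{\lambda t}$), close the strong-norm estimate via Gr\"onwall against the $O(\iota^2 e^{2\lambda t})$ forcing, feed this into the Duhamel formula to show the nonlinear remainder is $O((\iota e^{\lambda t})^2)$, and choose $\theta_0$ so the escape time $T^\iota$ is reached first. Your phrase ``after possibly shrinking $\delta$'' should be understood as the paper's device of working with a threshold $\tilde\delta=\min\{\delta,\lambda/(2C_\delta)\}$ while keeping $C_\delta$ fixed (legitimate since a smaller threshold only weakens hypothesis~(4)), but this is a matter of wording, not substance.
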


Theorem \ref{bootstrap2} can be proved by a slight modifications of the  proof of Lemma 1.1 of \cite{GHS}, which also follows along the lines of our proof of Theorems \ref{maintheorem} and \ref{maintheorem2}, and hence we omit its proof.  In fact, one may read Theorem \ref{bootstrap2} as a description of our strategy for proving Theorems \ref{maintheorem} and \ref{maintheorem2}.  Comparing with Lemma 1.1 of \cite{GHS}, we see that \eqref{l21} and \eqref{l27} are two relaxations of the corresponding conditions in that lemma.  This is precisely the case we encounter when we try to use the Guo-Strauss bootstrap framework to prove nonlinear Rayleigh-Taylor instability.  Another point we want to mention here is that in Theorem \ref{bootstrap2} we implicitly assume that the linear growing mode $y_0$ can be used as initial data for the nonlinear problem \eqref{equation}. However, in our nonlinear problems \eqref{surface} and \eqref{nosurface2} defined on a domain with boundary, to guarantee the local well-posedness the initial data must satisfy certain compatibility conditions that the growing modes to the linearized problem would not satisfy.  Hence, we need to employ an argument from \cite{JT} to use the linear growing mode to construct initial data for the nonlinear problem in Section \ref{data}.

We will end this introduction by reviewing some previous mathematical results closely related to our current paper. The stability and instability analysis for the linearized problem without viscosity is well understood \cite{3C}. However, there is no general theory that guarantees the passage from linear instability to nonlinear instability for  partial differential equations, so the question of nonlinear instability was not immediately resolved by the  linear analysis.  For the inviscid Rayleigh-Taylor problem without surface tension, Ebin \cite{3E} proved the nonlinear ill-posedness of the problem for incompressible fluids,  Guo and Tice \cite{3GT1} proved an analogous result for compressible fluids, and Hwang and Guo \cite{hw_guo} proved the nonlinear instability of the incompressible problem with a continuous density distribution.  For the viscous Rayleigh-Taylor problem with or without surface tension, Guo and Tice \cite{3GT2} proved the linear instability for  compressible fluids, and with surface tension  Pr\"{u}ss and Simonett \cite{PS} proved nonlinear instability for incompressible fluids in an $L^p$-setting by using Henry's instability theorem.  The proof of \cite{PS} is quite abstract, hence in this paper for the case with surface tension we  provide an alternative proof to show the nonlinear instability in the natural energy space.  However, to our best knowledge, the result is completely new for the case without surface tension.  In forthcoming papers, we expect to apply the framework here to show the nonlinear Rayleigh-Taylor instability for viscous incompressible fluids with magnetic field, the linear instability of which was obtained by Wang \cite{W}.

\medskip

\hspace{-13pt}{\bf Notation.} In this paper, for any given domain, we write $H^k$ for the usual $L^2$-based Sobolev space of order $k\ge0$. We define the piecewise Sobolev spaces $\ddot{H}^k(\Omega), k\ge 0$ by
\begin{equation}
\ddot{H}^k(\Omega) = \{u\chi_{\Omega_\pm}\in H^k(\Omega_\pm)\}\text{ with norm } \|u\|_{k}^2 :=\|u\|_{\ddot{H}^k(\Omega)}^2 := \|u_+\|_{H^k(\Omega_+)}^2 + \|u_-\|_{H^k(\Omega_-)}^2.
\end{equation}
When $k=0$, $\ddot{H}^0(\Omega)={H}^0(\Omega)=L^2(\Omega)$. We let the Sobolev spaces $H^s( \Sigma_\pm)$ for $s\in \mathbb{R}$ be equivalent to $H^s( \mathrm{T}^2)$, with norm $\|\cdot\|_{H^s(\Sigma_\pm)}$; we  write
\begin{equation}
\|\eta\|_{s}^2 := \|\eta\|_{{H}^s(\Sigma)}^2 :=\|\eta_+\|_{H^s(\Sigma_+)}^2 + \|\eta_-\|_{H^s(\Sigma_-)}^2.
\end{equation}
We do not distinguish the notation of norms on the domain or on the boundary. Basically, when we write  $\|\partial_t^ju\|_{k}$ and $\|\partial_t^jp\|_{k}$ we always mean that the space is $\ddot{H}^k(\Omega)$, and when we write $\|\partial_t^j\eta\|_{k}$ we always mean that the space is $ {H}^k(\Sigma)$. When there is potential for confusion, typically when trace estimates are employed, we will clarify as needed. 
Let us define
\begin{equation}\label{0H}
\begin{split}
 {}_0H^1(\Omega)  := \{ u \in H^1(\Omega)\mid u\mid_{\Sigma_b}=0\}  \hbox{ and
 }
 {}_0H_\sigma^1(\Omega) := \{ u \in  {}_0H^1(\Omega) \mid \diverge u=0 \},
\end{split}
\end{equation}
with the obvious restriction that the last space is for vector-valued functions only. It is easy to see that $u\in {}_0H^1(\Omega)$ if and only if $u\in \ddot{H}^1(\Omega)$ with $\Lbrack u\Rbrack=0$ on $\Sigma_-$ and $u=0$ on $\Sigma_b$.  We will not need negative index spaces on $\Omega$ except for $({}_0H^1(\Omega))^\ast$.  In our analysis, we will occasionally abuse notation by writing  $\|\cdot\|_{-1}$ for the norm in $({}_0H^1(\Omega))^\ast$.  Here it is not the case that $({}_0H^1(\Omega))^\ast = H^{-1}$ because of boundary conditions;  we employ this abuse of notation in order to have indexed sums of norms include terms like $\|\cdot\|_{4N-2j+1}$ for $j=2N+1$.  Sometimes we use $\|\cdot\|_{L^pX}$ to denote the norm of the space $L^p(0,T;X)$. We will sometimes extend the above abuse of notation by writing $\|\cdot\|_{L^p H^{k}}$ for $k=-1$ in a sum of norms; when we do this we actually mean $\|\cdot\|_{L^p ({}_0H^1(\Omega))^\ast}$.

We shall also employ the notational convention for derivatives from \cite{GT_per}.  When using space-time differential multi-indices, we will write $\mathbb{N}^{1+m} = \{ \alpha = (\alpha_0,\alpha_1,\dotsc,\alpha_m) \}$ to emphasize that the $0-$index term is related to temporal derivatives.  For just spatial derivatives we write $\mathbb{N}^m$.  For $\alpha \in \mathbb{N}^{1+m}$ we write $\partial^\alpha = \partial_t^{\alpha_0} \partial_1^{\alpha_1}\cdots \partial_m^{\alpha_m}.$ We define the parabolic counting of such multi-indices by writing $|\alpha| = 2 \alpha_0 + \alpha_1 + \cdots + \alpha_m.$  We will also write $\na f$ for the horizontal gradient of $f$, i.e. $\na f = \partial_1 f e_1 + \partial_2 f e_2$, while $\nabla f$ will denote the usual full gradient.

For a given norm $\|\cdot\|$ and an integer $k\ge 0$, we introduce the following notation for sums of spatial derivatives:
\begin{equation}
 \|\na^k f\|^2 := \sum_{\substack{\alpha \in \mathbb{N}^2 \\  |\alpha|\le k} } \|\partial^\alpha  f\|^2 \text{ and } \|\nabla^k f\|^2 := \sum_{\substack{\alpha \in \mathbb{N}^{3} \\
 |\alpha|\le k} } \|\partial^\alpha  f\|^2.
\end{equation}
The convention we adopt in this notation is that $\na$ refers to only ``horizontal'' spatial derivatives, while $\nabla$ refers to full spatial derivatives.   For space-time derivatives we add bars to our notation:
\begin{equation}
 \| \bna^k f\|^2 := \sum_{\substack{\alpha \in \mathbb{N}^{1+2} \\  |\alpha|\le k} } \|\partial^\alpha  f\|^2 \text{ and }
\| \bar{\nabla}^k f\|^2 := \sum_{\substack{\alpha \in
\mathbb{N}^{1+3}
\\   |\alpha|\le k} } \|\partial^\alpha  f\|^2.
\end{equation}
We allow for composition of derivatives in this counting scheme in a natural way; for example, $\norm{\na \na^k f} = \norm{\na^k \na f} = \norm{\na^{k+1} f}$.

We will employ the Einstein convention of summing over  repeated indices.  Throughout the paper $C>0$ will denote a generic constant that can depend on the parameters of the problem, integers $N$ used in energy definitions, and $\Omega$, but does not depend on the data, etc.  We refer to such constants as ``universal.''  Such constants are allowed to change from line to line. When a constant depends on a quantity $z$ we will write $C = C(z)= C_z$ to indicate this.  We will employ the notation $a \lesssim b$ to mean that $a \le C b$ for a universal constant $C>0$. To indicate some constants in some places so that they can be referred to later, we will denote them in particular by $C_1,C_2$, etc.

\section{Linear instability}\label{linear instability}

In this section, we consider the linearized equations  of  \eqref{surface} and \eqref{nosurface2} in the  unified  form
\begin{equation}\label{linear}
\left\{\begin{array}{lll}\rho\partial_t u-\mu\Delta u+\nabla p=0\quad&\hbox{ in }\Omega
\\ \diverge u=0&\hbox{ in }\Omega
\\ \partial_t\eta=u_3 &\hbox{ on }\Sigma
\\ (p_+I-\mu_+\mathbb{D}(u_+)) e_3=(\rho_+g\eta_+ -\sigma_+\Delta_\ast\eta_+)e_3&\hbox{ on
}\Sigma_+
\\\Lbrack u\Rbrack=0,\quad \Lbrack pI-\mu\mathbb{D}(u)\Rbrack e_3=(\rj g\eta_- +\sigma_-\Delta_\ast\eta_-)e_3&\hbox{ on
}\Sigma_-
\\ u_-=0 &\hbox{ on }\Sigma_b
\\(u,\eta)|_{t=0}=(u_0,\eta_0),
\end{array}\right.
\end{equation}
where $\sigma_\pm\ge0$.

\subsection{Growing mode solution}

We want to construct a growing mode solution to \eqref{linear}, so we assume an ansatz
\begin{equation}\label{ansatz}
u(t,x)=w(x){\rm e}^{\lambda t},\ p(t,x)=\tilde{\pi}(x){\rm e}^{\lambda t},
\ \eta(t,x')= {\zeta}(x'){\rm e}^{\lambda t},
\end{equation}
for some $\lambda>0$ (the same in both $\Omega_\pm$), where $x'=(x_1,x_2)$. Substituting this ansatz into \eqref{linear}, we find that $ {\zeta}=\lambda^{-1}w_3|_{\Sigma}$. By this fact we can eliminate $ {\zeta}$ from \eqref{linear} and then arrive at the following time-invariant system for $w$ and $\tilde{\pi}$:
\begin{equation}\label{linear2}
\left\{\begin{array}{ll} \lambda\rho w-\mu\Delta w +\nabla \tilde{\pi}
=0&\hbox{in }\Omega
\\ \diverge w=0&\hbox{in }\Omega
\\ (\tilde{\pi}_+ I-\mu_+ \mathbb{D}(w_+)) e_3
=\lambda^{-1}(g\rho_+w_{3,+}-\sigma_+\Delta_\ast
w_{3,+})e_3&\hbox{on }\Sigma_+
\\\llbracket w\rrbracket=0,\quad
\llbracket \tilde{\pi} I-\mu \mathbb{D}(w)\rrbracket e_3
=\lambda^{-1}(g\rj w_3+\sigma_-\Delta_\ast w_{3})e_3&\hbox{on
}\Sigma_-
\\w_-=0&\hbox{on
}\Sigma_b.
\end{array}\right.
\end{equation}
Since the domain is horizontally flat, we are free to make the further structural assumption that the $x'$ dependence of $w, \tilde{\pi}$ is given as a Fourier mode ${\rm e}^{ix'\cdot\xi}$ for $\xi=(\xi_1,\xi_2)\in L_1^{-1}\mathbb{Z}\times L_2^{-1}\mathbb{Z}$. Together with the growing mode ansatz \eqref{ansatz}, this constitutes a ``normal mode'' ansatz (see \cite{3C}). We define the new unknowns $\varphi,\theta,\psi,\pi: (-b,1)\rightarrow \mathbb{R}$ according to
\begin{equation}\label{ansatz2}
w_1(x)=-i\varphi( x_3){\rm e}^{ix'\cdot\xi},\ w_2(x)=-i\theta( x_3){\rm e}^{ix'\cdot\xi},
\ w_3(x)= \psi( x_3){\rm e}^{ix'\cdot\xi},\
\tilde{\pi}(x)=\pi(x_3){\rm e}^{ix'\cdot\xi}.
\end{equation}
For each fixed nonzero spatial frequency $\xi$, we deduce from the equations \eqref{linear2} a system of ODEs for $\varphi,\theta,\psi,\pi$ and $\lambda$, denoting $'=d/dx_3$:
\begin{equation}\label{linear3}
\left\{\begin{array}{ll}
\lambda^2\rho\varphi+\mu\lambda(|\xi|^2\varphi-\varphi'') -\lambda\xi_1\pi=0&\hbox{ in }(-b,1)
\\\lambda^2\rho\theta+\mu\lambda(|\xi|^2\theta-\theta'')-\lambda\xi_2\pi =0&\hbox{ in }(-b,1)
\\\lambda^2\rho\psi+\mu\lambda(|\xi|^2\psi-\psi'')+\lambda\pi' =0&\hbox{ in }(-b,1)
\\ \xi_1\varphi+\xi_2\theta+\psi'=0&\hbox{ in }(-b,1)
\\ \mu_+\lambda(\xi_1\psi_+-\varphi_+') = \mu_+\lambda(\xi_2\psi_+-\theta_+')=0&\hbox{ at }
x_3=1
\\ -2\mu_+\lambda\psi_+'+\lambda\pi_+  = g\rho_+\psi_+ + \sigma_+|\xi|^2\psi_+&\hbox{ at }
x_3=1
\\
\llbracket\varphi\rrbracket=\llbracket\theta\rrbracket=\llbracket\psi\rrbracket=0&\hbox{
at } x_3=0
\\\llbracket\mu\lambda(\xi_1\psi-\varphi') \rrbracket=\llbracket\mu\lambda(\xi_2\psi-\theta') \rrbracket=0&\hbox{ at }
x_3=0
\\\llbracket-2\mu\lambda\psi'+\lambda\pi \rrbracket= g\rj \psi -\sigma_-|\xi|^2\psi&\hbox{ at }
x_3=0
\\\varphi_-=\theta_-= \psi_- =0&\hbox{ at }
x_3=-b.\end{array}\right.
\end{equation}
We may eliminate $\pi$ by multiplying the first and second equations by $\xi_1,$ $\xi_2$ respectively, summing, and then using the fourth equation to solve for
\begin{equation}\label{pi_solve}
 \lambda | \xi |^2 \pi = - (\lambda^2 \rho\psi' + \lambda \mu (|\xi|^2\psi' - \psi''')).
\end{equation}
Eliminating $\pi$ from the third equation in \eqref{linear3}, we then obtain the following fourth-order ODE for $\psi$:
\begin{equation}\label{linear4}
\left\{\begin{array}{ll}
-\lambda^2\rho(|\xi|^2\psi-\psi'')=\mu\lambda(|\xi|^4\psi-2|\xi|^2\psi''+\psi'''')
&\hbox{in }(-b,1)
\\ \mu_+\lambda(|\xi|^2\psi_++{\psi_+''})=0 &\hbox{at } x_3=1
\\  \mu_+\lambda(\psi_+'''-3|\xi|^2\psi_+')
 = \lambda^2\rho_+{\psi_+'}+g\rho_+|\xi|^2\psi_+ + \sigma_+|\xi|^4 \psi_+&\hbox{at } x_3=1
\\
\llbracket\psi\rrbracket=\llbracket\psi'\rrbracket =0  &\hbox{at }
x_3=0
\\ \llbracket\mu\lambda(|\xi|^2\psi+{\psi''})
\rrbracket=0  &\hbox{at } x_3=0
\\ \llbracket\mu\lambda(\psi'''-3|\xi|^2\psi')
\rrbracket=\llbracket\lambda^2\rho{\psi'}\rrbracket+g\rj |\xi|^2\psi - \sigma_-|\xi|^4
\psi &\hbox{at } x_3=0
\\\psi_-=\psi'_-=0&\hbox{at } x_3=-b.
\end{array}\right.
\end{equation}

Given a solution $\psi$ to \eqref{linear4}, we can recover the function $\pi$ via \eqref{pi_solve}.  To recover $\varphi$ and $\theta$, we first note that the problem \eqref{linear3} is invariant under simultaneous rotations of $(\varphi,\theta)$ and $(\xi_1,\xi_2)$.  Indeed, it is easily verified that if $R \in SO(2)$ is any rotation operator, then $R(\varphi,\theta)$, $R(\xi_1,\xi_2)$ is also a solution with the same $\psi$ and $\lambda$.  Then, given any $\xi$ we choose a rotation operator $R_\xi$ so that $R_\xi \xi = (|\xi|,0)$.  It is easy to check that when $\xi_2 =0$, we may simply take $\theta =0$ in \eqref{linear3}.  Then from a solution  $\psi_{|\xi|}$, $\lambda$, $R_\xi = (|\xi|,0)$ to \eqref{linear4} we can recover $\varphi_{|\xi|}$ through the equation $|\xi| \varphi_{|\xi|} + \psi_{|\xi|}' =0$.  Then we return to a full solution to \eqref{linear3} for $\xi$ by setting $(\varphi,\theta) = R^{-1}_{\xi}(\varphi_{|\xi|},0)$.

The problem \eqref{linear4} can be viewed as an eigenvalue problem with the eigenvalue $\lambda$. However, due to the fact that $\lambda$ appears both quadratically and linearly, this problem does not possess a standard variational structure. In \cite{3GT2}, Guo and Tice developed a robust method to overcome such a difficulty and constructed growing mode solutions for viscous compressible fluids. In \cite{W}, Wang also applied this method to construct growing mode solutions for viscous incompressible fluids with magnetic field in a slightly different setting.  The method was also employed by Jang and Tice \cite{JT} for the compressible Navier-Stokes-Poisson system.  The growing mode solution to \eqref{linear} can be constructed in the same way, though the actual procedure is somewhat simpler, so we will only sketch the procedure for  later use, such as for deriving some estimates.  For a more thorough presentation of the idea, we refer to \cite{3GT2}.

In order to restore the ability to use variational methods we artificially remove the linear dependence on $\lambda$ in \eqref{linear4} by defining the modified viscosities $\tilde{\mu}=s\mu$, where $s>0$ is an arbitrary parameter. This results in a family $(s>0)$ of modified problems:
\begin{equation}\label{linear5}
\left\{\begin{array}{ll}
-\lambda^2\rho(|\xi|^2\psi-\psi'')=s\mu(|\xi|^4\psi-2|\xi|^2\psi''+\psi'''')
&\hbox{in }(-b,1)
\\ s\mu_+(|\xi|^2\psi_++{\psi_+''})=0 &\hbox{at } x_3=1
\\  s\mu_+(\psi_+'''-3|\xi|^2\psi_+')
 = \lambda^2\rho_+{\psi_+'}+g\rho_+|\xi|^2\psi_+ + \sigma_+|\xi|^4 \psi_+&\hbox{at } x_3=1
\\
\llbracket\psi\rrbracket=\llbracket\psi'\rrbracket =0  &\hbox{at }
x_3=0
\\ \llbracket s\mu(|\xi|^2\psi+{\psi''})
\rrbracket=0  &\hbox{at } x_3=0
\\ \llbracket s\mu(\psi'''-3|\xi|^2\psi')
\rrbracket=\llbracket\lambda^2\rho{\psi'}\rrbracket+g\rj |\xi|^2\psi - \sigma_-|\xi|^4
\psi &\hbox{at } x_3=0
\\\psi_-=\psi'_-=0&\hbox{at } x_3=-b.
\end{array}\right.
\end{equation}
Note that for any fixed $s>0$ and $\xi$, \eqref{linear5} is a standard eigenvalue problem for $-\lambda^2$, which has a natural variational structure that allows us to use variational methods to construct solutions. For this, we define the energies
\begin{equation}\label{ener}
\begin{split}
E(\psi;|\xi|,s)
&=\frac{1}{2}\int_{-b}^1 s\mu(4|\xi|^2|\psi'|^2+||\xi|^2\psi+\psi''|^2)
\\&\quad+\frac{1}{2}|\xi|^2(\sigma_+|\xi|^2+g\rho_+)|\psi(1)|^2
+\frac{1}{2}|\xi|^2(\sigma_-|\xi|^2-g\rj )|\psi(0)|^2,
\end{split}
\end{equation}
\begin{equation}
J(\psi;|\xi|)
=\frac{1}{2}\int_{-b}^1
\rho(|\xi|^2|\psi|^2+|\psi'|^2)\,dx_3,
\end{equation}
which are both well-defined on the space ${}_0H^2((-b,1)) := \{ v\in H^2((-b,1)) \;\vert\; \psi(-b)=\psi'(-b)=0\}$. We define the set
\begin{equation}\label{constrain}
\mathcal{C}=\{ \psi \in {}_0H^2((-b,1)) \ |\ J(\psi;|\xi|)=1\}.
\end{equation}
Then we want to find the smallest $-\lambda^2$ by minimizing
\begin{equation}\label{min}
-\lambda^2(|\xi|;s)=\alpha(|\xi|;s):=\inf_{\psi\in
\mathcal{C}}E(\psi;|\xi|,s).
\end{equation}
It is standard to show that a minimizer of \eqref{min} exists and that the minimizer satisfies Euler-Lagrange equations equivalent to \eqref{linear5}.  A standard bootstrap argument also shows that the minimizer $\psi$ is in $H^k((-b,0))$ (resp. $H^k((0,1))$) for all $k\ge 0$ when restricted to $(-b,0)$ (resp. $(0,1)$), and hence is smooth when restricted to either interval.

To  construct the growing mode we then need to clarify the sign of the infimum \eqref{min}. For $\sigma_-\ge\sigma_c$, we always have that $E(\psi;|\xi|,s)\ge0$ for any nonzero frequency $\xi\in L_1^{-1}\mathbb{Z}\times L_2^{-1}\mathbb{Z}$ and any $\psi\in \mathcal{C}$, which then means that $\alpha(|\xi|;s)\ge0$. This suggests that  no growing mode solution to \eqref{linear} can be constructed when $\sigma_- \ge \sigma_c$, and then that the system should be linearly stable.  In \cite{WTK} we proved the nonlinear stability of the  problem \eqref{form1} in the case $\sigma_->\sigma_c$.  However, when $0\le\sigma_-<\sigma_c$  and $0<|\xi|< |\xi|_c:=\sqrt{\rj g/\sigma_-}$ (it is interpreted that when $\sigma_-=0$ this means $0<|\xi|< \infty$), then it is easy to see that if $s>0$ is small enough we can always find  $\psi\in \mathcal{C}$ such that $E(\psi;|\xi|,s)\le 0$.  Hence in this case there is $\psi_s(|\xi|)$ such that $E(\psi_s(|\xi|);|\xi|,s)=\alpha(|\xi|;s)<0$.

We want to show that there is a fixed point $s$ so that $\lambda(|\xi|,s)=s$, which will then allow us to construct a solution to the original problem \eqref{linear4}. To this end, we study the behavior of $\alpha(s)$ as a function of $s> 0$.

\begin{lemma}\label{alpha_s}
For any fixed $|\xi|\in(0,|\xi|_c)$, let $\alpha(s)=\alpha(|\xi|;s)$ be defined by \eqref{min}. Then the following hold.
\begin{enumerate}
 \item $\alpha\in C_{loc}^{0,1}((0,\infty))\cap C^0((0,\infty))$ and $\alpha(s)$ is strictly increasing in $s$.
 \item There exist constants  $C_0,C_1,C_2>0$ depending on  $\rho_\pm,$ $\mu_\pm,$ $\sigma_\pm,$ $b,$ $g,$ $|\xi|$ so that
\begin{equation}\label{s1}
\alpha(s)\le-C_0+sC_1,
\end{equation}
and
\begin{equation}\label{s2}
\alpha(s)\ge-\frac{2 g\rj }{\rho_-}|\xi|+sC_2.
\end{equation}
\end{enumerate}

\end{lemma}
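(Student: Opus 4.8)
The plan is to exploit the fact that the energy $E(\psi;|\xi|,s)$ of \eqref{ener} is \emph{affine} in the parameter $s$. Write
\[
E(\psi;|\xi|,s)=s\,E_1(\psi)+E_2(\psi),\qquad E_1(\psi):=\frac12\int_{-b}^1\mu\left(4|\xi|^2|\psi'|^2+\lvert |\xi|^2\psi+\psi''\rvert^2\right),
\]
\[
E_2(\psi):=\frac12|\xi|^2(\sigma_+|\xi|^2+g\rho_+)|\psi(1)|^2+\frac12|\xi|^2(\sigma_-|\xi|^2-g\rj)|\psi(0)|^2,
\]
so that $E_1(\psi)\ge0$ for all $\psi$ while $E_2$ may be negative, and $\alpha(s)=\inf_{\psi\in\mathcal C}\bigl(s\,E_1(\psi)+E_2(\psi)\bigr)$ with $\mathcal{C}$ as in \eqref{constrain}. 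I would first prove the two bounds in part (2) — which in particular show $\alpha(s)$ is finite for each $s>0$ — and then read off part (1) from this structure.

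For the upper bound \eqref{s1}, I would fix once and for all a function $\psi_0\in{}_0H^2((-b,1))$ with $\psi_0(0)\neq0$ and $\psi_0(1)=0$ (for instance a smooth bump vanishing together with its derivative near $x_3=-b$ and vanishing identically near $x_3=1$), set $\tilde\psi_0:=\psi_0/\sqrt{J(\psi_0;|\xi|)}\in\mathcal C$, and use it as a competitor in \eqref{min}:
\[
\alpha(s)\le E(\tilde\psi_0;|\xi|,s)=s\,E_1(\tilde\psi_0)+\tfrac12|\xi|^2(\sigma_-|\xi|^2-g\rj)|\tilde\psi_0(0)|^2=:sC_1-C_0.
\]
Here $C_1=E_1(\tilde\psi_0)>0$ because $\tilde\psi_0$ is nonconstant (and $|\xi|\neq0$, $\mu_\pm>0$), while $C_0=\tfrac12|\xi|^2(g\rj-\sigma_-|\xi|^2)|\tilde\psi_0(0)|^2>0$ precisely because $|\xi|<|\xi|_c=\sqrt{g\rj/\sigma_-}$ (interpreted as $|\xi|<\infty$ when $\sigma_-=0$). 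This is \eqref{s1}.

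For the lower bound \eqref{s2}, I would combine a trace estimate with Poincaré's inequality. Every $\psi\in{}_0H^2((-b,1))$ satisfies $\psi(-b)=0$, so for $\psi\in\mathcal C$ the fundamental theorem of calculus and Young's inequality give
\[
|\psi(0)|^2=\int_{-b}^0\bigl(|\psi|^2\bigr)'\,dx_3\le\frac1{|\xi|}\int_{-b}^0\bigl(|\xi|^2|\psi|^2+|\psi'|^2\bigr)\,dx_3\le\frac{2}{\rho_-|\xi|}\,J(\psi;|\xi|)=\frac{2}{\rho_-|\xi|},
\]
where we used $\rho=\rho_-$ on $(-b,0)$ and positivity of the integrand of $J$; hence, dropping the nonnegative $\psi(1)$ term and using $\sigma_-|\xi|^2\ge0$,
\[
E_2(\psi)\ge-\tfrac12|\xi|^2 g\rj\,|\psi(0)|^2\ge-\frac{g\rj\,|\xi|}{\rho_-}\ge-\frac{2g\rj}{\rho_-}|\xi|.
\]
On the other hand, Poincaré's inequality on $(-b,1)$ for functions vanishing at $-b$, $\int_{-b}^1|\psi|^2\le C_b\int_{-b}^1|\psi'|^2$, together with $J(\psi;|\xi|)=1$ and $\rho\le\rho_+$, yields $\int_{-b}^1|\psi'|^2\ge\frac{2}{\rho_+(1+C_b|\xi|^2)}$, and therefore
\[
E_1(\psi)\ge 2|\xi|^2\min\{\mu_+,\mu_-\}\int_{-b}^1|\psi'|^2\ge C_2>0
\]
for all $\psi\in\mathcal C$, with $C_2$ depending only on $\rho_+,\mu_\pm,b,|\xi|$. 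Combining, $E(\psi;|\xi|,s)=sE_1(\psi)+E_2(\psi)\ge sC_2-\tfrac{2g\rj}{\rho_-}|\xi|$ on $\mathcal C$, and taking the infimum over $\mathcal C$ gives \eqref{s2}.

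Finally I would deduce part (1). By \eqref{s1}--\eqref{s2} the quantity $\alpha$ is real-valued on $(0,\infty)$; being a pointwise infimum of the affine functions $s\mapsto sE_1(\psi)+E_2(\psi)$, it is concave there, hence automatically continuous and locally Lipschitz on the open interval, i.e. $\alpha\in C^0((0,\infty))\cap C^{0,1}_{\mathrm{loc}}((0,\infty))$. Non-strict monotonicity is immediate from $E_1(\psi)\ge0$. For strictness, given $0<s_1<s_2$ let $\psi_{s_2}\in\mathcal C$ attain $\alpha(s_2)$ (a minimizer exists by the standard variational argument recalled before the lemma); then, using $E_1(\psi_{s_2})\ge C_2$ from the previous step,
\[
\alpha(s_1)\le E(\psi_{s_2};|\xi|,s_1)=\alpha(s_2)-(s_2-s_1)E_1(\psi_{s_2})\le\alpha(s_2)-(s_2-s_1)C_2<\alpha(s_2).
\]
The whole argument is elementary; the only points that need care are extracting the power $|\xi|$ (rather than $|\xi|^2$) in the trace estimate, which dictates the particular Young split used, and the uniform positive lower bound $E_1\ge C_2$ on the constraint manifold $\mathcal C$, which simultaneously furnishes the $sC_2$ term in \eqref{s2} and the strict monotonicity in (1) and rests on Poincaré together with $|\xi|\neq0$, $\mu_\pm>0$. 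Everything else — concavity giving regularity, and the test-function construction for \eqref{s1} — is immediate from the structure $E=sE_1+E_2$ with $E_1\ge0$.
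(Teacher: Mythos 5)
Your proof is correct and rests on the same decomposition $E(\psi;|\xi|,s)=sE_1(\psi)+E_0(\psi)$ with $E_1\ge 0$ that the paper uses, and the test-function argument for \eqref{s1} and the trace estimate for \eqref{s2} are the paper's in all but cosmetic detail (the paper applies Cauchy--Schwarz and then the constraint $J(\psi)=1$ to bound $|\psi(0)|^2$, you use the fundamental theorem of calculus followed by Young; yours actually yields the slightly sharper $-g\rj|\xi|/\rho_-$ before you relax it to match the stated bound). Where you genuinely add value is in two places the paper leaves implicit: for part (1) the paper simply notes that the decomposition ``keeps the same form as in Proposition 3.6 of \cite{3GT2}, hence (1) follows in the same way,'' while you reconstruct the argument directly (concavity of a pointwise infimum of affine functions gives local Lipschitz continuity, and a uniform bound $E_1\ge C_2>0$ on $\mathcal{C}$ upgrades monotonicity to strictness); and for the lower bound the paper merely ``denotes by $C_2$ this latter positive infimum'' without justification, whereas you supply the Poincar\'e argument that establishes $\inf_{\psi\in\mathcal{C}}E_1(\psi)>0$. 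Both fills are correct and worth making explicit.
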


\begin{proof}
We decompose the energy \eqref{ener} as
\begin{equation}\label{decomp}
E(\psi;|\xi|,s)=sE_1(\psi;|\xi|)+E_0(\psi;|\xi|),
\end{equation}
where
\begin{equation}
E_1(\psi;|\xi|):=\frac{1}{2}\int_{-b}^1 \mu(4|\xi|^2|\psi'|^2+||\xi|^2\psi+\psi''|^2),
\end{equation}
\begin{equation}
E_0(\psi;|\xi|):=\frac{1}{2}|\xi|^2(\sigma_+|\xi|^2+g\rho_+)|\psi(1)|^2
+\frac{1}{2}|\xi|^2(\sigma_-|\xi|^2-g\rj )|\psi(0)|^2.
\end{equation}
The decomposition \eqref{decomp} keeps the same form as in Proposition 3.6 of \cite{3GT2}, hence $(1)$ follows in the same way.

Now we prove $(2)$. First, for \eqref{s1}, note that for any fixed $0<|\xi|<|\xi|_{c}$ there exists $ \psi_{|\xi|}$ such that $C_0=-E_0( \psi_{|\xi|};|\xi|)>0$. Then we have that $E(\psi_{|\xi|};|\xi|,s)\le -C_0+sC_1$ for some $C_1>0$, and hence \eqref{s1} holds. Next,  for \eqref{s2}, observe that for any $\psi\in \mathcal{C}$, we have
\begin{equation}\label{kkk}
\begin{split}
-\frac{|\xi|^2g\rj |\psi(0)|^2}{2}
&=-|\xi|^2g\rj \int_{-b}^0\psi'\psi\,dx_3
\\&\ge
-\frac{|\xi|g\rj }{\rho_-}\left(\int_{-b}^0\rho_-|\xi|^2|\psi|^2\,dx_3\right)^\frac{1}{2}\left(\int_{-b}^0\rho_-|\psi'|^2\,dx_3\right)^\frac{1}{2}\ge
-\frac{2|\xi|g\rj }{\rho_-}.
\end{split}
\end{equation}
Since the other terms in the energy $E$ are nonnegative, we have
\begin{equation}
\alpha(s)\ge-\frac{2|\xi|g\rj }{\rho_-}+s\inf_{\psi\in \mathcal{C}}E_1(\psi).
\end{equation}
We denote by $C_2$ this latter positive infimum; then \eqref{s2} follows and we conclude our lemma.
\end{proof}

For any fixed $|\xi|\in (0,|\xi|_c)$, we then define the open set
\begin{equation}
\mathcal{S}=\alpha^{-1}((-\infty,0))\subset(0,\infty).
\end{equation}
Note that $\mathcal{S}$ is non-empty and allows us to define $\lambda(s)=\sqrt{-\alpha(s)}$ for $s\in\mathcal{S}$. We next show that there is a unique fix point $s\in \mathcal{S}$ so that $s=\lambda(|\xi|;s)$.

\begin{lemma}\label{fix}
There exists a unique $s\in \mathcal{S}$ so that $\lambda(|\xi|;s)=\sqrt{-\alpha(|\xi|; s)}>0$ and
\begin{equation}\label{fixedpoint}
s=\lambda(|\xi|;s).
\end{equation}
\end{lemma}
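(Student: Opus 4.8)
The plan is to turn the fixed-point equation \eqref{fixedpoint} into a scalar problem and solve it by the intermediate value theorem, using monotonicity for uniqueness. First I would pin down the structure of $\mathcal{S}$. By part $(1)$ of Lemma \ref{alpha_s}, $\alpha(\cdot)$ is continuous and strictly increasing on $(0,\infty)$; the upper bound \eqref{s1} shows $\alpha(s)<0$ whenever $0<s<C_0/C_1$ (so $\mathcal{S}\neq\emptyset$), while the lower bound \eqref{s2} shows $\alpha(s)\to+\infty$ as $s\to\infty$. Since $\alpha$ is increasing, $\mathcal{S}=\{s>0:\alpha(s)<0\}$ is a lower set, hence an interval; combining this with continuity, $\mathcal{S}=(0,s_\ast)$ where $s_\ast\in(0,\infty)$ is the unique point with $\alpha(s_\ast)=0$.

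Next I would study $\lambda(s)=\sqrt{-\alpha(s)}$ on $(0,s_\ast)$: since $\alpha$ is strictly increasing, $\lambda$ is continuous and strictly decreasing there. As $\alpha$ is monotone and bounded below, the limit $\alpha(0^+):=\lim_{s\to0^+}\alpha(s)$ exists, and by \eqref{s1} and \eqref{s2} it satisfies $-2g\rj|\xi|/\rho_-\le\alpha(0^+)\le-C_0<0$; hence $\lambda(0^+):=\lim_{s\to0^+}\lambda(s)$ exists with $0<\sqrt{C_0}\le\lambda(0^+)\le\sqrt{2g\rj|\xi|/\rho_-}$. At the upper endpoint, $\alpha(s_\ast)=0$ forces $\lambda(s)\to0^+$ as $s\to s_\ast^-$.

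Finally I would set $F(s):=\lambda(s)-s$ on $(0,s_\ast)$. This is continuous, with $\lim_{s\to0^+}F(s)=\lambda(0^+)>0$ and $\lim_{s\to s_\ast^-}F(s)=-s_\ast<0$, so the intermediate value theorem produces $s\in(0,s_\ast)$ with $F(s)=0$, i.e. $s=\lambda(|\xi|;s)=\sqrt{-\alpha(|\xi|;s)}>0$. Uniqueness is then immediate: $F$ is the difference of the strictly decreasing function $\lambda$ and the strictly increasing function $s\mapsto s$, hence strictly decreasing, so it vanishes at most once. The step requiring the most care is the boundary analysis of $\lambda$ — in particular verifying that $\lambda(0^+)$ is a genuine finite positive number (which is exactly what the lower bound \eqref{s2} provides, preventing $\lambda(0^+)=+\infty$) and that $\lambda(s_\ast^-)=0$; everything else is a soft argument once the structure of $\mathcal{S}$ coming from Lemma \ref{alpha_s} is in hand.
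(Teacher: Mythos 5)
Your proof is correct and takes essentially the same approach as the paper: identify $\mathcal{S}=(0,s_\ast)$, exploit the monotonicity of $\lambda$ coming from Lemma \ref{alpha_s}, and apply the intermediate value theorem plus strict monotonicity to produce a unique fixed point. The only cosmetic difference is that the paper packages the argument via the strictly increasing ratio $\Phi(s)=s/\lambda(|\xi|;s)$ with $\Phi(0^+)=0$ and $\Phi(s_\ast^-)=+\infty$, whereas you use the strictly decreasing difference $F(s)=\lambda(s)-s$; your version makes explicit the boundedness of $\lambda(0^+)$ (via \eqref{s2}), which the paper uses implicitly.
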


\begin{proof}
By Lemma \ref{alpha_s}, there exists $s_\ast>0$ such that
\begin{equation}
\mathcal{S}=\alpha^{-1}((-\infty,0))=(0,s_\ast).
\end{equation}
We define $\lambda=\sqrt{-\alpha}$ on $\mathcal{S}$ and define the function $\Phi:\ (0,s_\ast)\rightarrow(0,\infty)$ by
\begin{equation}
\Phi(s)=s/\lambda(|\xi|;s),
\end{equation}
which is continuous and strictly increasing in $s$. Moreover, $\lim_{s\rightarrow0}\Phi(s)=0$ and $\lim_{s\rightarrow s_\ast}\Phi(s)=+\infty$. Hence there is unique $s\in (0,s_\ast)$ so that $\Phi(s)=1$, which gives \eqref{fixedpoint}.
\end{proof}

Notice that in Lemma \ref{fix}, for any fixed $|\xi|\in(0,|\xi|_{c})$ the fixed point $s=s(|\xi|)\in\mathcal{S}$ is unique, we may write uniquely $\lambda(|\xi|)$ within $(0,|\xi|_{c})$, while the corresponding solution to \eqref{linear4} is written by $\psi_{|\xi|}$. We have the following behavior of $\lambda(|\xi|)$.

\begin{Proposition}\label{lambdabehavior}
The function $\lambda:(0,|\xi|_c)\to (0,\infty)$ satisfies the bound
\begin{equation}\label{bound}
\lambda(|\xi|)\le \frac{b g \rj }{4\mu_-}.
 \end{equation}
 Moreover,
 \begin{equation}
 \lim_{|\xi|\rightarrow0}\lambda(|\xi|)=0\hbox{ for }\sigma_\pm\ge0,\hbox{ and }
 \lim_{|\xi|\rightarrow|\xi|_c}\lambda(|\xi|)=0\hbox{ if }\sigma_->0.
 \end{equation}
\end{Proposition}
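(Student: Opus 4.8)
The plan is to establish the bound \eqref{bound} by testing the variational characterization \eqref{min} with a well-chosen trial function, and to establish the two limits by carefully tracking the competition between the destabilizing surface term $-g\rj|\xi|^2|\psi(0)|^2$ and the stabilizing viscous and surface-tension terms in $E(\psi;|\xi|,s)$. For the bound, recall that at the fixed point we have $s = \lambda = \lambda(|\xi|)$ and $-\lambda^2 = \alpha(|\xi|;s) = E(\psi_{|\xi|};|\xi|,s)$ with $\psi_{|\xi|} \in \mathcal C$, i.e. $J(\psi_{|\xi|};|\xi|)=1$. From \eqref{s2} in Lemma \ref{alpha_s} we already have $\alpha(s) \ge -\frac{2g\rj}{\rho_-}|\xi| + sC_2$; since $\alpha(s) = -s^2$ at the fixed point, this reads $-s^2 \ge -\frac{2g\rj}{\rho_-}|\xi| + sC_2 \ge -\frac{2g\rj}{\rho_-}|\xi|$. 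This alone does not immediately give the clean bound $bg\rj/(4\mu_-)$, so instead I would run the estimate more carefully: keep the viscous term $E_1$ and bound it below using only the lower-layer contribution. Using $\psi_-(-b)=\psi_-'(-b)=0$, a Poincaré-type inequality on $(-b,0)$ gives $\int_{-b}^0 \mu_- |\psi'|^2 \gtrsim \frac{1}{b}\int_{-b}^0 \mu_-|\psi|^2$ and moreover controls $|\psi(0)|^2$ in terms of $b\int_{-b}^0|\psi'|^2$ via the fundamental theorem of calculus and Cauchy-Schwarz. Combining the bound $-\frac{|\xi|^2 g\rj|\psi(0)|^2}{2} \ge -\frac{|\xi|g\rj}{\rho_-}\big(\int_{-b}^0\rho_-|\xi|^2|\psi|^2\big)^{1/2}\big(\int_{-b}^0\rho_-|\psi'|^2\big)^{1/2}$ from \eqref{kkk} with Young's inequality applied so as to absorb one factor into $s E_1$, and then using $-s^2 = E(\psi_{|\xi|})$ together with $J=1$, should produce $s^2 \le \frac{(g\rj)^2 b}{16\mu_-^2}$ after optimizing constants; that is $\lambda = s \le \frac{bg\rj}{4\mu_-}$. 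The main obstacle here is bookkeeping the constants so that they collapse exactly to $bg\rj/(4\mu_-)$ rather than merely a universal multiple thereof; this forces the specific choice of how to split the viscous integral between the two layers and which factor to send through Young's inequality.

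For the limit $\lim_{|\xi|\to 0}\lambda(|\xi|) = 0$ (valid for all $\sigma_\pm \ge 0$): since $\lambda(|\xi|)^2 = -\alpha(|\xi|;s(|\xi|)) = -E(\psi_{|\xi|};|\xi|,s)$ and every term of $E$ other than $-\frac12 g\rj|\xi|^2|\psi(0)|^2$ is nonnegative, we get $\lambda(|\xi|)^2 \le \frac12 g\rj |\xi|^2|\psi_{|\xi|}(0)|^2$. It then suffices to show $|\xi|^2|\psi_{|\xi|}(0)|^2$ stays bounded, or better, is $o(1)$. Using $\psi_-(-b)=0$ and Cauchy-Schwarz on $(-b,0)$ as in \eqref{kkk}, $|\psi_{|\xi|}(0)|^2 \le 2\big(\int_{-b}^0|\psi'|^2\big)^{1/2}\big(\int_{-b}^0|\psi|^2\big)^{1/2} \le C(b)\, J(\psi_{|\xi|};|\xi|)/\rho_- \le C(b)$ when $|\xi|\le 1$, since the constraint $J=1$ bounds $\int\rho|\psi'|^2 \le 2$ and $\int \rho|\psi|^2 \le 2/|\xi|^2$ — but wait, the $|\xi|^{-2}$ blows up, so I must instead bound $|\xi|^2|\psi(0)|^2 \le 2|\xi|^2\big(\int|\psi'|^2\big)^{1/2}\big(\int|\psi|^2\big)^{1/2}$ and note $|\xi|^2\big(\int_{-b}^0|\psi|^2\big) = \int_{-b}^0 |\xi|^2|\psi|^2 \le 2/\rho_-$ from the constraint, while $\int_{-b}^0|\psi'|^2 \le 2/\rho_-$; hence $|\xi|^2|\psi(0)|^2 \le C|\xi|$. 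Therefore $\lambda(|\xi|)^2 \le C|\xi| \to 0$ as $|\xi|\to 0$, giving the claim. (This is essentially sharpening \eqref{s2}.)

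For the limit $\lim_{|\xi|\to|\xi|_c}\lambda(|\xi|)=0$ when $\sigma_->0$: here $|\xi|_c = \sqrt{g\rj/\sigma_-}$ is finite, and at $|\xi| = |\xi|_c$ the coefficient $\sigma_-|\xi|^2 - g\rj$ of $\frac12|\xi|^2|\psi(0)|^2$ in $E_0$ vanishes, so $E_0(\psi;|\xi|) \ge 0$ for $|\xi|\ge|\xi|_c$ and $E_0(\psi;|\xi|) \to (\text{nonneg})$ as $|\xi|\uparrow|\xi|_c$. The plan is: pick any fixed admissible $\bar\psi \in \mathcal C$ (rescaled so $J(\bar\psi;|\xi|)=1$; note $J$ depends on $|\xi|$, so one should fix a $|\xi|$-independent profile and renormalize, which only changes things continuously). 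Then $-\lambda(|\xi|)^2 = \alpha(|\xi|;s) \le \alpha(|\xi|;s_0)$ for an upper bound on $s$ — actually more directly, since $\alpha(|\xi|;s)$ is increasing in $s$ and $s = \lambda(|\xi|) \le bg\rj/(4\mu_-) =: M$ by the bound just proved, we have $-\lambda(|\xi|)^2 = \alpha(|\xi|;\lambda(|\xi|)) \ge \alpha(|\xi|; 0^+)$ in the wrong direction; instead use: $0 > \alpha(|\xi|;s) = \inf_{\psi\in\mathcal C} E(\psi;|\xi|,s) \ge \inf_{\psi\in\mathcal C} E_0(\psi;|\xi|) \ge \inf_{\psi\in\mathcal C}\big[\tfrac12|\xi|^2(\sigma_-|\xi|^2 - g\rj)|\psi(0)|^2\big]$, and by the Cauchy-Schwarz bound above $|\psi(0)|^2 \le C(b)$ on $\mathcal C$ for $|\xi|$ near $|\xi|_c$, so $\alpha(|\xi|;s) \ge -\tfrac12 C |\xi|_c^2 (g\rj - \sigma_-|\xi|^2)$, whence $\lambda(|\xi|)^2 = -\alpha \le \tfrac12 C|\xi|_c^2(g\rj - \sigma_-|\xi|^2) \to 0$ as $|\xi|\to|\xi|_c$. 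The subtlety here is that one must drop the positive viscous term $sE_1$ in passing to the lower bound on $\alpha$ — which is legitimate since $s>0$ and $E_1 \ge 0$ — and then the surface-tension term alone vanishes at $|\xi|_c$; the only care needed is the uniform (in $|\xi|$, near $|\xi|_c$) bound $|\psi(0)|^2 \le C(b)$ on the constraint set, which follows from $\psi(-b)=0$ plus $J(\psi;|\xi|) = 1$ and $|\xi|_c$ bounded away from $0$. I expect the hardest bookkeeping to again be the exact constant in \eqref{bound}; the two limits are comparatively soft, requiring only that the destabilizing term is the sole possibly-negative contribution and that it is controlled by $|\xi|$ (near $0$) or by $g\rj - \sigma_-|\xi|^2$ (near $|\xi|_c$).
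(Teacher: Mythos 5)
Your treatment of the two limit statements is essentially correct and matches the paper's argument: for $|\xi|\to 0$ you re-derive the content of \eqref{s2}, and for $|\xi|\to|\xi|_c$ you drop the nonnegative terms of $E$ and use \eqref{kkk} together with $J=1$ to get $|\psi_{|\xi|}(0)|^2\le 4/(\rho_-|\xi|)$, giving $\lambda^2\lesssim|\xi|(g\rj-\sigma_-|\xi|^2)\to 0$. That part is fine.

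The bound \eqref{bound} is where your proposal goes astray. You correctly identify the two key ingredients — the fundamental-theorem-of-calculus estimate $|\psi_{|\xi|}(0)|^2\le b\int_{-b}^0|\psi_{|\xi|}'|^2$ and the lower-layer piece of $E_1$ — but you then plan to bound $-s^2=E(\psi_{|\xi|})$ by combining \eqref{kkk} with Young's inequality and the constraint $J=1$. That route does not produce the stated bound. If you drop $sE_1$ and bound $\lambda^2\le -E_0\le \tfrac{1}{2}|\xi|^2 g\rj|\psi(0)|^2$, the constraint only gives $|\psi(0)|^2\le 4/(\rho_-|\xi|)$, leaving $\lambda^2\lesssim|\xi|$, which is unbounded when $\sigma_-=0$ (so $|\xi|_c=\infty$) and in any case does not involve $b$ or $\mu_-$. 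If instead you absorb part of the Young split into $sE_1$ and choose the Young parameter to match $s$, you end up with a cubic relation $s^3\lesssim(g\rj)^2/(\mu_-\rho_-)$ with no $b$ in it — the wrong functional form. The constraint $J=1$ has no role here, and neither does Young's inequality. The point you are missing is that one should bound $\lambda$, not $\lambda^2$: from $E<0$ one gets
\begin{equation}
\lambda\, E_1(\psi_{|\xi|})<-E_0(\psi_{|\xi|})\le\frac{|\xi|^2 g\rj\,|\psi_{|\xi|}(0)|^2}{2},
\end{equation}
and then the fundamental theorem of calculus gives $|\psi_{|\xi|}(0)|^2\le b\int_{-b}^0|\psi_{|\xi|}'|^2$, while $E_1\ge 2\mu_-|\xi|^2\int_{-b}^0|\psi_{|\xi|}'|^2$. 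Substituting, the factors $|\xi|^2\int_{-b}^0|\psi_{|\xi|}'|^2$ cancel on both sides and you get $\lambda E_1<\frac{bg\rj}{4\mu_-}E_1$, hence $\lambda<\frac{bg\rj}{4\mu_-}$ directly. This is the paper's argument; it requires neither the constraint $J=1$ nor any Young's inequality, and the exact constant falls out of the cancellation of $E_1$ rather than from an optimization.
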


\begin{proof}
 We shall use the fact that $E(\psi_{|\xi|};|\xi|,\lambda(|\xi|))=-\lambda^2(|\xi|)<0$. First, we have
\begin{equation}\label{lkk}
 \lambda(|\xi|)E_1(\psi_{|\xi|})<  \frac{|\xi|^2g\rj |\psi_{|\xi|}(0)|^2}{2},
\end{equation}
where $E_1(\psi_{|\xi|}):=E_1(\psi_{|\xi|};|\xi|,\lambda(|\xi|))$. Since $\psi_{|\xi|}(-b)=0$, we have
\begin{equation}
\psi_{|\xi|}(0)=\int_{-b}^0 \psi_{|\xi|}' \,dx_3 \le \sqrt{b} \left(\int_{-b}^0 |\psi_{|\xi|}'|^2\,dx_3\right)^{1/2}
=\frac{\sqrt{b}}{\sqrt{2\mu_-|\xi|^2}}\left(\int_{-b}^0
2\mu_-|\xi|^2|\psi_{|\xi|}'|^2\,dx_3\right)^{1/2} ,
\end{equation}
which implies that
\begin{equation}
 \frac{|\xi|^2 g\rj |\psi_{|\xi|}(0)|^2}{2}\le  \frac{b g\rj }{4\mu_-}E_1(\psi_{|\xi|}).
\end{equation}
This together with \eqref{lkk} implies \eqref{bound}.

Next, we derive the limit behaviors of $\lambda(|\xi|)$. First, by \eqref{s2}, we have
\begin{equation}
0\le \lambda^2(|\xi|)\le \frac{2 g\rj }{\rho_-}|\xi|,
\end{equation}
which implies that $\lim_{|\xi|\rightarrow0}\lambda(|\xi|)=0$ for $\sigma_\pm\ge0$. On the other hand, by \eqref{kkk}, we have
\begin{equation}
  |\psi_{|\xi|}(0)|^2\le \frac{4  }{\rho_-|\xi| }.
 \end{equation}
Hence, we obtain
\begin{equation}
\lambda^2(|\xi|)\le \frac{|\xi|^2(g\rj -\sigma_-|\xi|^2)}{2} { |\psi_{|\xi|}(0)|^2}  \le \frac{2|\xi|(g\rj -\sigma_-|\xi|^2)}{\rho_-} ,
 \end{equation}
which implies that $\lim_{|\xi|\rightarrow|\xi|_c}\lambda(|\xi|)=0$ for $\sigma_->0$.
\end{proof}

By Proposition \ref{lambdabehavior},  we can then define
\begin{equation} \label{Lambda}
0<\Lambda:= \sup_{0<|\xi|< |\xi|_c} {\lambda(|\xi|)}<\infty .
\end{equation}
For $\sigma_->0$, there is only a finite number of spatial frequencies $\xi\in (L_1^{-1}\mathbb{Z})\times (L_2^{-1}\mathbb{Z})$ satisfying $|\xi|<|\xi|_c$, so the the largest growth rate $\Lambda$  must be achieved when $0<\sigma_- <\sigma_c$. For $\sigma_-=0$ it is not clear whether $\Lambda$ is achieved.  However, we can achieve a growth rate that is arbitrarily close to $\Lambda$, and so in particular $\Lambda_\ast$ is achieved, where
\begin{equation} \label{littlelambda}
0<\Lambda/2<\Lambda_\ast\le \Lambda.
\end{equation}

We may now construct a growing mode solution to the linearized problem \eqref{linear}.
\begin{theorem}\label{growingmode}
Let $\Lambda$ be defined by \eqref{Lambda} and $\Lambda_\ast$ be defined by \eqref{littlelambda}.  Then the following hold.
\begin{enumerate}
 \item Let $0< \sigma_- < \sigma_c$.  Then there is a growing mode solution to \eqref{linear} so that
\begin{equation}\label{gro1}
\|\eta(t)\|_{H^k}=e^{\Lambda t}\|\eta(0)\|_{H^k},\quad \|u(t)\|_{H^k}=e^{\Lambda t}\|u(0)\|_{H^k},\hbox{ for any }k\ge0.
\end{equation}

 \item  Let $\sigma_-=0$. Then there is a growing mode solution to \eqref{linear} so that
\begin{equation}\label{gro2}
\|\eta(t)\|_{k}=e^{\Lambda_\ast t}\|\eta(0)\|_{k},\quad \|u(t)\|_{k}=e^{\Lambda_\ast t}\|u(0)\|_{k},\hbox{ for any }k\ge0.
\end{equation}
\end{enumerate}
\end{theorem}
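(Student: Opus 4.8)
The plan is to convert the already-constructed frequency-by-frequency data into a genuine space-time solution of the full linearized system \eqref{linear}, and then to choose the frequency (or a sequence of frequencies approaching the supremum) that realizes the growth rate $\Lambda$ (resp. $\Lambda_\ast$). For part (1), since $\sigma_->0$ there are only finitely many admissible lattice frequencies $\xi\in(L_1^{-1}\mathbb{Z})\times(L_2^{-1}\mathbb{Z})$ with $0<|\xi|<|\xi|_c$, so the supremum in \eqref{Lambda} is attained at some $\xi^\ast$ with $\lambda(|\xi^\ast|)=\Lambda$. By Lemma \ref{fix} and the discussion following it, for this $|\xi^\ast|$ we have a fixed point $s=\Lambda$ and a minimizer $\psi_{|\xi^\ast|}\in {}_0H^2((-b,1))$ solving \eqref{linear4} with $\lambda=\Lambda$; it is smooth on each of $(-b,0)$ and $(0,1)$ by the bootstrap remark. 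First I would recover $\pi$ from \eqref{pi_solve} and then recover $\varphi,\theta$ via the rotation argument described in the text (pick $R_{\xi^\ast}$ with $R_{\xi^\ast}\xi^\ast=(|\xi^\ast|,0)$, solve $|\xi^\ast|\varphi_{|\xi^\ast|}+\psi_{|\xi^\ast|}'=0$ with $\theta=0$, then rotate back), thereby obtaining a full solution $(\varphi,\theta,\psi,\pi)$ of the ODE system \eqref{linear3} at $\lambda=\Lambda$. Reassembling via \eqref{ansatz2} gives $(w,\tilde\pi)$ solving \eqref{linear2}, and multiplying by $e^{\Lambda t}$ as in \eqref{ansatz} with $\zeta=\Lambda^{-1}w_3|_\Sigma$ gives a solution $(u,p,\eta)$ of \eqref{linear}. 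Since every time dependence is the scalar factor $e^{\Lambda t}$ and the spatial profiles $u(0),\eta(0)$ lie in every $H^k$ (smoothness on each subdomain plus the single Fourier mode in $x'$), the identities \eqref{gro1} follow immediately by pulling the scalar $e^{\Lambda t}$ out of each norm.

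For part (2), $\sigma_-=0$ and it is not clear the supremum in \eqref{Lambda} is attained; but by the definition of $\Lambda$ as a supremum over $|\xi|\in(0,|\xi|_c)=(0,\infty)$ and by continuity of $\lambda(\cdot)$ together with the limit behavior $\lim_{|\xi|\to0}\lambda(|\xi|)=0$ from Proposition \ref{lambdabehavior}, we may pick a single admissible lattice frequency $\xi^\ast$ with $\lambda(|\xi^\ast|)=:\Lambda_\ast$ satisfying $\Lambda/2<\Lambda_\ast\le\Lambda$, which is the content of \eqref{littlelambda}. (Here one uses that the lattice $(L_1^{-1}\mathbb{Z})\times(L_2^{-1}\mathbb{Z})$ is dense enough at scale $1$ relative to where $\lambda$ is near its sup; more precisely, $\lambda$ is bounded by \eqref{bound} and $\to0$ at the endpoints, so its sup is a genuine max over $(0,\infty)$ if that point is a lattice value, and otherwise one chooses a nearby lattice frequency, at which $\lambda$ is still $>\Lambda/2$ by continuity.) Then the identical construction as in part (1), now with $s=\Lambda_\ast$ and the corresponding minimizer $\psi_{|\xi^\ast|}$, produces a growing mode with time factor $e^{\Lambda_\ast t}$, yielding \eqref{gro2}.

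The only genuinely nontrivial point, and the one I would be most careful about, is verifying that the assembled $(u,p,\eta)$ actually satisfies \eqref{linear} — not merely the reduced fourth-order problem \eqref{linear4} — with all interface and boundary conditions intact, including $\llbracket u\rrbracket=0$ and the jump conditions for the stress. This requires checking that the elimination steps leading from \eqref{linear2} to \eqref{linear3} and then to \eqref{linear4} are reversible: the passage to \eqref{linear3} is a straightforward substitution (it is an equivalence), while recovering $\pi$ from \eqref{pi_solve} and $\varphi,\theta$ from the divergence constraint requires that the tangential momentum equations and tangential stress conditions, which were combined to eliminate $\pi$, are recovered correctly — this is where the rotation invariance of \eqref{linear3} does the work, reducing everything to the scalar relation $|\xi|\varphi+\psi'=0$ when $\xi_2=0$. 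Regularity across $\Sigma_-$ is not an issue for the linear solution since each subdomain profile is smooth and the transmission conditions $\llbracket\psi\rrbracket=\llbracket\psi'\rrbracket=0$ etc. are built into \eqref{linear4}. Everything else — extracting $e^{\Lambda t}$ (resp. $e^{\Lambda_\ast t}$) from the norms, and membership in $H^k$ for all $k$ — is routine. I would present the argument for part (1) in full and then remark that part (2) is identical after the frequency selection.
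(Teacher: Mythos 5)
Your proposal is correct and follows the same essential route as the paper's terse proof: select a frequency $|\xi^\ast|$ realizing $\Lambda$ (resp.\ $\Lambda_\ast$), take the corresponding minimizer $\psi_{|\xi^\ast|}$ solving \eqref{linear4}, recover $(\varphi,\theta,\pi)$ via \eqref{pi_solve} and the rotation recipe, assemble $(u,p,\eta)$ through \eqref{ansatz}--\eqref{ansatz2} and $\eta=\lambda^{-1}u_3|_\Sigma$, and read off the norm identities from the scalar time factor and the smoothness of each Fourier-mode profile on $\Omega_\pm$. Your discussion of the reversibility of the chain \eqref{linear3} $\to$ \eqref{linear4} is a worthwhile and correct expansion of the paper's one-line statement that a solution to \eqref{linear3} is defined ``as described immediately after \eqref{linear4}.''

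One small remark concerns the frequency selection in part (2). You invoke continuity of $\lambda(\cdot)$ plus density of lattice moduli, which is both unnecessary and slightly shaky: the gaps between consecutive lattice moduli are bounded away from zero, and continuity of $\lambda$ in $|\xi|$ is never actually established in the paper. In the paper's convention the supremum in \eqref{Lambda} is effectively taken over the admissible lattice frequencies $\xi\in(L_1^{-1}\mathbb{Z})\times(L_2^{-1}\mathbb{Z})$ with $0<|\xi|<|\xi|_c$; this is what makes the finiteness argument for $\sigma_->0$ work and what the remark ``we can achieve a growth rate that is arbitrarily close to $\Lambda$'' means for $\sigma_-=0$. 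With that reading, the existence of a lattice frequency $\xi^\ast$ with $\Lambda/2<\lambda(|\xi^\ast|)=\Lambda_\ast\le\Lambda$ is immediate from the definition of supremum, with no density or continuity input. Your conclusion is the same; this is simply the cleaner justification.
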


\begin{proof}
Let $|\xi|>0$ be so that $\lambda(|\xi|)=\Lambda$ for $\sigma_->0$ or $\lambda(|\xi|)=\Lambda_\ast$ for $\sigma_-=0$.  Let $\psi_{|\xi|}$ be the corresponding solution to \eqref{linear4} with $\lambda(|\xi|)$, which is constructed above by the minimization problem \eqref{min}. We then define a solution to \eqref{linear3} as described immediately after \eqref{linear4}, which then allows us to define $u$, $p$, and $\eta$ according to \eqref{ansatz}, \eqref{ansatz2}, and  ${\eta}=\lambda^{-1}u_3|_{\Sigma}$. Then we have that $u\in {}_0H_\sigma^1(\Omega)\cap \ddot{H}^k(\Omega)$, $p \in \ddot{H}^k(\Omega)$, and $\eta\in H^k(\Sigma)$  for any $k \ge 0$ and $(u,p,\eta)$ solve the linearized problem \eqref{linear}.  Moreover, $u,\eta$ satisfy \eqref{gro1} or \eqref{gro2}.
\end{proof}

\subsection{Sharp growth rate}

In this subsection, we will show that $\Lambda$ defined by \eqref{Lambda} is the sharp growth rate of arbitrary solutions to the linearized problem \eqref{linear}.  Since the spectrum of the linear operator is  complicated, it is hard to obtain the largest growth rate of the solution operator in ``$L^2$'' in the usual way. Instead, motivated by \cite{3GT2,JT}, we use careful energy estimates to show that $e^{\Lambda t}$ is the sharp growth rate in a slightly weaker sense (cf. \eqref{l21}).

First, we have the following energy identity.
\begin{lemma}\label{identity}
Let  $(u,p,\eta)$ solve \eqref{linear}, then
\begin{equation}\label{energyidentity}
\begin{split} \frac{1}{2} &\frac{d}{dt}
\left(\int_\Omega\rho |\partial_t u|^2+\int_{\Sigma_+}
\sigma_+|\nabla_\ast u_3|^2+\rho_+g|u_3|^2+\int_{\Sigma_-}
\sigma_-|\nabla_\ast
u_3|^2-\rj g|u_3|^2\right)
\\&+\frac{1}{2}\int_\Omega\mu
|\mathbb{D}\partial_t u|^2=0.
\end{split}
\end{equation}
\end{lemma}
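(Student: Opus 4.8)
The plan is to differentiate the linearized system \eqref{linear} in time, test the resulting momentum equation against $\partial_t u$, and integrate by parts, tracking the boundary contributions carefully so that they assemble into the time derivative of the quadratic surface terms appearing in \eqref{energyidentity}. Since the coefficients $\rho$, $\mu$, $\sigma_\pm$, $g$, $\rj$ are all time-independent, the time derivative $(\partial_t u, \partial_t p, \partial_t \eta)$ solves exactly the same linear system \eqref{linear} (with the kinematic condition now reading $\partial_t(\partial_t \eta) = \partial_t u_3$ on $\Sigma$). So it suffices to work with $(w,q,\zeta) := (\partial_t u, \partial_t p, \partial_t \eta)$ and show
\begin{equation*}
\frac{1}{2}\frac{d}{dt}\left( \int_\Omega \rho|w|^2 + \int_{\Sigma_+} \sigma_+|\na u_3|^2 + \rho_+ g|u_3|^2 + \int_{\Sigma_-}\sigma_-|\na u_3|^2 - \rj g|u_3|^2\right) + \frac12 \int_\Omega \mu |\mathbb{D}w|^2 = 0,
\end{equation*}
noting that the surface terms still carry $u_3$, not $w_3$, which is the reason they differentiate to the half of $\zeta$-type terms below.

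The key computation is: multiply $\rho \partial_t w - \mu \Delta w + \nabla q = 0$ by $w$ and integrate over $\Omega = \Omega_+ \cup \Omega_-$. The first term gives $\frac{d}{dt}\frac12\int_\Omega \rho|w|^2$. For the viscous and pressure terms I would use the standard identity (valid piecewise on $\Omega_\pm$, then summed): for divergence-free $w$ with the stress tensor $S(q,w) = qI - \mu\mathbb{D}(w)$,
\begin{equation*}
\int_{\Omega_\pm} (-\mu \Delta w + \nabla q)\cdot w = \frac12 \int_{\Omega_\pm} \mu |\mathbb{D}(w)|^2 - \int_{\partial\Omega_\pm} (S(q,w) n)\cdot w.
\end{equation*}
Summing over $\pm$, the interior terms give $\frac12\int_\Omega \mu|\mathbb{D}w|^2$, while the boundary integrals localize to $\Sigma_+$ (with $n = e_3$), $\Sigma_-$ (where the two one-sided traces combine into a jump, using $\Lbrack w \Rbrack = 0$), and $\Sigma_b$ (which vanishes since $w = \partial_t u = 0$ there by $u_- = 0$ on $\Sigma_b$). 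Here I must use that $(w,q,\zeta)$ solves the linear system, so $S(q_+,w_+)e_3 = (\rho_+ g \zeta_+ - \sigma_+ \Delta_\ast \zeta_+)e_3$ on $\Sigma_+$ and $\Lbrack S(q,w)\Rbrack e_3 = (\rj g \zeta_- + \sigma_- \Delta_\ast \zeta_-)e_3$ on $\Sigma_-$, together with $\zeta = w_3|_\Sigma$ (the differentiated kinematic condition is $\partial_t \zeta = w_3$, but the boundary conditions directly involve $\zeta = \partial_t \eta$, and $\partial_t \eta = u_3$ gives $\zeta = u_3|_\Sigma$).

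So on $\Sigma_+$ the boundary term is $-\int_{\Sigma_+}(\rho_+ g u_3 - \sigma_+ \Delta_\ast u_3) w_3$, and since $w_3 = \partial_t u_3$ this is $-\frac{d}{dt}\frac12\int_{\Sigma_+}(\rho_+ g |u_3|^2 + \sigma_+ |\na u_3|^2)$ after integrating the surface tension term by parts on the torus $\Sigma_+$ (no boundary terms, since $\mathrm{T}^2$ is closed). Similarly on $\Sigma_-$ the jump boundary term contributes $+\frac{d}{dt}\frac12\int_{\Sigma_-}(\rj g|u_3|^2 - \sigma_-|\na u_3|^2)$, i.e. $-\rj g$ and $+\sigma_-$ with the signs matching \eqref{energyidentity}; I will need to be careful that the sign of the normal $n_-$ and the orientation of the jump $\Lbrack \cdot \Rbrack$ line up so that the $-\rj g|u_3|^2$ term (the destabilizing one, since $\rj > 0$) lands with the stated sign. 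Collecting everything yields the identity. The main obstacle, and the only place requiring genuine care rather than bookkeeping, is getting all the signs correct in the integration by parts across the internal interface $\Sigma_-$: the normal convention, the direction of the jump bracket, and the sign of the curvature/surface-tension contribution must be reconciled with the conventions fixed in \eqref{linear}. Everything else — the divergence-free identity, the vanishing on $\Sigma_b$, the periodic integration by parts of the $\Delta_\ast$ terms — is routine.
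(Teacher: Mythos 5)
Your approach is exactly the paper's (which dispatches this lemma in one sentence): differentiate \eqref{linear} in time, multiply by $\partial_t u$, integrate by parts, and use the remaining conditions in \eqref{linear}. The observation that the differentiated boundary conditions involve $\partial_t\eta = u_3|_\Sigma$, which is what makes the surface terms carry $u_3$ rather than $\partial_t u_3$, is the right key step.

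However, the integration-by-parts identity you wrote down carries a sign error that propagates to the wrong sign on the surface terms. With the paper's convention $S(q,w)=qI-\mu\mathbb{D}(w)$ (so $\diverge S=\nabla q-\mu\Delta w$ when $\diverge w=0$) and $n$ the outward unit normal of $\Omega_\pm$, the correct identity is
\begin{equation*}
\int_{\Omega_\pm}(-\mu\Delta w+\nabla q)\cdot w
=\int_{\Omega_\pm}\diverge S(q,w)\cdot w
=\frac{1}{2}\int_{\Omega_\pm}\mu|\mathbb{D}(w)|^2
\;+\;\int_{\partial\Omega_\pm}\bigl(S(q,w)n\bigr)\cdot w,
\end{equation*}
i.e.\ with a plus sign on the boundary term, because $-S:\nabla w=-q\diverge w+\mu\mathbb{D}(w):\nabla w=\tfrac{\mu}{2}|\mathbb{D}w|^2$. (Your minus sign would be correct for the Cauchy stress convention $-qI+\mu\mathbb{D}$, but not for the $S$ used in \eqref{linear}.) With the correct sign, the $\Sigma_+$ contribution is $+\int_{\Sigma_+}(\rho_+g u_3-\sigma_+\Delta_\ast u_3)\partial_t u_3 = +\frac{1}{2}\frac{d}{dt}\int_{\Sigma_+}(\rho_+g|u_3|^2+\sigma_+|\na u_3|^2)$, matching the statement, whereas your version gives the opposite sign. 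Relatedly, your $\Sigma_-$ line is internally inconsistent: you display $+\frac{1}{2}\frac{d}{dt}\int_{\Sigma_-}(\rj g|u_3|^2-\sigma_-|\na u_3|^2)$ but then assert it amounts to $-\rj g$ and $+\sigma_-$; with the corrected IBP sign and the normal/jump convention ($n=-e_3$ from $\Omega_+$, $n=e_3$ from $\Omega_-$, $\Lbrack f\Rbrack = f_+-f_-$) the $\Sigma_-$ contribution is $-\int_{\Sigma_-}\Lbrack S\Rbrack e_3\cdot w = +\frac{1}{2}\frac{d}{dt}\int_{\Sigma_-}(-\rj g|u_3|^2+\sigma_-|\na u_3|^2)$, as in \eqref{energyidentity}. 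Once the IBP sign is fixed everything falls into place; the rest of the argument is sound.
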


\begin{proof}
We differentiate $\eqref{linear}_1$ in time, multiply the resulting equation by $\partial_{t} u$ and  then integrate by parts over $\Omega$. By using the other conditions in $\eqref{linear}$, we obtain \eqref{energyidentity}.
\end{proof}

The next result allows us to estimate the energy in terms of $\Lambda$.

\begin{lemma} \label{varineq}
Let  $u\in{}_0H_\sigma^1(\Omega)$ with $\sigma\nabla_\ast u_3\in L^2(\Sigma)$, then we have the inequality
\begin{equation}
\begin{split}&\frac{1}{2} \int_{\Sigma_+}
\sigma_+|\nabla_\ast
u_3|^2+\rho_+g|u_3|^2+\frac{1}{2}\int_{\Sigma_-}
\sigma_-|\nabla_\ast u_3|^2-\rj g|u_3|^2
\\ &\quad\ge-
\frac{\Lambda^2}{2}\int_\Omega\rho
|u|^2-\frac{\Lambda}{4}\int_\Omega\mu |\mathbb{D}u|^2.\label{i0}
\end{split}
\end{equation}
\end{lemma}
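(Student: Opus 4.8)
The plan is to diagonalize \eqref{i0} over horizontal Fourier modes and, on each mode, to recognize the reduced one-dimensional inequality as a consequence of the variational characterization of $\alpha(|\xi|;\cdot)$ in \eqref{min}, together with the monotonicity of Lemma \ref{alpha_s} and the fixed-point identity of Lemma \ref{fix}.

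First I would expand $u$ in a Fourier series in $x'\in\mathrm{T}^2$, writing $u(x)=\sum_\xi\hat{u}(\xi,x_3)\mathrm{e}^{ix'\cdot\xi}$ with $\xi\in(L_1^{-1}\mathbb{Z})\times(L_2^{-1}\mathbb{Z})$. Since $u\in{}_0H^1_\sigma(\Omega)$, each $\hat{u}(\xi,\cdot)$ lies in $H^1((-b,1))$ with $\hat{u}(\xi,-b)=0$, and the divergence-free relation $i\xi_1\hat{u}_1+i\xi_2\hat{u}_2+\hat{u}_3'=0$ in fact puts $\hat{u}_3(\xi,\cdot)$ in ${}_0H^2((-b,1))$. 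Averaging $\diverge u=0$ over $\mathrm{T}^2$ together with $u|_{\Sigma_b}=0$ forces $\hat{u}_3(0,\cdot)\equiv0$, so the $\xi=0$ mode contributes nothing to the left-hand side of \eqref{i0}, while its nonnegative contribution to $\int_\Omega\rho|u|^2$ and to $\int_\Omega\mu|\mathbb{D}u|^2$ only decreases the right-hand side. Since all three integrals split into orthogonal sums over $\xi$, it suffices to prove \eqref{i0} mode by mode for each $\xi\neq0$; and because $|u|^2$, $|\mathbb{D}u|^2$ and the boundary integrals depend on $\xi$ only through $|\xi|$, invoking the rotational invariance noted after \eqref{linear4} I may assume $\xi=(|\xi|,0)$, so that the constraint determines $\hat{u}_1=i\hat{u}_3'/|\xi|$ while $\hat{u}_2$ stays free.

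Next I would reduce each mode to one dimension. Write $\psi:=\hat{u}_3(\xi,\cdot)\in{}_0H^2((-b,1))$ and drop the overall factor $|\mathrm{T}^2|$. By Parseval the $\Sigma_\pm$ boundary terms of \eqref{i0} on the mode $\xi$ equal $|\xi|^{-2}E_0(\psi;|\xi|)$; the kinetic term satisfies $\int_\Omega\rho|u^\xi|^2=\int_{-b}^1\rho\big(|\xi|^{-2}|\psi'|^2+|\hat{u}_2|^2+|\psi|^2\big)\ge 2|\xi|^{-2}J(\psi;|\xi|)$; and, since $|\mathbb{D}u|^2$ is invariant under a simultaneous rotation of the spatial variables and of $u$, a direct computation of $\mathbb{D}u^\xi$ gives $\int_\Omega\mu|\mathbb{D}u^\xi|^2=\int_{-b}^1\mu\big(8|\psi'|^2+\tfrac{2}{|\xi|^2}||\xi|^2\psi+\psi''|^2+2|\xi|^2|\hat{u}_2|^2+2|\hat{u}_2'|^2\big)\ge 4|\xi|^{-2}E_1(\psi;|\xi|)$. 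As $\int_\Omega\rho|u^\xi|^2$ and $\int_\Omega\mu|\mathbb{D}u^\xi|^2$ enter \eqref{i0} with negative coefficients $-\Lambda^2/2$ and $-\Lambda/4$, these lower bounds produce an upper bound for the mode-$\xi$ right-hand side equal to $-|\xi|^{-2}\big(\Lambda^2 J(\psi;|\xi|)+\Lambda E_1(\psi;|\xi|)\big)$; cancelling $|\xi|^{-2}$, the mode-$\xi$ case of \eqref{i0} is therefore reduced to $E(\psi;|\xi|,\Lambda)\ge -\Lambda^2 J(\psi;|\xi|)$.

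Finally I would prove this scalar inequality. By the homogeneity of the minimization \eqref{min}, $E(\psi;|\xi|,\Lambda)\ge\alpha(|\xi|;\Lambda)J(\psi;|\xi|)$ for every $\psi\in{}_0H^2((-b,1))$ (the degenerate case $J(\psi;|\xi|)=0$ forcing $\psi\equiv0$, hence trivial), so it suffices to show $\alpha(|\xi|;\Lambda)\ge-\Lambda^2$. If $\sigma_->0$ and $|\xi|\ge|\xi|_c$, then $\sigma_-|\xi|^2-g\rj\ge0$, so $E_0,E_1\ge0$ and $\alpha(|\xi|;\Lambda)\ge0\ge-\Lambda^2$. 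If $0<|\xi|<|\xi|_c$, then $\alpha(|\xi|;\cdot)$ is increasing by Lemma \ref{alpha_s}(1) and $\Lambda\ge\lambda(|\xi|)$ by \eqref{Lambda}, whence $\alpha(|\xi|;\Lambda)\ge\alpha(|\xi|;\lambda(|\xi|))=-\lambda(|\xi|)^2\ge-\Lambda^2$ using the fixed-point identity of Lemma \ref{fix}. Summing over $\xi\neq0$ gives \eqref{i0}. The main obstacle is the second step: arranging the Fourier reduction so that the three reduced quadratic forms line up with $E_0$, $J$ and $E_1$ with the right proportionality constants, and noticing that the free horizontal component $\hat{u}_2$ should simply be discarded — this is precisely the statement that the normal-mode profiles are extremal, and it is what makes the sharp rate $\Lambda$ emerge. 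Everything downstream is the short variational estimate above.
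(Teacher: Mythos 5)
Your proposal is correct and follows essentially the same route as the paper: horizontal Fourier decomposition with the $\xi=0$ mode eliminated by the divergence-free and bottom boundary conditions, rotational invariance to reduce to $\xi=(|\xi|,0)$, identification of the mode-wise quantities with $E_0$, $J$, and $E_1$ via the constraint $\psi'=-|\xi|\varphi$, and finally the variational bound $E(\psi;|\xi|,\Lambda)\ge-\Lambda^2 J(\psi;|\xi|)$. The one place you add useful detail is in justifying that bound: the paper asserts it ``follows directly from the definitions,'' whereas you make explicit that it requires the monotonicity of $\alpha(|\xi|;\cdot)$ from Lemma~\ref{alpha_s}, the fixed-point identity $\alpha(|\xi|;\lambda(|\xi|))=-\lambda(|\xi|)^2$ from Lemma~\ref{fix}, together with $\lambda(|\xi|)\le\Lambda$ --- this is a genuinely necessary chain, not a triviality, so your extra care is warranted.
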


\begin{proof}
Let $\hat{f}$ be   the horizontal Fourier transform of $f$. By the Parseval theorem, we have
\begin{equation}\label{i1}
\begin{split}
& \frac{1}{2} \int_{\Sigma_+}
\sigma_+|\nabla_\ast
u_3|^2+\rho_+g|u_3|^2+\frac{1}{2}\int_{\Sigma_-}
\sigma_-|\nabla_\ast u_3|^2-\rj g|u_3|^2
\\&\quad=\frac{1}{4\pi^2}\times \sum_{\xi\in L_1^{-1}\mathbb{Z}\times
L_2^{-1}\mathbb{Z}}\left\{\frac{1}{2}  (\sigma_+|\xi|^2
+\rho_+g)|\hat{u}_3(1)|^2+ \frac{1}{2}(\sigma_-|\xi|^2
-\rj g)|\hat{u}_3(0)|^2 \right\}.
\end{split}
\end{equation}
Noticing that for $\xi=0$,
\begin{equation}
\hat{u}_3(0)=\int_{\Sigma_-}u_3=\int_{\Omega_-}{\rm div }u =0
\end{equation}
and then
\begin{equation}
\hat{u}_3(1)=\int_{\Sigma_+}u_3=\int_{\Omega_+}{\rm div }u =0,
\end{equation}
we may reduce \eqref{i1} to be
\begin{equation}\label{i2}
\begin{split} & \frac{1}{2} \int_{\Sigma_+}
\sigma_+|\nabla_\ast
u_3|^2+\rho_+g|u_3|^2+\frac{1}{2}\int_{\Sigma_-}
\sigma_-|\nabla_\ast u_3|^2-\rj g|u_3|^2
\\&\quad=\frac{1}{4\pi^2}\times \sum_{0\neq\xi\in L_1^{-1}\mathbb{Z}\times
L_2^{-1}\mathbb{Z}}\left\{\frac{1}{2}  (\sigma_+|\xi|^2
+\rho_+g)|\hat{u}_3(1)|^2+ \frac{1}{2}(\sigma_-|\xi|^2
-\rj g)|\hat{u}_3(0)|^2\right\}.
\end{split}
\end{equation}

Now for any fixed $\xi\neq 0$,  writing
\begin{equation}
\hat{u}_1(x_3)=-i\varphi(\xi,
x_3),\ \hat{u}_2(x_3)=-i\theta(\xi,x_3),\ \hat{u}_3(x_3)=\psi(\xi,
x_3)
\end{equation}
we  have $\xi_1 \varphi+\xi_2\theta+\psi'=0$. The right hand side of \eqref{i2} and the constraint $\xi_1 \varphi+\xi_2\theta+\psi'=0$ are  obviously invariant under simultaneous rotations of $\xi$ and $(\varphi,\theta)$, so without loss of generality we may assume that $\xi = (|\xi| , 0)$ with $|\xi| > 0$ and $\theta = 0$. Noting that for $|\xi|\in (0,|\xi|_c)$, by the definition of \eqref{ener}, we have
\begin{equation}\label{i3}
\begin{split}
\frac{1}{2}  (&\sigma_+|\xi|^2 +\rho_+g)|\hat{u}_3(1)|^2+
\frac{1}{2}(\sigma_-|\xi|^2 -\rj g)|\hat{u}_3(0)|^2
\\& =\frac{1}{2}  (\sigma_+|\xi|^2
+\rho_+g)|\psi(1)|^2+ \frac{1}{2}(\sigma_-|\xi|^2
-\rj g)|\psi(0)|^2
\\&  =\frac{1}{|\xi|^2}\left(
E(\psi;|\xi|,\Lambda)-\frac{1}{2}\int_{-b}^1
\Lambda\mu(4|\xi|^2|\psi'|^2+||\xi|^2\psi+\psi''|^2)\,dx_3\right)
\\& \ge \frac{1}{|\xi|^2}\left(-\Lambda^2
J(\psi;|\xi|)-\frac{\Lambda}{2}\int_{-b}^1
\mu(4|\xi|^2|\psi'|^2+||\xi|^2\psi+\psi''|^2)\,dx_3\right)
\\& = -\frac{\Lambda^2}{2}\int_{-b}^1
\rho(|\psi|^2+ | \varphi |^2)\,dx_3
-\frac{\Lambda}{2}\int_{-b}^1 \mu(4
|\psi'|^2+ ||\xi| \psi-
\varphi'|^2)\,dx_3
 .\end{split}
\end{equation}
Here in the inequality above we have used the  following variational characterization for $\Lambda$, which follows directly from the definitions \eqref{min} and \eqref{Lambda},
\begin{equation}
E(\psi;|\xi|, \Lambda)\ge -\Lambda^2
J(\psi;|\xi|),\hbox{ for any } 0<|\xi|< |\xi|_c \hbox{ and any
}\psi\in {}_0H^2((-b,1)).
\end{equation}
Note that for $|\xi|\ge |\xi|_c$ the left hand side of \eqref{i3} is nonnegative, so \eqref{i3} holds trivially, and so we deduce that it holds for all $0 \neq \xi \in L_1^{-1} \mathbb{Z} \times L_2^{-1} \mathbb{Z}$.

Plugging \eqref{i3} into \eqref{i2} and translating the resulting inequality back to the original notation for fixed $\xi$,  by the Fubini and Parseval theorems, we find that
\begin{equation}
\begin{split}  & \frac{1}{2} \int_{\Sigma_+}
\sigma_+|\nabla_\ast
u_3|^2+\rho_+g|u_3|^2+\frac{1}{2}\int_{\Sigma_-}
\sigma_-|\nabla_\ast u_3|^2-\rj g|u_3|^2
\\&\quad \ge -\frac{1}{4\pi^2}\times \sum_{\xi\in L_1^{-1}\mathbb{Z}\times
L_2^{-1}\mathbb{Z}}\frac{\Lambda^2}{2}\int_{-b}^1
\rho|\hat{u}(x_3)|^2\,dx_3 +\frac{\Lambda}{4}\int_{-b}^1 \mu |
\widehat{\mathbb{D}(u)} (x_3)|\,dx_3
  \\&\quad=-
\frac{\Lambda^2}{2}\int_\Omega\rho
|u|^2-\frac{\Lambda}{4}\int_\Omega\mu |\mathbb{D}u|^2.
\end{split}
\end{equation}
This is \eqref{i0} and we conclude our lemma.
\end{proof}

Now we can prove our main result of the subsection.
\begin{theorem} \label{lineargrownth}
Let  $(u,p,\eta)$ solve \eqref{linear}. Then we have the following estimates for $t \ge 0$:
\begin{equation}\label{result1}
\|u(t)\|_{1}^2+\|\partial_tu(t)\|_0^2+\int_0^t\| u(s)\|_{1}^2\,ds \le
C e^{2\Lambda t} \| u(0)\|_2^2,
\end{equation}
and
\begin{equation}\label{result2}
\| \eta(t)\|_{0}^2+\| \partial_t\eta(t)\|_{1/2}^2+\int_0^t\|
  \partial_s\eta(s)\|_{1/2}^2\,ds  \le
C   e^{\Lambda t}(\| \eta(0)\|_{0}^2+\|
u(0)\|_2^2).
\end{equation}
\end{theorem}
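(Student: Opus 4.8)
The plan is to bound everything by a single combined energy functional that is coercive over $\|u(t)\|_1^2+\|\partial_t u(t)\|_0^2$ and that, thanks to Lemmas~\ref{identity} and~\ref{varineq}, obeys a differential inequality with the \emph{sharp} exponential rate $2\Lambda$. Concretely I would introduce
\[
\mathcal{I}(t):=\tfrac12\int_\Omega\rho|\partial_t u|^2+\tfrac12\left(\int_{\Sigma_+}\sigma_+|\nabla_\ast u_3|^2+\rho_+g|u_3|^2+\int_{\Sigma_-}\sigma_-|\nabla_\ast u_3|^2-\rj g|u_3|^2\right)+\tfrac{\Lambda}{2}\int_\Omega\mu|\mathbb{D}u|^2+\Lambda^2\int_\Omega\rho|u|^2,
\]
i.e.\ the quantity differentiated in Lemma~\ref{identity} augmented by $\tfrac\Lambda2$ times the viscous energy of $u$ and $\Lambda^2$ times its kinetic energy. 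The first step is coercivity: applying Lemma~\ref{varineq} to $u(t)$ absorbs the (possibly negative) surface integral, leaving $\mathcal{I}(t)\ge\tfrac12\int_\Omega\rho|\partial_t u|^2+\tfrac\Lambda4\int_\Omega\mu|\mathbb{D}u|^2+\tfrac{\Lambda^2}{2}\int_\Omega\rho|u|^2$, and then Korn's inequality on ${}_0H^1(\Omega)$ (available since $u|_{\Sigma_b}=0$ and $\Lbrack u\Rbrack=0$ on $\Sigma_-$) gives $\mathcal{I}(t)\gtrsim\|u(t)\|_1^2+\|\partial_t u(t)\|_0^2$.

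\textbf{The differential inequality.} Next I would differentiate $\mathcal{I}$. Lemma~\ref{identity} handles the first two groups, producing the favorable term $-\tfrac12\int_\Omega\mu|\mathbb{D}\partial_t u|^2$; the last two groups differentiate directly into $\Lambda\int_\Omega\mu\,\mathbb{D}(u):\mathbb{D}(\partial_t u)+2\Lambda^2\int_\Omega\rho\,u\cdot\partial_t u$. A Young inequality with the sharp constant bounds $\Lambda\int_\Omega\mu\,\mathbb{D}(u):\mathbb{D}(\partial_t u)$ by $\tfrac12\int_\Omega\mu|\mathbb{D}\partial_t u|^2+\tfrac{\Lambda^2}{2}\int_\Omega\mu|\mathbb{D}u|^2$, so the $\mathbb{D}\partial_t u$ terms cancel exactly, and a second weighted Young inequality in the $L^2(\rho\,dx)$ inner product gives $2\Lambda^2\int_\Omega\rho\,u\cdot\partial_t u\le\Lambda\int_\Omega\rho|\partial_t u|^2+\Lambda^3\int_\Omega\rho|u|^2$. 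Hence
\[
\frac{d}{dt}\mathcal{I}(t)\le\Lambda\int_\Omega\rho|\partial_t u|^2+\frac{\Lambda^2}{2}\int_\Omega\mu|\mathbb{D}u|^2+\Lambda^3\int_\Omega\rho|u|^2.
\]
Invoking Lemma~\ref{varineq} once more, this time to bound $2\Lambda\mathcal{I}(t)$ from below (the surface term contributes at least $-\Lambda^3\int_\Omega\rho|u|^2-\tfrac{\Lambda^2}{2}\int_\Omega\mu|\mathbb{D}u|^2$), one finds exactly $2\Lambda\mathcal{I}(t)\ge\Lambda\int_\Omega\rho|\partial_t u|^2+\tfrac{\Lambda^2}{2}\int_\Omega\mu|\mathbb{D}u|^2+\Lambda^3\int_\Omega\rho|u|^2$. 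Combining, $\mathcal{I}'(t)\le2\Lambda\mathcal{I}(t)$, and Grönwall's inequality yields $\mathcal{I}(t)\le e^{2\Lambda t}\mathcal{I}(0)$.

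\textbf{Conclusion.} From here \eqref{result1} is immediate: the pointwise terms come from the coercivity of $\mathcal{I}$, while $\int_0^t\|u(s)\|_1^2\,ds\lesssim\int_0^t\mathcal{I}(s)\,ds\lesssim\mathcal{I}(0)\int_0^t e^{2\Lambda s}\,ds\lesssim e^{2\Lambda t}\mathcal{I}(0)$; it then remains to bound $\mathcal{I}(0)$ by the initial data, the only nonroutine point being $\int_\Omega\rho|\partial_t u(0)|^2$, which is estimated from $\rho\partial_t u(0)=\mu\Delta u(0)-\nabla p(0)$ together with the standard elliptic estimate for the initial pressure, the surface integrals being handled by the trace inequality $\|u_3\|_{H^1(\Sigma)}\lesssim\|u\|_{H^2(\Omega)}$. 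For \eqref{result2} I would use the kinematic relation $\partial_t\eta=u_3$ on $\Sigma$: the trace inequality gives $\|\partial_t\eta(t)\|_{1/2}=\|u_3(t)\|_{H^{1/2}(\Sigma)}\lesssim\|u(t)\|_1$ and $\int_0^t\|\partial_s\eta(s)\|_{1/2}^2\,ds\lesssim\int_0^t\|u(s)\|_1^2\,ds$, both already controlled by \eqref{result1}, and integrating $\partial_t\eta=u_3$ in time, $\|\eta(t)\|_0\le\|\eta(0)\|_0+\int_0^t\|u_3(s)\|_{L^2(\Sigma)}\,ds\lesssim\|\eta(0)\|_0+\sqrt t\,\big(\int_0^t\|u(s)\|_1^2\,ds\big)^{1/2}$, which closes the estimate.

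\textbf{Main obstacle.} The delicate point is the choice of the weights $\tfrac\Lambda2$ and $\Lambda^2$ in $\mathcal{I}$. With arbitrary positive weights one still gets exponential growth of $\mathcal{I}$, but in general at a rate strictly larger than $2\Lambda$, because the lower bound of Lemma~\ref{varineq} then leaves a positive gap. Demanding instead that the two Young inequalities above and the coercivity bound of Lemma~\ref{varineq} be simultaneously saturable forces the weights uniquely: the ``matching'' inequalities for the $\mathbb{D}(u)$ term and the kinetic term collapse to the perfect squares $(2a-\Lambda)^2\le0$ and $(b-\Lambda^2)^2\le0$ (with $a,b$ the weights of $\int_\Omega\mu|\mathbb{D}u|^2$, $\int_\Omega\rho|u|^2$). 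This rigidity is precisely the trace of the fact that Lemma~\ref{varineq} is sharp at the maximal growing mode, and preserving the balance when passing from the energy of $u$ to that of the data in $\mathcal{I}(0)$ is what must be done with care.
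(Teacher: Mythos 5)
Your proof of \eqref{result1} is correct and takes a genuinely different route from the paper's. The paper first integrates Lemma~\ref{identity} in time and combines it with Lemma~\ref{varineq} to get the integral inequality \eqref{j4}, then closes the argument with two auxiliary manipulations (\eqref{j222}, a time integration by parts on $\|u\|_{\star\star}^2$, and \eqref{j333}, a pointwise Cauchy bound) to arrive at the Gronwall-type inequality \eqref{j5} involving the integrated dissipation, and finally feeds \eqref{j6} back into \eqref{j4} and \eqref{j222} to recover pointwise control of $\|u(t)\|_1^2$ and $\|\partial_t u(t)\|_0^2$. You instead build a single pointwise Lyapunov functional $\mathcal{I}(t)$ that is coercive by one application of Lemma~\ref{varineq} plus Korn, and you prove the clean differential inequality $\mathcal{I}'(t)\le 2\Lambda\,\mathcal{I}(t)$ by a second application of Lemma~\ref{varineq} together with two Young inequalities whose weights $\Lambda/2$ and $\Lambda^2$ you correctly observe are uniquely forced (the discriminant conditions $(2a-\Lambda)^2\le 0$ and $(b-\Lambda^2)^2\le 0$). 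Your version is tighter and more transparent about why $2\Lambda$ is the exact rate, and it delivers the time-integrated dissipation bound $\int_0^t\|u\|_1^2\,ds\lesssim e^{2\Lambda t}\mathcal{I}(0)$ for free by integrating $\mathcal{I}(s)\le e^{2\Lambda s}\mathcal{I}(0)$, rather than having to carry the integrated quantity through the Gronwall step as the paper does.

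There is, however, a genuine (if small) gap in your treatment of $\|\eta(t)\|_0$ in \eqref{result2}. Applying Cauchy--Schwarz to $\int_0^t\|u_3(s)\|_{L^2(\Sigma)}\,ds$ costs you a spurious $\sqrt{t}$, giving $\|\eta(t)\|_0^2\lesssim\|\eta(0)\|_0^2+t\,e^{2\Lambda t}\|u(0)\|_2^2$, which is strictly weaker than the theorem's bound; the polynomial factor $t$ cannot be absorbed into the exponential without changing the rate $\Lambda$, and the rate is exactly what matters in the bootstrap argument this theorem feeds. The fix is the one the paper uses and is already at your disposal: \eqref{result1} gives the pointwise estimate $\|u(s)\|_1\lesssim e^{\Lambda s}\|u(0)\|_2$, so by the trace theorem $\|u_3(s)\|_{L^2(\Sigma)}\lesssim e^{\Lambda s}\|u(0)\|_2$, and hence $\int_0^t\|u_3(s)\|_{L^2(\Sigma)}\,ds\lesssim\Lambda^{-1}e^{\Lambda t}\|u(0)\|_2$, which closes the estimate without the polynomial loss.
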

\begin{proof}
Integrating the result of Lemma \ref{identity} in time from $0$ to $t$, by Lemma \ref{varineq}, we get
\begin{equation}\label{j111}
\begin{split}
& \frac{1}{2}
\int_\Omega\rho |\partial_t u|^2+\frac{1}{2}\int_0^t\int_\Omega\mu
|\mathbb{D}\partial_t u|^2
\\&\quad=K_0-\frac{1}{2}
\left(\int_{\Sigma_+} \sigma_+|\nabla_\ast
u_3|^2+\rho_+g|u_3|^2+\int_{\Sigma_-} \sigma_-|\nabla_\ast
u_3|^2-\rj g|u_3|^2\right)
\\&\quad \le K_0+
\frac{\Lambda^2}{2}\int_\Omega\rho
|u|^2+\frac{\Lambda}{4}\int_\Omega\mu |\mathbb{D}u|^2
\end{split}
\end{equation}
with
\begin{equation}
K_0=\frac{1}{2} \left(\int_\Omega\rho|\partial_t u(0)|^2+\int_{\Sigma_+} \sigma_+|\nabla_\ast
u_3(0)|^2+\rho_+g|u_3(0)|^2+\int_{\Sigma_-} \sigma_-|\nabla_\ast
u_3(0)|^2-\rj g|u_3(0)|^2\right).
\end{equation}
To compactly rewrite the previous inequality, we denote the weighted norms by
\begin{equation}
\|u\|_\star^2:=\int_\Omega\rho |  u|^2\hbox{ and }\|u\|_{\star\star}^2:=\frac{1}{2}\int_\Omega\mu
|\mathbb{D}  u|^2
\end{equation}
which are equivalent to $\|\cdot\|_0^2$ and $\|\cdot\|_1^2$ respectively. We denote the corresponding inner-products by $\langle \cdot,\cdot\rangle_{\ast}$, etc.  Then \eqref{j111} reads as
\begin{equation}\label{j4}
\frac{1}{2}\|\partial_t  u(t) \|_\star^2 + \int_0^t\|\partial_t  u(s)\|_{\star\star}^2 ds \le K_0
+\frac{\Lambda^2}{2}\| u(t) \|_\star^2 +  \frac{\Lambda}{2}\| u(t) \|_{\star\star}^2.
\end{equation}

Integrating in time and using Cauchy's inequality, we may bound
\begin{equation}\label{j222}
\begin{split}
\Lambda\|u(t)\|_{\star\star}^2
&=\Lambda\|u(0)\|_{\star\star}^2+ \Lambda\int_0^t 2 \langle
u(s),\partial_s u(s) \rangle_{\star\star}\,ds
\\&\le
\Lambda\|u(0)\|_{\star\star}^2+\int_0^t\|\partial_su(s)\|_{\star\star}^2\,ds
+\Lambda^2\int_0^t\|u(s)\|_{\star\star}^2\,ds.
\end{split}
\end{equation}
On the other hand
\begin{equation}\label{j333}
\Lambda\partial_t\|u(t)\|_{\star}^2=2 \Lambda \langle
u(t),\partial_t u(t) \rangle_{\star}\le \|\partial_tu(t)\|_{\star}^2
+\Lambda^2\|u(t)\|_{\star}^2.
\end{equation}
Hence, combining \eqref{j222}--\eqref{j333} with \eqref{j4}, we derive the differential inequality
\begin{equation}\label{j5}
\partial_t\|u(t)\|_\star^2+\|u(t)\|_{\star\star}^2\le K_1+2\Lambda \left( \|u(t)\|_\star^2+\int_0^t\|u(s)\|_{\star\star}^2 \,ds \right)
\end{equation}
for $K_1=2K_0/\Lambda+2\|u(0)\|_{\star\star}^2$.  An application of Gronwall's inequality to \eqref{j5} yields
\begin{equation}\label{j6}
\|u(t)\|_\star^2+\int_0^t\|u(s)\|_{\star\star}^2
\le e^{2\Lambda t}\|u(0)\|_\star^2+\frac{K_1}{2\Lambda}( e^{2\Lambda
t}-1).
\end{equation}
Now plugging \eqref{j6} and \eqref{j222} into \eqref{j4}, we find that
\begin{equation}\label{j7}
\frac{1}{\Lambda}\|\partial_tu(t)\|_\star^2+\|u(t)\|_{\star\star}^2
\le K_1+\Lambda \|u(t)\|_\star^2+2\Lambda \int_0^t
\|u(s)\|_{\star\star}^2\,ds \le  e^{2\Lambda
t}(2\Lambda\|u(0)\|_\star^2+K_1).
\end{equation}
By the trace theorem and using the equations \eqref{linear}, we have
\begin{equation}
K_0,K_1\lesssim
\| u(0)\|_2^2+\|\partial_t u(0)\|_0^2\lesssim \|
u(0)\|_2^2.
\end{equation}
and so \eqref{j7} implies \eqref{result1}.

To prove \eqref{result2},  we use \eqref{result1} together with the kinematic boundary condition $\dt \eta = u_3$ and the  trace theorem to estimate
\begin{equation}\label{j8}
\begin{split}\| \partial_t\eta(t)\|_{1/2}^2+\int_0^t\|
  \partial_s\eta(s)\|_{1/2}^2\,ds
 &=   \|u_3(t)\|_{H^{1/2}(\Sigma)}^2+\int_0^t\| u_3(s)\|_{H^{1/2}(\Sigma)}^2\,ds
\\&\lesssim   \|u_3(t)\|_{1}^2+\int_0^t\| u_3(s)\|_{1}^2\,ds
 \lesssim
 e^{2\Lambda t}\| u(0)\|_2^2,
 \end{split}
\end{equation}
and then by \eqref{j8},
 \begin{equation}\label{j9}
 \| \eta(t)\|_{0}
 \le  \| \eta(0)\|_{0} +\int_0^t \|\partial_s\eta(s)\|_0 \,ds
\lesssim  e^{\Lambda t}(\| \eta(0)\|_{0}+\| u(0)\|_2).
\end{equation}
Hence, \eqref{j8} and \eqref{j9} imply \eqref{result2}.
\end{proof}

\section{Nonlinear energy estimates}\label{energy}

\subsection{Energy estimates with surface tension}

In this subsection, we will derive the nonlinear energy estimates for the system \eqref{surface}. For this, we define the energy and dissipation, the definitions of which rely on the linear energy identity of the homogeneous form of \eqref{surface}:
\begin{equation}
\begin{split}
&\frac{d}{dt}\left(\frac{1}{2}\int_\Omega \rho |u|^2 +\frac{1}{2}\int_{\Sigma_+}
\rho_+g|\eta_+|^2+\sigma_+|\nabla_\ast\eta_+|^2+\frac{1}{2}\int_{\Sigma_-}
-\rj g|\eta_-|^2+\sigma_-|\nabla_\ast\eta_-|^2\right)
\\
&\quad+\frac{1}{2}\int_{\Omega}\mu|\mathbb{D} u|^2=0.
\end{split}
\end{equation}
According to this energy identity and the structure of the equations \eqref{surface}, we define the instantaneous energy (a higher regularity version of what we used in \cite{WTK}) as
\begin{equation}\label{energy00}
  \mathcal{E}:=\sum_{\ell=0}^2\|\partial_t^\ell u\|_{4-2\ell}^2+\sum_{\ell=0}^1\|\partial_t^\ell p\|_{4-2\ell-1}^2+\|\eta\|_{  5 }^2
  +\sum_{\ell=1}^3
  \|\partial_t^\ell\eta\|_{ 4-2\ell+3/2  }^2
  \end{equation}
and the corresponding dissipation rate as
\begin{equation}\label{dissipation00}
\begin{split}
 \mathcal{D}:=
&\sum_{\ell=0}^2\|\partial_t^\ell
u\|_{4-2\ell+1}^2+\sum_{\ell=0}^1\|\partial_t^\ell p\|_{4-2\ell}^2
+\|\eta\|_{   11/2  }^2+\sum_{\ell=1}^3
  \|\partial_t^\ell\eta\|_{ 4-2\ell+5/2  }^2
\\&  +\|\rho \partial_t^3
u\|_{-1}^2+\|\nabla\partial_t^2u\|_{ H^{-1/2}(\Sigma_+)
}^2+\|\llbracket\mu\nabla\partial_t^2u\rrbracket\|_{
H^{-1/2}(\Sigma_-) }^2
\\&    +\|\partial_t^2p\|_{ 0
}^2+\|
\partial_t^2 p\|_{ H^{-1/2}(\Sigma_+) }^2+\|\llbracket \partial_t^2
p\rrbracket\|_{ H^{-1/2}(\Sigma_-) }^2 .
\end{split}
\end{equation}
We recall that in these definitions we have abused notation by writing $\|\cdot \|_{-1} := \|\cdot\|_{\Hd}$.

We will now derive our a priori estimates.  Throughout the rest of this subsection we will assume that $\mathcal{E}(t)\le \delta^2$ for some sufficiently small $\delta>0$ and for all $t$ in  the interval in which the solution is defined.  We will implicitly allow $\delta$ to be made smaller in each result, but we will reiterate the smallness of $\delta$ in our main result.

To begin with, in the following lemma we present the estimates of the nonlinear terms $f$ and $g$, defined by \eqref{f}--\eqref{g_-^3}, in terms of $\mathcal{E}$ and $\mathcal{D}$.

\begin{lemma}
Let $f$ and $g$ be defined by \eqref{f}--\eqref{g_-^3}, then we have
\begin{equation}\label{nonl1}
\sum_{\ell=0}^1\left(\|\partial_t^\ell
f\|_{4-2\ell-2}^2+\|\partial_t^\ell
g\|_{4-2\ell-3/2}^2\right)\le C\mathcal{E}^2,
\end{equation}
and
\begin{equation}\label{nonl2}
\sum_{\ell=0}^2\left(\|\partial_t^\ell
f\|_{4-2\ell-1}^2+\|\partial_t^\ell
g\|_{4-2\ell-1/2}^2\right)\le C\mathcal{E} \mathcal{D}.
\end{equation}
\end{lemma}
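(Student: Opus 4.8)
The plan is to estimate each nonlinear term in \eqref{f}--\eqref{g_-^3} by first observing its algebraic structure: every summand is a product of (i) some coefficient built from $\mathcal{A}$, $\theta$, $J$, $K$, $\mathcal{N}$, $\mathcal{T}$ (hence from $\bar\eta$, equivalently from $\eta$ via the Poisson extensions $\mathcal{P}_\pm$ of \eqref{P+def}--\eqref{P-def}), and (ii) a factor that is linear or quadratic in $u$, $\nabla u$, $p$, and their time derivatives. The key auxiliary facts are: the Poisson extension is bounded, $\norm{\bar\eta}_{H^{s+1/2}(\Omega)} \lesssim \norm{\eta}_{H^s(\Sigma)}$, and the smallness hypothesis $\mathcal{E}(t) \le \delta^2$ guarantees that $J$ is bounded away from zero, so $K = J^{-1}$ and all rational expressions in these coefficients are controlled (e.g. $\norm{\mathcal{A} - I}_{H^{k}}$, $\norm{K-1}_{H^k}$, $\norm{\mathcal{N}-e_3}_{H^k}$ are all $\lesssim \norm{\eta}_{H^{k+1/2}} \lesssim \sqrt{\mathcal{E}}$ for the relevant range of $k$). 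I would also record once and for all the product/trace estimates in the piecewise Sobolev spaces $\ddot H^k(\Omega)$ and $H^s(\Sigma)$: Sobolev embedding $H^2(\Omega) \hookrightarrow L^\infty$, the algebra property of $H^k$ for $k>3/2$, and the interpolation/multiplication rules of the form $\norm{ab}_{H^k} \lesssim \norm{a}_{H^{k}}\norm{b}_{H^{2}} + \norm{a}_{H^2}\norm{b}_{H^k}$ valid in three dimensions, together with their half-integer boundary analogues on $\Sigma$.

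Next I would carry out the bookkeeping term by term. For \eqref{nonl1}, the worst case is $\ell=0$: one needs $\norm{f}_{2}^2 + \norm{g}_{5/2}^2 \lesssim \mathcal{E}^2$. A schematic summand of $f$ looks like $\mathfrak{a}(\eta) \cdot \mathfrak{b}(\eta) \cdot \partial^{\beta} u$ with $\mathfrak{a},\mathfrak{b}$ coefficient factors and $|\beta| \le 2$; applying the product rule in $H^2(\Omega_\pm)$ one gets $\lesssim \norm{\mathfrak{a}}_{H^2}^2 \norm{\mathfrak{b}}_{H^2}^2 \norm{u}_{H^4}^2 \lesssim \mathcal{E} \cdot \mathcal{E} = \mathcal{E}^2$ — note each coefficient factor is itself controlled by $\sqrt{\mathcal{E}}$, and since it appears through $\mathcal{A}-I$ etc. it actually contributes a factor $\mathcal{E}$, which is what makes the bound quadratic in $\mathcal{E}$ rather than merely $\le C\mathcal{E}\norm{u}^2$. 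The terms with a bare $u$ and two coefficient factors (like the linear-in-$u$ pieces $\mu(\mathcal{A}_{jk}\mathcal{A}_{jl}-\delta_{kl})\partial_k\partial_l u_i$) gain their smallness from the coefficient difference; the genuinely quadratic-in-$u$ pieces (like $\rho K u_k \partial_k u_i$) use $\norm{u}_{H^2}\norm{u}_{H^2} \le \mathcal{E}$. For the boundary terms $g_\pm$ I would use trace theory to put velocity traces in $H^{5/2}(\Sigma)$ controlled by $\norm{u}_{H^3(\Omega)}^2 \le \mathcal{E}$, the surface-tension pieces in \eqref{g_+^3},\eqref{g_-^3} being handled similarly since $\eta \in H^5(\Sigma)$ so $\Delta_\ast \eta \in H^3$ and products with $(\nabla_\ast\eta)^2$-type small factors land in $H^{5/2}$. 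The $\ell=1$ case of \eqref{nonl1} and both cases of \eqref{nonl2} are analogous but with one or two time derivatives distributed by Leibniz: a derivative $\partial_t$ either hits $u$ or $p$ (losing it from the highest-regularity slot but still inside $\mathcal{E}$), or hits a coefficient, producing $\partial_t \bar\eta$ — here one uses that $\partial_t \eta \in H^{4-2\cdot1+3/2} = H^{9/2}$ etc., which is exactly what the $\sum_{\ell=1}^3 \norm{\partial_t^\ell \eta}$ part of $\mathcal{E}$ provides, and one must also invoke the structure equation $\partial_t\eta = u_3$ (plus lower-order terms) to trade a time derivative of $\eta$ for a spatial trace of $u$ when the raw counting is tight. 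The point of \eqref{nonl2} is that one is allowed one more derivative than in \eqref{nonl1}, and the resulting highest-order factor must be placed in $\mathcal{D}$ (the extra-derivative norms) while the remaining factors stay in $\mathcal{E}$, giving the bilinear bound $C\mathcal{E}\mathcal{D}$.

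The main obstacle, as usual with this flattening method, is the precise derivative counting at the top order — making sure that in \eqref{nonl2} with $\ell=2$ one never needs $\norm{u}_{H^6}$ or $\norm{p}_{H^4}$ or $\norm{\partial_t^3 u}$ in $L^2(\Omega)$ (none of which is in $\mathcal{E}$ or $\mathcal{D}$). This forces one to be careful about which factor in each product absorbs the highest derivative: in the elliptic-error terms of $f$ the top derivative must fall on $u$ or $p$ and land in the dissipation, while in the time-differentiated coefficient terms one must use the boundary equation $\partial_t\eta = u_3 + (\text{l.o.t.})$ to convert $\partial_t^3\eta$ into traces of $\partial_t^2 u$, whose $H^{1/2}(\Sigma)$-type norms appear precisely among the auxiliary terms in $\mathcal{D}$ (the $\norm{\nabla\partial_t^2 u}_{H^{-1/2}(\Sigma_+)}$ and $\norm{\llbracket \mu\nabla\partial_t^2 u\rrbracket}_{H^{-1/2}(\Sigma_-)}$ pieces). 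Once the correct allocation is fixed, each individual estimate is a routine application of the product and trace inequalities above; the lemma then follows by summing finitely many such bounds.
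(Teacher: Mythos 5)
Your overall framework --- Poisson-extension bounds, product and trace inequalities, and the observation that coefficient factors like $\mathcal{A}-I$, $K-1$, $\mathcal{N}-e_3$ gain a factor of $\sqrt{\mathcal{E}}$ --- is the same machinery the paper uses; the paper likewise reduces to a schematic decomposition of $f$ and $g$ and presents only the worst-case estimate. But there is a genuine gap exactly at the point you yourself call the ``main obstacle,'' the $\ell=2$ case of \eqref{nonl2}. For an elliptic-error term such as $\mu(\mathcal{A}_{jk}\mathcal{A}_{jl}-\delta_{kl})\partial_t^2\partial_k\partial_l u_i$, your prescription that ``the top derivative must fall on $u$ or $p$ and land in the dissipation'' does not close the estimate: $\mathcal{D}$ controls $\partial_t^2 u$ only in $H^1$, not $H^2$, so $\partial_t^2\nabla^2 u$ is not available in $L^2$. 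The crucial step supplied by the paper (estimate \eqref{a_bound}) is to measure $\mu\partial_t^2\nabla^2 u$ in the dual norm $\Hd$ and integrate by parts against the test function, moving one derivative onto $\varphi$; the boundary terms this produces on $\Sigma_\pm$ are exactly what the dissipation pieces $\|\nabla\partial_t^2 u\|_{H^{-1/2}(\Sigma_+)}^2$ and $\|\llbracket\mu\nabla\partial_t^2 u\rrbracket\|_{H^{-1/2}(\Sigma_-)}^2$ control.

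You did notice that those trace pieces of $\mathcal{D}$ must come into play, but you assigned them the wrong job, namely converting $\partial_t^3\eta$ to traces of $\partial_t^2 u$ via the kinematic condition. That trade is unnecessary here: $\|\partial_t^3\eta\|_{-1/2}^2$ already sits in $\mathcal{E}$ and $\|\partial_t^3\eta\|_{1/2}^2$ sits in $\mathcal{D}$, and Lemma \ref{Poi} then controls $\partial_t^3\bar{\eta}$ directly; moreover, if one did wish to trade $\partial_t^3\eta = \partial_t^2 u_3$ on $\Sigma$, the relevant control would be $\|\partial_t^2 u\|_1$ via the trace theorem, not $\|\nabla\partial_t^2 u\|_{H^{-1/2}(\Sigma)}$. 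Note also that for \eqref{surface} the kinematic condition is exactly $\partial_t\eta = u_3$ with no lower-order correction; the $G^4$-type correction you allude to belongs to the $\sigma_\pm = 0$ formulation \eqref{nosurface2}. Once the duality estimate \eqref{a_bound} is inserted, your term-by-term strategy does close; without it, the bound $\|\partial_t^2 f\|_{-1}^2 \lesssim \mathcal{E}\mathcal{D}$ is not reached.
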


\begin{proof}
The full expressions that define $f$ and $g$   are rather complicated, and a full analysis of each term would be tedious.  As such, we will identify only the principal terms in the expressions and provide details for only the most delicate estimates.  The other terms (lower order terms) may be handled through straightforward modifications of the arguments presented here.

The principal terms appearing in $f$ and $g$ may be identified by first examining the regularity of the terms appearing in $\mathcal{E}$ and $\mathcal{D}$, as defined by \eqref{energy00}--\eqref{dissipation00}, and by appealing to Lemma \ref{Poi} to compare the regularity of $\bar{\eta}$ to $\eta$.  We find that, roughly speaking, the regularity of $u$ (and its time derivatives) is same as $\partial_t\bar{\eta}$, one order higher than $p$ and at least one order lower than $\bar{\eta}$. Based on this, we identify the principal terms in $f$ and $g$ defined by \eqref{f}--\eqref{g_-^3} as
\begin{equation}\label{app1}
f\sim \nabla^3\bar{\eta}    u+\nabla^2\bar{\eta} \nabla  u+\nabla\bar{\eta}\, \mu\nabla^2 u
\end{equation}
and
\begin{equation}\label{app2}
g_+\sim \nabla_\ast\eta_+\nabla u_++\nabla_\ast^2\eta_+u_+,\quad
g_- \sim \nabla_\ast\eta_-\Lbrack\mu\nabla u\Rbrack+\nabla_\ast^2\eta_-u.
\end{equation}

We will prove only the most involved estimate, namely,
\begin{equation} \label{non111}
\|\partial_t^2
f\|_{-1}^2+\|\partial_t^2
g\|_{-1/2}^2 \lesssim\mathcal{E} \mathcal{D}.
\end{equation}
The other  estimates may be derived through somewhat easier arguments.  By Lemma \ref{Poi}--\ref{-1norm}, we have
\begin{equation}
\begin{split}
 \|\partial_t^2
f\|_{-1} :=\|\partial_t^2 f\|_{\Hd}& \lesssim
\| \partial_t^2 \nabla^3\bar{\eta}\|_{0} \|   u\|_1+ \| \partial_t  \nabla^3\bar{\eta}\|_0\|\partial_t u\|_1
+\| \nabla^3\bar{\eta}\|_0  \|\partial_t^2  u\|_1
\\&\quad+\|\partial_t^2\nabla^2\bar{\eta} \|_0 \|\nabla u\|_1
+\|\partial_t\nabla^2\bar{\eta}\|_0\| \partial_t\nabla  u\|_1
+\|\nabla^2\bar{\eta}\|_1\| \partial_t^2\nabla  u\|_0
\\&\quad+\|\partial_t^2\nabla\bar{\eta} \|_0\|\nabla^2 u\|_1
  +\|\partial_t\nabla\bar{\eta} \|_1 \|\partial_t\nabla^2 u\|_0
  +\|\nabla\bar{\eta} \|_3\|\mu\partial_t^2\nabla^2 u\|_{\Hd}
\\&\lesssim
\|
\partial_t^2 \eta \|_{5/2} \|   u\|_1+\| \partial_t   {\eta}\|_{5/2}\|\partial_t u\|_1
+\|  {\eta}\|_{5/2} \|\partial_t^2  u\|_1
\\&\quad+\|\partial_t^2 {\eta} \|_{3/2} \| u\|_2
+\|\partial_t {\eta}\|_{3/2}\| \partial_t   u\|_2
+\| {\eta}\|_{5/2}\| \partial_t^2   u\|_1
\\&\quad+\|\partial_t^2 {\eta} \|_{1/2}\| u\|_2
  +\|\partial_t {\eta} \|_{3/2}\|\partial_t u\|_2
  +\| {\eta} \|_{7/2}\|\mu\partial_t^2\nabla^2 u\|_{\Hd}
\\&\lesssim \sqrt{\mathcal{E} \mathcal{D}}.
\end{split}
\end{equation}
Here, in the last inequality, we have used the  estimate
\begin{equation}\label{a_bound}
 \|\mu\partial_t^2 \nabla^2 u\|_{\Hd} \ls \|\partial_t^2 u\|_{1} + \|\partial_t^2 \nabla u_+\|_{H^{-1/2}(\Sigma_+)}
+ \|\partial_t^2 \Lbrack\mu \nabla u\Rbrack \|_{H^{-1/2}(\Sigma_-)}\ls \sqrt{\mathcal{D}}.
\end{equation}
Indeed,   by  H\"{o}lder's inequality and the trace theorem, we obtain that for any $\varphi\in \H(\Omega)$ and any $i,j=1,2,3$,
\begin{equation}
\begin{split}
&\langle\mu\partial_t^2 \partial_i \partial_j u,\varphi\rangle_\ast = -\int_\Omega \mu \partial_t^2\partial_i u \cdot \partial_j \varphi \,dx
+\mu_+ \int_{\Sigma_+} \partial_t^2 \partial_i u_+ \cdot \varphi  (e_3\cdot e_j) - \int_{\Sigma_-}\partial_t^2\Lbrack\mu\partial_i u \Rbrack \cdot \varphi (e_3\cdot e_j)
\\&\quad
\lesssim \norm{\partial_t^2u}_1\norm{\varphi}_1+\|\partial_t^2 \nabla u_+\|_{H^{-1/2}(\Sigma_+)}\|\varphi\|_{H^{1/2}(\Sigma_+)}
+\|\partial_t^2\Lbrack\mu\nabla u\Rbrack\|_{H^{-1/2}(\Sigma_-)}\|\varphi\|_{H^{1/2}(\Sigma_-)}
\\&\quad
\lesssim \left(\norm{\partial_tu}_1+\|\partial_t^2 \nabla u_+\|_{H^{-1/2}(\Sigma_+)}
+\|\partial_t^2\Lbrack\mu\nabla u\Rbrack\|_{H^{-1/2}(\Sigma_-)}\right)\norm{\varphi}_1.
\end{split}
\end{equation}
Taking the supremum over such $\varphi$ with $\|\varphi\|_1\le 1$, we get \eqref{a_bound}.

A similar application of these lemmas, together with trace estimates, implies that
\begin{equation}
\begin{split} \|\partial_t^2
g_-\|_{-1/2} &\lesssim
\|\partial_t^2\nabla_\ast\eta_-\|_0\|\Lbrack\mu\nabla
u\Rbrack\|_{L^2(\Sigma)}+\|\partial_t\nabla_\ast\eta_-\|_0\|\Lbrack\mu\partial_t\nabla
u\Rbrack\|_{L^2(\Sigma)}
\\&\quad+\|\nabla_\ast\eta_-\|_2\|\Lbrack\mu\partial_t^2\nabla
u\Rbrack\|_{H^{-1/2}(\Sigma)}
+
 \|\partial_t^2\nabla_\ast^2\eta_-\|_0\|u\|_{L^2(\Sigma)}
\\&\quad+\|\partial_t\nabla_\ast^2\eta_-\|_0\|\partial_tu\|_{L^2(\Sigma)}
 +\|\nabla_\ast^2\eta_-\|_0\|\partial_t^2u\|_{L^2(\Sigma)}
\\ &\lesssim
\|\partial_t^2 \eta_-\|_1\|
u \|_{2}+\|\partial_t \eta_-\|_1\| \partial_t
u \|_{2}+\| \eta_-\|_3\|\Lbrack\mu\partial_t^2\nabla
u\Rbrack\|_{H^{-1/2}(\Sigma)}
\\&\quad
+
 \|\partial_t^2 \eta_-\|_2\|u\|_{1}
+\|\partial_t \eta_-\|_2\|\partial_tu\|_{1}
 +\| \eta_-\|_2\|\partial_t^2u\|_{1}
\\&\lesssim \sqrt{\mathcal{E}\mathcal{D}},
\end{split}
\end{equation}
and similarly, $\|\partial_t^2 g_+\|_{-1/2}^2\lesssim \mathcal{E}\mathcal{D}$. Hence, \eqref{non111} follows.
\end{proof}

\subsubsection{Energy evolution}

We first derive the energy evolution of the pure temporal derivatives.
\begin{lemma} \label{nosur1}
Let $(u,p,\eta)$ solve \eqref{surface}.  Then we have the estimate
\begin{equation}\label{0es}
\begin{split}
&\sum_{\ell=0}^2\left(\|\partial_t^\ell u(t)\|_{0}^2+\|\partial_t^\ell \eta(t)\|_{0}^2\right)
+\int_0^t \sum_{\ell=0}^2 \|\partial_t^\ell u(s)\|_{1}^2\,ds
\\&\quad \le C \mathcal{E} (0)+ C\int_0^t\mathcal{E}(s)
\mathcal{D}(s)\,ds+C\int_0^t \|\eta(s)\|_0^2\,ds.
\end{split}
\end{equation}
\end{lemma}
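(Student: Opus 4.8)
The plan is to run, for $\ell=0,1,2$, the natural $L^2$ energy estimate for $\partial_t^\ell$ of the system \eqref{surface} and to absorb the destabilizing Rayleigh--Taylor boundary term at each level. Since \eqref{surface} is linear in $(u,p,\eta)$ apart from the forcing $(f,g_\pm)$, applying $\partial_t^\ell$ produces the same system for $(\partial_t^\ell u,\partial_t^\ell p,\partial_t^\ell\eta)$ with forcing $(\partial_t^\ell f,\partial_t^\ell g_\pm)$ and with kinematic condition $\partial_t^{\ell+1}\eta=\partial_t^\ell u_3$ on $\Sigma$. Testing the momentum equation against $\partial_t^\ell u$, integrating by parts over $\Omega_+$ and $\Omega_-$ separately, and using $\diverge\partial_t^\ell u=0$, $\Lbrack\partial_t^\ell u\Rbrack=0$, $\partial_t^\ell u_-|_{\Sigma_b}=0$ and the two stress boundary conditions, one obtains
\[
\frac{d}{dt}\mathcal{E}_\ell+\tfrac12\int_\Omega\mu|\mathbb{D}\partial_t^\ell u|^2=\langle\partial_t^\ell f,\partial_t^\ell u\rangle+\int_{\Sigma_+}\partial_t^\ell g_+\cdot\partial_t^\ell u_++\int_{\Sigma_-}\partial_t^\ell g_-\cdot\partial_t^\ell u ,
\]
where $\mathcal{E}_\ell=\tfrac12\int_\Omega\rho|\partial_t^\ell u|^2+\tfrac12\int_{\Sigma_+}\big(\rho_+g|\partial_t^\ell\eta_+|^2+\sigma_+|\na\partial_t^\ell\eta_+|^2\big)+\tfrac12\int_{\Sigma_-}\big(\sigma_-|\na\partial_t^\ell\eta_-|^2-\rj g|\partial_t^\ell\eta_-|^2\big)$ mimics the linear energy recalled just before the lemma. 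For $\ell=2$ this identity is to be read in the time-integrated, duality sense, as $\partial_t^2u$, $\rho\partial_t^3u$ and $\partial_t^2f$ only sit at the dissipation regularity level; this is legitimate since $\partial_t^2u\in L^2(0,T;\H(\Omega))$ and $\rho\partial_t^3u\in L^2(0,T;\Hd)$ on the interval in question.

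Integrating in time, the only non-coercive piece of $\mathcal{E}_\ell$ is the Rayleigh--Taylor term $-\tfrac12\rj g\|\partial_t^\ell\eta_-(t)\|_0^2$, which I move to the right-hand side. By Korn's inequality the dissipation controls $\int_0^t\|\partial_t^\ell u\|_1^2$, and since $\int_{\mathrm{T}^2}\partial_t^\ell\eta_-=\partial_t^\ell\int_{\mathrm{T}^2}\eta_-=0$ by \eqref{zerot} the Poincar\'{e} inequality makes $\sigma_-\|\na\partial_t^\ell\eta_-(t)\|_0^2$ control $\|\partial_t^\ell\eta_-(t)\|_0^2$ --- but, in the unstable range $\sigma_-<\sigma_c$, only with a constant too small to reabsorb $\tfrac12\rj g\|\partial_t^\ell\eta_-(t)\|_0^2$ (this mismatch is exactly the instability). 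Instead I estimate the moved term by the fundamental theorem of calculus, $\|\partial_t^\ell\eta_-(t)\|_0^2\le\|\partial_t^\ell\eta_-(0)\|_0^2+2\int_0^t\|\partial_t^\ell\eta_-\|_0\|\partial_t^{\ell+1}\eta_-\|_0$, and use $\partial_t^{\ell+1}\eta_-=\partial_t^\ell u_3|_{\Sigma_-}$ together with the trace theorem so that $\|\partial_t^{\ell+1}\eta_-\|_0\lesssim\|\partial_t^\ell u\|_1$, while for $\ell\ge1$ also $\|\partial_t^\ell\eta_-\|_0=\|\partial_t^{\ell-1}u_3\|_{L^2(\Sigma_-)}\lesssim\|\partial_t^{\ell-1}u\|_1$. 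A Cauchy--Young split then reduces the moved term to $\|\partial_t^\ell\eta_-(0)\|_0^2$, a small multiple of $\int_0^t\|\partial_t^\ell u\|_1^2$ (absorbable into the dissipation), the genuinely new term $\int_0^t\|\eta_-\|_0^2\le\int_0^t\|\eta\|_0^2$ when $\ell=0$, and, for $\ell\ge1$, a possibly large multiple of $\int_0^t\|\partial_t^{\ell-1}u\|_1^2$. To keep this last contribution absorbable I combine the three identities with descending weights $1,\kappa,\kappa^2$ for a small fixed $\kappa$ chosen after the Young parameters, so that in the weighted sum the feedback of level $\ell$ into the level-$(\ell-1)$ dissipation is dominated by the level-$(\ell-1)$ dissipation itself.

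The nonlinear terms are handled via the previous lemma: $\langle\partial_t^\ell f,\partial_t^\ell u\rangle\le\|\partial_t^\ell f\|_{-1}\|\partial_t^\ell u\|_1$ and $\int_{\Sigma_\pm}\partial_t^\ell g_\pm\cdot\partial_t^\ell u\lesssim\|\partial_t^\ell g\|_{H^{-1/2}(\Sigma)}\|\partial_t^\ell u\|_1$ by the trace theorem, and $\|\partial_t^\ell f\|_{-1}^2+\|\partial_t^\ell g\|_{H^{-1/2}(\Sigma)}^2\lesssim\|\partial_t^\ell f\|_{4-2\ell-1}^2+\|\partial_t^\ell g\|_{4-2\ell-1/2}^2\lesssim\mathcal{E}\mathcal{D}$ by \eqref{nonl2}; one more Young step absorbs the $\|\partial_t^\ell u\|_1^2$ factor into the dissipation and leaves $C\int_0^t\mathcal{E}\mathcal{D}$. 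Choosing $\delta$, the Young parameters and $\kappa$ small enough (using $\mathcal{E}\le\delta^2$ to kill the quadratic feedbacks), the weighted sum gives $\sum_{\ell=0}^2\big(\|\partial_t^\ell u(t)\|_0^2+\|\partial_t^\ell\eta_+(t)\|_0^2\big)+\int_0^t\sum_{\ell=0}^2\|\partial_t^\ell u\|_1^2\lesssim\mathcal{E}(0)+\int_0^t\mathcal{E}\mathcal{D}+\int_0^t\|\eta\|_0^2$; the remaining $\|\partial_t^\ell\eta_-(t)\|_0^2$ in \eqref{0es} then follow from $\|\eta_-(t)\|_0^2\lesssim\|\eta_-(0)\|_0^2+\int_0^t(\|\eta\|_0^2+\|u\|_1^2)$ and, for $\ell\ge1$, $\|\partial_t^\ell\eta_-(t)\|_0^2\lesssim\|\partial_t^\ell\eta_-(0)\|_0^2+\int_0^t(\|\partial_t^{\ell-1}u\|_1^2+\|\partial_t^\ell u\|_1^2)$, whose right-hand sides are already bounded by the quantity just established. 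I expect the main obstacle to be precisely the sign-indefinite Rayleigh--Taylor boundary term $-\tfrac12\rj g\|\partial_t^\ell\eta_-\|_0^2$ at each temporal level and the cascade it creates into lower-order dissipation; the zero-average constraint, the kinematic identity, trace and Korn inequalities, and the descending weights $1,\kappa,\kappa^2$ are what make the estimate close, with the duality reading of the $\ell=2$ identity as a secondary technical wrinkle.
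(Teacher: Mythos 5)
Your proof is correct and follows the paper's strategy: test the $\partial_t^\ell$-differentiated system against $\partial_t^\ell u$, use the kinematic identity $\partial_t^{\ell+1}\eta=\partial_t^\ell u_3$ plus trace estimates to convert the Rayleigh--Taylor boundary term at level $\ell$ into a quantity controlled by the level-$(\ell-1)$ dissipation (and by $\int_0^t\|\eta\|_0^2$ at $\ell=0$), then chain. The paper keeps $\int_{\Sigma_-}\rj g\,\partial_t^\ell\eta_-\,\partial_t^\ell u_3$ directly on the right-hand side of the $\partial_t^\ell$-energy identity, bounds it via $\|\partial_t^\ell\eta\|_{-1/2}\|\partial_t^\ell u\|_1$, and then simply substitutes the level-$(\ell-1)$ estimate into the level-$\ell$ one (the cross term enters with a fixed constant, so straightforward iteration already closes the cascade); your FTC reformulation of the same boundary integral and the descending weights $1,\kappa,\kappa^2$ are valid but unnecessary bookkeeping.
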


\begin{proof}
We multiply $\eqref{surface}_1$ by $u$ and then integrate by parts respectively in $\Omega_+$ and $\Omega_-$; by using the other conditions in $\eqref{surface}$, together with  duality and  trace theory, we then obtain that for any $\varepsilon>0$,
\begin{equation}\label{0es0}
\begin{split}&\frac{1}{2}\frac{d}{dt}
\left(\int_\Omega\rho |u|^2+\int_{\Sigma_+} \sigma_+|\nabla_\ast
\eta_+|^2+\rho_+ g|\eta_+|^2+\int_{\Sigma_-} \sigma_-|\nabla_\ast
\eta_-|^2   \right) +\frac{1}{2}\int_\Omega\mu |\mathbb{D}
u|^2
\\&\quad=\int_\Omega
 f\cdot
u-\int_\Sigma   {g} \cdot u +\int_{\Sigma_-}\rj  g \eta_- u_3
\\&\quad\le C_\varepsilon (\|f\|_{-1}^2
+\|g\|_{-1/2}^2+\|\eta\|_{0}^2)+\varepsilon\|u\|_1^2
.\end{split}
\end{equation}
Now we integrate \eqref{0es0} in time from $0$ to $t$ and bound the resulting terms using the Korn inequality and the Poincar\'e inequality on $\Sigma_-$, as well as the nonlinear estimates \eqref{nonl2}; taking $\varepsilon$ sufficiently small in the resulting bound, we may absorb the $\|u\|_1^2$ term onto the left to find that
\begin{equation}\label{0es1}
\|u(t)\|_0^2+\|\eta(t)\|_1^2+\int_0^t\|u(s)\|_1^2\,ds\le C
\mathcal{E}(0)+ C\int_0^t\mathcal{E}(s)
\mathcal{D}(s)\,ds+C\int_0^t\|\eta(s)\|_{0}^2\,ds.
\end{equation}

Now we apply  $\partial_t^\ell$ with $\ell=1,2$ to $\eqref{surface}_1$ and integrate and estimate as above to find that
\begin{equation}\label{0es01}
\begin{split} &\frac{1}{2}\frac{d}{dt}
\left(\int_\Omega\rho |\partial_t^\ell u|^2+\int_{\Sigma_+}
\sigma_+|\nabla_\ast \partial_t^\ell \eta_+|^2+\rho_+
g|\partial_t^\ell \eta_+|^2+\int_{\Sigma_-} \sigma_-|\nabla_\ast
\partial_t^\ell \eta_-|^2   \right) +\frac{1}{2}\int_\Omega\mu |\mathbb{D}
\partial_t^\ell u|^2
\\&\quad=\int_\Omega
 \partial_t^\ell f\cdot
\partial_t^\ell u-\int_\Sigma   {\partial_t^\ell g} \cdot \partial_t^\ell u +\int_{\Sigma_-}\rj  g \partial_t^\ell \eta_- \partial_t^\ell u_3
\\&\quad \le C (\|\partial_t^\ell f\|_{-1}
+\|\partial_t^\ell g\|_{-1/2}+\|\partial_t^{\ell}
\eta\|_{-1/2})\|\partial_t^\ell u\|_1
\\&\quad \le
C_\varepsilon  (\|\partial_t^\ell f\|_{-1}^2 +\|\partial_t^\ell
g\|_{-1/2}^2+\|\partial_t^{\ell-1}
u\|_{1}^2)+\varepsilon\|\partial_t^\ell u\|_1^2
.\end{split}
\end{equation}
Here in the last inequality we have used the fact that
\begin{equation}
\|\partial_t^{\ell}
\eta\|_{-1/2}=\|\partial_t^{\ell-1} u_3\|_{H^{-1/2}(\Sigma)}\le
C\|\partial_t^{\ell-1} u\|_{1}.
\end{equation}
Then we integrate  \eqref{0es01} in time from $0$ to $t$ and argue as above to deduce that  for $\ell=1,2$,
\begin{equation}\label{0es2}
\|\partial_t^\ell
u(t)\|_0^2+\|\partial_t^\ell\eta(t)\|_1^2+\int_0^t\|\partial_t^\ell
u(s)\|_1^2\,ds\le C \mathcal{E}(0)+ C\int_0^t\mathcal{E}(s)
\mathcal{D}(s)\,ds+C\int_0^t\|\partial_t^{\ell-1}
u(s)\|_{0}^2\,ds.
\end{equation}

Consequently, chaining \eqref{0es1} and \eqref{0es2} together, we get \eqref{0es}.
\end{proof}

Next, we derive the energy evolution of the mixed horizontal space-time derivatives.

\begin{lemma}\label{horizontal}
Let $(u,p,\eta)$ solve \eqref{surface}, then for any $\varepsilon>0$, there exists constant $C_\varepsilon>0$ such that
\begin{equation}\label{1es}
\begin{split}
&\sum_{ |\alpha|\le 4 } \left(\|\partial^\alpha u(t)\|_{0}^2+\|\partial^\alpha\eta(t)\|_{1}^2\right)
+\int_0^t \sum_{|\alpha|\le 4 }\|\partial^\alpha u(s)\|_{1}^2\,ds
\\&\quad \le  C_\varepsilon\mathcal{E} (0)+C_\varepsilon\int_0^t
 {\mathcal{E}(s)} \mathcal{D} (s) \,ds
+\varepsilon\int_0^t \mathcal{D}(s)\,ds
 +C_\varepsilon
\int_0^t\|\eta(s)\|_{0}^2\,ds  .
\end{split}
\end{equation}
\end{lemma}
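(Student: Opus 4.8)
The plan is to imitate the proof of Lemma~\ref{nosur1}, but applying the mixed horizontal space-time derivatives $\partial^\alpha$, $\alpha\in\mathbb{N}^{1+2}$, $|\alpha|\le 4$, in place of the pure time derivatives $\partial_t^\ell$. Since the interfaces $\Sigma_\pm$ and the bottom $\Sigma_b$ are flat and horizontal, every such $\partial^\alpha$ commutes with the equations and with all of the boundary conditions in \eqref{surface}: differentiating yields the same system for $(\partial^\alpha u,\partial^\alpha p,\partial^\alpha\eta)$ with forcing $\partial^\alpha f,\ \partial^\alpha g_\pm$, with $\partial_t\partial^\alpha\eta=\partial^\alpha u_3$ on $\Sigma$, and with $\Lbrack\partial^\alpha u\Rbrack=0$ on $\Sigma_-$ and $\partial^\alpha u_-=0$ on $\Sigma_b$ still in force. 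For each fixed such $\alpha$ I would multiply the $\partial^\alpha$-differentiated momentum equation by $\partial^\alpha u$, integrate by parts over $\Omega_+$ and $\Omega_-$ separately, and use the remaining (differentiated) boundary conditions exactly as in \eqref{0es0}--\eqref{0es01}; the pressure drops out via $\diverge\partial^\alpha u=0$, and one reaches
\begin{multline*}
\frac{1}{2}\frac{d}{dt}\left(\int_\Omega\rho|\partial^\alpha u|^2+\int_{\Sigma_+}\sigma_+|\na\partial^\alpha\eta_+|^2+\rho_+g|\partial^\alpha\eta_+|^2+\int_{\Sigma_-}\sigma_-|\na\partial^\alpha\eta_-|^2\right)+\frac{1}{2}\int_\Omega\mu|\mathbb{D}\partial^\alpha u|^2\\
=\int_\Omega\partial^\alpha f\cdot\partial^\alpha u-\int_\Sigma\partial^\alpha g\cdot\partial^\alpha u+\int_{\Sigma_-}\rj g\,\partial^\alpha\eta_-\,\partial^\alpha u_3.
\end{multline*}

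The forcing terms are routine. By duality and the trace theorem, $\int_\Omega\partial^\alpha f\cdot\partial^\alpha u\le C_\varepsilon\|\partial^\alpha f\|_{-1}^2+\varepsilon\|\partial^\alpha u\|_1^2$ and $\int_\Sigma\partial^\alpha g\cdot\partial^\alpha u\le C_\varepsilon\|\partial^\alpha g\|_{-1/2}^2+\varepsilon\|\partial^\alpha u\|_1^2$, while $\|\partial^\alpha f\|_{-1}^2+\|\partial^\alpha g\|_{-1/2}^2\lesssim\mathcal{E}\mathcal{D}$ for $|\alpha|\le 4$ is just the version of \eqref{nonl2} with $\partial_t^\ell$ replaced by a general parabolic-order-$\le 4$ derivative, which is proved by the same argument. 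After integrating in time, Korn's inequality turns $\int_0^t\int_\Omega\mu|\mathbb{D}\partial^\alpha u|^2$ into a multiple of $\int_0^t\|\partial^\alpha u\|_1^2$, so all of these $\varepsilon\|\partial^\alpha u\|_1^2$ contributions (and the analogous ones produced below) can be absorbed on the left. The only term that must be handled with care is the destabilizing boundary integral $\int_{\Sigma_-}\rj g\,\partial^\alpha\eta_-\,\partial^\alpha u_3$, and here I would distinguish two cases. If $\alpha=(\ell,0,0)$ with $\ell\le 2$ (no horizontal derivative), I proceed exactly as in Lemma~\ref{nosur1}: for $\ell=0$ a Cauchy--Schwarz bound gives $\le C_\varepsilon\|\eta\|_0^2+\varepsilon\|u\|_1^2$, producing the admissible term $\int_0^t\|\eta\|_0^2$, while for $\ell=1,2$ one uses $\|\partial_t^\ell\eta_-\|_{-1/2}=\|\partial_t^{\ell-1}u_{3,-}\|_{H^{-1/2}(\Sigma_-)}\lesssim\|\partial_t^{\ell-1}u\|_1$ to reduce to the strictly lower (time-)order quantity $\|\partial_t^{\ell-1}u\|_1^2$.

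The remaining case — $\alpha$ containing at least one horizontal derivative — is the crux. Write $\partial^\alpha=\partial_i\partial^\beta$ with $i\in\{1,2\}$ and $|\beta|=|\alpha|-1$. Using $\partial^\alpha u_3=\partial_t\partial^\alpha\eta_-$ on $\Sigma_-$, the integral becomes the perfect derivative $\frac{\rj g}{2}\frac{d}{dt}\|\partial^\alpha\eta_-\|_{L^2(\Sigma_-)}^2$, which I move to the left-hand side. The price is the indefinite instantaneous quantity $-\frac{\rj g}{2}\|\partial^\alpha\eta_-(t)\|_0^2$, which must be dominated by the surface-tension term $\frac{\sigma_-}{2}\|\na\partial^\alpha\eta_-(t)\|_0^2$ up to a lower-order error. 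Since $\partial^\alpha\eta_-=\partial_i\partial^\beta\eta_-$ has zero average on the torus, interpolation gives $\|\partial^\alpha\eta_-\|_0^2\le C\|\na\partial^\alpha\eta_-\|_0\,\|\partial^\alpha\eta_-\|_{H^{-1}(\Sigma_-)}\le C\|\na\partial^\alpha\eta_-\|_0\,\|\partial^\beta\eta_-\|_0$, and hence $\frac{\rj g}{2}\|\partial^\alpha\eta_-\|_0^2\le\frac{\sigma_-}{8}\|\na\partial^\alpha\eta_-\|_0^2+C\|\partial^\beta\eta_-\|_0^2$; this is the one step that genuinely uses $\sigma_->0$. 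Therefore $\frac{\sigma_-}{2}\|\na\partial^\alpha\eta_-(t)\|_0^2-\frac{\rj g}{2}\|\partial^\alpha\eta_-(t)\|_0^2\gtrsim\|\partial^\alpha\eta_-(t)\|_1^2-C\|\partial^\beta\eta_-(t)\|_0^2$, and the error $C\|\partial^\beta\eta_-(t)\|_0^2\le C\|\partial^\beta\eta(t)\|_1^2$ — since $|\beta|=|\alpha|-1$ — is controlled by the left-hand side of the estimate at the previous order. (The $\Sigma_+$ contributions create no such difficulty, since $\rho_+g>0$ is stabilizing and already delivers $\|\partial^\alpha\eta_+(t)\|_1^2$.)

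To conclude I would run this as a finite induction on $k=|\alpha|$ from $0$ to $4$: summing the time-integrated identities over $|\alpha|\le k$, and using Korn and the Poincar\'e inequality on $\Sigma_-$, every error term is bounded either by $\mathcal{E}(0)$, by $\mathcal{E}\mathcal{D}$, by $\varepsilon\mathcal{D}$ (after absorption), by $\int_0^t\|\eta\|_0^2$, or by the already-established estimate at order $k-1$; the base case $k=0$ is precisely the estimate \eqref{0es1} from the proof of Lemma~\ref{nosur1}. Chaining the finitely many levels yields \eqref{1es}. I expect the main obstacle to be exactly the destabilizing term in the horizontal-derivative case: arranging the competition between $-\rj g\|\partial^\alpha\eta_-\|_0^2$ and $\sigma_-\|\na\partial^\alpha\eta_-\|_0^2$ so that only a strictly lower-order quantity — ultimately the low-regularity norm $\|\eta\|_0^2$ — leaks onto the right-hand side, which crucially uses both $\sigma_->0$ and the zero-average property of pure derivatives on the periodic interface. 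A secondary point requiring a check is that the nonlinear bounds \eqref{nonl1}--\eqref{nonl2} persist verbatim when the pure time derivatives there are replaced by mixed horizontal space-time derivatives of the same parabolic order.
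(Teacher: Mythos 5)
Your proposal is correct, but it handles the crucial destabilizing boundary term $\int_{\Sigma_-}\rj g\,\partial^\alpha\eta_-\,\partial^\alpha u_3$ by a genuinely different route than the paper. The paper keeps this term on the right-hand side, bounds it by duality and trace as $C\|\partial^\alpha\eta\|_{-1/2}\|\partial^\alpha u\|_1$, and then uses the Sobolev interpolation trick \eqref{1es1}, $\|\partial^\alpha\eta\|_{-1/2}^2\le\|\eta\|_{7/2}^2+\|u_3\|_{H^{3/2}(\Sigma)}^2 \le C_\varepsilon(\|\eta\|_0^2+\|u\|_0^2)+\varepsilon\mathcal{D}$; it is precisely this step that generates the $\varepsilon\int_0^t\mathcal{D}$ term in \eqref{1es}, and it works uniformly in $\alpha$ without ever invoking $\sigma_->0$ in this lemma. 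Your approach instead identifies the integral (via the kinematic condition) as the perfect derivative $\frac{\rj g}{2}\frac{d}{dt}\|\partial^\alpha\eta_-\|_0^2$, moves it to the left, and neutralizes the resulting indefinite $-\frac{\rj g}{2}\|\partial^\alpha\eta_-(t)\|_0^2$ by the surface-tension energy $\frac{\sigma_-}{2}\|\na\partial^\alpha\eta_-(t)\|_0^2$ and the interpolation $\|\partial_i\partial^\beta\eta_-\|_0^2\le C\|\na\partial_i\partial^\beta\eta_-\|_0\|\partial^\beta\eta_-\|_0$, which leaves a strictly lower-order error that you absorb by a finite induction on $|\alpha|$; this trades the paper's $\varepsilon\int_0^t\mathcal{D}$ term for an explicit use of $\sigma_->0$ (your argument actually yields the lemma without the $\varepsilon\int_0^t\mathcal{D}$ term, a slightly stronger conclusion, though at the cost of making $\sigma_->0$ essential to this particular lemma rather than only to the structure of $\mathcal{D}$). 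Your secondary concern about extending \eqref{nonl1}--\eqref{nonl2} to mixed derivatives is in fact automatic: for any $\alpha$ with $\alpha_0=\ell$ and $|\alpha|\le 4$ one has $\alpha_1+\alpha_2\le 4-2\ell$, and since the horizontal directions are periodic, $\|\partial^\alpha f\|_{(\H)^\ast}\le\|\partial_t^\ell f\|_{4-2\ell-1}$ and $\|\partial^\alpha g\|_{-1/2}\le\|\partial_t^\ell g\|_{4-2\ell-1/2}$, so \eqref{nonl2} already gives the needed bounds without reworking the nonlinear estimates.
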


\begin{proof}
Since the boundaries $\Sigma_\pm$ and $\Sigma_b$ are flat,  we are free to take temporal and horizontal derivatives of the equations \eqref{surface} without disrupting the structure of the boundary conditions.  Applying $\partial^\alpha$ for $\alpha\in \mathbb{N}^{1+2}$ with $|\alpha|\le 4, \alpha_0<2$ to $\eqref{surface}_1$, multiplying the  resulting equations by $\partial^\alpha u$, and then integrating by parts over $\Omega$, we find that (again using trace estimates)
\begin{equation}\label{1es0}
\begin{split}
&\frac{1}{2}\frac{d}{dt}
\left(\int_\Omega\rho |\partial^\alpha u|^2+\int_{\Sigma}
\sigma|\nabla_\ast
\partial^\alpha \eta|^2 \right) +\frac{1}{2}\int_\Omega\mu
|\mathbb{D}\partial^\alpha u|^2
\\&\quad=\int_\Omega
\partial^\alpha f\cdot
\partial^\alpha u-\int_\Sigma \partial^\alpha  {g} \cdot
\partial^\alpha u+\int_{\Sigma_-}\rj    g \partial^\alpha \eta_- \partial^\alpha  u_3
\\&\quad\le   C (\|\partial^\alpha f\|_{-1}+\|
\partial^\alpha  {g}\|_{-1/2}+\|\partial^\alpha  \eta\|_{-1/2})\|\partial^\alpha u\|_1.
\end{split}
\end{equation}

Notice that for $|\alpha|\le 4, \alpha_0<2$,  the kinematic boundary condition, the trace theorem, and the usual Sobolev interpolation allow us to estimate
\begin{equation}\label{1es1}
\begin{split}
\|\partial^\alpha  \eta\|_{-1/2}^2&\le \|\eta\|_{7/2}^2+\|\partial_t\eta\|_{3/2}^2
=\|\eta\|_{7/2}^2+\|u_3\|_{H^{3/2}(\Sigma)}^2
\\&\le
C_\varepsilon (\|\eta\|_{0}^2+\|u\|_{0}^2)+\varepsilon(\|\eta\|_{4}^2+\|u\|_{3}^2)
\le C_\varepsilon (\|\eta\|_{0}^2+\|u\|_{0}^2)+\varepsilon
\mathcal{D} .
\end{split}
\end{equation}
We then integrate \eqref{1es0} in time from $0$ to $t$,  plug \eqref{1es1} into the resulting inequality, and use Korn's inequality, Poincar\'{e}'s inequality, Cauchy's inequality, and the estimate \eqref{nonl2} to find that
\begin{equation}
\begin{split}
 &  \sum_{ \substack{|\alpha|\le 4 \\ \alpha_0<2}}(\|\partial^\alpha
u(t)\|_0^2+\|
\partial^\alpha \eta(t)\|_1^2)   +\int_0^t  \sum_{ \substack{|\alpha|\le 4 \\ \alpha_0<2}} \|\partial^\alpha
u(s)\|_1^2\,ds
\\&\quad\le  C\mathcal{E} (0)+C\int_0^t
 {\mathcal{E}(s)} \mathcal{D} (s) \,ds
+\varepsilon\int_0^t \mathcal{D}(s) \,ds
 +C_\varepsilon
\int_0^t(\|\eta(s)\|_{0}^2+\|u(s)\|_{0}^2)\,ds .
\end{split}
\end{equation}
We then obtain the estimate \eqref{1es} from the above inequality and the estimate \eqref{0es} of Lemma \ref{nosur1}.
\end{proof}

\subsubsection{Full energy estimates}

Notice that the estimates \eqref{1es} of Lemma \ref{horizontal} only contain the ``horizontal'' part energy estimates. In this subsection we will improve the estimates to be the full energy estimates.

\begin{theorem}\label{energywith}
Let $(u,p,\eta)$ solve \eqref{surface}. If $\mathcal{E}(t) \le \delta^2$ for sufficiently small $\delta$ for all $t \in [0,T]$, then we have that
\begin{equation}\label{2es}
\mathcal{E}(t)+\int_0^t\mathcal{D}(s)\,ds
 \le  C\mathcal{E} (0)+C
\int_0^t\|\eta(s)\|_{0}^2 \,ds
\end{equation}
for $t \in [0,T]$.
\end{theorem}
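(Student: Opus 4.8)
The plan is to upgrade the horizontal-and-temporal control furnished by Lemma~\ref{horizontal} to the full energy $\mathcal{E}$ and dissipation $\mathcal{D}$ of \eqref{energy00}--\eqref{dissipation00} by elliptic regularity, and then to close the estimate purely by absorption. Concretely, I would first establish two instantaneous \emph{improvement} inequalities, $\mathcal{E}(t)\lesssim\mathcal{E}_\parallel(t)+\mathcal{E}(t)^2$ and $\mathcal{D}(t)\lesssim\mathcal{D}_\parallel(t)+\mathcal{E}(t)\mathcal{D}(t)$, where $\mathcal{E}_\parallel,\mathcal{D}_\parallel$ are the horizontal-and-temporal quantities bounded by \eqref{1es}. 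To obtain them, freeze time and regard $\eqref{surface}_1,\eqref{surface}_2,\eqref{surface}_4,\eqref{surface}_5,\eqref{surface}_6$ as a two-phase stationary Stokes system for $(u,p)$ with interior forcing $f-\rho\partial_t u$, upper stress data $(\rho_+g\eta_+-\sigma_+\Delta_\ast\eta_+)e_3+g_+$ on $\Sigma_+$, jump-of-stress data $(\rj g\eta_-+\sigma_-\Delta_\ast\eta_-)e_3-g_-$ on $\Sigma_-$, and $u=0$ on $\Sigma_b$; applying the two-phase Stokes regularity estimate (as in \cite{WTK}) to this system and to its temporal derivatives $\partial_t^\ell$ ($\ell=0,1$) gains two spatial derivatives, schematically
\[
\|\partial_t^\ell u\|_{4-2\ell+j}+\|\partial_t^\ell p\|_{3-2\ell+j}\lesssim\|\partial_t^{\ell+1}u\|_{2-2\ell+j}+\|\partial_t^\ell f\|_{2-2\ell+j}+\|\partial_t^\ell\eta\|_{4-2\ell+j+1/2}+\|\partial_t^\ell g\|_{2-2\ell+j+1/2}
\]
for $j=0$ (energy) and $j=1$ (dissipation). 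The top temporal slot $\partial_t^2 u$ must be handled by hand, by testing $\partial_t^2$ of $\eqref{surface}_1$ against divergence-free elements of ${}_0H^1(\Omega)$ and using duality; this same pairing, applied to $\partial_t^3$ of $\eqref{surface}_1$ and to the elliptic problem satisfied by $p$, produces the weak tails $\|\rho\partial_t^3 u\|_{-1}$, $\|\partial_t^2 p\|_0$, $\|\nabla\partial_t^2 u\|_{H^{-1/2}(\Sigma_+)}$, $\|\llbracket\mu\nabla\partial_t^2 u\rrbracket\|_{H^{-1/2}(\Sigma_-)}$ and $\|\partial_t^2 p\|_{H^{-1/2}(\Sigma_\pm)}$ occurring in $\mathcal{D}$.

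Next I would recover the surface functions. The top $\|\eta\|_5$ in $\mathcal{E}$ is already provided by Lemma~\ref{horizontal}, since every spatial derivative of $\eta$ is horizontal, and the temporal derivatives $\partial_t^\ell\eta$ are read off from the kinematic condition $\partial_t^\ell\eta=\partial_t^{\ell-1}u_3$ on $\Sigma$ and the trace theorem (the lowest slot, $\partial_t^3\eta$, being taken in $H^{-1/2}$ by duality). What remains is the $\|\eta\|_{11/2}$ dissipation term: the normal components of $\eqref{surface}_4,\eqref{surface}_5$ rearrange into the elliptic equations $-\sigma_+\Delta_\ast\eta_++\rho_+g\eta_+=(p_+-2\mu_+\partial_3 u_{3,+})|_{\Sigma_+}-g_+^3$ and $-\sigma_-\Delta_\ast\eta_-=\rj g\eta_-+\llbracket p-2\mu\partial_3 u_3\rrbracket|_{\Sigma_-}+g_-^3$ on $\mathrm{T}^2$. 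The upper operator is coercive, so elliptic regularity and the trace theorem give $\|\eta_+\|_{s+2}\lesssim\|p\|_{s+1/2}+\|u\|_{s+3/2}+\|g_+^3\|_{H^s(\Sigma)}$; taking $s=7/2$ and using dissipation-level control of $(u,p)$ yields the $\|\eta_+\|_{11/2}$ bound, with the nonlinear $\|g_+^3\|_{7/2}$ estimated by $\sqrt{\mathcal{E}\mathcal{D}}$ via \eqref{nonl2}. At $\Sigma_-$ the symbol $\sigma_-|\xi|^2$ is only nonnegative and the term $\rj g\eta_-$ sits on the right-hand side, so after an interpolation the same estimate holds only \emph{modulo} $\|\eta_-\|_0$ — which is harmless, being exactly the quantity permitted on the right of \eqref{2es}. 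This is the concrete place where the unstable regime $\sigma_-<\sigma_c$ forces the appearance of $\int_0^t\|\eta\|_0^2$.

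Finally I would assemble everything: combine the two improvement inequalities with the conclusion of Lemma~\ref{horizontal} (which controls $\mathcal{E}_\parallel(t)+\int_0^t\mathcal{D}_\parallel$ by $C\mathcal{E}(0)+C\int_0^t\mathcal{E}\mathcal{D}+\varepsilon\int_0^t\mathcal{D}+C_\varepsilon\int_0^t\|\eta\|_0^2$), and bound every nonlinear contribution $\partial_t^\ell f,\partial_t^\ell g$ by $\mathcal{E}^2$ at the energy level and $\mathcal{E}\mathcal{D}$ at the dissipation level via \eqref{nonl1}--\eqref{nonl2}. This produces an inequality of the shape
\[
\mathcal{E}(t)+\int_0^t\mathcal{D}(s)\,ds\lesssim\mathcal{E}(0)+\mathcal{E}(t)^2+\int_0^t\mathcal{E}(s)\mathcal{D}(s)\,ds+\varepsilon\int_0^t\mathcal{D}(s)\,ds+C_\varepsilon\int_0^t\|\eta(s)\|_0^2\,ds.
\]
Since $\mathcal{E}(t)\le\delta^2$, one has $\mathcal{E}(t)^2\le\delta^2\mathcal{E}(t)$ and $\int_0^t\mathcal{E}\mathcal{D}\le\delta^2\int_0^t\mathcal{D}$, so choosing first $\varepsilon$ and then $\delta$ small absorbs the $\mathcal{E}(t)^2$, $\int_0^t\mathcal{E}\mathcal{D}$ and $\varepsilon\int_0^t\mathcal{D}$ terms into the left-hand side and yields \eqref{2es}; no Gronwall argument is needed, because the only surviving time integral on the right is the admissible $\int_0^t\|\eta\|_0^2$. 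I expect the hard part to be the first step — setting up the two-phase Stokes elliptic estimates so that the Sobolev indices match across the internal interface $\Sigma_-$ (where only the \emph{jump} of the stress is controlled), handling the non-coercivity of the capillary operator there, and chasing the weak/negative-order closures for $\partial_t^3 u$, $\partial_t^2 p$ and their boundary traces that tie off the temporal hierarchy in $\mathcal{D}$.
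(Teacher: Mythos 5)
There is a genuine gap in the dissipation step.  You propose to obtain the $j=1$ (dissipation) half of your ``improvement inequality'' by applying the two-phase Stokes regularity estimate of Lemma~\ref{cStheorem} with $r = 4-2\ell+1$.  But for this two-phase problem the boundary data $F^3_\pm$ contain $\sigma_\pm\Delta_\ast\eta_\pm$, so the estimate has $\|\partial_t^\ell\eta\|_{4-2\ell+j+1/2}$ on the right-hand side exactly as you wrote; at $\ell=0,j=1$ this is $\|\eta\|_{11/2}$, which is one of the dissipation quantities you are trying to bound.  You then propose to recover $\|\eta\|_{11/2}$ from the dynamic boundary conditions ``using dissipation-level control of $(u,p)$,'' i.e.\ from $\|u\|_5+\|p\|_4$.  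The two steps feed each other: $\|u\|_5+\|p\|_4 \lesssim \cdots + \sigma\|\eta\|_{11/2}$ and $\sigma\|\eta\|_{11/2}\lesssim \cdots + \|u\|_5+\|p\|_4$, with no small prefactor available to absorb.  The argument does not close.

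The paper breaks this circle with a different elliptic device.  It observes that, because $\Sigma_\pm$ are flat, the horizontal-and-temporal estimates of Lemma~\ref{horizontal} already give high-order boundary control of $u$ directly: $\int_0^t\|\partial_t^\ell u\|^2_{H^{4-2\ell+1/2}(\Sigma)}\,ds \lesssim \int_0^t \sum_{|\alpha|\le 4}\|\partial^\alpha u\|_1^2\,ds$, since a horizontal Sobolev norm on a flat $\Sigma_\pm$ is produced by horizontal derivatives plus a trace (\eqref{boundary}).  With that in hand, one views $(\partial_t^\ell u_\pm,\partial_t^\ell p_\pm)$ as solving two separate \emph{one-phase} Stokes systems in $\Omega_+$ and $\Omega_-$ with \emph{Dirichlet} boundary data (\eqref{pro+11}--\eqref{pro-11}), and applies Lemma~\ref{cS1phaselemma2}.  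The Dirichlet Stokes estimate needs only the boundary trace of $u$ and no reference whatsoever to $\eta$, so there is no circularity; this yields dissipation-level bounds for $\|\partial_t^\ell u\|_{4-2\ell+1}$ and $\|\nabla\partial_t^\ell p\|_{4-2\ell-1}$ unconditionally.  Only afterwards are the dynamic boundary conditions used (after applying $\nabla_\ast$, since at that stage one controls $\nabla p$ but not the full trace of $p$) together with Poincar\'e on $\mathrm{T}^2$ and the zero-average condition to recover $\|\eta\|_{11/2}$ and then $\|p\|_{4-2\ell}$, and the weak formulation plus Lemma~\ref{Pressure} closes the low-index tails.  You should restructure your Step 2 along these lines; the two-phase estimate is only used at the \emph{energy} level (where $\|\eta\|_{4-2\ell+5/2}$ is already provided by \eqref{1es}), while the dissipation level must go through the one-phase Dirichlet problem.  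Also, a minor point: the $\int_0^t\|\eta\|_0^2$ term on the right of \eqref{2es} is forced already in the basic $L^2$ evolution (the sign-indefinite boundary term $\int_{\Sigma_-}\rj g\,\eta_-\,u_3$ in \eqref{0es0}), not only at the $\eta$ elliptic estimate; the elliptic estimate merely produces a harmless additional copy that is absorbed there.
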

\begin{proof}
As in \cite{WTK}, we will use the elliptic regularity theory for the stationary Stokes problems and the structure of the equations \eqref{surface} to improve the estimates.  We divide our proof into several steps.

\emph{Step 1 -- $\mathcal{E}$ estimates}

First,  we may apply  $\partial_t^\ell$ with $\ell=0,1$ to the equations \eqref{surface} to find
\begin{equation}\label{jellip1}
\left\{\begin{array}{lll}
-\mu\Delta \partial_t^\ell u+\nabla\partial_t^\ell p
=-\rho\partial_t^{\ell+1} u+\partial_t^\ell f
\quad&\hbox{in }\Omega
\\ \diverge \partial_t^\ell u=0&\hbox{in }\Omega
\\ \llbracket \partial_t^\ell p_+I-\mu_+\mathbb{D}(\partial_t^\ell u_+)\rrbracket e_3
=\rho_+g\partial_t^\ell\eta_+
e_3-\sigma_+\Delta_\ast\partial_t^\ell\eta_++\partial_t^\ell
g^3_+&\hbox{on }\Sigma_+
\\ \llbracket \partial_t^\ell u\rrbracket=0,
\ \llbracket\partial_t^\ell pI-\mu\mathbb{D}(\partial_t^\ell u)\rrbracket
e_3=\rj g\partial_t^\ell\eta_- e_3+\sigma_-\Delta_\ast\partial_t^\ell\eta_--\partial_t^\ell g_-^3&\hbox{on
}\Sigma_-
\\ \partial_t^j u_-=0 &\hbox{on }\Sigma_b.
\end{array}\right.
\end{equation}
Applying the two-phase Stokes regularity theory in Lemma \ref{cStheorem} with $r=4-2\ell\ge 2$ to the problem \eqref{jellip1} and using \eqref{1es} and \eqref{nonl1}, we obtain
\begin{equation}\label{l0}
\begin{split}  &\|\partial_t^\ell u(t)\|_{4-2\ell}^2+ \|\partial_t^{\ell} p(t)\|_{4-2\ell-1}^2
  \\&\quad\lesssim \|\partial_t^{\ell+1} u\|_{4-2\ell-2 }^2+\| \partial_t^\ell f(t)\|_{4-2\ell-2
}^2+\|\partial_t^\ell \eta\|_{4-2\ell-3/2+2  }^2+\|
 \partial_t^\ell{g}\|_{4-2\ell-3/2 }^2
   \\&\quad  \lesssim \mathcal{E}^2+\|\partial_t^{\ell+1} u\|_{4-2\ell-2 }^2+\|\partial_t^\ell \eta\|_{4-2\ell-3/2+2  }^2.
   \end{split}
\end{equation}
Chaining \eqref{l0} and \eqref{1es}, we obtain
\begin{equation}\label{l1}
\sum_{\ell=0}^2\left(\|\partial_t^\ell u\|_{4-2\ell}^2+\|\partial_t^\ell \eta\|_{4-2\ell+1}^2\right)+\sum_{\ell=0}^1\|\partial_t^\ell p\|_{4-2\ell-1}^2\lesssim \mathcal{E}^2+\mathcal{Z},
\end{equation}
where we have written compactly $\mathcal{Z}$ to denote the right hand side of \eqref{1es}.

Next, employing the kinematic boundary conditions and the trace theorem, we deduce from \eqref{l1} that
\begin{equation}\label{l3}
\begin{split}
\sum_{\ell=1}^3\|\partial_t^\ell\eta(t)\|_{   4-2\ell+3/2  }^2
 &=\sum_{\ell=1}^3\|\partial_t^{\ell-1} u_3(t)\|_{  H^{ 4-2\ell+3/2}(\Sigma)  }^2
 \\&\lesssim \sum_{\ell=1}^3\|\partial_t^{\ell-1} u(t)\|_{    4-2(\ell-1)  }^2\lesssim\mathcal{E}^2+\mathcal{Z}.
    \end{split}
\end{equation}
Here, when $\ell=3$ we have used the fact $\|\partial_t^2 u_3\|_{  H^{ -1/2}(\Sigma)  }\lesssim \|\partial_t^2 u\|_{  0 }$ since $\diverge \partial_t^2 u=0$ in $\Omega$.

We complete the energy part of the desired estimate by summing  \eqref{l1}--\eqref{l3} and then absorbing the $\mathcal{E}^2$ term (taking $\delta$ small enough) onto the left side of the resulting inequality.  This yields the estimate
\begin{equation}\label{hh1}
\mathcal{E}(t)  \lesssim \mathcal{Z}.
\end{equation}

\emph{Step 2 -- $\mathcal{D}$ estimates for $u$ and $\nabla p$}

Now we turn to the dissipation part estimates. Notice that we have not yet derived at estimate of $\eta$ in terms of the dissipation, so we can not apply the two-phase elliptic estimates of Lemma \ref{cStheorem} as above. It is crucial to observe that from \eqref{1es} we can get higher regularity estimates of u on the boundaries $\Sigma=\Sigma_+\cup\Sigma_-$. Indeed, since $\Sigma_\pm$ are flat, by the definition of Sobolev norm on $\mathrm{T}^2$  and the trace theorem, we may deduce from \eqref{1es} that for $\ell=0,1,2$,
\begin{equation}\label{boundary}
\begin{split}
\int_0^t  \|\partial_t^\ell u(s)\|_{H^{4-2\ell+1/2}(\Sigma)}^2\,ds
&\lesssim \int_0^t  \left(\| \partial_t^\ell u(s)\|_{H^{
1/2}(\Sigma)}^2+\|\nabla_\ast^{(4-2\ell)}\partial_t^\ell
u(s)\|_{H^{1/2}(\Sigma)}^2\right)\,ds
\\ & \lesssim   \int_0^t  \left(\| \partial_t^\ell u(s)\|_{1}^2+\|\nabla_\ast^{(4-2\ell)}\partial_t^\ell
u(s)\|_{1}^2\right)\,ds\lesssim
\mathcal{Z}.
    \end{split}
\end{equation}
This motivates us to use the one-phase  Stokes regularity theory of Lemma \ref{cS1phaselemma2}.

The key point is to observe that $(\partial_t^\ell u_+,\partial_t^\ell p_+) $ with $\ell=0,1$ solve the  problem
 \begin{equation} \label{pro+11}
 \left\{\begin{array}{lll}-\mu_+\Delta \partial_t^\ell u_++\nabla\partial_t^\ell p_+
 =-\rho_+\partial_t^{\ell+1} u_++\partial_t^\ell f_+\quad&\hbox{in }&\Omega_+
\\ \diverge \partial_t^\ell u_+=0&\hbox{in }&\Omega_+
\\ \partial_t^\ell u_+=\partial_t^\ell u_+&\hbox{on }&\Sigma,
\end{array}\right.
\end{equation}
and  $(\partial_t^\ell u_-,\partial_t^\ell p_-) $ solve the  problem
 \begin{equation} \label{pro-11}
 \left\{\begin{array}{lll}-\mu_-\Delta \partial_t^\ell u_-+\nabla\partial_t^\ell p_-
 =-\rho_-\partial_t^{\ell+1} u_-+\partial_t^\ell f_-\quad&\hbox{in }&\Omega_-
\\ \diverge \partial_t^\ell u_-=0&\hbox{in }&\Omega_-
\\ \partial_t^\ell u_-=\partial_t^\ell u_-&\hbox{on }&\Sigma_-,
\\ \partial_t^\ell u_-=0 &\hbox{on }&\Sigma_b.
\end{array}\right.
\end{equation}
We apply Lemma \ref{cS1phaselemma2} with $r=4-2\ell+1$ to the problems \eqref{pro+11} and \eqref{pro-11} respectively, using the estimates \eqref{nonl2}, \eqref{boundary}, and then summing up, to have
\begin{equation}\label{ll0}
\begin{split}
&\int_0^t
 \|\partial_t^\ell u(s)\|_{4-2\ell+1}^2+\|\nabla\partial_t^\ell
p(s)\|_{4-2\ell-1}^2 \,ds
\\&\quad\lesssim
\int_0^t\|\partial_t^{\ell+1} u(s)\|_{4-2\ell-1 }^2+\|
\partial_t^\ell f(s)\|_{4-2\ell-1 }^2+\|\partial_t^\ell u(s)\|_{H^{4-2\ell+1/2}(\Sigma)}^2\,ds
\\&\quad\lesssim \int_0^t\|\partial_t^{\ell+1} u(s)\|_{4-2\ell-1
}^2\,ds +\int_0^t \mathcal{E}(s)\mathcal{D}(s)\,ds+ \mathcal{Z}
\\&\quad\lesssim \int_0^t\|\partial_t^{\ell+1} u(s)\|_{4-2\ell-1
}^2\,ds +  \mathcal{Z}.
\end{split}
\end{equation}
Chaining \eqref{ll0} and \eqref{1es}, we obtain
\begin{equation}\label{lll2}
   \int_0^t\sum_{\ell=0}^2
 \|\partial_t^\ell u(s)\|_{4-2\ell+1}^2+\sum_{\ell=0}^1 \|\nabla\partial_t^\ell
p(s)\|_{4-2\ell-1}^2  \,ds \lesssim \mathcal{Z}.
\end{equation}

\emph{Step 3 -- $\mathcal{D}$ estimates for $\eta$ and $p$}

Now that we have obtained \eqref{lll2}, we estimate the remaining parts in  $\mathcal{D}$. We will turn to the boundary conditions in \eqref{surface}. First we derive estimates for $\eta$. For the time derivatives of $\eta$, we use the kinematic boundary conditions, the trace theorem and \eqref{lll2} to obtain
\begin{equation}\label{toesd52}
\begin{split}
\int_0^t\sum_{\ell=1}^3
  \|\partial_t^\ell\eta(s)\|_{ 4-2\ell+5/2  }^2\,ds
  &=\int_0^t\sum_{\ell=1}^3
  \|\partial_t^{\ell-1}u_3(s)\|_{ H^{4-2\ell+5/2}(\Sigma)  }^2\,ds
\\&\lesssim
\int_0^t\sum_{\ell=1}^3\|\partial_t^{\ell-1}u_3(s)\|_{ 4-2(\ell-1)+1
}^2\,ds\lesssim\mathcal{Z}.
\end{split}
\end{equation}
For the term $\eta$ without temporal derivatives we use the dynamic boundary conditions
\begin{equation} \label{ellip1}
 -\sigma_+\Delta_\ast\eta_+  = p_+-2\mu_+\partial_3u_{3,+}- {g}_+-\rho_+g\eta_+\hbox{ on }\Sigma_+
 \end{equation}
and
\begin{equation}\label{ellip2}
 -\sigma_-\Delta_\ast\eta_-  = \llbracket -p +2\mu\partial_3u_3\rrbracket
- {g}_-+\rj g\eta_-\hbox{ on }\Sigma_-.
\end{equation}
Notice that at this point we do not have any bound on $p$ on the
boundary $\Sigma$ yet, but we have the estimate of $\nabla p$ in
$\Omega$. Applying $\nabla_\ast$ to  \eqref{ellip1}--\eqref{ellip2} and employing the standard  elliptic theory on $\mathrm{T}^2$ as well as  the trace theorem and \eqref{lll2}, we find that
\begin{equation}\label{toesd62}
\begin{split}\int_0^t
\|\nabla_\ast\eta(s)\|_{   9/2  }^2\,ds
       & \lesssim  \int_0^t\| \nabla_\ast p(s)\|_{H^{5/2}(\Sigma)}^2+\| \nabla_\ast\partial_3u_3(s)\|_{H^{5/2}(\Sigma)}^2
       +\| \nabla_\ast\eta(s)\|_{ 5/2}^2\,ds
      \\& \lesssim  \int_0^t\| \nabla_\ast p(s)\|_{3}^2+\|  u (s)\|_{5}^2
       +\| \eta(s)\|_{ 7/2}^2\,ds
       \\& \lesssim\mathcal{Z} + \int_0^t \| \eta(s)\|_{ 7/2}^2\,ds .
       \end{split}
\end{equation}
Using Poincar\'e's inequality on $\Sigma_\pm$ (since $\int_{\mathrm{T}^2}\eta=0$) and the same type of Sobolev interpolation trick we used in \eqref{1es1}, we deduce from \eqref{toesd62} that
\begin{equation}\label{toesd72}
\int_0^t
\| \eta(s)\|_{ 11/2  }^2\,ds
\lesssim\mathcal{Z} + \int_0^t \|
\eta(s)\|_{ 0}^2\,ds\lesssim\mathcal{Z}  .
\end{equation}

We now estimate  $\|\partial_t^\ell p\|_0$ for $\ell=0,1$. By the equations \eqref{ellip1}--\eqref{ellip2},  the estimates \eqref{lll2}, \eqref{toesd72} and \eqref{nonl2}, and the trace theorem, we find that for $\ell=0,1$,
\begin{equation}\label{pp11}
\begin{split}
    &\int_0^t
 \|\partial_t^\ell p_+(s)\|_{L^2(\Sigma_+)}^2+  \| \llbracket  \partial_t^\ell p\rrbracket(s)\|_{L^2(\Sigma_+)}^2  \,ds
 \\&\quad\lesssim \int_0^t
 \|\partial_t^\ell \partial_3 u_3(s)\|_{L^2(\Sigma)}^2+\|\partial_t^\ell g(s)\|_{L^2(\Sigma)}^2+\|\partial_t^\ell\eta(s)\|_{2}^2  \,ds
 \\&\quad\lesssim \mathcal{Z}+\int_0^t
\mathcal{E}(s)\mathcal{D}(s) \,ds\lesssim
\mathcal{Z}.
  \end{split}
\end{equation}
 By Poincar\'e's inequality on $\Omega_+$ (Lemma \ref{poincare}), \eqref{lll2} and \eqref{pp11}, we have
\begin{equation}\label{pp21}
\begin{split}
 \int_0^t   \| \partial_t^\ell
p_+(s)\|_{H^1(\Omega_+)}^2  \,ds &=\int_0^t   \| \partial_t^\ell
p_+(s)\|_{L^2(\Omega_+)}^2+\|\nabla
\partial_t^\ell p_+(s)\|_{L^2(\Omega_+)}^2 \,ds
\\&\lesssim\int_0^t   \| \partial_t^\ell
p_+(s)\|_{L^2(\Sigma_+)}^2+\|\nabla
\partial_t^\ell p_+(s)\|_{L^2(\Omega_+)}^2 \,ds \lesssim\mathcal{Z}.
  \end{split}
\end{equation}
On the other hand, by the trace theorem and \eqref{pp11}--\eqref{pp21}, we have
\begin{equation}\label{pp31}
\begin{split}
 \int_0^t   \| \partial_t^\ell
p_-(s)\|_{L^2(\Sigma_-)}^2  \,ds&\le \int_0^t   \| \partial_t^\ell
p_+(s)\|_{L^2(\Sigma_-)}^2+\| \llbracket \partial_t^\ell
p(s)\rrbracket\|_{L^2(\Sigma_-)}^2  \,ds
\\& \lesssim
\int_0^t   \| \partial_t^\ell p_+(s)\|_{H^1(\Omega_+)}^2+\|
\llbracket
\partial_t^\ell p(s)\rrbracket\|_{L^2(\Sigma_-)}^2  \,ds
 \lesssim\mathcal{Z},
 \end{split}
\end{equation}
and so again by Poincar\'e's inequality on $\Omega_-$ as well as \eqref{lll2} and \eqref{pp31}, we have
\begin{equation}\label{pp41}
\begin{split}
\int_0^t   \| \partial_t^\ell
p_-(s)\|_{H^1(\Omega_-)}^2  \,ds &= \int_0^t   \| \partial_t^\ell
p_-(s)\|_{L^2(\Omega_-)}^2+ \| \nabla\partial_t^\ell
p_-(s)\|_{L^2(\Omega_-)}^2 \,ds
\\&\lesssim\int_0^t   \|
\partial_t^\ell p_-(s)\|_{L^2(\Sigma_-)}^2+ \| \nabla\partial_t^\ell
p_-(s)\|_{L^2(\Omega_-)}^2 \,ds \lesssim\mathcal{Z}.
 \end{split}
\end{equation}
 In light of \eqref{pp21} and \eqref{pp41}, we may improve the estimate \eqref{lll2} to be
\begin{equation}\label{lll3}
\begin{split}
    \int_0^t\sum_{\ell=0}^2
 \|\partial_t^\ell u(s)\|_{4-2\ell+1}^2+\sum_{\ell=0}^1 \| \partial_t^\ell
p(s)\|_{4-2\ell}^2  \,ds \lesssim \mathcal{Z}.
 \end{split}
\end{equation}

\emph{Step 4 -- Remaining $\mathcal{D}$ estimates for $u$ and $p$}

Next, we want to estimate $\|\rho\partial_t^3 u(s)\|_{\Hd}^2$ and $\|\partial_t^2p\|_0^2$. To do this, we shall use a weak formulation of the problem \eqref{surface}. It follows that
\begin{equation}\label{weak}
\begin{split}
\langle\rho\partial_t^3 u,\varphi \rangle_{\ast}+(\frac{\mu}{2}\mathbb{D}\partial_t^2u, \mathbb{D}\varphi)
&=\langle\partial_t^2 f,\varphi \rangle_{\ast}-\langle
\partial_t^2g,\varphi \rangle_{-1/2}+\sigma_\pm(\Delta_\ast \partial_t^2\eta_\pm,  \varphi_{3,+})_{\pm}
\\&\quad -\rho_+g(\partial_t^2\eta_{+},\varphi_{3,+} )_{+}
+\rj g(\partial_t^2\eta_{-},\varphi_3)_{-} \hbox{ for all
}\varphi\in {}_0H_\sigma^1(\Omega).
 \end{split}
\end{equation}
Here $\langle\cdot,\cdot\rangle_\ast$ denotes the dual pairing between $({}_0H_\sigma^1(\Omega))^\ast$ and ${}_0H_\sigma^1(\Omega)$, $\langle \cdot,\cdot \rangle_{-1/2}$ denotes the dual pairing between $H^{1/2}(\Sigma)$ and $H^{-1/2}(\Sigma)$, $(\cdot,\cdot)_{\pm}$ is the $L^2$ inner product on $\Sigma_\pm$. By the estimates \eqref{nonl2}, \eqref{toesd52} and \eqref{lll3}, we see from the formulation \eqref{weak} that
\begin{equation}\label{000}
\int_0^t\|\rho\partial_t^3u(s)\|_{({}_0H_\sigma^1(\Omega))^\ast}^2\,ds\le
\mathcal{Z}.
\end{equation}
Since ${}_0H_\sigma^1(\Omega)\subset  {}_0H^1(\Omega)$, the usual theory of Hilbert spaces provides a unique operator $E: ({}_0H_\sigma^1(\Omega))^\ast\to ({}_0H^1(\Omega))^\ast$ satisfying the property that $Ef\vert_{ {}_0H_\sigma^1(\Omega)} = f$ and that $\|Ef\|_{({}_0H^1(\Omega))^\ast}= \|f\|_{({}_0H_\sigma^1(\Omega))^\ast}$ for all $f \in ({}_0H_\sigma^1(\Omega))^\ast$. Using this $E$, we regard the element of $  ({}_0H_\sigma^1(\Omega))^\ast$ as an element of $({}_0H^1(\Omega))^\ast$ in a natural way. Hence, we may also rewrite \eqref{000} as
\begin{equation}\label{111}
\int_0^t\|\rho\partial_t^3u(s)\|_{({}_0H^1(\Omega))^\ast}^2\,ds\le
\mathcal{Z}.
\end{equation}
With the estimate \eqref{111} in mind, since the pressure can be viewed as a Lagrange multiplier to \eqref{weak}, applying Lemma \ref{Pressure}, we obtain
\begin{equation}\label{112}
\int_0^t\|\partial_t^2p(s)\|_{0}^2\,ds\le
\mathcal{Z}.
\end{equation}

Now we derive the boundary estimates for $\partial_t^2 u$ and $\partial_t^2p$. First, by \eqref{lll3}, we have
 \begin{equation}\label{011}
         \int_0^t\|\nabla_\ast\partial_t^2 u(s)\|_{ H^{-1/2}(\Sigma) }^2\,ds
         \lesssim \int_0^t\| \partial_t^2 u(s)\|_{ H^{1/2}(\Sigma) }^2\,ds\lesssim\int_0^t\| \partial_t^2 u(s)\|_{  1 }^2\,ds
        \lesssim\mathcal{Z},
\end{equation}
and then by the incompressibility condition and \eqref{011}, we get
 \begin{equation}\label{012}
        \int_0^t\|\partial_3\partial_t^2 u_3(s)\|_{ H^{-1/2}(\Sigma) }^2\,ds
        =\int_0^t\|\partial_1\partial_t^2 u_1(s)+\partial_2\partial_t^2 u_2(s)\|_{ H^{-1/2}(\Sigma)
        }^2\,ds
        \lesssim\mathcal{Z}.
\end{equation}
For the term $\llbracket \mu \partial_3 \partial_t^2  u_i\rrbracket,\ i=1,2$, we use the first two identities of the dynamic boundary conditions on  $\Sigma_-$, along with the estimates \eqref{nonl2} and \eqref{011}, to see that
\begin{equation}\label{013}
\begin{split}
\int_0^t\|\llbracket \mu \partial_3 \partial_t^2
 u_i(s)\rrbracket\|_{ H^{-1/2}(\Sigma_-) }^2\,ds&=
 \int_0^t\|\partial_t^2g_-^i(s)\|_{ H^{-1/2}(\Sigma_-) }^2+\|\llbracket \mu \partial_i \partial_t^2
 u_3(s)\rrbracket(s)\|_{ H^{-1/2}(\Sigma_-) }^2\,ds
        \\&\lesssim \mathcal{Z}+\int_0^t
\mathcal{E}(s)\mathcal{D}(s) \,ds \lesssim\mathcal{Z},\
i=1,2.
 \end{split}
\end{equation}
Similarly, we have
 \begin{equation}\label{014}
\int_0^t\| \partial_3 \partial_t^2 u_{i,+}(s) \|_{ H^{-1/2}(\Sigma_+) }^2\,ds \lesssim \mathcal{Z}.
 \end{equation}
Next, using the third identity of the dynamic boundary condition on $\Sigma$, along with the estimates \eqref{nonl2} and \eqref{012}, we obtain the bound
\begin{equation}\label{015}
\begin{split}
        & \int_0^t\|\partial_t^2 p_+(s)\|_{ H^{-1/2}(\Sigma_+) }^2+\|\llbracket  \partial_t^2 p(s)\rrbracket \|_{ H^{-1/2}(\Sigma_-)
        }^2\,ds
  \\&\quad\le\int_0^t\|\partial_t^2 g^3(s)\|_{ {-1/2}  }^2+\| \partial_3\partial_t^2 u_3(s)\|_{ H^{-1/2}(\Sigma)
 }^2+\|  \partial_t^2 \eta(s)\|_{ 3/2}^2\,ds
        \\&\quad\lesssim \mathcal{Z}+\int_0^t
\mathcal{E}(s)\mathcal{D}(s) \,ds
\lesssim\mathcal{Z}.
 \end{split}
\end{equation}

We now combine \eqref{toesd52}, \eqref{toesd72}, \eqref{lll3}, \eqref{111}--\eqref{112} and \eqref{011}--\eqref{015}, and then use the smallness of $\delta$ to absorb the $\int \mathcal{E} \mathcal{D}$ term onto the left.  This   yields the inequality
\begin{equation}\label{hh2}
\int_0^t\mathcal{D}(s)\,ds
 \lesssim \mathcal{Z},
\end{equation}
which completes the dissipation part of the estimates.

\emph{Step 5 -- Conclusion}

Consequently, by \eqref{hh1}, \eqref{hh2} and the definition of  $\mathcal{Z}$, we have
\begin{equation}\label{hh3}
\mathcal{E}(t)+\int_0^t\mathcal{D}(s)\,ds
 \le  C_\varepsilon\mathcal{E} (0)+ C_\varepsilon\int_0^t
 {\mathcal{E}(s)} \mathcal{D} (s) \,ds
 +\varepsilon C \int_0^t \mathcal{D}(s) \,ds+C_\varepsilon
\int_0^t\|\eta(s)\|_{0}^2 \,ds.
\end{equation}
By taking $\varepsilon$ and $\delta$ sufficiently small, the previous inequality implies that
\begin{equation}\label{hh333333}
\begin{split}\mathcal{E}(t)+\int_0^t\mathcal{D}(s)\,ds
 \le  C\mathcal{E} (0)+C
\int_0^t\|\eta(s)\|_{0}^2 \,ds .
 \end{split}
\end{equation}
This is \eqref{2es} and we conclude our proof.
\end{proof}

 \subsection{Energy estimates without surface tension}

In this subsection, we will derive the nonlinear energy estimates for the system \eqref{nosurface2}. For this, the definitions of the energy and dissipation rely on the linear energy identity of the homogeneous form of \eqref{nosurface2}:
\begin{equation}\label{lin_en_ev}
\frac{d}{dt}\left(\frac{1}{2}\int_\Omega \rho |u|^2 +\frac{1}{2}\int_{\Sigma_+}
\rho_+g|\eta_+|^2+\frac{1}{2}\int_{\Sigma_-} -\rj g|\eta_-|^2\right)+\frac{1}{2}\int_{\Omega}\mu|\mathbb{D}
u|^2=0.
\end{equation}
According to this energy identity and the structure of the equations, as in \cite{GT_per,WTK}, we define the energies and dissipations as follows. For any integer $N \ge 3$ we define the  energy as
\begin{equation}\label{E_2N}
 \mathcal{E}_{2N}: =
 \sum_{j=0}^{2N} \left( \|\partial_t^j u\|_{4N-2j}^2 + \|\partial_t^j \eta\|_{4N-2j}^2 \right)
 + \sum_{j=0}^{2N-1} \|\partial_t^j p\|_{4N-2j-1}^2
\end{equation}
and the corresponding dissipation as
\begin{equation}
\begin{split}
  \mathcal{D}_{2N} &:= \sum_{j=0}^{2N} \|\partial_t^j u\|_{4N-2j+1}^2 + \sum_{j=0}^{2N-1} \|\partial_t^j p\|_{4N-2j}^2
  \\
&\quad+  \| \eta\|_{4N-1/2}^2 + \|\partial_t \eta\|_{4N-1/2}^2 +
\sum_{j=2}^{2N+1} \|\partial_t^j \eta\|_{4N-2j+5/2}^2.
 \end{split}
\end{equation}
 We also  define
\begin{equation}\label{F_2N}
\mathcal{F}_{2N}:=   \|\eta\|_{4N+1/2}^2.
\end{equation}

We will now derive our a priori estimates.  We assume throughout this subsection that $\mathcal{E}_{2N}(t) + \mathcal{F}_{2N}(t) \le \delta^2$ for some sufficiently small $\delta>0$ and for all $t$ in  the interval in which the solution is defined.  We will implicitly allow $\delta$ to be made smaller in each result, but we will reiterate the smallness of $\delta$ in our main result.

 \subsubsection{Energy evolution in geometric form}

We can not use the equations of perturbed form \eqref{nosurface2} to derive the energy evolution of the pure temporal derivatives because of lack of control of the highest temporal derivative of $p$.  Instead, to control the temporal derivatives we employ  the following equivalent geometric formulation. The geometric form is the equations directly transformed by composition from the original problem \eqref{form1}:
\begin{equation}\label{nosurface}
\left\{\begin{array}{lll}
\rho\partial_t
u-\rho W\partial_3u + \rho u\cdot
\nabla_{\mathcal{A}}u-\mu\Delta_{\mathcal{A}}u+\nabla_{\mathcal{A}}p=0 &\hbox{in
} \Omega
\\ \diverge_{\mathcal{A}}u=0&\hbox{in } \Omega
\\ \partial_t\eta=u\cdot \mathcal{N}&\hbox{on } \Sigma
\\  S_{\mathcal{A}}(p_+,u_+) \mathcal{ N}_+=\rho_+g\eta_+ \mathcal{N}_+&\hbox{on } \Sigma_+
\\ \Lbrack u\Rbrack=0,\quad\Lbrack S_{\mathcal{A}}(p,u)\Rbrack \mathcal{N}_-=\rj g\eta_- \mathcal{N}_-&\hbox{on } \Sigma_-
\\ u=0 &\hbox{on } \Sigma_b
\\(u,\eta )\mid_{t=0}=(u_0,\eta_0).
\end{array}\right.
\end{equation}
Here we have written the differential operators $\nabla_\mathcal{A}, \diverge_\mathcal{A}$  and $\Delta_\mathcal{A}$ with their actions given by $(\nabla_\mathcal{A}f)_i:= \mathcal{A}_{ij}\partial_jf$, $\diverge_\mathcal{A}X := \mathcal{A}_{ij}\partial_jX_i$, and $\Delta_\mathcal{A}f= \diverge_\mathcal{A}\nabla_\mathcal{A}f$ for appropriate $f$ and $X$.   We write $S_\mathcal{A}(p, u):=pI-\mu \mathbb{D}_\mathcal{A}u$ for the stress tensor, where $(\mathbb{D}_\mathcal{A}u)_{ij} = \mathcal{A}_{ik}\partial_ku^j+\mathcal{A}_{jk}\partial_ku^i$ is the symmetric $\mathcal{A}$--gradient. Note that if we extend $\diverge_\mathcal{A}$ to act on symmetric tensors in the natural way, then $\diverge_\mathcal{A}S_\mathcal{A}(p, u)=-\mu\Delta_{\mathcal{A}}u+\nabla_{\mathcal{A}}p$ for those $u$ satisfying $\diverge_\mathcal{A} u =0$.

Applying the temporal differential operator $\partial_t^\ell $ to \eqref{nosurface}, the resulting equations are the following system for $( \partial_t^\ell  u, \partial_t^\ell p,\partial_t^\ell  \eta)$,
\begin{equation}\label{geoform}
\left\{\begin{array}{lll}
\rho\partial_t (\partial_t^\ell  u) -\rho W\partial_3(\partial_t^\ell
u) + \rho u\cdot \nabla_{\mathcal{A}}(\partial_t^\ell  u) +{\rm
div}_{\mathcal{A}}S_{\mathcal{A}}(\partial_t^\ell p,\partial_t^\ell
u)=F^{\ell,1} &\hbox{in } \Omega
\\ \diverge_{\mathcal{A}}(\partial_t^\ell  u)=F^{\ell,2}&\hbox{in } \Omega
\\ \partial_t(\partial_t^\ell   \eta)=\partial_t^\ell  u\cdot \mathcal{N}+F^{\ell,4}&\hbox{on } \Sigma
\\  S_{\mathcal{A}}(\partial_t^\ell
p_+,\partial_t^\ell  u_+) \mathcal{ N}_+=\rho_+g\partial_t^\ell
\eta_+ \mathcal{N}_++F^{\ell,3}_+&\hbox{on } \Sigma_+
\\\llbracket \partial_t^\ell  u\rrbracket=0,\quad\llbracket S_{\mathcal{A}}(\partial_t^\ell
p,\partial_t^\ell  u)\rrbracket \mathcal{N}_-=\rj g\partial_t^\ell
\eta_- \mathcal{N}_--F^{\ell,3}_-&\hbox{on } \Sigma_-
\\ \partial_t^\ell  u_-=0 &\hbox{on } \Sigma_b,
\end{array}\right.
\end{equation}
where
\begin{equation}\label{formF1}
\begin{split}
&F^{\ell,1}_i=\sum_{0<m<\ell }C_\ell^m \rho\partial_t^m W\partial_t^{\ell-m }\partial_3u_i
 +
\sum_{0<m\le \ell }C_\ell^m\partial_t^{\ell-m }\partial_t
\Theta^3\partial_t^mK\partial_3u_i
\\&\qquad\ -\sum_{0<m\le
\ell}C_\ell^m(\rho\partial_t^m(u_j\mathcal{A}_{jk})\partial_t^{\ell-m
}\partial_ku_i+\partial_t^m\mathcal{A}_{ik}\partial_t^{\ell-m
}\partial_k p)
\\&\qquad\ +\sum_{0<m\le \ell }C_\ell^m \mu\partial_t^m\mathcal{A}_{jk}\partial_t^{\ell-m }
\partial_k(\mathcal{A}_{is}\partial_su_j+\mathcal{A}_{js}\partial_su_i)
\\&\qquad\ +\sum_{0<m<\ell }C_\ell^m\mathcal{A}_{jk}\partial_k(\partial_t^{m}\mathcal{A}_{is}\partial_t^{\ell-m }\partial_su_j
+\partial_t^{m}\mathcal{A}_{js}\partial_t^{\ell-m }\partial_su_i)
\\&\qquad\
 +\partial_t^\ell \partial_t \Theta^3 K\partial_3u_i+\mathcal{A}_{jk}\partial_k(\partial_t^{\ell}
 \mathcal{A}_{is} \partial_su_j+\partial_t^{l}\mathcal{A}_{js} \partial_su_i),
  \end{split}
\end{equation}
\begin{equation}\label{formF2}
\begin{split}
&F^{\ell,2}= -\sum_{0<m<\ell}C_\ell^m\partial_t^m \mathcal{A}_{ij}\partial_t^{\ell-m }
 \partial_ju_i-\partial_t^\ell \mathcal{A}_{ij} \partial_ju_i ,
 \end{split}
\end{equation}
 \begin{equation}\label{formF31}
\begin{split}
&F^{\ell,3}_+= \sum_{0<m\le \ell }C_\ell^m\partial_t^m \nabla\eta_+(\partial_t^{\ell-m }\eta_+-\partial_t^{\ell-m }  p_+)
\\&\qquad\ -\sum_{0<m\le \ell }C_\ell^m \mu
_+\left(\partial_t^m(\mathcal{N}_{j,+} \mathcal{A}_{is,+})
\partial_t^{\ell-m }\partial_su_{j,+}+\partial_t^m(\mathcal{N}_{j,+}
\mathcal{A}_{js,+})  \partial_t^{\ell-m }\partial_su_{i,+}\right),
\end{split}
\end{equation}
 \begin{equation}\label{formF32}
\begin{split}&F^{{\ell,3}}_-=-\sum_{0<m\le \ell }C_\ell^m\partial_t^m \nabla\eta_-(\partial_t^{\ell-m }\eta_--\partial_t^{\ell-m }\llbracket p\rrbracket)
\\&\qquad\ -\sum_{0<m\le
l}C_\ell^m\left(\partial_t^m(\mathcal{N}_{j,-}
\mathcal{A}_{is})\llbracket \mu
\partial_t^{\ell-m }\partial_su_j\rrbracket+\partial_t^m(\mathcal{N}_{j,-}
\mathcal{A}_{js})\llbracket \mu
\partial_t^{\ell-m }\partial_su_i\rrbracket\right),
\end{split}
\end{equation}
 \begin{equation}\label{formF34}
\begin{split}
&F^{\ell,4}=\sum_{0<m\le \ell }C_\ell^m\partial_t^m \nabla\eta\cdot \partial_t^{\ell-m } u.
\end{split}
\end{equation}

We present the estimates of these nonlinear perturbation terms $F^{\ell,1},F^{\ell,2},F^{\ell,3}$ and $F^{\ell,4}$ in the following lemma.

\begin{lemma}
Let the $F^{\ell,i},\ i=1,2,3,4$ be given as above, then for $0\le \ell\le 2N$, we have
\begin{equation} \label{Fes1}
\|F^{\ell,1}\|_0^2+\|\partial_t(JF^{\ell,2})\|_0^2
+\|F^{\ell,3}\|_0^2+\|F^{\ell,4}\|_0^2\lesssim \mathcal{E}_{2N}
\mathcal{D}_{2N}.
\end{equation}
\end{lemma}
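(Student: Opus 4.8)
The plan is to reduce \eqref{Fes1} to a finite family of elementary product estimates, one for each summand of the explicit formulas \eqref{formF1}--\eqref{formF34} and each $0\le\ell\le 2N$, and to observe that every such summand admits a bound of the form $\|\cdot\|_0\lesssim\sqrt{\mathcal{E}_{2N}}\sqrt{\mathcal{D}_{2N}}$. The structural point is that, after the trivial splittings $\mathcal{A}=I+(\mathcal{A}-I)$, $J=1+(J-1)$, $K=1+(K-1)$, $\mathcal{N}=e_3+(\mathcal{N}-e_3)$ (whose constant parts recombine with the principal operators on the left of \eqref{geoform}), each term is a product of (i) a space-time derivative of a geometric coefficient built from $\bar\eta$, which either carries a positive-order temporal derivative or factors through one of $\bar\eta$, $W$, $\mathcal{A}-I$, $J-1$, $K-1$, $\mathcal{N}-e_3$, $\theta_{ij}-\delta_{ij}$, and (ii) a single space-time derivative of a solution variable $u$, $p$, or $\eta$. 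Both types of factor vanish at the equilibrium: type (ii) trivially, and type (i) because it is either a manifestly vanishing difference or a positive-order temporal derivative of a coefficient; moreover type (i) is controlled through Lemma \ref{Poi} by norms of $\eta$, since the coefficients are smooth near-equilibrium functions of $\bar\eta$ and its first derivatives. So each summand is genuinely quadratic, and the target $\sqrt{\mathcal{E}_{2N}}\cdot\sqrt{\mathcal{D}_{2N}}$ with one factor in each norm is the natural pairing.

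Before the general estimate I would dispose of $\partial_t(JF^{\ell,2})$ separately, using the Piola identity $\partial_j(J\mathcal{A}_{ij})=0$: from \eqref{formF2} one rewrites $JF^{\ell,2}$ in divergence form $-\sum_{0<m\le\ell}C_\ell^m\,\partial_j\big(\partial_t^m(J\mathcal{A}_{ij})\,\partial_t^{\ell-m}u_i\big)$, and then applies $\partial_t$; by Piola every term in which a $\partial_j$ falls on a temporal derivative of $J\mathcal{A}$ drops out, so $\partial_t(JF^{\ell,2})$ becomes a sum of products of a positive-temporal-order derivative of $J\mathcal{A}$ against a space-time derivative of $u$ carrying at most one spatial derivative, which is again of the form above.

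The remaining step is bookkeeping. For a fixed summand I would split its (at most two) spatial and its $\le 2N$ temporal derivatives between the two factors, estimate the lower-order factor in $L^\infty(\Omega)$ (or, for the boundary terms $F^{\ell,3},F^{\ell,4}$, in $L^\infty(\mathrm{T}^2)$ after a trace estimate) via $H^2\hookrightarrow L^\infty$ in three dimensions, put the other factor in $L^2$, and check against \eqref{E_2N}--\eqref{F_2N}, tracking the half-derivative exchanges from the Poisson extension of Lemma \ref{Poi} and from the trace theorem, that each factor lands in $\mathcal{E}_{2N}$ or $\mathcal{D}_{2N}$. The decisive combinatorial facts are: the solution factor of any summand of $F^{\ell,i}$ carries at most $\partial_t^{\ell-1}$ in time (the coefficient always eats at least one $\partial_t$), so it never hits the top level $\partial_t^{2N}$ where $\mathcal{D}_{2N}$ controls only $H^1$, and whenever it is placed in $L^\infty$ its temporal level $\le 2N-1$ makes $\mathcal{D}_{2N}$ (or $\mathcal{E}_{2N}$) furnish at least $H^3\hookrightarrow L^\infty$; and the top temporal levels $\partial_t^{2N}$, $\partial_t^{2N+1}$ occur only on coefficients, for which $\mathcal{D}_{2N}$ supplies $\|\partial_t^{2N}\eta\|_{5/2}$ and $\|\partial_t^{2N+1}\eta\|_{1/2}$, hence $\partial_t^{2N}\nabla\bar\eta\in H^2\hookrightarrow L^\infty$ and $\partial_t^{2N+1}\nabla\bar\eta\in L^2$, the latter then paired against the solution factor placed in $L^\infty$. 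Summing the finitely many resulting bounds yields \eqref{Fes1}.

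The main obstacle is exactly this last bookkeeping for the highest-order summands: one must verify, uniformly over $0\le\ell\le 2N$ and over all the terms in \eqref{formF1}--\eqref{formF34}, that the derivative budget never forces a factor above the regularity available in $\mathcal{E}_{2N}$ or $\mathcal{D}_{2N}$, that the Poisson and trace half-derivatives align, and, since $\mathcal{D}_{2N}$ does not dominate the $\eta$-part of $\mathcal{E}_{2N}$ (whose top level lives in $\mathcal{F}_{2N}$), that the top $\eta$-regularity is never generated except against a genuine $\mathcal{D}_{2N}$ factor. The choice $N\ge 3$ is precisely what keeps this budget positive; once it is respected, each individual term estimate is routine.
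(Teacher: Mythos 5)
The paper does not prove this lemma directly; it observes that the $F^{\ell,i}$ share the structure of the corresponding terms in \cite{GT_per} and cites Theorem 4.1 there. Your proposal supplies the omitted argument, and the route is sound. The Piola rewrite $JF^{\ell,2}=-\sum_{0<m\le\ell}C_\ell^m\partial_j\bigl(\partial_t^m(J\mathcal{A}_{ij})\partial_t^{\ell-m}u_i\bigr)$ is correct (differentiate $\partial_j(J\mathcal{A}_{ij}u_i)=0$ in time $\ell$ times), and applying $\partial_t$ and then $\partial_j(J\mathcal{A}_{ij})=0$ does leave only products of a positive-time-order derivative of $J\mathcal{A}$ against a single spatial derivative of $u$, which is also why the lemma targets $\partial_t(JF^{\ell,2})$ rather than $\partial_tF^{\ell,2}$. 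The $L^\infty$--$L^2$ splitting by Sobolev embedding, Lemma \ref{Poi} to pass from $\eta$- to $\bar\eta$-regularity, and a derivative-budget check using $N\ge 3$ then close the estimate. One inaccuracy should be corrected: the structural claim that every summand is ``geometric coefficient built from $\bar\eta$'' times ``one solution factor carrying at most $\partial_t^{\ell-1}$'' fails for the transport term $-\sum_{0<m\le\ell}C_\ell^m\rho\,\partial_t^m(u_j\mathcal{A}_{jk})\,\partial_t^{\ell-m}\partial_ku_i$ in \eqref{formF1}, where $\partial_t^m(u_j\mathcal{A}_{jk})$ is not a pure coefficient and, after Leibniz with $m=\ell=2N$, produces $\partial_t^{2N}u_j\cdot\mathcal{A}_{jk}\cdot\partial_ku_i$, so $\partial_t^{2N}u$ does appear as a solution factor. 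The estimate still closes because $\|\partial_t^{2N}u\|_0\lesssim\sqrt{\mathcal{E}_{2N}}$ (and even $\|\partial_t^{2N}u\|_1\lesssim\sqrt{\mathcal{D}_{2N}}$) while the companion $\nabla u$ lies in $L^\infty$ by the ample spatial regularity of $\mathcal{E}_{2N}$, but these genuinely quadratic-in-$u$ summands must be handled directly rather than by the coefficient-times-solution template you describe.
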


\begin{proof}
Our perturbations $F^{\ell,1},$ $F^{\ell,2},$ $F^{\ell,3},$ and $F^{\ell,4}$ have the same structure as those of \cite{GT_per}. As such, the estimates in \eqref{Fes1} are recorded in Theorem 4.1 of \cite{GT_per}.
\end{proof}

We now  estimate the evolution of the pure temporal derivatives.

\begin{lemma}\label{temp}
It holds that
 \begin{equation}
 \begin{split}\label{geoes}
 &\sum_{\ell=0}^{2N}\|\partial_t^\ell
u(t)\|_0^2 + \int_0^t\sum_{\ell=0}^{2N}\| \partial_t^\ell
u(s)\|_1^2\,ds
\\&\quad\lesssim \mathcal{E}
_{2N}(0)+({\mathcal{E}_{2N}}(t))^{3/2}+\int_0^t\sqrt{{\mathcal{E}
_{2N}(s)}} \mathcal{D}_{2N}(s)\,ds  +  \int_0^t\|\eta(s)\|_0^2\,ds
.
\end{split}
\end{equation}
\end{lemma}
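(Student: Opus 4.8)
The plan is to perform an $L^2$-based energy estimate on the geometric formulation \eqref{geoform} for each temporal order $\ell = 0,1,\dots,2N$, testing the momentum equation against $\partial_t^\ell u$ and carefully handling the geometric structure so that the highest temporal derivative of the pressure never explicitly appears. Concretely, I would multiply $\eqref{geoform}_1$ by $J\,\partial_t^\ell u$ (the Jacobian weight is what makes the geometric divergence-free structure produce a clean energy identity) and integrate over $\Omega$. Using the Reynolds-transport/Piola-type identities available for $\mathcal{A}, J, W$ (these are exactly the identities underlying the energy law \eqref{lin_en_ev} and its geometric counterpart), the transport term $-\rho W\partial_3 u + \rho u\cdot\nabla_\mathcal{A} u$ combines with $\rho\partial_t(\partial_t^\ell u)$ so that, after integration by parts, one obtains $\frac12\frac{d}{dt}\int_\Omega J\rho|\partial_t^\ell u|^2$ plus the viscous term $\frac12\int_\Omega J\mu|\mathbb{D}_\mathcal{A}\partial_t^\ell u|^2$, plus boundary contributions coming from $S_\mathcal{A}$. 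The boundary terms on $\Sigma_\pm$ are evaluated using $\eqref{geoform}_4$ and $\eqref{geoform}_5$: they produce the $\rho_+ g\,\partial_t^\ell\eta_+\,\partial_t^\ell u\cdot\mathcal{N}$ and $\rj g\,\partial_t^\ell\eta_-\,\partial_t^\ell u\cdot\mathcal{N}$ terms, the sign-definite surface energies (here $\sigma_\pm=0$, so only the gravity contributions), and the $F^{\ell,3}$ error terms, while the bulk right-hand side contributes $\int_\Omega J F^{\ell,1}\cdot\partial_t^\ell u$ and the divergence defect $F^{\ell,2}$ couples to the pressure via $\int_\Omega \partial_t^\ell p\, F^{\ell,2}$ — this is where one must integrate by parts in time to move a derivative off $F^{\ell,2}$, explaining the appearance of $\|\partial_t(JF^{\ell,2})\|_0$ in \eqref{Fes1}.

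The key steps, in order, are: (1) derive the differential energy identity for each $\ell$, being careful to write all error terms explicitly as in $F^{\ell,1},\dots,F^{\ell,4}$; (2) integrate in time from $0$ to $t$ and use Korn's inequality together with the uniform equivalence of $J$-weighted norms to the flat norms (valid since $\mathcal{E}_{2N}\le\delta^2$ keeps $J$ close to $1$) to convert $\int_0^t\int_\Omega J\mu|\mathbb{D}_\mathcal{A}\partial_t^\ell u|^2$ into a genuine $\|\partial_t^\ell u\|_1^2$ dissipation term, absorbing the $O(\delta)$ discrepancies; (3) bound the destabilizing gravity terms $\int_{\Sigma_\pm} g|\cdot|\,|\partial_t^\ell\eta|\,|\partial_t^\ell u\cdot\mathcal{N}|$ using the kinematic relation $\partial_t(\partial_t^\ell\eta)=\partial_t^\ell u\cdot\mathcal{N}+F^{\ell,4}$ to trade $\partial_t^\ell u\cdot\mathcal{N}$ for $\partial_t^{\ell+1}\eta$ (up to $F^{\ell,4}$) and interpolation/trace estimates, producing at worst $\int_0^t\|\eta(s)\|_0^2$ plus $\varepsilon$-small pieces of $\mathcal{D}_{2N}$ plus $\sqrt{\mathcal{E}_{2N}}\mathcal{D}_{2N}$; (4) estimate all $F^{\ell,i}$ contributions via Cauchy–Schwarz and \eqref{Fes1}, which immediately yields the $\int_0^t\sqrt{\mathcal{E}_{2N}}\mathcal{D}_{2N}$ term; (5) handle the pressure–divergence coupling $\int_0^t\int_\Omega\partial_t^\ell p\,F^{\ell,2}$ by the time-integration-by-parts trick, which generates a boundary-in-time term $\int_\Omega\partial_t^\ell p(t)\,(JF^{\ell,2})(t)$ bounded by $\mathcal{E}_{2N}^{3/2}(t)$ (hence the cubic term), plus $\int_0^t\partial_t^\ell p\,\partial_t(JF^{\ell,2})$ again controlled by \eqref{Fes1}; (6) sum over $\ell=0,\dots,2N$ and choose $\varepsilon$ small to absorb the $\varepsilon\mathcal{D}_{2N}$ terms, noting that the $\|\partial_t^\ell u\|_1^2$ terms with $\ell\ge 1$ require summing in increasing order of $\ell$ as in Lemma \ref{nosur1}.

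The main obstacle I anticipate is precisely the pressure term: unlike the surface-tension case of Lemma \ref{nosur1}, here $\partial_t^{2N}p$ is not controlled at the top level, so one cannot simply put the pressure into a duality pairing against a divergence-free test function. The geometric formulation is chosen exactly to circumvent this — the only place $\partial_t^\ell p$ enters the $L^2$ estimate is through $\int_\Omega \partial_t^\ell p\,F^{\ell,2}$, and because $F^{\ell,2}=\diverge_\mathcal{A}\partial_t^\ell u - \diverge\partial_t^\ell u$ is itself a quadratic perturbation of size $\sqrt{\mathcal{E}_{2N}}$, integrating by parts in time keeps $\partial_t^\ell p$ at an order where it is controlled by $\mathcal{E}_{2N}$ (for $\ell\le 2N-1$ directly, and for $\ell=2N$ one must verify that $\partial_t(JF^{2N,2})$ does not contain $\partial_t^{2N}p$, which is exactly the content of \eqref{Fes1}). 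Making this bookkeeping precise — especially tracking that no forbidden highest-order pressure derivative sneaks in, and that the commutator structure of $F^{\ell,1}, F^{\ell,2}$ is compatible with the $\mathcal{E}_{2N}\mathcal{D}_{2N}$ bound — is the delicate part; everything else is a routine, if lengthy, combination of trace theorems, Korn and Poincaré inequalities, and Sobolev interpolation already used in the surface-tension case.
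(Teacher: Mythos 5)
Your overall strategy is the same as the paper's: test the geometric formulation \eqref{geoform} with $J\,\partial_t^\ell u$, use the $J$-weighted structure to get a clean energy identity, move a time derivative off the pressure in the $\int_\Omega \partial_t^\ell p\, F^{\ell,2}$ coupling, and chain over $\ell$ in increasing order. The identification of the pressure term as the place where the naive approach breaks and the geometric form rescues you is exactly right.

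There is, however, a genuine gap in your step (3), the treatment of the gravity boundary terms $\int_0^t\int_{\Sigma}\partial_t^\ell\eta\,\partial_t^\ell u\cdot\mathcal{N}$. You propose to use the kinematic relation $\partial_t(\partial_t^\ell\eta)=\partial_t^\ell u\cdot\mathcal{N}+F^{\ell,4}$ to replace $\partial_t^\ell u\cdot\mathcal{N}$ by $\partial_t^{\ell+1}\eta$. This pushes the derivative count \emph{up}, which cannot terminate: at $\ell = 2N$ one then needs to absorb $\partial_t^{2N+1}\eta$, whose $L^2$ control sits in $\mathcal{D}_{2N}$ (not $\mathcal{E}_{2N}$), so Cauchy--Schwarz produces a term like $\int_0^t\sqrt{\mathcal{E}_{2N}\,\mathcal{D}_{2N}}$, which is \emph{not} of the allowable form $\int_0^t\sqrt{\mathcal{E}_{2N}}\,\mathcal{D}_{2N}$ and cannot be absorbed; alternatively, if you integrate in time so that $\int_0^t \partial_t^\ell\eta\,\partial_t^{\ell+1}\eta$ becomes a boundary-in-time term $\tfrac12\|\partial_t^\ell\eta(t)\|_0^2$, then on $\Sigma_-$ the sign (since $\rj>0$) makes this a \emph{bad-sign} endpoint contribution that cannot go on the right of \eqref{geoes}, and for $\ell=0$ it doesn't convert to $\int_0^t\|\eta\|_0^2$. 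The paper instead bounds the boundary term directly, $\int_\Sigma \partial_t^\ell\eta\,\partial_t^\ell u\cdot\mathcal{N} \lesssim \|\partial_t^\ell\eta\|_0 \|\partial_t^\ell u\|_1 \le C_\varepsilon\|\partial_t^\ell\eta\|_0^2 + \varepsilon\|\partial_t^\ell u\|_1^2$ via trace and Cauchy, and then uses the kinematic relation in the \emph{downward} direction for $\ell\ge 1$: $\|\partial_t^\ell\eta\|_0 = \|\partial_t^{\ell-1}(u\cdot\mathcal{N})\|_{L^2(\Sigma)} \lesssim \|\partial_t^{\ell-1}u\|_1 + \sqrt{\mathcal{E}_{2N}\mathcal{D}_{2N}}$, after which $\int_0^t\|\partial_t^{\ell-1}u\|_1^2$ is exactly what the level-$(\ell-1)$ estimate controls, and the chain bottoms out at $\ell=0$ with $\int_0^t\|\eta\|_0^2$. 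This direction of the substitution is the load-bearing choice, and your reversal of it does not close.

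A smaller point: after integrating by parts in time in $\int_0^t\int_\Omega J\,\partial_t^\ell p\, F^{\ell,2}$, the endpoint term carries $\partial_t^{\ell-1}p$, not $\partial_t^\ell p$ as you wrote, and the remaining integral is $\int_0^t \partial_t^{\ell-1}p\,\partial_t(JF^{\ell,2})$. This is precisely why the scheme works at $\ell=2N$: the uncontrolled $\partial_t^{2N}p$ is replaced by $\partial_t^{2N-1}p$ (which is in $\mathcal{E}_{2N}$), and one only has to verify that $\partial_t(JF^{2N,2})$ does not smuggle $\partial_t^{2N}p$ back in (the content of \eqref{Fes1}). You noted this last fact, so the issue is likely notational, but it is worth being careful because that shift by one derivative is exactly the mechanism that rescues the estimate.
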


\begin{proof}
For $\ell=0,1,\dots,2N$, we take the dot product of $\eqref{geoform}_1$ with $J \partial_t^\ell  u$, integrate by parts over the domain $\Omega$ and then integrate in time from $0$ to $t$; by using the other conditions in \eqref{geoform} and some easy geometric identities, we obtain the following energy identity:
\begin{equation}
\begin{split}\label{fiden0}
& \frac{1}{2}\int_\Omega \rho J|\partial_t^\ell u(t)|^2
+\frac{1}{2}\int_0^t\int_{\Omega}\mu
J|\mathbb{D}_\mathcal{A}\partial_t^\ell u|^2
\\&\quad =\frac{1}{2}\int_\Omega \rho J|\partial_t^\ell u(0)|^2
+ \int_0^t\int_\Omega J(\partial_t^\ell u\cdot F^1+\partial_t^\ell
pF^2)-\int_0^t\int_{\Sigma}\partial_t^\ell u\cdot F^3
\\&\qquad  -\frac{1}{2}\int_0^t\int_{\Sigma_+}\rho_+g \partial_t^\ell \eta_+ \partial_t^\ell u_+\cdot \mathcal{N}_++\frac{1}{2}\int_0^t\int_{\Sigma_-}
 \rj g \partial_t^\ell \eta_- \partial_t^\ell u\cdot \mathcal{N}_-.
 \end{split}
 \end{equation}

First, we estimate the left hand side of \eqref{fiden0}. For this we write
\begin{equation}
\label{j1}
J|\mathbb{D}_\mathcal{A}\partial_t^\ell u|^2
=|\mathbb{D} \partial_t^\ell u|^2+(J-1)|\mathbb{D}
\partial_t^\ell u|^2+J(\mathbb{D}_\mathcal{A}\partial_t^\ell u+\mathbb{D}
\partial_t^\ell u) :(\mathbb{D}_\mathcal{A}\partial_t^\ell u-\mathbb{D}
\partial_t^\ell u).
\end{equation}
For the last term in the above, since
\begin{equation}
\mathbb{D}_\mathcal{A}\partial_t^\ell u\pm\mathbb{D} \partial_t^\ell u=
(\mathcal{A}_{ik}\pm\delta_{ik})\partial_k\partial_t^\ell
u^j+(\mathcal{A}_{jk}\pm\delta_{jk})\partial_k\partial_t^\ell u^i,
\end{equation}
we have that (using Lemma \ref{Poi} to estimate $\mathcal{A} \pm I$)
\begin{equation}\label{j2}
\int_\Omega
J(\mathbb{D}_\mathcal{A}\partial_t^\ell u+\mathbb{D} \partial_t^\ell
u) :(\mathbb{D}_\mathcal{A}\partial_t^\ell u-\mathbb{D}
\partial_t^\ell u)\lesssim \sqrt{{\mathcal{E}_{2N}}} \mathcal{D}_{2N}.
\end{equation}
On the other hand, we similarly have the estimate
\begin{equation} \label{j3}
\int_{\Omega}|J-1||\mathbb{D} \partial_t^\ell u|^2\lesssim \sqrt{{\mathcal{E}_{2N}}} \mathcal{D}_{2N}
\hbox{ and } \int_\Omega \rho |J-1|| \partial_t^\ell u
|^2(t)\lesssim({\mathcal{E}_{2N}})^{3/2}.
\end{equation}
This allows us to estimate the left hand side as
\begin{equation}\label{tempor0}
\begin{split}
& \frac{1}{2}\int_\Omega \rho J|\partial_t^\ell u(t)|^2
+\frac{1}{2}\int_0^t\int_{\Omega}\mu
J|\mathbb{D}_\mathcal{A}\partial_t^\ell u|^2 \\&
\quad\ge \frac{1}{2}\int_\Omega \rho  |\partial_t^\ell
u(t)|^2 +\frac{1}{2}\int_0^t\int_{\Omega}\mu
 |\mathbb{D}\partial_t^\ell u|^2-C({\mathcal{E}_{2N}}(t))^{3/2}-C\int_0^t\sqrt{{\mathcal{E}_{2N}(s)}} \mathcal{D}_{2N}(s)\,ds.
 \end{split}
 \end{equation}

We now estimate the right hand side of  \eqref{fiden0}. For the $F^1$ term, by \eqref{Fes1}, we have
\begin{equation}\label{tempor1}
\int_0^t\int_\Omega J\partial_t^\ell u\cdot F^1
 \le  \int_0^t\|\partial_t^\ell u\|_0\|J\|_{L^\infty}\|F^1\|_0  \lesssim
\int_0^t\sqrt{{\mathcal{E}_{2N}}} \mathcal{D}_{2N} .
\end{equation}
For the $F^3$ term, by again \eqref{Fes1}  and the trace theorem,  we have
\begin{equation}\label{tempor2}
\begin{split}
&-\int_0^t\int_{\Sigma}\partial_t^\ell u\cdot F^3 \le \int_0^t \|\partial_t^\ell u\|_{L^2(\Sigma)} \|
F^3\|_{0}  \lesssim \int_0^t \|\partial_t^\ell u\|_{1}
\sqrt{\mathcal{E}_{2N}  \mathcal{D}_{2N} } \lesssim \int_0^t
\sqrt{\mathcal{E}_{2N}  }\mathcal{D}_{2N} .
\end{split}
\end{equation}
Next, we estimate the $F^2$ term. Notice that we can not control $\partial_t^{2N} p$ by $\mathcal{D}_{2N}$, hence we need to integrate by parts in time to find
\begin{equation}
\int_0^t\int_\Omega J \partial_t^\ell pF^2
=-\int_0^t\int_\Omega \partial_t^{\ell-1}p\partial_t({JF^2})+
\int_\Omega (\partial_t^{\ell-1}p JF^2)(t) -\int_\Omega
(\partial_t^{\ell-1}p JF^2)(0).
\end{equation}
Then by \eqref{Fes1}, we may estimate
\begin{equation}
-\int_0^t\int_\Omega \partial_t^{\ell-1}p\partial_t({JF^2})
\le \int_0^t  \|\partial_t^{\ell-1}p\|_0\|\partial_t({JF^2})\|_0
\lesssim \int_0^t\sqrt{{\mathcal{E}_{2N}}} \mathcal{D}_{2N}
.
\end{equation} Also, it is easy to deduce that
\begin{equation} \label{tempor3}
\int_\Omega (\partial_t^{\ell-1}p JF^2)(t) -\int_\Omega (\partial_t^{\ell-1}p JF^2)(0)\lesssim \mathcal{E}_{2N}(0)
 + (\mathcal{E}_{2N}(t))^{ {3}/{2}}.
 \end{equation}
Hence, we have
\begin{equation}\label{tempor4}
\int_0^t\int_\Omega J \partial_t^\ell pF^2\lesssim \mathcal{E}_{2N}(0)
 + (\mathcal{E}_{2N}(t))^{ {3}/{2}}+ \int_0^t\sqrt{{\mathcal{E}_{2N}}} \mathcal{D}_{2N} .
 \end{equation}
For the last line of \eqref{fiden0}, by the trace theorem and Cauchy's inequality, we have
 \begin{equation}\label{tempor6}
 \begin{split}
& -\frac{1}{2}\int_0^t\int_{\Sigma_+}\rho_+g \partial_t^\ell \eta_+
\partial_t^\ell u_+\cdot
\mathcal{N}_++\frac{1}{2}\int_0^t\int_{\Sigma_-}
 \rj g \partial_t^\ell \eta_- \partial_t^\ell u\cdot
 \mathcal{N}_-
\\&\quad\le C\int_0^t\|
\partial_t^\ell \eta(s)\|_{0} \|\partial_t^\ell u(s)\|_0\,ds
\le C_\varepsilon \int_0^t\|
\partial_t^\ell \eta(s)\|_{0}^2\,ds+\varepsilon\int_0^t \|\partial_t^\ell u(s)\|_1^2\,ds .
 \end{split}
 \end{equation}

Using Korn's inequality together with the estimates \eqref{tempor0}, \eqref{tempor1}, \eqref{tempor2}, \eqref{tempor4}, and \eqref{tempor6}, we find that we may take  $\varepsilon$  sufficiently small to deduce from \eqref{fiden0} that
\begin{equation}\label{tempor5}
\begin{split}
&\|\partial_t^\ell
u(t)\|_0^2 + \int_0^t\| \partial_t^\ell
u(s)\|_1^2\,ds\\&\quad\lesssim \mathcal{E}
_{2N}(0)+({\mathcal{E}_{2N}}(t))^{3/2}+\int_0^t\sqrt{{\mathcal{E}
_{2N}(s)}} \mathcal{D}_{2N}(s)\,ds  +  \int_0^t\|
\partial_t^\ell \eta(s)\|_{0} ^2\,ds .
\end{split}
\end{equation}

Now taking $\ell=0$ in \eqref{tempor5}, we have
\begin{equation}\label{tempor7}
\begin{split}
  &\|
u(t)\|_0^2 + \int_0^t\|  u(s)\|_1^2\,ds \\&\quad\lesssim
\mathcal{E}_{2N}(0)+({\mathcal{E}
_{2N}}(t))^{3/2}+\int_0^t\sqrt{{\mathcal{E}_{2N}(s)}} \mathcal{D}
_{2N}(s)\,ds  +  \int_0^t\|  \eta(s)\|_{0} ^2\,ds
.
\end{split}
\end{equation}
For $\ell=1,\dots,2N$ the kinematic boundary condition $\eqref{nosurface}_3$ and the estimate \eqref{Fes1} imply that
\begin{equation}\label{tempor8}
  \|
\partial_t^\ell \eta\|_{0} \le\|
\partial_t^{\ell-1}u_3 \|_{L^2(\Sigma)} +\|
\partial_t^{\ell-1} (u \cdot \nabla_\ast\eta) \|_{L^2(\Sigma)}\le\|
\partial_t^{\ell-1}u \|_{1}+\sqrt{{\mathcal{E}
_{2N} }} \sqrt{\mathcal{D}_{2N}}.
\end{equation}
Plugging \eqref{tempor8} into \eqref{tempor5}, we obtain
\begin{equation}\label{tempor9}
\begin{split}
 &\|\partial_t^\ell
u(t)\|_0^2 + \int_0^t\| \partial_t^\ell
u(s)\|_1^2\,ds
\\&\quad\lesssim \mathcal{E}
_{2N}(0)+({\mathcal{E}_{2N}}(t))^{3/2}+\int_0^t\sqrt{{\mathcal{E}
_{2N}(s)}} \mathcal{D}_{2N}(s)\,ds  +  \int_0^t\|
\partial_t^{\ell-1} u(s)\|_1^2\,ds .
\end{split}
\end{equation}

Consequently, chaining \eqref{tempor9} and \eqref{tempor7}, we get \eqref{geoes}.
\end{proof}

 \subsubsection{Energy evolution in perturbed form}

We shall now use the perturbed equation \eqref{nosurface2} to derive the energy evolution of the mixed horizontal space-time derivatives. To proceed with, we present the estimates of $G^i$ defined by \eqref{G1}--\eqref{G4} in the following lemma.

\begin{lemma}\label{Gesle}
There exists a $\theta>0$ so that
\begin{equation} \label{Ges1}
\|\bar{\nabla}^{4N-2}G^1\|_0^2+ \|\bar{\nabla}^{4N-2}G^2\|_0^2 +\| \bna^{4N-2}G^3\|_{1/2}^2
+ \| \bna^{4N-2} G^4 \|_{1/2}^2
\lesssim \mathcal{E}_{2N}^{1+\theta},
\end{equation}
\begin{equation}\label{Ges2}
\begin{split}
&\|\bar{\nabla}^{4N-2} G^1\|_0^2 + \|\bar{\nabla}^{4N-2} G^2\|_0^2 + \| \bna^{4N-2}G^3\|_{1/2}^2
+ \|\bna^{4N-2} G^4\|_{1/2}^2 \\&\quad
+ \|\bar{\nabla}^{4N-3} \partial_tG^1\|_0^2 + \|\bar{\nabla}^{4N-3} \partial_t G^2\|_0^2
+ \|\bna^{4N-3} \partial_t G^3\|_{1/2}^2 + \|\bna^{4N-3} \partial_tG^4\|_{1/2}^2
\\&\qquad
\lesssim \mathcal{E}_{2N}^{\theta} \mathcal{D}_{2N},
\end{split}
\end{equation}
\begin{equation}\label{Ges3}
\| \nabla^{4N-1} G^1\|_0^2 + \|\nabla^{4N-1} G^2\|_0^2 +\| \na^{4N-1} G^3\|_{1/2}^2
+ \| \na^{4N-1} G^4\|_{1/2}^2
 \lesssim \mathcal{E}_{2N}^{\theta} \mathcal{D}_{2N} +\mathcal{E}_{2N}\mathcal{F}_{2N}.
\end{equation}

\end{lemma}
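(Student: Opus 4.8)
The plan is to follow the scheme of \cite{GT_per}. The nonlinearities $G^1,\dots,G^4$ in \eqref{G1}--\eqref{G4} differ from their one-layer counterparts only through the interfacial jumps $\Lbrack\cdot\Rbrack$ on $\Sigma_-$, and since the coordinate transformation \eqref{cotr} was designed so that $\mathcal{A}$, $\mathcal{N}$, $J$, $K$, $W$, $\theta$ are continuous across $\Sigma_-$, these jumps do not alter the structure of the estimates. First I would assemble the toolkit: from Lemma \ref{Poi} the Poisson-extension bounds $\|\bar{\eta}\|_{H^{s+1/2}(\Omega)}\lesssim\|\eta\|_{H^{s}(\Sigma)}$ (and their time-differentiated analogues), which control the geometric quantities above and, crucially, the ``small'' differences $\mathcal{A}-I$, $\mathcal{N}-e_3$, $J-1$, $K-1$, each linear-or-higher in $\bar{\eta}$ and hence vanishing at equilibrium; the product estimates $\|fg\|_{H^{s}(\Omega)}\lesssim\|f\|_{H^{s_1}(\Omega)}\|g\|_{H^{s_2}(\Omega)}$ for $s\le s_1,s_2$, $s_1+s_2\ge s+3/2$ (with the $3/2$ loss replaced by $1$ on the two-dimensional $\Sigma$); the algebra property of $H^{s}$, $s>3/2$; Sobolev interpolation; and the trace theorem.

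With these in hand, each of \eqref{Ges1}--\eqref{Ges3} becomes a bookkeeping argument: write each summand of $G^i$ schematically as $(\text{geometric factor})\times(\text{factor built from }u\text{ or }p)$ with the geometric factor being $\bar{\eta}$ or one of the small differences above, apply $\bar{\nabla}^{4N-2}$ (for $G^1,G^2$) or $\bna^{4N-2}$ (for $G^3,G^4$) --- respectively $\nabla^{4N-1}$ and $\na^{4N-1}$ in \eqref{Ges3} --- and expand via Leibniz. For each split assign the factor carrying most of the derivatives to the stronger norm ($\mathcal{E}_{2N}$ for \eqref{Ges1}, $\mathcal{D}_{2N}$ for \eqref{Ges2}--\eqref{Ges3}, recalling that $\mathcal{D}_{2N}$ carries one more spatial derivative of $u$ and $p$ than $\mathcal{E}_{2N}$) and the other to the weaker norm, closing the bilinear bound with the product estimate and using the trace theorem when the target norm lives on $\Sigma$. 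Since $N\ge 3$, the budget $4N-2\ge 10$ comfortably exceeds the $3/2$ (or $1$) loss, so every term closes; the surplus is spent on an interpolation inequality applied to the low-order factor, which is exactly what yields the gain $\mathcal{E}_{2N}^{1+\theta}$ in \eqref{Ges1} and the extra $\mathcal{E}_{2N}^{\theta}$ in \eqref{Ges2}--\eqref{Ges3}, for some fixed $\theta>0$, as in \cite{GT_per}.

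The point requiring care --- the only place where neither $\mathcal{E}_{2N}$ nor $\mathcal{D}_{2N}$ alone suffices --- is the top-order term in \eqref{Ges3} where all $4N-1$ derivatives hit the geometric factor. In the interior the worst case is $\nabla^{4N-1}$ acting on a factor linear in $\nabla\bar{\eta}$, producing $\nabla^{4N}\bar{\eta}$; by Lemma \ref{Poi} this is bounded in $L^2(\Omega)$ by $\|\eta\|_{4N-1/2}\lesssim\sqrt{\mathcal{D}_{2N}}$, and pairing with the undifferentiated $u$ or $p$ factor (bounded by $\sqrt{\mathcal{E}_{2N}}$ via $H^{4N-1}\hookrightarrow L^\infty$) gives a contribution absorbed into $\mathcal{E}_{2N}^{\theta}\mathcal{D}_{2N}$. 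On the boundary, however, $\mathcal{N}-e_3=(-\na\eta,0)$ and $G^4$ is itself linear in $\na\eta$, so $\na^{4N-1}$ of the geometric factor, measured in $H^{1/2}(\Sigma)$, costs $\|\eta\|_{H^{4N+1/2}(\Sigma)}=\sqrt{\mathcal{F}_{2N}}$ --- one spatial derivative above anything in $\mathcal{D}_{2N}$ --- and multiplying by the low-order factor yields precisely the $\mathcal{E}_{2N}\mathcal{F}_{2N}$ term. This explains why $\mathcal{F}_{2N}$ appears only in \eqref{Ges3}: in \eqref{Ges1}--\eqref{Ges2} at most $4N-2$ derivatives are applied, so the analogous boundary term only demands $\|\eta\|_{4N-1/2}$, already inside $\mathcal{D}_{2N}$ (or $\mathcal{E}_{2N}$). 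A last minor remark: in \eqref{Ges2} the extra $\partial_t$ must be distributed so that no pressure factor is differentiated to temporal order $2N$ (which $\mathcal{D}_{2N}$ does not control), but this is automatic since $\partial_tG^i$ carries only $4N-3$ further parabolic derivatives, keeping every $p$-factor at temporal order $\le 2N-1$.
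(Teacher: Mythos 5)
Your outline is mathematically correct and identifies all the key points, but the paper's own proof of this lemma is almost entirely by citation: it observes that $G^1,\dots,G^4$ have the same structure as the single-layer perturbations in \cite{GT_per}, invokes Theorem~3.2 of that paper for the estimates, and adds only one further remark, namely that the coefficient $\mathcal{K}:=\|\nabla u\|_{L^\infty}^2+\|\nabla^2 u\|_{L^\infty}^2+\sum_{i=1}^2\|D u_i\|_{H^2(\Sigma)}^2$ appearing in front of $\mathcal{F}_{2N}$ in the cited theorem satisfies $\mathcal{K}\lesssim\mathcal{E}_{2N}$ by Sobolev embedding, which converts $\mathcal{K}\mathcal{F}_{2N}$ into the $\mathcal{E}_{2N}\mathcal{F}_{2N}$ term in \eqref{Ges3}. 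What you have done is unpack that cited theorem: your Leibniz/product-estimate scheme with interpolation to extract the $\mathcal{E}_{2N}^\theta$ gain, your observation that the transformation \eqref{cotr} makes $\mathcal{A},\mathcal{N},J,K,W,\theta$ continuous across $\Sigma_-$ so the jumps cost nothing structurally, your tracking of temporal order of $p$ under the extra $\partial_t$ in \eqref{Ges2}, and, crucially, your identification of the top-order boundary term in \eqref{Ges3} --- all $4N-1$ horizontal derivatives falling on $\na\eta$ (in $G^4$ or in $\mathcal{N}-e_3$) measured in $H^{1/2}(\Sigma)$, which costs $\sqrt{\mathcal{F}_{2N}}$ and is paired with $\|u\|_{L^\infty}\lesssim\sqrt{\mathcal{E}_{2N}}$. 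That last observation is precisely the content of the paper's $\mathcal{K}\lesssim\mathcal{E}_{2N}$ remark. So the two proofs rest on the same argument; the paper delegates the bookkeeping to the reference, while you give a correct self-contained sketch of what that reference proves. Both are valid; the citation version is shorter and is what the paper actually does, but your version has the virtue of making visible exactly why $\mathcal{F}_{2N}$ appears only in \eqref{Ges3} and nowhere else.
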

\begin{proof}
Since our perturbations $G^1,G^2,G^3,G^4$ have the same structure as those of \cite{GT_per}, the estimates \eqref{Ges1}--\eqref{Ges3} follow from Theorem 3.2 of \cite{GT_per}. In the statement of Theorem 3.2 of \cite{GT_per}, the left hand side of \eqref{Ges3} is bounded by $\mathcal{E}_{2N}^{\theta}\mathcal{D}_{2N}+\mathcal{K}\mathcal{F}_{2N}$ with
\begin{equation}
  \mathcal{K}:= \|\nabla u\|_{L^\infty}^2 + \|\nabla^2 u\|_{L^\infty}^2+\sum_{i=1}^2\|D u_i\|_{H^2(\Sigma)}^2.
\end{equation}
However, in the present case we may use the Sobolev embeddings to estimate  $\mathcal{K}\lesssim \mathcal{E}_{2N}$, and so we have \eqref{Ges3}.
\end{proof}

Now we estimate the energy evolution of the mixed horizontal space-time derivatives.
\begin{lemma}\label{hori}
There exists a $\theta>0$ such that for any $\kappa>0$, there exists a constant $ C_\kappa >0$ so that
\begin{equation}\label{peres}
\begin{split}
\sum_{|\alpha|\le 4N} &\|\partial^\alpha   u (t)\|_0^2
  +\int_0^t\sum_{|\alpha|\le 4N} \|\partial^\alpha   u (s)\|_1^2\,ds
  \\& \lesssim  {\mathcal{E}}_{2N}(0)+\int_0^t \left( \mathcal{E}_{2N}^{\theta}
 \mathcal{D}_{2N}(s) +\sqrt{\mathcal{E}_{2N} }\mathcal{F}_{2N}(s) + \kappa\mathcal{F}_{2N}(s) \,ds\right) +C_\kappa \int_0^t \| \eta(s)\|_{0}^2\,ds.
 \end{split}
\end{equation}
\end{lemma}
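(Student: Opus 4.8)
The plan is to mimic the proof of Lemma~\ref{horizontal}, now working with the perturbed system \eqref{nosurface2} and the nonlinear bounds of Lemma~\ref{Gesle}. First, since $\Sigma_\pm$ and $\Sigma_b$ are flat, for every $\alpha\in\mathbb{N}^{1+2}$ with $|\alpha|\le 4N$ and $\alpha_0\le 2N-1$ I would apply $\partial^\alpha$ to each equation of \eqref{nosurface2} without disturbing the boundary structure; the restriction $\alpha_0\le 2N-1$ is what keeps $\partial^\alpha p$ controlled by $\mathcal{D}_{2N}$, and the only multi-index left out, $\alpha=(2N,0,0)$, is already handled by the geometric-form estimate \eqref{geoes} of Lemma~\ref{temp}, which I will add in at the end. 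Pairing $\partial^\alpha$ of the momentum equation with $\partial^\alpha u$ in $L^2(\Omega)$, integrating by parts over $\Omega_+$ and $\Omega_-$ separately, and inserting $\diverge\partial^\alpha u=\partial^\alpha G^2$, the kinematic identity $\partial_t\partial^\alpha\eta=\partial^\alpha u_3+\partial^\alpha G^4$, and the two dynamic boundary conditions, gives
\begin{equation*}
\frac12\frac{d}{dt}\left(\int_\Omega\rho|\partial^\alpha u|^2+\int_{\Sigma_+}\rho_+g|\partial^\alpha\eta_+|^2\right)+\frac12\int_\Omega\mu|\mathbb{D}\partial^\alpha u|^2=\mathcal{R}^\alpha,
\end{equation*}
where $\mathcal{R}^\alpha$ collects the interior forcing $\int_\Omega\partial^\alpha G^1\cdot\partial^\alpha u$, the divergence term $-\int_\Omega\partial^\alpha p\,\partial^\alpha G^2$, the boundary forcings built from $\partial^\alpha G^3$ and $\partial^\alpha G^4$, and the destabilizing interfacial term $\int_{\Sigma_-}\rj g\,\partial^\alpha\eta_-\,\partial^\alpha u_3$. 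Note that, unlike the surface-tension case \eqref{surface} where $\diverge u=0$ exactly, the divergence term is genuinely present here.

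All the $G$-terms in $\mathcal{R}^\alpha$ are routine consequences of Lemma~\ref{Gesle}: after one integration by parts (over $\Omega$ for $G^1,G^2$, along $\Sigma$ for $G^3,G^4$) and a trace estimate placing the boundary traces of $\partial^\alpha u$ into $\|\partial^\alpha u\|_1$, one only needs the quantities bounded in \eqref{Ges1}--\eqref{Ges3} — the $\bna^{4N-2}$- and $\bna^{4N-3}\partial_t$-level norms when $\alpha_0\ge1$, and the $\nabla^{4N-1}$-, $\na^{4N-1}$-level norms when $\alpha_0=0$ (for the one top-order case $\alpha_0=0$, $|\alpha|=4N$, $\partial^\alpha G^2$ is estimated directly, putting the highest-derivative factor in $L^2$ and the other in $L^\infty$). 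Using Cauchy's inequality with a small parameter $\varepsilon$, together with $\mathcal{E}_{2N}\le\delta^2\le1$ so that $\mathcal{E}_{2N}\mathcal{F}_{2N}\le\sqrt{\mathcal{E}_{2N}}\mathcal{F}_{2N}$, these bound the $G$-contributions of $\mathcal{R}^\alpha$ by $\varepsilon\|\partial^\alpha u\|_1^2+C_\varepsilon(\mathcal{E}_{2N}^{\theta}\mathcal{D}_{2N}+\sqrt{\mathcal{E}_{2N}}\mathcal{F}_{2N})$.

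The interfacial term is where the essential work lies, and it is the main obstacle: because $\rj>0$ it carries the opposite sign to its counterpart in \cite{GT_per}, so it cannot be folded into the energy and must instead be carried on the right and dissipated by interpolation — which is precisely why $\mathcal{F}_{2N}$ and $C_\kappa\int_0^t\|\eta\|_0^2$ appear in \eqref{peres}. I would bound it by $\varepsilon\|\partial^\alpha u\|_1^2+C_\varepsilon\|\partial^\alpha\eta_-\|_{H^{-1/2}(\Sigma)}^2$ and then estimate $\|\partial^\alpha\eta_-\|_{H^{-1/2}(\Sigma)}^2$ in the spirit of \eqref{1es1}: for $\alpha_0=0$ one has $\|\partial^\alpha\eta_-\|_{H^{-1/2}}\le\|\eta\|_{4N-1/2}$, and the Sobolev interpolation $\|\eta\|_{4N-1/2}^2\le C_\kappa\|\eta\|_0^2+\kappa\|\eta\|_{4N+1/2}^2=C_\kappa\|\eta\|_0^2+\kappa\mathcal{F}_{2N}$ closes the estimate; for $\alpha_0\ge1$, the kinematic condition gives $\partial^\alpha\eta_-=\partial^{\alpha-e_0}u_3+\partial^{\alpha-e_0}G^4$ on $\Sigma_-$ (with $e_0=(1,0,0)$, $|\alpha-e_0|\le4N-2$), whose $G^4$-part is $\lesssim\|\bna^{4N-2}G^4\|_{1/2}^2\lesssim\mathcal{E}_{2N}^{\theta}\mathcal{D}_{2N}$ by \eqref{Ges2}, and whose $u_3$-part reduces to lower-order interior norms of $u$ that interpolate into $\|\eta\|_0^2$-type terms together with quantities absorbed using Lemma~\ref{temp} (note that $\int_0^t\|\partial_t^j u\|_1^2$ for $j\le 2N$ sits on the left of \eqref{peres}). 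Care must be taken to let the interpolation losses land on $\mathcal{F}_{2N}$ (for which $\kappa$-smallness is allowed) and on $\|\eta\|_0$ (for which $C_\kappa$-largeness is allowed), and not on $\mathcal{D}_{2N}$, for which no small-coefficient room is yet available at this stage of the argument.

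Finally I would integrate the energy identity in time over $[0,t]$, sum over all admissible $\alpha$, discard the nonnegative $\int_{\Sigma_+}\rho_+g|\partial^\alpha\eta_+(t)|^2$, bound the data at $t=0$ by $C\mathcal{E}_{2N}(0)$, apply Korn's inequality to replace $\int\mu|\mathbb{D}\partial^\alpha u|^2$ by $\|\partial^\alpha u\|_1^2$, and choose $\varepsilon$ (and then $\kappa$) small enough to absorb the $\varepsilon\|\partial^\alpha u\|_1^2$ contributions onto the left, obtaining \eqref{peres} for $\alpha_0\le 2N-1$; adding \eqref{geoes} of Lemma~\ref{temp} then completes the sum over all $|\alpha|\le 4N$ and simultaneously absorbs the residual low-order $u$-norms and the $\int_0^t\|\eta\|_0^2$ terms picked up along the way.
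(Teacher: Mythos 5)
Your outline reproduces the paper's strategy: apply $\partial^\alpha$ for $|\alpha|\le 4N$, $\alpha_0\le 2N-1$ to \eqref{nosurface2}, form the $L^2$ energy identity, estimate the $G^i$-contributions by Lemma \ref{Gesle} (with one horizontal integration by parts at top order), treat the destabilizing $\eta_-$ boundary term by duality plus Sobolev interpolation of $\|\eta\|_{4N-1/2}$ against $\|\eta\|_0$ and $\mathcal{F}_{2N}$ when $\alpha_0=0$, reduce to $u$- and $G^4$-norms via the kinematic equation when $\alpha_0\ge 1$, and then add \eqref{geoes} of Lemma \ref{temp} for the pure $\partial_t^{2N}$ index. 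The one structural variation is that you fold $\rho_+ g|\partial^\alpha\eta_+|^2$ into the time-differentiated energy (at the cost of an extra $\int_{\Sigma_+}\rho_+g\,\partial^\alpha\eta_+\,\partial^\alpha G^4$ error) and discard it at the end, whereas the paper leaves both $\Sigma_+$ and $\Sigma_-$ contributions on the right and treats them together by the single bound $C\|\partial^\alpha\eta\|_{-1/2}\|\partial^\alpha u_3\|_{H^{1/2}(\Sigma)}$; the two routes are equivalent in difficulty, but the paper's is marginally cleaner because the $\Sigma_+$ term requires no separate handling. One sentence of yours is off: when you say the $u_3$-part of $\partial^{\alpha-e_0}u_3$ ``interpolates into $\|\eta\|_0^2$-type terms,'' that is a slip --- those traces land on lower parabolic-weight $\|\partial^\beta u\|_1^2$ contributions (which sit on the left of the summed estimate and in \eqref{geoes}), not on $\|\eta\|_0$-terms; only the $\alpha_0=0$ case feeds into the $\|\eta\|_0^2$ and $\mathcal{F}_{2N}$ remainders. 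With that correction the proposal matches the paper's argument.
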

\begin{proof}
Since the boundaries of $\Omega_\pm$ are flat we are free to take the time derivatives and horizontal derivatives in the equations \eqref{nosurface2}.  Let $\alpha\in \mathbb{N}^{1+2}$ so that $\alpha_0\le 2N-1$ and $|\alpha|\le 4N$. We apply $\partial^\alpha$  to $\eqref{nosurface2}_1$, multiply the resulting equations by $\partial^\alpha u$, and then integrate  over $\Omega$,  using the other conditions in $\eqref{nosurface2}$;  using  trace estimates on the resulting equality, we find that
\begin{equation}\label{peres1}
\begin{split}
\frac{1}{2}\frac{d}{dt} \int_\Omega \rho |\partial^\alpha   u |^2
  +\frac{1}{2}\int_{\Omega}\mu|\mathbb{D} \partial^\alpha u
|^2&=\int_\Omega   \partial^\alpha u\cdot \partial^\alpha G^1 +
\partial^\alpha p \partial^\alpha G^2-\int_{\Sigma}\partial^\alpha
u\cdot
\partial^\alpha G^3
\\&\quad-\frac{1}{2}\int_{\Sigma_+} \rho_+g
\partial^\alpha\eta_+ \partial^\alpha u_{3,+}
+\frac{1}{2}\int_{\Sigma_-}\rj g
\partial^\alpha\eta_-\partial^\alpha u_{3} .
 \end{split}
\end{equation}

We will estimate the right hand side of \eqref{peres1}. First, we estimate the $G^1,G^2,G^3$ terms. Assume that $|\alpha|\le 4N-1$, then by the estimates \eqref{Ges2}--\eqref{Ges3} in Lemma \ref{Gesle} we have
\begin{equation}\label{m1}
\begin{split}
\displaystyle\int_\Omega   \partial^\alpha u\cdot
 \partial^\alpha G^1 +  \partial^\alpha p \partial^\alpha G^2
 &\lesssim \|\partial^\alpha u\|_0\|\partial^\alpha G^1\|_0+\|\partial^\alpha p\|_0\|\partial^\alpha G^2\|_0
 \\&\lesssim
\sqrt{\mathcal{D}_{2N}}\sqrt{\mathcal{E}_{2N}^{\theta}\mathcal{D}_{2N}+\mathcal{E}_{2N}\mathcal{F}_{2N}}
  . \end{split}
\end{equation}
Again by  \eqref{Ges2}--\eqref{Ges3}, together with the trace theorem, we have
\begin{equation}\label{m2}
\begin{split}-\int_{\Sigma}\partial^\alpha
u\cdot
\partial^\alpha G^3\lesssim \|\partial^\alpha
u\|_{L^2(\Sigma)}\|\partial^\alpha G^3\|_0 \lesssim
\sqrt{\mathcal{D}_{2N}}\sqrt{\mathcal{E}_{2N}^{\theta}\mathcal{D}_{2N}+\mathcal{E}_{2N}\mathcal{F}_{2N}}
 .\end{split}
\end{equation}
Now  we assume that $|\alpha|=4N$. Since $\alpha_0\le 2N-1$, we have $\alpha_1+\alpha_2\ge 2$, so we can integrate by parts in the horizontal directions. We write $\partial^\alpha=\partial^\beta\partial^\gamma$ so that $|\gamma|=4N-1$ and $|\beta| = 1$.  Integrating by parts and using the estimates \eqref{Ges2}--\eqref{Ges3}, we find that
\begin{equation}\label{m3}
\begin{split}
 \int_\Omega   \partial^\alpha u\cdot  \partial^\alpha G^1  &
 =-\int_\Omega   \partial^{\alpha+\beta} u\cdot  \partial^\gamma G^1
\lesssim \|\partial^{\alpha+\beta} u\|_0\|\partial^\gamma G^1\|_0
\\&\quad\lesssim
\sqrt{\mathcal{D}_{2N}}\sqrt{\mathcal{E}_{2N}^{\theta}\mathcal{D}_{2N}+\mathcal{E}_{2N}\mathcal{F}_{2N}}.
\end{split}
\end{equation}
For the $G^2$ term, we do not need to integrate by parts:
\begin{equation}\label{m4}
 \int_\Omega   \partial^\alpha p \partial^\alpha G^2
 \lesssim \|\partial^\alpha p\|_0\| \partial^\gamma G^2\|_1
 \lesssim \sqrt{\mathcal{D}_{2N}}\sqrt{\mathcal{E}_{2N}^{\theta}\mathcal{D}_{2N}+\mathcal{E}_{2N}\mathcal{F}_{2N}}.\end{equation}
For the $G^3$ term, we use the trace theorem to estimate
\begin{equation}\label{m5}
\begin{split}
-\int_{\Sigma}\partial^\alpha
u\cdot
\partial^\alpha G^3
&\lesssim \left| \int_{\Sigma}\partial^{\alpha+\beta} u \cdot
\partial^\gamma G^3\right|
 \lesssim\|\partial^{\alpha+\beta} u \|_{H^{-1/2}(\Sigma)}\|\partial^\gamma G^3\|_{1/2}
\\&\lesssim\|\partial^{\alpha} u \|_{H^{1/2}(\Sigma)}\|\partial^\gamma G^3\|_{1/2}
  \lesssim\|\partial^{\alpha} u \|_{1}\|\partial^\gamma G^3\|_{1/2}
\\&\lesssim \sqrt{\mathcal{D}_{2N}}\sqrt{\mathcal{E}_{2N}^{\theta}\mathcal{D}_{2N}+\mathcal{E}_{2N}\mathcal{F}_{2N}}
 .\end{split}
\end{equation}
Now we turn to estimating the last two terms in \eqref{peres1}. By the trace theorem and Cauchy's inequality, since $\alpha_0\le 2N-1$ and $|\alpha|\le 4N$, we have
\begin{equation}\label{m6}
\begin{split}
 -\frac{1}{2}\int_{\Sigma_+} \rho_+g
\partial^\alpha\eta_+ \partial^\alpha u_{3,+}
+\frac{1}{2}\int_{\Sigma_-}\rj g
\partial^\alpha\eta_-\partial^\alpha u_{3}&\le C\|\partial^\alpha\eta\|_{-1/2}\| \partial^\alpha u_{3}\|_{H^{1/2}(\Sigma)}
\\&\le
C_\varepsilon \|\partial_t^{\alpha_0}\eta\|_{4N-2\alpha_0-1/2}^2+\varepsilon\|
\partial^\alpha u \|_{1}^2.
\end{split}
\end{equation}

Consequently, in light of \eqref{m1}--\eqref{m6}, we may integrate \eqref{peres1} from $0$ to $t$ and sum over such $\alpha$.  Using Korn's inequality and taking $\varepsilon$ sufficiently small, we find that
\begin{equation}\label{ls0}
\begin{split}
 \sum_{\substack{|\alpha|\le 4N  \\ \alpha_0<2N}} &\|\partial^\alpha   u (t)\|_0^2
  +\int_0^t\sum_{\substack{|\alpha|\le 4N \\ \alpha_0<2N}} \|\partial^\alpha   u (s)\|^2\,ds
 \lesssim  {\mathcal{E}}_{2N}(0) \\
&+  \int_0^t \left( \mathcal{E}_{2N}^{\theta/2} \mathcal{D}_{2N}(s) +\sqrt{\mathcal{D}_{2N}\mathcal{E}_{2N} \mathcal{F}_{2N}}(s)\right) ds
 +\int_0^t\sum_{0\le \alpha_0\le 2N-1}\|\partial_t^{\alpha_0}\eta(s)\|_{4N-2\alpha_0-1/2}^2\,ds.
 \end{split}
 \end{equation}
If $\alpha_0=0$, we use  Sobolev interpolation to bound
\begin{equation}
\|\partial_t^{\alpha_0}\eta\|_{4N-2\alpha_0-1/2}^2=\| \eta\|_{4N
-1/2}^2\le C_\kappa \|\eta\|_{0}^2+\kappa\|\eta\|_{4N+1/2}^2=
C_\kappa \|\eta\|_{0}^2+\kappa \mathcal{F}_{2N}.\label{ls1}
\end{equation}
If $1\le \alpha_0\le 2N-1$, we use the kinematic boundary condition, the trace theorem and \eqref{Ges2}, to obtain
\begin{equation}\label{ls2}
\begin{split}
\|\partial_t^{\alpha_0}\eta\|_{4N-2\alpha_0-1/2}^2&\le
\|\partial_t^{\alpha_0-1}u_3\|_{H^{4N-2\alpha_0-1/2}}^2+\|\partial_t^{\alpha_0-1}G^4\|_{H^{4N-2\alpha_0-1/2}}^2
\\&\le \|\partial_t^{\alpha_0-1}u \|_{ {4N-2(\alpha_0-1)-2}} ^2+\|\partial_t^{\alpha_0-1}G^4\|_{H^{4N-2(\alpha_0-1)-5/2}}^2
\\&\le \|\partial_t^{\alpha_0-1}u \|_{
{4N-2(\alpha_0-1)-2}}
^2+\mathcal{E}_{2N}^{\theta}\mathcal{D}_{2N}.
\end{split}
\end{equation}
Hence, by \eqref{ls1}--\eqref{ls2}, we deduce from \eqref{ls0} that
\begin{equation}\label{ls3}
\begin{split}
&\sum_{\substack{|\alpha|\le 4N \\ \alpha_0<2N}} \|\partial^\alpha   u (t)\|_0^2
  +\int_0^t\sum_{\substack{|\alpha|\le 4N \\ \alpha_0<2N}} \|\partial^\alpha   u (s)\|_1^2\,ds
\\&\quad\lesssim  {\mathcal{E}}_{2N}(0)+\int_0^t \mathcal{E}_{2N}^{\theta/2}
 \mathcal{D}_{2N}+\sqrt{ \mathcal{E}_{2N}}\mathcal{F}_{2N}+\kappa\mathcal{F}_{2N}\,ds
 \\&\qquad+C_\kappa \int_0^t \| \eta(s)\|_{0}^2\,ds
 +\int_0^t\sum_{0\le \ell\le 2N-2}\|\partial_t^{\ell}u(s)\|_{4N-2\ell-2}^2\,ds.
 \end{split}
 \end{equation}

To conclude, we  combine \eqref{ls3} and \eqref{geoes} of  Lemma \ref{temp},  renaming $\theta$; This yields \eqref{peres}.
\end{proof}

 \subsubsection{Transport estimates}

Now we derive  estimates for $\eta$ by using the kinematic transport equation
 \begin{equation}\label{transport}
 \partial_t\eta+u\cdot \nabla_\ast\eta=u_3\quad
 \hbox{in }\Sigma,
 \end{equation}
where $u\cdot \nabla_\ast\eta=u_1\partial_1\eta+u_2\partial_2\eta$.
\begin{lemma}\label{transportlemma}
For any $\varepsilon>0$, there exists a constant $C_\varepsilon >0$ such that
\begin{equation}\label{transportes}
\begin{split}  \sum_{\ell=0}^{2N} \|\partial_t^\ell   \eta
(t)\|^2_{4-2\ell}+\mathcal{F}_{2N}(t) &\lesssim \mathcal{E}_{2N}(0)+\mathcal{F}_{2N}(0)+\int_0^t
\sqrt{\mathcal{E}_{2N}}(\mathcal{D}_{2N}+\mathcal{F}_{2N})\,ds
 \\&\quad+\varepsilon\int_0^t( \mathcal{E}_{2N} +
\mathcal{F}_{2N} )\,ds+C_\varepsilon \int_0^t\sum_{|\alpha|\le
4N}\|\partial^\alpha u(s)\|_{1}^2\,ds.
\end{split}
\end{equation}
\end{lemma}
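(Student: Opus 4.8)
The plan is to run transport-type energy estimates on the kinematic equation \eqref{transport}, regarding $u_3|_\Sigma$ as a forcing that is low order in $u$, treating the advection term $u\cdot\na\eta$ by integration by parts together with commutator estimates, and exploiting throughout that $\Sigma$ is a flat torus, so that only horizontal derivatives ever need to be taken and fractional Sobolev norms on $\Sigma$ are given by Fourier multipliers. Write $\langle\na\rangle^s:=(1-\Delta_\ast)^{s/2}$ for the horizontal Bessel potential of order $s$. For each $0\le\ell\le 2N$, applying $\partial_t^\ell$ to \eqref{transport} yields
\begin{equation}
\partial_t(\partial_t^\ell\eta)+u\cdot\na(\partial_t^\ell\eta)=\partial_t^\ell u_3-\sum_{0<m\le\ell}C_\ell^m\,\partial_t^m u\cdot\na\partial_t^{\ell-m}\eta\quad\text{on }\Sigma .
\end{equation}
For the top spatial piece one uses the same equation with $\ell=0$ but at a higher Sobolev level.

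For the ``interior'' temporal pieces, fix $0\le\ell\le 2N$ and set $s_\ell:=4N-2\ell$; apply $\langle\na\rangle^{s_\ell}$ to this equation, pair with $\langle\na\rangle^{s_\ell}\partial_t^\ell\eta$ in $L^2(\Sigma)$, and integrate in time. The advection term becomes, after integrating by parts in the horizontal variables,
\[
\tfrac12\int_\Sigma(\diverge_\ast u)\,|\langle\na\rangle^{s_\ell}\partial_t^\ell\eta|^2+\int_\Sigma[\langle\na\rangle^{s_\ell},u\cdot\na]\partial_t^\ell\eta\cdot\langle\na\rangle^{s_\ell}\partial_t^\ell\eta ;
\]
the first term is bounded by $\|\diverge_\ast u\|_{L^\infty}\|\partial_t^\ell\eta\|_{s_\ell}^2\lesssim\sqrt{\mathcal E_{2N}}\,\mathcal E_{2N}$ (Sobolev embedding, with $N\ge 3$ leaving ample room), and the commutator by an estimate of Kato--Ponce type, the trace theorem, and Sobolev interpolation. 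The commutator pieces in which $\eta$ carries at most $4N-2$ derivatives, together with the products $\partial_t^m u\cdot\na\partial_t^{\ell-m}\eta$ for $0<m\le\ell$, are each of the form (a factor controlled by $\mathcal E_{2N}$)$\times$(a factor controlled by $\mathcal D_{2N}$), hence contribute $\lesssim\sqrt{\mathcal E_{2N}}\,\mathcal D_{2N}$; the only piece genuinely requiring the dissipation is the top commutator term, where the velocity factor is estimated through $\mathcal D_{2N}$. The linear forcing is handled by Cauchy's inequality,
\[
\Bigl|\int_\Sigma\langle\na\rangle^{s_\ell}\partial_t^\ell u_3\cdot\langle\na\rangle^{s_\ell}\partial_t^\ell\eta\Bigr|\le C_\varepsilon\|\partial_t^\ell u_3\|_{H^{s_\ell}(\Sigma)}^2+\varepsilon\|\partial_t^\ell\eta\|_{s_\ell}^2 ,
\]
where, since $2\ell+s_\ell=4N$, applying the trace theorem in the horizontal variable gives $\|\partial_t^\ell u_3\|_{H^{s_\ell}(\Sigma)}^2\lesssim\sum_{|\alpha|\le 4N}\|\partial^\alpha u\|_1^2$, which yields the $C_\varepsilon\int\sum_{|\alpha|\le 4N}\|\partial^\alpha u\|_1^2$ term, while $\varepsilon\|\partial_t^\ell\eta\|_{s_\ell}^2\le\varepsilon\mathcal E_{2N}$.

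For $\mathcal F_{2N}=\|\eta\|_{4N+1/2}^2$ one repeats the computation with $\ell=0$ and the multiplier $\langle\na\rangle^{4N+1/2}$: integration by parts again produces only $\|\diverge_\ast u\|_{L^\infty}\mathcal F_{2N}\lesssim\sqrt{\mathcal E_{2N}}\,\mathcal F_{2N}$ from the top derivative of $\eta$; the commutator in $H^{4N+1/2}(\Sigma)$ is controlled by $\|\na u\|_{L^\infty}\|\eta\|_{4N+1/2}+\|u\|_{H^{4N+1/2}(\Sigma)}\|\na\eta\|_{L^\infty}\lesssim\sqrt{\mathcal E_{2N}}\,\sqrt{\mathcal F_{2N}}+\sqrt{\mathcal D_{2N}}\,\sqrt{\mathcal E_{2N}}$, which by Young's inequality is $\lesssim\sqrt{\mathcal E_{2N}}(\mathcal D_{2N}+\mathcal F_{2N})$; and the forcing obeys $\|u_3\|_{H^{4N+1/2}(\Sigma)}^2\lesssim\|\na^{4N}u\|_1^2\lesssim\sum_{|\alpha|\le 4N}\|\partial^\alpha u\|_1^2$, so it contributes $C_\varepsilon\int\sum_{|\alpha|\le 4N}\|\partial^\alpha u\|_1^2+\varepsilon\mathcal F_{2N}$. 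Collecting the estimates over $0\le\ell\le 2N$, adding the $\mathcal F_{2N}$ estimate, integrating in time from $0$ to $t$, and discarding the nonnegative dissipative contributions on the left yields \eqref{transportes}; the hypothesis $\mathcal E_{2N}(t)\le\delta^2$ is used only to make the $\sqrt{\mathcal E_{2N}}$ prefactors small, the error terms $\varepsilon\int_0^t(\mathcal E_{2N}+\mathcal F_{2N})$ and $C_\varepsilon\int_0^t\sum_{|\alpha|\le 4N}\|\partial^\alpha u\|_1^2$ being carried along as stated.

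The main obstacle is the top-order estimate of $\mathcal F_{2N}=\|\eta\|_{4N+1/2}^2$. Because $\eta$ lives half a derivative above the highest-order velocity regularity controlled by $\mathcal E_{2N}$, the natural commutator cannot be closed inside $\mathcal E_{2N}$ alone; one must borrow the extra half derivative on $u$ from the dissipation $\mathcal D_{2N}$, which does control $\|u\|_{4N+1}$ and hence $u|_\Sigma\in H^{4N+1/2}(\Sigma)$, accepting the factor $\sqrt{\mathcal E_{2N}}$ in front of it, and one must arrange, by integrating by parts in the flat horizontal variable, that the uncontrollable top derivative of $\eta$ appears only ever paired with $\|\na u\|_{L^\infty}$, so that it is multiplied by $\sqrt{\mathcal E_{2N}}$ and can be absorbed in the overall scheme. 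Carrying out the fractional-order commutator and product estimates on $\Sigma$ cleanly, and keeping precise track of which pieces land in the $\sqrt{\mathcal E_{2N}}(\mathcal D_{2N}+\mathcal F_{2N})$, the $\varepsilon(\mathcal E_{2N}+\mathcal F_{2N})$, and the $C_\varepsilon\sum_{|\alpha|\le 4N}\|\partial^\alpha u\|_1^2$ buckets, is the bulk of the work.
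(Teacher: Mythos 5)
Your proposal is correct and follows the same route as the paper's proof: energy estimates on the kinematic transport equation \eqref{transport}, integration by parts in the flat horizontal variable to expose only $\diverge_\ast u$ rather than the top derivative of $\eta$, the Kato--Ponce commutator estimate (Lemma \ref{commutator}, applied with $\mathcal{J}^{4N+1/2}$) to close the $\mathcal{F}_{2N}$ estimate, and Cauchy's inequality at the end to split off the $\varepsilon\int(\mathcal{E}_{2N}+\mathcal{F}_{2N})$ and $C_\varepsilon\int\sum_{|\alpha|\le 4N}\|\partial^\alpha u\|_1^2$ terms. The only cosmetic difference is that for the integer-order pieces the paper applies $\partial^\alpha$ directly (multi-indices $|\alpha|\le 4N$) and handles the advection via the Leibniz-rule remainder $R^\alpha$, whereas you apply the Bessel potential $\langle\nabla\rangle^{s_\ell}\partial_t^\ell$ uniformly and invoke Kato--Ponce there too; both yield the same bounds.
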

\begin{proof}

 Let $\alpha\in \mathbb{N}^{1+2}$. Applying $\partial^\alpha$ with  $|\alpha|\le 4N$ to \eqref{transport}, we obtain
 \begin{equation}\label{transportalpha}
 \partial_t(\partial^\alpha\eta)+u\cdot \nabla_\ast(\partial^\alpha\eta)=\partial^\alpha u_3+
 R^\alpha.
 \end{equation}
Here the remainder term $R^\alpha$ is a sum of terms $\partial^\beta u\cdot \nabla_\ast(\partial^{\alpha-\beta}\eta)$ with $0\neq\beta\le \alpha$. As in Lemma \ref{Gesle}, we may bound this term as
 \begin{equation}\label{R_alpha}
 \|
 R^\alpha\|_{L^2(\Sigma)}^2\lesssim \mathcal{E}_{2N}(\mathcal{D}_{2N}+\mathcal{F}_{2N}).
 \end{equation}
We multiply \eqref{transportalpha} by $\partial^\alpha\eta$ and then integrate over $\Sigma$; by the Sobolev embedding theory on $\Sigma$, the trace theorem and \eqref{R_alpha}, we obtain the bound
 \begin{equation}
 \begin{split}
  \frac{1}{2}\frac{d}{dt}\int_\Sigma|\partial^\alpha\eta|^2&=
 -\frac{1}{2}\int_\Sigma u\cdot \nabla_\ast|\partial^\alpha\eta|^2+\int_\Sigma (\partial^\alpha u_3+R^\alpha)\partial^\alpha\eta
  \\&=
 \frac{1}{2}\int_\Sigma (\partial_1u_1+\partial_2u_2)|\partial^\alpha\eta|^2+\int_\Sigma (\partial^\alpha u_3+R^\alpha)\partial^\alpha\eta
   \\&\lesssim\|\nabla_\ast u\|_{L^\infty(\Sigma)}\|\partial^\alpha\eta\|_0^2
+(\|\partial^\alpha
u_3\|_{L^2(\Sigma)}+\|R^\alpha\|_{L^2(\Sigma)})\|\partial^\alpha\eta\|_{0}
 \\&\lesssim\| u\|_{4}\|\partial^\alpha\eta\|_0^2
+(\|\partial^\alpha
u_3\|_{1}+\|R^\alpha\|_{L^2(\Sigma)})\|\partial^\alpha\eta\|_{0}
 \\&\lesssim
\sqrt{\mathcal{E}_{2N}}(\mathcal{D}_{2N}+\mathcal{F}_{2N})+\sqrt{\mathcal{E}_{2N}}\|\partial^\alpha
u\|_{1}.
 \end{split}
 \end{equation}
Integrating the previous inequality in time from $0$ to $t$, summing over $\alpha$, we find that
 \begin{equation}\label{2nest}
 \begin{split}
 \sum_{\ell=0}^{2N}\|\partial_t^\ell\eta(t)\|_{4-2\ell}^2\lesssim \mathcal{E}_{2N}(0)+
\int_0^t\left(\sqrt{\mathcal{E}_{2N}}(\mathcal{D}_{2N}+\mathcal{F}_{2N})+\sqrt{\mathcal{E}_{2N}}\sum_{|\alpha|\le
4N} \|\partial^\alpha   u (s)\|_1\right)\,ds.
 \end{split}
 \end{equation}

Now we define the operator $\mathcal{J}=\sqrt{1-\Delta_\ast}$. We apply $\mathcal{J}^{4N+1/2}$  to \eqref{transport}, multiply the resulting equation by $\mathcal{J}^{4N+1/2}\eta$, and then integrate  over $(0,t)\times \Sigma$; using the commutator estimate of Lemma \ref{commutator} with $s=4N+1/2$, we find that
 \begin{equation}
 \begin{split}
  &\frac{1}{2}\frac{d}{dt}\int_\Sigma|\mathcal{J}^{4N+1/2}\eta|^2 \\&\quad=
 -\frac{1}{2}\int_\Sigma u\cdot \nabla_\ast|\mathcal{J}^{4N+1/2}\eta|^2+\int_\Sigma
   \left(\mathcal{J}^{4N+1/2} u_3-\left[\mathcal{J}^{4N+1/2},u\right]\cdot\nabla_\ast\eta\right) \mathcal{J}^{4N+1/2}\eta
  \\ & \quad=
 \frac{1}{2}\int_\Sigma (\partial_1u_1+\partial_2u_2)|\mathcal{J}^{4N+1/2}\eta|^2+
 \int_\Sigma  \left(\mathcal{J}^{4N+1/2} u_3-\left[\mathcal{J}^{4N+1/2},u\right]\cdot\nabla_\ast\eta\right) \mathcal{J}^{4N+1/2}\eta
    \\&\quad\lesssim\|\nabla_\ast u\|_{L^\infty(\Sigma)}\|\mathcal{J}^{4N+1/2}\eta\|_0^2
+\left(\|\mathcal{J}^{4N+1/2} u_3\|_{L^2(\Sigma)}+\|\nabla_\ast
u\|_{L^\infty(\Sigma)}\|\mathcal{J}^{4N-1/2}\nabla_\ast\eta\|_{0}\right.
 \\&\qquad+\left.\|\mathcal{J}^{4N+1/2}
u\|_{0}\|\nabla_\ast\eta\|_{L^\infty(\Sigma)}\right)\|\mathcal{J}^{4N+1/2}\eta\|_{0}
 \\&\quad\lesssim\| u\|_{4}\| \eta\|_{4N+1/2}^2 +(\|\mathcal{J}^{4N}
u_3\|_{1}+\| u\|_{4}\| \eta\|_{4N+1/2}+\|\mathcal{J}^{4N} u_3\|_{1}\|\eta\|_3)
\| \eta\|_{4N+1/2}  \\&\quad\lesssim
\sqrt{\mathcal{E}_{2N}}(\mathcal{D}_{2N}+\mathcal{F}_{2N})+\sqrt{\mathcal{F}_{2N}}\|\partial^\alpha
u\|_{1}.
 \end{split}
 \end{equation}
Integrating the above in time from $0$ to $t$, we find that
 \begin{equation}\label{2n1est}
 \begin{split}
 \mathcal{F}_{2N}(t)\lesssim
 \mathcal{F}_{2N}(0)+\int_0^t
\sqrt{\mathcal{E}_{2N}}(\mathcal{D}_{2N}+\mathcal{F}_{2N})+\sqrt{\mathcal{F}_{2N}}\|\partial^\alpha
u(s)\|_{1}\,ds.
 \end{split}
 \end{equation}

Summing  \eqref{2nest} and \eqref{2n1est} and applying Cauchy's inequality then yields \eqref{transportes}.
\end{proof}

 \subsubsection{Full energy estimates}

We will improve the estimates derived previously. To simplify notation, we define the ``horizontal'' energy as
\begin{equation}
\bar{\mathcal{E}}_{2N}:=\sum_{|\alpha|\le 4N} \|\partial^\alpha   u  \|_0^2+\sum_{\ell=0}^{2N} \|\partial_t^\ell   \eta
 \|_{4-2\ell}^2
\end{equation}
and the horizontal dissipation as
\begin{equation}
\bar{\mathcal{D}}_{2N}:=\sum_{|\alpha|\le 4N} \|\partial^\alpha   u  \|_1^2.
\end{equation}

First, chaining  \eqref{peres} and \eqref{transportes},  we obtain that for any $\varepsilon>0$, there exist a constant  $C_\varepsilon >0$ and a universal $C_0>0$ such that
\begin{equation}
\begin{split}
 & \bar{\mathcal{E}}_{2N}(t) +\mathcal{F}_{2N}(t)
  +\int_0^t \bar{\mathcal{D}}_{2N}\,ds
   \\&\quad\le  C_\varepsilon (\mathcal{E}_{2N}(0)+\mathcal{F}_{2N}(0))
+C_\varepsilon \int_0^t \mathcal{E}_{2N}^{\theta}
 \mathcal{D}_{2N}\,ds
  +C_\varepsilon \int_0^t
\sqrt{\mathcal{E}_{2N}} \mathcal{F}_{2N}\,ds
 \\&\qquad+\varepsilon C_0 \int_0^t( \mathcal{E}_{2N} +
\mathcal{F}_{2N} )+C_\varepsilon \kappa\mathcal{F}_{2N}\,ds +C_\kappa C_\varepsilon \int_0^t \| \eta(s)\|_{0}^2\,ds.
 \end{split}
 \end{equation}
If we set $\kappa=(\varepsilon  C_0)/C_\varepsilon$, then we may rewrite the inequality above as
\begin{equation}\label{horizontales}
\begin{split}
 & \bar{\mathcal{E}}_{2N}(t) +\mathcal{F}_{2N}(t)
  +\int_0^t \bar{\mathcal{D}}_{2N}\,ds
   \\&\quad\le  C_\varepsilon (\mathcal{E}_{2N}(0)+\mathcal{F}_{2N}(0))
+C_\varepsilon \int_0^t \mathcal{E}_{2N}^{\theta}
 \mathcal{D}_{2N}\,ds
  +C_\varepsilon \int_0^t
\sqrt{\mathcal{E}_{2N}} \mathcal{F}_{2N}\,ds
 \\&\qquad+ 2 \varepsilon C_0 \int_0^t( \mathcal{E}_{2N} +
\mathcal{F}_{2N} )\,ds + C_\varepsilon \int_0^t \|
\eta(s)\|_{0}^2\,ds.
\end{split}
\end{equation}

To conclude our estimates, we shall replace the left hand side of \eqref{horizontales} with the full energy and dissipation. Hence, in what follows we want to  show  that ${\mathcal{E}}_{2N}$ is comparable to $ \bar{\mathcal{E}}_{2N}$ and that ${\mathcal{D}}_{2N}$ is comparable to $ \bar{\mathcal{D}}_{2N}$, provided that $\delta$ is  sufficiently small.   We begin with the energy estimate.

\begin{lemma}\label{eth}
It holds that
\begin{equation}\label{e2n}
{\mathcal{E}}_{2N}\lesssim  \bar{\mathcal{E}}_{2N}  .
\end{equation}
\end{lemma}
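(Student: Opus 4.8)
The plan is to upgrade the horizontal energy $\bar{\mathcal{E}}_{2N}$ to the full energy $\mathcal{E}_{2N}$ by bootstrapping through the equations, in the spirit of Step~1 of Theorem~\ref{energywith} but for the problem without surface tension. The only quantities in $\mathcal{E}_{2N}$ not directly controlled by $\bar{\mathcal{E}}_{2N}$ are the full spatial derivatives $\|\partial_t^j u\|_{4N-2j}$ with $1\le j\le 2N$ and $j=0$ carrying more than $4N-1$ ``horizontal-counted'' derivatives, the pressure terms $\|\partial_t^j p\|_{4N-2j-1}$, and the $\eta$ terms $\|\partial_t^j\eta\|_{4N-2j}$; the latter are already part of $\bar{\mathcal{E}}_{2N}$ for $j\ge1$, and for $j=0$ one has $\|\eta\|_{4N}$ which is controlled by $\mathcal{F}_{2N}$ — but here we only want $\lesssim\bar{\mathcal{E}}_{2N}$, so I should double-check: actually $\mathcal{E}_{2N}$ includes $\|\eta\|_{4N}$, and $\bar{\mathcal{E}}_{2N}$ includes only $\|\eta\|_{4N}$ via $\ell=0$... wait, $\bar{\mathcal{E}}_{2N}$ has $\sum_{\ell=0}^{2N}\|\partial_t^\ell\eta\|_{4-2\ell}^2$, so for $\ell=0$ it is $\|\eta\|_4^2$, NOT $\|\eta\|_{4N}$. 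So the $\eta$-regularity in $\mathcal{E}_{2N}$ at zero time derivatives must be recovered from the elliptic structure too, presumably via the dynamic boundary condition $\rho_+g\eta_+ = \ldots$ relating $\eta$ to traces of $u$ and $p$.

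First I would apply $\partial_t^\ell$ for $\ell=0,1,\dots,2N-1$ to the perturbed Stokes system \eqref{nosurface2} and view it as a stationary two-phase Stokes problem with forcing $-\rho\partial_t^{\ell+1}u+\partial_t^\ell G^1$, divergence $\partial_t^\ell G^2$, and boundary data involving $\rho_+g\partial_t^\ell\eta_+$, $\partial_t^\ell G^3$. Applying the two-phase Stokes elliptic regularity (the analog of Lemma~\ref{cStheorem}, cited as available in this setting) with $r=4N-2\ell$, I get $\|\partial_t^\ell u\|_{4N-2\ell}^2 + \|\partial_t^\ell p\|_{4N-2\ell-1}^2 \lesssim \|\partial_t^{\ell+1}u\|_{4N-2\ell-2}^2 + \|\partial_t^\ell\eta\|_{4N-2\ell-1/2}^2 + \|\partial_t^\ell G^1\|_{4N-2\ell-2}^2 + \|\partial_t^\ell G^2\|_{4N-2\ell-1}^2 + \|\partial_t^\ell G^3\|_{4N-2\ell-3/2}^2$. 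The nonlinear terms are $\lesssim \mathcal{E}_{2N}^{1+\theta}$ by \eqref{Ges1}, which can be absorbed once $\delta$ is small; the $\partial_t^\ell\eta$ term needs handling. I would then induct downward on $\ell$ from $\ell=2N$ (base case: $\partial_t^{2N}u$ is directly in $\bar{\mathcal{E}}_{2N}$ at $\|\cdot\|_0$, and the chain bottoms out) up through $\ell=0$, at each stage using that $\|\partial_t^{\ell+1}u\|_{4N-2\ell-2}$ was bounded at the previous step, and using the kinematic boundary condition $\partial_t\eta = u\cdot\mathcal{N}+\ldots$ to trade $\|\partial_t^\ell\eta\|_s$ for $\|\partial_t^{\ell-1}u\|_{s+1/2}$ plus nonlinear terms for $\ell\ge1$, and handling $\|\eta\|$ at $\ell=0$ separately.

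For the $\ell=0$ pressureless/purely-spatial $\eta$ bound, I would use the dynamic boundary conditions $\rho_+g\eta_+ e_3 = (p_+I-\mu_+\mathbb{D}(u_+))e_3 - G^3_+$ on $\Sigma_+$ and $\rj g\eta_- e_3 = \Lbrack pI-\mu\mathbb{D}(u)\Rbrack e_3 + G^3_-$ on $\Sigma_-$ to express $\eta$ via traces of $u$ and $p$: $\|\eta\|_{4N}\lesssim \|p\|_{H^{4N-1/2}(\Sigma)}+\|u\|_{4N}+\|G^3\|_{H^{4N-1/2}(\Sigma)}\lesssim \|p\|_{4N-1}+\|u\|_{4N}+\mathcal{E}_{2N}^{(1+\theta)/2}$, using the trace theorem and the fact that $p\in\ddot H^{4N-1}$ was just obtained. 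Chaining all the bounds and absorbing the $\mathcal{E}_{2N}^{1+\theta}$ contributions (small by the standing assumption $\mathcal{E}_{2N}\le\delta^2$) onto the left gives $\mathcal{E}_{2N}\lesssim \bar{\mathcal{E}}_{2N} + \mathcal{E}_{2N}^{1+\theta}$, hence \eqref{e2n}.

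The main obstacle I anticipate is bookkeeping the downward induction so that at every regularity level the required lower-order-in-time quantity $\|\partial_t^{\ell+1}u\|_{4N-2\ell-2}$ and the $\eta$-trace term have already been controlled, and ensuring the Sobolev indices in the two-phase Stokes estimate and in the trace/kinematic manipulations all line up with $4N-2\ell$ (in particular checking the half-integer boundary indices $4N-2\ell-1/2$ versus what the kinematic condition yields, $4N-2(\ell-1)-1/2$, i.e. a clean gain). A secondary subtlety is that $\eta$ at zero temporal derivative appears in $\mathcal{E}_{2N}$ only at regularity $4N$ (not $4N+1/2$), so one must be careful \emph{not} to invoke $\mathcal{F}_{2N}$ here — the elliptic/trace argument via the dynamic condition must close at exactly the $4N$ level, which it does since $p$ is recovered at $4N-1$ in $\Omega$, hence at $4N-3/2$ on $\Sigma$, and combined with $\nabla_\ast$ on the boundary equation one gains back to $4N$; this is the one place demanding genuine care rather than routine estimation.
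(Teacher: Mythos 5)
Your overall strategy---apply $\partial_t^\ell$ to \eqref{nosurface2}, view the result as a two-phase stationary Stokes problem with data $-\rho\partial_t^{\ell+1}u+\partial_t^\ell G^1$, $\partial_t^\ell G^2$, $\partial_t^\ell G^3$, invoke Lemma~\ref{cStheorem} with $r=4N-2\ell$, and induct downward in $\ell$ from the base case $\|\partial_t^{2N}u\|_0^2\lesssim\bar{\mathcal{E}}_{2N}$---is exactly the paper's proof.  Where you go wrong is in the handling of $\eta$, and the source of the confusion is a typographical error in the paper: the printed ``$\sum_{\ell=0}^{2N}\|\partial_t^\ell\eta\|_{4-2\ell}^2$'' in the definition of $\bar{\mathcal{E}}_{2N}$ should read $\sum_{\ell=0}^{2N}\|\partial_t^\ell\eta\|_{4N-2\ell}^2$.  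This is what the transport argument \eqref{transportalpha}--\eqref{2nest} actually controls, since there one sums $\|\partial^\alpha\eta\|_{L^2(\Sigma)}^2$ over all $\alpha\in\mathbb{N}^{1+2}$ with $|\alpha|=2\alpha_0+\alpha_1+\alpha_2\le 4N$, i.e.\ horizontal derivatives of order $4N-2\ell$ at each temporal level $\ell$; and the paper's own \eqref{n2} records precisely $\sum_{j=0}^{2N}\|\partial_t^j\eta\|_{4N-2j}^2\lesssim\bar{\mathcal{E}}_{2N}$.  With the corrected definition, the boundary datum $\|\partial_t^\ell\eta\|_{4N-2\ell-3/2}$ on the right of the Stokes estimate is trivially dominated by $\bar{\mathcal{E}}_{2N}$, the downward induction closes as a matter of bookkeeping, and the entire second half of your argument is not needed.

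That second half would also not work as written.  Trading $\|\partial_t^\ell\eta\|_{4N-2\ell-3/2}$ through the kinematic condition for $\|\partial_t^{\ell-1}u\|_{4N-2\ell-1}$ breaks the downward induction: at stage $\ell$ only $\partial_t^{\ell+1}u$ has been estimated, not $\partial_t^{\ell-1}u$.  At $\ell=0$ the dynamic boundary condition reads $\rho_+g\eta_+=p_+-2\mu_+\partial_3 u_{3,+}-G^3_{3,+}$ on $\Sigma_+$ (and similarly on $\Sigma_-$); since $u\in\ddot{H}^{4N}$ gives $\partial_3u_3\vert_\Sigma\in H^{4N-3/2}(\Sigma)$ and $p\in\ddot{H}^{4N-1}$ gives $p\vert_\Sigma\in H^{4N-3/2}(\Sigma)$, trace theory yields only $\|\eta\|_{H^{4N-3/2}(\Sigma)}$, a $3/2$-derivative deficit versus the required $\|\eta\|_{4N}$.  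The ``$\nabla_\ast$ plus Poincar\'e'' device you cite does not remove this deficit; it is used in Lemma~\ref{dth} for the \emph{dissipation}, where $u$ and $p$ carry one additional full derivative.  The point of $\bar{\mathcal{E}}_{2N}$ is that the $\eta$-regularity at the energy level comes from the transport equation, and it is already built into $\bar{\mathcal{E}}_{2N}$; there is no route to it through the Stokes boundary conditions at energy-level regularity.
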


\begin{proof}
We compactly write
 \begin{equation}\label{n1}
 \mathcal{Z}_{2N}=\sum_{j=0}^{n-1}\|\partial_t^{j} G^1\|_{4N-2j-2}^2
 +\|\partial_t^{j} G^2\|_{4N-2j-1}^2+\|\partial_t^{j} G^3\|_{4N-2j-3/2}^2.
 \end{equation}
 Note that by the definitions of $\bar{\mathcal{E}}_{2N}$, we have
\begin{equation} \label{n2}
\|\partial_t^{2N}u\|_{0}^2+\sum_{j=0}^{2N}\|\partial_t^j\eta\|_{4N-2j}^2\lesssim  \bar{\mathcal{E}}_{2N}.
\end{equation}

 Now we let $j=0,\dots,2N-1$ and then apply $\partial_t^j$ to the equations in \eqref{nosurface2} to find
\begin{equation}
\label{jellip}
\left\{\begin{array}{lll}-\mu\Delta \partial_t^j u+\nabla\partial_t^j p=-\rho\partial_t^{j+1} u+\partial_t^j G^1
\quad&\hbox{in }\Omega
\\ \diverge \partial_t^j u=\partial_t^j G^2&\hbox{in }\Omega
\\ \llbracket \partial_t^j p_+I-\mu_+\mathbb{D}(\partial_t^j u_+)\rrbracket e_3=\rho_+g\partial_t^j\eta_+ e_3+\partial_t^jG^3_+&\hbox{on }\Sigma_+
\\ \llbracket \partial_t^j u\rrbracket=0,
\ \llbracket\partial_t^j pI-\mu\mathbb{D}(\partial_t^j u)\rrbracket
e_3=\rj g\partial_t^j\eta_- e_3-\partial_t^jG_-^3&\hbox{on
}\Sigma_-
\\ \partial_t^j u_-=0 &\hbox{on }\Sigma_b.
\end{array}\right.
\end{equation}
Applying the elliptic estimates in Theorem \ref{cStheorem} with $r=4N-2j\ge 2$ to the problem \eqref{jellip} and using \eqref{n1}--\eqref{n2}, we obtain
\begin{equation}\label{n3}
\begin{split}
\|\partial_t^j u\|_{4N-2j}^2+\|\partial_t^j  p\|_{4N-2j-1}^2
&\lesssim\|\partial_t^{j+1} u\|_{4N-2 j-2}^2 + \|\partial_t^{j} G^1\|_{4N-2j-2}^2
\\
&\quad +\|\partial_t^{j} G^2\|_{4N-2j-1}^2
+\|\partial_t^{j} \eta\|_{4N-2j-3/2}^2+\|\partial_t^{j}
G^3\|_{4N-2j-3/2}^2
\\
& \lesssim\|\partial_t^{j+1} u\|_{4N-2(j+1)}^2+\bar{\mathcal{E}}_{2N}+\mathcal{Z}_{2N}.
\end{split}
\end{equation}

We claim that
\begin{equation}\label{claim}
\mathcal{E}_{2N}\lesssim\bar{\mathcal{E}}_{2N}+\mathcal{Z}_{2N}.
\end{equation}
We prove the claim \eqref{claim} by a finite induction based on the estimate \eqref{n3}. For $j=2N-1$, we obtain from \eqref{n3} and \eqref{n2} that
\begin{equation}\|\partial_t^{2N-1} u\|_{2}^2+\|\partial_t^{2N-1} p\|_{1}^2
\lesssim\|\partial_t^{2N} u\|_{0}^2
+\bar{\mathcal{E}}_{2N}+\mathcal{Z}_{2N}
\lesssim\bar{\mathcal{E}}_{2N}+\mathcal{Z}_{2N}.
\end{equation}
Now suppose that the following holds for $1\le \ell<2N$
\begin{equation}\label{n4}
\|\partial_t^{2N-\ell} u\|_{2\ell}^2+\|\partial_t^{2N-\ell}  p\|_{2\ell-1}^2
\lesssim\bar{\mathcal{E}}_{2N}+\mathcal{Z}_{2N}.
\end{equation}
We apply \eqref{n3} with $j=2N-(\ell+1)$ and use the induction hypothesis \eqref{n4} to find
\begin{equation}
\|\partial_t^{2N-(\ell+1)} u\|_{2(\ell+1)}^2+\|\partial_t^{4N-(\ell+1)}  p\|_{2(\ell+1)-1}^2
 \lesssim\|\partial_t^{2N-\ell} u\|_{2\ell}^2+\bar{\mathcal{E}}_{2N}+\mathcal{Z}_{2N}
\lesssim\bar{\mathcal{E}}_{2N}+\mathcal{Z}_{2N}.
\end{equation}
Hence by  finite induction, the bound \eqref{n4} holds for all $\ell=1,\dots,n.$ Summing \eqref{n4} over  $\ell=1,\dots,n$ and changing the index, we then have
 \begin{equation}\label{n5}
 \sum_{j=0}^{2N-1}\|\partial_t^j u\|_{4N-2j}^2
 +\|\partial_t^j  p\|_{4N-2j-1}^2\lesssim \bar{\mathcal{E}}_{2N}+\mathcal{Z}_{2N}.
 \end{equation}
We then conclude the claim \eqref{claim} by summing \eqref{n2} and \eqref{n5}.

Finally, using \eqref{Ges1} in Lemma \ref{Gesle} to bound $\mathcal{Z}_{2N}\lesssim ({\mathcal{E}}_{2N})^{1+\theta}$,  we obtain \eqref{e2n} if we take ${\mathcal{E}}_{2N}< \delta^2$ with $\delta$  small enough to absorb onto the left.
\end{proof}

Now we consider a similar estimate for the dissipation.

\begin{lemma}\label{dth}
It holds that
\begin{equation}\label{d2n}
{\mathcal{D}}_{2N}\lesssim  \bar{\mathcal{D}}_{2N} +
\mathcal{E}_{2N}{\mathcal{F}}_{2N}  .
\end{equation}
\end{lemma}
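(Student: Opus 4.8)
The plan is to carry out, for the surface‑tension‑free dissipation, the analogue of Steps~2--4 of the proof of Theorem~\ref{energywith}: exploit the fact that $\bar{\mathcal D}_{2N}$ controls the full boundary traces of $u$ to drive one‑phase Stokes regularity, then recover $\eta$ and the pressure from the boundary conditions, absorbing all the geometric nonlinearities through Lemma~\ref{Gesle}. Throughout we use Lemma~\ref{eth}, so that $\mathcal E_{2N}\simeq\bar{\mathcal E}_{2N}\le\delta^2$; all bounds in this lemma are pointwise in time.

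\textbf{Steps 1--2 (Stokes regularity for $u$ and $\nabla p$).} Since $\Sigma_\pm$ and $\Sigma_b$ are flat, the trace theorem and the definition of Sobolev norms on $\mathrm T^2$ give $\|\partial_t^j u\|_{H^{4N-2j+1/2}(\Sigma)}^2\lesssim \|\partial_t^j u\|_{H^{1/2}(\Sigma)}^2+\|\na^{4N-2j}\partial_t^j u\|_{H^{1/2}(\Sigma)}^2\lesssim \bar{\mathcal D}_{2N}$ for $0\le j\le 2N$. Because $\eta$ is not yet controlled at the dissipation level, I would not apply the two‑phase estimate of Theorem~\ref{cStheorem}; instead $(\partial_t^j u_+,\partial_t^j p_+)$ solves a one‑phase Stokes problem in $\Omega_+$ with forcing $-\rho_+\partial_t^{j+1}u_++\partial_t^jG^1_+$, divergence $\partial_t^jG^2_+$, and Dirichlet data $\partial_t^j u_+$ on $\Sigma_+\cup\Sigma_-$, and likewise $(\partial_t^j u_-,\partial_t^j p_-)$ in $\Omega_-$ with data $\partial_t^j u_-$ on $\Sigma_-$ and $0$ on $\Sigma_b$. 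Applying Lemma~\ref{cS1phaselemma2} with $r=4N-2j+1$, using Lemma~\ref{Gesle} for $G^1,G^2$ and Step~1 for the boundary data, and chaining downward in $j$ from $j=2N-1$ (base case $\|\partial_t^{2N}u\|_1^2\le\bar{\mathcal D}_{2N}$), one gets $\sum_{j=0}^{2N}\|\partial_t^j u\|_{4N-2j+1}^2+\sum_{j=0}^{2N-1}\|\nabla\partial_t^j p\|_{4N-2j-1}^2\lesssim \bar{\mathcal D}_{2N}+\mathcal E_{2N}^\theta\mathcal D_{2N}+\mathcal E_{2N}\mathcal F_{2N}$. At the top index $j=0$ the norm $\|G^2\|_{4N}$ exceeds what \eqref{Ges3} supplies, so it is split by Leibniz: the piece with all derivatives on $u$ is $\lesssim\delta\|u\|_{4N+1}$ and absorbed onto the left, the piece with all derivatives on $\bar\eta$ is $\lesssim\sqrt{\mathcal E_{2N}\mathcal F_{2N}}$, and the remainder is $\lesssim\mathcal E_{2N}^\theta\mathcal D_{2N}$; the same comment applies to $\|G^1\|_{4N-1}$ via \eqref{Ges3}.

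\textbf{Step 3 (recovering $\eta$ and the full pressure).} This is the main obstacle: without surface tension $\eta$ enters the dynamic boundary conditions only algebraically, so solving for $\eta$ on $\Sigma$ would require $p|_\Sigma$, which is not yet available. The remedy is to apply $\na$ to those conditions, obtaining $\rho_+g\,\na\eta_+=\na(p_+-2\mu_+\partial_3u_{3,+})-\na G^{3,3}_+$ on $\Sigma_+$ and $\rj g\,\na\eta_-=\na\Lbrack p-2\mu\partial_3u_3\Rbrack+\na G^{3,3}_-$ on $\Sigma_-$; now only the trace of $\nabla p$ enters, which is controlled by Step~2. Combining this with the zero‑average condition $\int_{\mathrm T^2}\eta_\pm=0$ and Poincar\'e on $\mathrm T^2$ yields $\|\eta\|_{4N-1/2}^2\lesssim\|\nabla p\|_{4N-1}^2+\|u\|_{4N+1}^2+\|G^3\|_{H^{4N-1/2}(\Sigma)}^2\lesssim\bar{\mathcal D}_{2N}+\mathcal E_{2N}^\theta\mathcal D_{2N}+\mathcal E_{2N}\mathcal F_{2N}$. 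Reading $p$ off the dynamic conditions then gives $p_+|_{\Sigma_+}$ and $\Lbrack p\Rbrack|_{\Sigma_-}$; Poincar\'e on $\Omega_+$ (Lemma~\ref{poincare}) upgrades $\|\nabla p\|_{4N-1}$ to $\|p_+\|_{H^{4N}(\Omega_+)}$, tracing to $\Sigma_-$ gives $p_-|_{\Sigma_-}=p_+|_{\Sigma_-}-\Lbrack p\Rbrack$, and Poincar\'e on $\Omega_-$ gives $\|p_-\|_{H^{4N}(\Omega_-)}$; applying the same reasoning to $\partial_t^j$ of the dynamic conditions for $1\le j\le 2N-1$ controls $\sum_{j=0}^{2N-1}\|\partial_t^j p\|_{4N-2j}^2$.

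\textbf{Steps 4--5 (time derivatives of $\eta$ and conclusion).} The kinematic condition $\eqref{nosurface2}_3$ differentiated in time gives $\partial_t^j\eta=\partial_t^{j-1}u_3+\partial_t^{j-1}G^4$ on $\Sigma$; by trace theory and Step~2, $\|\partial_t^{j-1}u_3\|_{H^{4N-2(j-1)+1/2}(\Sigma)}\lesssim\|\partial_t^{j-1}u\|_{4N-2(j-1)+1}$ for $1\le j\le 2N+1$, which controls $\|\partial_t\eta\|_{4N-1/2}^2+\sum_{j=2}^{2N+1}\|\partial_t^j\eta\|_{4N-2j+5/2}^2$ up to the $G^4$ contributions handled by \eqref{Ges2} (at $j=2N+1$ the top term $\partial_t^{2N}u\cdot\na\eta$ of $\partial_t^{2N}G^4$ is $\lesssim\sqrt{\mathcal E_{2N}\,\bar{\mathcal D}_{2N}}$ and is absorbed). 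Collecting Steps~2--4, every term of $\mathcal D_{2N}$ is bounded by $C(\bar{\mathcal D}_{2N}+\mathcal E_{2N}^\theta\mathcal D_{2N}+\mathcal E_{2N}\mathcal F_{2N})$; since $\mathcal E_{2N}\le\delta^2$, taking $\delta$ small enough absorbs $\mathcal E_{2N}^\theta\mathcal D_{2N}$ onto the left, which is \eqref{d2n}.
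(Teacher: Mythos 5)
Your proposal is correct and follows essentially the same route as the paper's proof: trace estimates from $\bar{\mathcal D}_{2N}$ to get Dirichlet data for $\partial_t^j u$, one-phase Stokes regularity (Lemma~\ref{cS1phaselemma2}) with downward induction in $j$, then $\nabla_\ast$ applied to the dynamic boundary conditions plus Poincar\'e to recover $\eta$, then $p$, then the kinematic condition for $\partial_t^j\eta$. Your explicit Leibniz splitting of the top-index $\|G^2\|_{4N}$ (and $\|\partial_t^{2N}G^4\|_{1/2}$) term is a nice touch — the paper folds these into the citation of Theorem~3.2 of \cite{GT_per} via the compact quantity $\mathcal Z_{2N}$, but your argument shows directly why the extra order of $G^2$ regularity is harmless.
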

\begin{proof}

We  compactly write
 \begin{equation}\label{n11}
\mathcal{Z}_{2N}= \| \bar{\nabla}^{4N-1} G^1\|_{0}^2+\|
\bar{\nabla}^{4N-1} G^2\|_{1}^2
 +\|\bar{\nabla}_\ast^{4N-1} G^3\|_{1/2}^2+\| \bar{\nabla}_\ast^{4N-1} G^4\|_{1/2}^2+\| \bar{\nabla}_\ast^{4N-2} \partial_tG^4\|_{1/2}^2.
\end{equation}

First,  by the definition of $\bar{{\mathcal{D}}}_{ 2N} $ and Korn's inequality, we obtain
\begin{equation} \label{n14} \|\bar{\nabla}_\ast^{4N} u\|_{1}^2 \lesssim \bar{\mathcal{D}}_{2N}.\end{equation}

Notice that we have not yet derived at estimate of $\eta$ in terms of the dissipation, so we can not apply the two-phase elliptic estimates of Lemma \ref{cStheorem} as in Lemma \ref{eth}.  It is crucial to observe that  from \eqref{n14} we can get higher regularity estimates of $u$ on the boundaries $\Sigma=\Sigma_+\cup\Sigma_-$. Indeed, since $\Sigma_\pm$ are flat, we may use the definition of the Sobolev norm on $\mathrm{T}^2$  and the trace theorem to see from \eqref{n14} that
 \begin{equation}\label{n21}
 \begin{split}\|\partial_t^{j} u\|_{H^{4N-2j+1/2}(\Sigma)}^2 &\lesssim \|\partial_t^{j}   u \|_{L^2(\Sigma)}^2
 +\|\bar{\nabla}_\ast^{4N-2j}\partial_t^{j} u \|_{H^{1/2}(\Sigma )}^2
  \\&\lesssim \| \partial_t^{j}   u \|_{1}^2+\|\bar{\nabla}_\ast^{4N-2j}\partial_t^{j}   u \|_{1}^2
 \lesssim \bar{\mathcal{D}}_{2N}.
 \end{split}
 \end{equation}
 This motivates us to use the one-phase elliptic estimates of Lemma \ref{cS1phaselemma2}.

Let $j=0,\dots,2N-1$, and observe that $(\partial_t^j u_+,\partial_t^j p_+) $ solve the  problem
 \begin{equation} \label{pro+1}
 \left\{\begin{array}{lll}-\mu_+\Delta \partial_t^j u_+
 +\nabla\partial_t^j p_+=-\rho_+\partial_t^{j+1} u_++\partial_t^j G^1_+ &\hbox{in } \Omega_+
\\ \diverge \partial_t^j u_+=\partial_t^j G^2_+&\hbox{in } \Omega_+
\\ \partial_t^j u_+=\partial_t^j u_+&\hbox{on } \Sigma,
\end{array}\right.
\end{equation}
and  $(\partial_t^j u_-,\partial_t^j p_-) $ solve the  problem
 \begin{equation} \label{pro-1}
 \left\{\begin{array}{lll}-\mu_-\Delta \partial_t^j u_-+\nabla\partial_t^j p_-
 =-\rho_-\partial_t^{j+1} u_-+\partial_t^j G^1_- &\hbox{in } \Omega_-
\\ \diverge \partial_t^j u_-=\partial_t^j G^2_-&\hbox{in } \Omega_-
\\ \partial_t^j u_-=\partial_t^j u_-&\hbox{on } \Sigma_-
\\ \partial_t^j u_-=0 &\hbox{on } \Sigma_b.
\end{array}\right.
\end{equation}
We apply Lemma \ref{cS1phaselemma2} with $r=4N-2j+1$ to the problem \eqref{pro+1} for $u_+,\ p_+$ and to the  problem \eqref{pro-1} for $u_-,\ p_-$, respectively; using \eqref{n11}, \eqref{n14}, \eqref{n21} and summing up,  we find
 \begin{equation}\label{n31}
 \begin{split}
 &\|\partial_t^j u\|_{4N-2j+1}^2+\| \nabla\partial_t^j p\|_{4N-2j-1}^2
\\&\quad\lesssim\|\partial_t^{j+1} u\|_{4N-2j-1}^2+
\|\partial_t^j G^1\|_{4N-2j-1}^2+\|\partial_t^{j}
G^2\|_{4N-2j}^2+\|\partial_t^{j}
u\|_{H^{4N-2j+1/2}(\Sigma)}^2
\\&\quad\lesssim\|\partial_t^{j+1} u\|_{4N-2j-1}^2+\mathcal{Z}_{2N}+  \bar{\mathcal{D}}_{2N}.
\end{split}
\end{equation}

We now claim that
\begin{equation}\label{claim2}
\sum_{j=0}^{ 2N}\|\partial_t^ju\|_{4N-2j+1}^2+\sum_{j=0}^{ 2N-1}\|\partial_t^j\nabla p\|_{4N-2j-1}
\lesssim\mathcal{Z}_{2N}+  \bar{\mathcal{D}}_{2N}.
\end{equation}
We prove the claim \eqref{claim2} by a finite induction  as in Lemma \ref{eth}. For $j=2N-1$, by \eqref{n11} and \eqref{n31}, we obtain
\begin{equation}
\|\partial_t^{2N-1} u\|_{3}^2+\| \nabla\partial_t^{2N-1} p\|_{1}^2
\lesssim\|\partial_t^{2N} u\|_{1}^2 +\mathcal{Z}_{2N}+
\bar{\mathcal{D}}_{2N} \lesssim \mathcal{Z}_{2N}+
\bar{{\mathcal{D}}}_{ 2N}.
\end{equation}
Now suppose that the following holds for $1\le \ell<2N$:
\begin{equation}\label{n41}
\|\partial_t^{2N-\ell} u\|_{ 2\ell+1}^2+\| \nabla\partial_t^{2N-\ell} p\|_{2\ell-1}^2
\lesssim\mathcal{Z}_{2N}+  \bar{{\mathcal{D}}}_{ 2N}.
\end{equation}
We apply \eqref{n31} with $j=2N-(\ell+1)$ and use the induction hypothesis \eqref{n41} to find
 \begin{equation}
 \begin{split}\|\partial_t^{2N-(\ell+1)} u\|_{2(\ell+1)+1}^2+\| \nabla\partial_t^{2N-(\ell+1) }p\|_{2(\ell+1)-1}^2
&\lesssim\|\partial_t^{2N-\ell} u\|_{2\ell+1}^2+\mathcal{Z}_{2N}+
\bar{\mathcal{D}}_{2N}
\\&\lesssim\mathcal{Z}_{2N}+  \bar{\mathcal{D}}_{2N}.
\end{split}
\end{equation}
Hence the bound \eqref{n41} holds for all $l=1,\dots,2N.$ We then conclude the claim \eqref{claim2} by summing this over $\ell=1,\dots, n$, adding \eqref{n14} and  changing the index.

Now that we have obtained \eqref{claim2}, we estimate the remaining parts in $\mathcal{D}_{2N}$. We will turn to the boundary conditions in \eqref{nosurface2}. First we derive estimates for $\eta$. For the term $\dt^j \eta$ for $j\ge 2$ we use the boundary condition
\begin{equation}\label{n61}
\partial_t\eta=u_3+G^4\hbox{ on }\Sigma.
\end{equation}
Indeed, for $j=2,\dots,2N+1$ we apply $\partial_t^{j-1}$ to \eqref{n61} to see, by \eqref{claim2} and \eqref{n11}, that
\begin{equation}\label{eta1}
\begin{split}\|\partial_t^j\eta\|_{4N-2j+5/2}^2&\lesssim \|\partial_t^{j-1}u_3\|_{H^{4N-2j+5/2}(\Sigma)}^2
+\|\partial_t^{j-1}G^4\|_{4N-2j+5/2}^2
\\&\lesssim \|\partial_t^{j-1}u_3\|_{{2n-2(j-1)+1}}^2+\|\partial_t^{j-1}G^4\|_{4N-2(j-1)+1/2}^2
 \lesssim\mathcal{Z}_{2N}+  \bar{\mathcal{D}}_{2N}.
\end{split}
\end{equation}
For the term $\partial_t\eta$, we again use \eqref{n61}, \eqref{claim2} and \eqref{n11} to find
\begin{equation}\label{eta2}
\begin{split}\|\partial_t \eta\|_{4N-1/2}^2&\lesssim \| u_3\|_{H^{4N-1/2}(\Sigma)}^2+\|\partial_t^{j-1}G^4\|_{4N-1/2}^2
\\&\lesssim \| u_3\|_{{4N }}^2+\| G^4\|_{4N-1/2}^2 \lesssim\mathcal{Z}_{2N}+  \bar{\mathcal{D}}_{2N}.
\end{split}
\end{equation}
For the remaining $\eta$ term, i.e. those without temporal derivatives, we use the boundary conditions
\begin{equation}\label{pb1}
 \rho g\eta_+= p_+- \mu_+\partial_3u_{3,+} -G_{3,+}^3\hbox{ on }\Sigma_+
  \end{equation}
 and
\begin{equation} \label{pb2}
\rj g\eta_-=\llbracket
p\rrbracket-\llbracket\mu\partial_3u_3\rrbracket +G_{3,-}^3\hbox{ on
}\Sigma_-.
\end{equation}
Notice that at this point we do not have any bound on $p$ on the boundary $\Sigma$, but we have bounded  $\nabla p$ in $\Omega$. Applying $ {\nabla}_\ast$ to \eqref{pb1} and \eqref{pb2}, respectively,  by \eqref{claim2} and \eqref{n11}, we obtain
\begin{equation}\label{n51}
\begin{split}
\|\nabla_\ast\eta\|_{4N-3/2}^2 &\lesssim \|  {\nabla}_\ast p \|_{H^{4N-3/2}(\Sigma)}^2 + \|  {\nabla}_\ast\partial_3u_3 \|_{H^{2n-3/2}(\Sigma)}^2 +\|{\nabla}_\ast G^3\|_{ 2n-3/2 }^2  \\
& \lesssim \|\nabla p\|_{4N-1}^2 + \|u_3\|_{4N+1}^2 +
\|G^3\|_{4N-1/2}^2 \lesssim \mathcal{Z}_{2N}+
\bar{\mathcal{D}}_{2N}.
\end{split}
\end{equation}
Since $\int_{\mathrm{T}^2}\eta=0$, we may then use Poincar\'e's inequality on $\Sigma_\pm$   to obtain from \eqref{n51} that
\begin{equation} \label{eta3}
\|\eta\|_{4N-1/2}^2\lesssim \|\eta\|_{0}^2+\|\nabla_\ast \eta\|_{4N-3/2}^2\lesssim\|\nabla_\ast\eta\|_{4N-3/2}^2
\lesssim\mathcal{Z}_{2N}+  \bar{{\mathcal{D}}}_{2N}.
\end{equation}
Summing \eqref{eta1}, \eqref{eta2}  and \eqref{eta3}, we complete the estimates for $\eta$:
\begin{equation} \label{eta0}
\|\eta\|_{4N-1/2}^2+\|\partial_t\eta\|_{4N-1/2}^2
+\sum_{j=2}^{
n+1}\|\partial_t^j\eta\|_{4N-2j+5/2}\lesssim\mathcal{Z}_{2N}+
\bar{\mathcal{D}}_{2N}.
\end{equation}

It remains to bound $\|\partial_t^jp\|_0$. Applying $\partial_t^j,\ j=0,\dots,2N-1$ to \eqref{pb1}--\eqref{pb2}, by \eqref{claim2}, \eqref{eta0} and \eqref{n11}, we find
\begin{equation}\label{pp1}
\begin{split}\|\partial_t^j p_+\|_{L^2(\Sigma_+)}^2+\|\llbracket \partial_t^j p\rrbracket\|_{L^2(\Sigma_-)}^2&\lesssim \|\partial_t^j \eta\|_{0}^2+\|\partial_3\partial_t^j u_3 \|_{L^2(\Sigma)}^2+\|\partial_t^j G^3\|_{0}^2
 \\&\lesssim \|\partial_t^j \eta\|_{0}^2+\| \partial_t^j u_3
\|_{2}^2+\|\partial_t^j G^3\|_{0}^2 \lesssim\mathcal{Z}_{2N}+
\bar{\mathcal{D}}_{2N}.
\end{split}
\end{equation}
By Poincar\'e's inequality on $\Omega_+$ (Lemma \ref{poincare}) and \eqref{claim2} and \eqref{pp1}, we have
\begin{equation}\label{pp2}
\begin{split}\|\partial_t^j p\|_{H^1(\Omega_+)}^2=\|\partial_t^j p\|_{L^2(\Omega_+)}^2+\|\nabla\partial_t^j p\|_{L^2(\Omega_+)}^2&\lesssim\|\partial_t^j p\|_{L^2(\Sigma_+)}^2+\|\nabla\partial_t^j p\|_{L^2(\Omega_+)}^2
  \\&\lesssim\mathcal{Z}_{2N}+  \bar{\mathcal{D}}_{2N}.
  \end{split}
  \end{equation}
On the other hand, by the trace theorem and \eqref{pp1}--\eqref{pp2}, we have
\begin{equation}\label{pp3}
\begin{split}\|\partial_t^j p_-\|_{L^2(\Sigma_-)}^2\le \|\partial_t^j p_+\|_{L^2(\Sigma_-)}^2+\|\llbracket \partial_t^j p\rrbracket\|_{L^2(\Sigma_-)}^2& \lesssim \|\partial_t^j p_+\|_{H^1(\Omega_+)}^2+\|\llbracket \partial_t^j p\rrbracket\|_{L^2(\Sigma_-)}^2
 \\&\lesssim\mathcal{Z}_{2N}+  \bar{{\mathcal{D}}}_{ n},
\end{split}
\end{equation}
so again by Poincar\'e's inequality on $\Omega_-$ as well as \eqref{claim2} and \eqref{pp1}, we have
\begin{equation}\label{pp4}
\begin{split}\|\partial_t^j p\|_{H^1(\Omega_-)}^2=\|\partial_t^j p\|_{L^2(\Omega_-)}^2+\|\nabla\partial_t^j p\|_{L^2(\Omega_-)}^2&\lesssim\|\partial_t^j p\|_{L^2(\Sigma_-)}^2+\|\nabla\partial_t^j p\|_{L^2(\Omega_-)}^2
  \\&\lesssim\mathcal{Z}_{2N}+  \bar{\mathcal{D}}_{2N}.
  \end{split}
  \end{equation}
In light of \eqref{pp2}  and \eqref{pp4}, we may improve the estimate \eqref{claim2} to be
\begin{equation}\label{up0}
\sum_{j=0}^{2N}\|\partial_t^ju\|_{4N-2j+1}^2+\sum_{j=0}^{ 2N-1}\|\partial_t^j p\|_{4N-2j}^2\lesssim\mathcal{Z}_{2N}+  \bar{\mathcal{D}}_{2N}.
\end{equation}

Consequently, summing \eqref{eta0} and \eqref{up0}, we obtain
\begin{equation}\label{upeta0}
{\mathcal{D}}_{n}\lesssim\mathcal{Z}_{2N}+  \bar{\mathcal{D}}_{2N}.
\end{equation}
Thus,  using \eqref{Ges2}--\eqref{Ges3} in Theorem \ref{Gesle} to bound $\mathcal{Z}_{2N}\lesssim ({\mathcal{E}}_{2N})^{\theta} {\mathcal{D}}_{2N}+\mathcal{E}_{2N}{\mathcal{F}}_{2N}$, we obtain \eqref{d2n} provided that  ${\mathcal{E}}_{2N}< \delta^2$ with $\delta$ small enough to absorb onto the left.
\end{proof}

Now we are ready to state the full energy estimates.

\begin{theorem}\label{energywithoutout}
Let $\Lambda_\ast$ be defined by \eqref{littlelambda}, and let $(u,p,\eta)$ solve \eqref{nosurface2}. If $\mathcal{E}_{2N}(t)+\mathcal{F}_{2N}(t) \le \delta^2$ for sufficiently small $\delta$ for all $t \in [0,T]$, then we have that
\begin{equation}\label{fulles}
\begin{split} &\mathcal{E}_{2N}(t)+\mathcal{F}_{2N}(t)
  +\int_0^t\mathcal{D}_{2N}(s)\,ds
 \\&\quad\le  C (\mathcal{E}_{2N}(0)+\mathcal{F}_{2N}(0))
   +  \frac{\Lambda_\ast}{2}\int_0^t( \mathcal{E}_{2N} +
\mathcal{F}_{2N} )\,ds+C\int_0^t \sqrt{\mathcal{E}_{2N}}
\mathcal{F}_{2N} \,ds + C \int_0^t \|
\eta(s)\|_{0}^2\,ds
\end{split}
\end{equation}
for all $t \in [0,T]$.
\end{theorem}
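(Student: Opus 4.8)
The plan is to assemble \eqref{fulles} from the horizontal estimate \eqref{horizontales} together with the comparison estimates of Lemmas \ref{eth} and \ref{dth}; no new differential inequality is needed here, and the argument is essentially a careful bookkeeping of constants already obtained.

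First I would upgrade \eqref{horizontales} from the ``horizontal'' quantities to the full ones. By Lemma \ref{eth} we have $\mathcal{E}_{2N}(t)\lesssim\bar{\mathcal{E}}_{2N}(t)$, and by Lemma \ref{dth} we have $\mathcal{D}_{2N}\lesssim\bar{\mathcal{D}}_{2N}+\mathcal{E}_{2N}\mathcal{F}_{2N}$, while $\mathcal{F}_{2N}$ enters \eqref{horizontales} unchanged on both sides. Combining these with \eqref{horizontales} gives, for some universal constant $C_1$ (depending only on $C_0$ and on the constants of Lemmas \ref{eth}--\ref{dth}),
\begin{equation*}
\mathcal{E}_{2N}(t)+\mathcal{F}_{2N}(t)+\int_0^t\mathcal{D}_{2N}\,ds \le C_1 C_\varepsilon\big(\mathcal{E}_{2N}(0)+\mathcal{F}_{2N}(0)\big)+C_1 C_\varepsilon\int_0^t\mathcal{E}_{2N}^\theta\mathcal{D}_{2N}\,ds+R,
\end{equation*}
where $R$ collects the terms $C_1 C_\varepsilon\int_0^t\sqrt{\mathcal{E}_{2N}}\mathcal{F}_{2N}$, $2C_1\varepsilon C_0\int_0^t(\mathcal{E}_{2N}+\mathcal{F}_{2N})$, $C_1\int_0^t\mathcal{E}_{2N}\mathcal{F}_{2N}$, and $C_1 C_\varepsilon\int_0^t\|\eta\|_0^2$.

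Then I would fix the parameters in the right order. Choosing $\varepsilon:=\Lambda_\ast/(4C_1C_0)$, which depends only on $\Lambda_\ast$ and universal constants, turns the term $2C_1\varepsilon C_0\int_0^t(\mathcal{E}_{2N}+\mathcal{F}_{2N})$ into exactly $\frac{\Lambda_\ast}{2}\int_0^t(\mathcal{E}_{2N}+\mathcal{F}_{2N})$ and simultaneously pins $C_\varepsilon$ to a fixed constant. Using the standing hypothesis $\mathcal{E}_{2N}(t)\le\delta^2$ on $[0,T]$ I bound $\mathcal{E}_{2N}^\theta\le\delta^{2\theta}$ and $\mathcal{E}_{2N}\mathcal{F}_{2N}\le\delta\sqrt{\mathcal{E}_{2N}}\mathcal{F}_{2N}$; then, taking $\delta$ small enough that $C_1C_\varepsilon\delta^{2\theta}\le\tfrac12$, the term $C_1C_\varepsilon\int_0^t\mathcal{E}_{2N}^\theta\mathcal{D}_{2N}$ is absorbed into $\int_0^t\mathcal{D}_{2N}$ on the left, and $C_1\int_0^t\mathcal{E}_{2N}\mathcal{F}_{2N}$ merges into the $C\int_0^t\sqrt{\mathcal{E}_{2N}}\mathcal{F}_{2N}$ term. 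Relabeling the surviving fixed constants as a single $C$ yields \eqref{fulles}.

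The only point needing care --- rather than a genuine obstacle --- is this order of quantification: $\varepsilon$ must be chosen purely in terms of $\Lambda_\ast$ and the universal constants \emph{before} $\delta$ is shrunk, since the constant $C_\varepsilon$ one must beat when absorbing the $\mathcal{E}_{2N}^\theta\mathcal{D}_{2N}$ term degenerates as $\varepsilon\to0$; once $\varepsilon$ is pinned down, $C_\varepsilon$ is a fixed number and the smallness of $\delta$ can be arranged last. Since \eqref{horizontales}, Lemma \ref{eth}, and Lemma \ref{dth} all hold under precisely the hypothesis $\mathcal{E}_{2N}+\mathcal{F}_{2N}\le\delta^2$ assumed in the theorem, no additional constraint on the solution is introduced.
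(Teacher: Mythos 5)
Your proof is correct and follows essentially the same route as the paper: upgrade \eqref{horizontales} to full quantities via Lemmas \ref{eth} and \ref{dth}, fix $\varepsilon$ purely in terms of $\Lambda_\ast$ and universal constants to produce the $\tfrac{\Lambda_\ast}{2}$ coefficient, and then shrink $\delta$ to absorb the $\mathcal{E}_{2N}^\theta\mathcal{D}_{2N}$ term and fold $\mathcal{E}_{2N}\mathcal{F}_{2N}$ into $\sqrt{\mathcal{E}_{2N}}\mathcal{F}_{2N}$. Your explicit remark about the order of quantification ($\varepsilon$ before $\delta$) is exactly the point the paper's argument also respects; the only difference is the cosmetic choice of constant in the definition of $\varepsilon$.
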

\begin{proof}
By \eqref{e2n} and \eqref{d2n}, we deduce from \eqref{horizontales} that for any $\varepsilon>0$, there exist $C_\varepsilon>0$ and $C_1>0$ such that
\begin{equation}\label{tot}
\begin{split} &\mathcal{E}_{2N}(t)+\mathcal{F}_{2N}(t)
  +\int_0^t\mathcal{D}_{2N}(s)\,ds
   \\&\quad\le  C_\varepsilon (\mathcal{E}_{2N}(0)+\mathcal{F}_{2N}(0))
+C_\varepsilon \int_0^t \mathcal{E}_{2N}^{\theta}
 \mathcal{D}_{2N}\,ds
  +C_\varepsilon \int_0^t
\sqrt{\mathcal{E}_{2N}} \mathcal{F}_{2N}\,ds
 \\&\qquad+\varepsilon C_1 \int_0^t( \mathcal{E}_{2N} +
\mathcal{F}_{2N} )\,ds + C_\varepsilon \int_0^t \|
\eta(s)\|_{0}^2\,ds.
\end{split}
\end{equation}
We fix the value of $\varepsilon$ by $\varepsilon=\Lambda_\ast/(2 C_1)$ with $\Lambda_\ast$ defined by \eqref{littlelambda}.  Taking  $\delta$ sufficiently small, we deduce from \eqref{tot} that there exists a constant $C=C_{\Lambda_\ast}$ so that
\begin{equation}
\begin{split}
&\mathcal{E}_{2N}(t)+\mathcal{F}_{2N}(t)
  +\int_0^t\mathcal{D}_{2N}(s)\,ds
  \\&\quad\le  C (\mathcal{E}_{2N}(0)+\mathcal{F}_{2N}(0))
   +  \frac{\Lambda_\ast}{2}\int_0^t( \mathcal{E}_{2N} +
\mathcal{F}_{2N} )\,ds+C\int_0^t \sqrt{\mathcal{E}_{2N}}
\mathcal{F}_{2N}  + C \int_0^t \|
\eta(s)\|_{0}^2\,ds.
\end{split}
\end{equation}
This is exactly \eqref{fulles} and the proof is completed.
\end{proof}

 \section{Nonlinear instability}

 \subsection{Unified operator form}\label{unified operator}

Note that the nonlinear problems \eqref{surface} and \eqref{nosurface2} can be written uniformly as
\begin{equation}\label{surfaceLP}
  \left\{\begin{array}{lll}
  \rho\partial_t u-\mu\Delta u+\nabla p=F^1 &\hbox{in } \Omega
\\ \diverge u=\diverge F^2&\hbox{in } \Omega
\\ \partial_t\eta-u_3= -F^2\cdot e_3 &\hbox{on } \Sigma
\\ ( p_+I-\mu_+\mathbb{D}(u_+)) e_3-(\rho_+g\eta_+  -\sigma_+\Delta_\ast\eta_+) e_3=F^3_{+}&\hbox{on }\Sigma_+
\\\Lbrack {u}\Rbrack=0,\quad \Lbrack pI-\mu\mathbb{D}(u)\Rbrack e_3 -(\rj g\eta_- +\sigma_-\Delta_\ast\eta_- )e_3=-F^3_{-}&\hbox{on }\Sigma_-
\\ u_-=0 &\hbox{on } \Sigma_b
\\(u,\eta )\mid_{t=0}=(u_0,\eta_0).
\end{array}\right.
\end{equation}
Here, the nonlinear terms in the right hand sides of \eqref{surfaceLP} are given as follows: for $\sigma_\pm>0$, \eqref{surface} corresponds to \eqref{surfaceLP} with $F^1=f$, $F^2=0$ and $F^3=g$, where $f$ and $g$ are defined by \eqref{f}--\eqref{g_-^3}; for $\sigma_\pm=0$, \eqref{nosurface2} corresponds to \eqref{surfaceLP} with $F^1=G^1$, $F_i^2=(\delta_{ji}-J\mathcal{A}_{ji})u_j,\,i=1,2,3$ and $F^3=G^3$, where $G^1$ and $G^3$ are defined by \eqref{G1}, \eqref{G3+} and \eqref{G3-}. Note that for $\sigma_\pm=0$, $\eqref{surfaceLP}_2$ is derived as follows:
\begin{equation}
\diverge_{\mathcal{A}}u=0\Longleftrightarrow J\mathcal{A}_{ij}\partial_ju_i=0\Longleftrightarrow \partial_j(J\mathcal{A}_{ij} u_i)=0
\Longleftrightarrow \diverge u=\diverge F^2.
\end{equation}
This implies that $\diverge{F^2} = G^2$, with $G^2$ defined by \eqref{G2}. We can also check, using the definitions in \eqref{ABJ_def}, that $F^2\cdot e_3 = -G^4$ with $G^4$ defined by \eqref{G4} as follows: on the boundary $\Sigma$,
\begin{equation}
F^2\cdot e_3=(\delta_{ji}-J\mathcal{A}_{ji})u_j\delta_{i3}=u_3-u\cdot \mathcal{N}=-G^4.
\end{equation}

In order to use the linear theory we have developed in  Section \ref{linear instability}, we shall write the system \eqref{surfaceLP} in the operator form as in \cite{B1,B2,BN}.  First, we want to reduce the divergence of velocity to be zero. This can be  easily done by setting $\tilde{u}=u-F^2$. Then $(\tilde{u},\eta,p)$ satisfy \eqref{surfaceLP},   replacing $F^1$ by $\tilde{F}^1=F^1-\rho \partial_t F^2+\mu\Delta F^2$, $F^2$ by $0$,  and $F^3$ by $\tilde{F}^3=F^3+\mu\mathbb{D}(F^2)$. Second, we reduce the tangential components of the stress tensor $\tilde{F}^3_i,\ i=1,2$ to be zero. We choose the vector $z$ satisfying the conditions (see Page 6 of \cite{BN}, for instance, for details of how to construct $z$)
\begin{equation}
\left\{\begin{array}{ccc}
z_+=0,\ \partial_3z_+=0,\ -\mu_+\partial_3^2z_+
=(\tilde{F}_{2,+}^3,-\tilde{F}_{1,+}^3,0)\hbox{ on }\Sigma_+,\  z_+\hbox{ vanishes near }\Sigma_-,
\\
z_-=0,\ \partial_3z_-=0,\ -\mu_-\partial_3^2z_-=(-\tilde{F}_{2,-}^3,\tilde{F}_{1,-}^3,0)\hbox{
on }\Sigma_-,\  z_-\hbox{ vanishes near
}\Sigma_b.
\end{array}\right.
\end{equation}
Then $w=\nabla\times z$ satisfies
\begin{equation}
\left\{\begin{array}{lll}
w=0,\ -\mu_+(\partial_3w_i+\partial_iw_3)=\tilde{F}_{i,+}^3,\ \partial_3w_3=0\hbox{ on }\Sigma_+,
\\
w=0,\
-\llbracket\mu(\partial_3w^i+\partial_iw^3)\rrbracket=-\tilde{F}_{i,-}^3,\  \partial_3w_3=0\hbox{
on }\Sigma_-, \\ \diverge w=0\hbox{ in }\Omega,\hbox{ and } w=0\hbox{ on
}\Sigma_b,
\end{array}\right.
\end{equation}
and we have the estimate
\begin{equation}\label{westimate}
\norm{w}_{r}\lesssim \|\tilde{F}^3\|_{r-3/2},\ r\ge 2.
\end{equation}

Therefore, let $\bar{u}=\tilde{u}-w=u-F^2-w$, then $(\bar{u},p,\eta)$ satisfy
  \begin{equation}\label{surfaceLP2}
  \left\{\begin{array}{lll}
  \rho\partial_t \bar{u}-\mu\Delta \bar{u}+\nabla p
  =\bar{F}^1:=\tilde{F}^1-\rho\partial_tw+\mu\Delta w&\hbox{in }\Omega
\\ \diverge \bar{u}=0&\hbox{in }\Omega
\\ \partial_t\eta-\bar{u}_3=0 &\hbox{on }\Sigma
\\ S_{tan}( \bar{u}_+)=0&\hbox{on }\Sigma_+
\\  p_+-2\mu_+\partial_3\bar{u}_{3,+}- \rho_+g\eta_+  +\sigma_+\Delta_\ast\eta_+=\tilde{F}^3_{3,+}&\hbox{on }\Sigma_+
\\\Lbrack \bar{u}\Rbrack=\Lbrack  S_{tan}(\bar{u})\Rbrack=0&\hbox{on }\Sigma_-
\\\Lbrack p-2\mu\partial_3\bar{u}_{3}\Rbrack -\rj g\eta_-
-\sigma_-\Delta_\ast\eta_-=-\tilde{F}^3_{3,-}&\hbox{on }\Sigma_-
\\ \bar{u}_-=0 &\hbox{on }\Sigma_b
\\(\bar{u},\eta )\mid_{t=0}=(\bar{u}_0,\eta_0),
\end{array}\right.
\end{equation}
where we have written $S_{tan}( \bar{u})$ for the tangential part of $S(p,\bar{u})$.

We want to introduce a projection on divergence-free vectors to remove the pressure as an unknown. With fixed boundaries the pressure term has no influence on the projected equation; in the present case, we shall see that it is replaced by a gradient term which is determined by the other unknowns. Let $\mathcal{P}$ be the orthogonal projection in $L^2(\Omega)$  onto the space orthogonal to $\{\nabla\phi\mid\phi\in {}^0H^1(\Omega)\}$, where ${}^0H^1(\Omega) := \{ u \in H^1(\Omega) \; \vert\; u\vert_{\Sigma_+} =0\}$.  Following  Lemma 3.1 of \cite{B1}, we can prove that $\mathcal{P}$ is a bounded operator on $\ddot{H}^s(\Omega)$ for $s\ge 0$ and that if $\phi\in \ddot{H}^{s+1}(\Omega)$, then $\mathcal{P}(\nabla\phi)=\nabla\pi$, where $\pi$ solves the problem
\begin{equation}\label{harmonic}
\Delta\pi=0\hbox{ in }\Omega,\ \pi=\phi\hbox{ on }\Sigma_+
,\ \llbracket\pi\rrbracket=\llbracket\phi\rrbracket\hbox{ and }
\llbracket\partial_3\pi\rrbracket=0\hbox{ on }\Sigma_-,\
 \partial_\nu\pi=0\hbox{ on
}\Sigma_b.
\end{equation}
We may regard $\pi$ as a harmonic representation of $\phi$ and we denote it  as $\pi=\mathcal{H}(\phi|_{\Sigma_+},\llbracket\phi\rrbracket|_{\Sigma_-} )$ to illustrate its dependence on the boundary values of $\phi$.

Now we apply $\mathcal{P}$ to the momentum equation $\eqref{surfaceLP2}_1$  to obtain
\begin{equation}
\rho\partial_t\bar{u}+\mathbb{A}\bar{u}+\nabla \mathcal{H}(\rho_+g\eta_+
-\sigma_+\Delta_\ast\eta_+, \rj g\eta_-
+\sigma_-\Delta_\ast\eta_-)=\mathcal{P}\bar{F}^1-\nabla\mathcal{H}(\tilde{F}^3_{3,+},-\tilde{F}^3_{3,-}),
\end{equation}
where $\mathbb{A}$ is defined by
\begin{equation}
\mathbb{A}v=-\mu \mathcal{P}\Delta v+\nabla \mathcal{H}(2\mu_+\partial_3v_3|_{\Sigma_+},
2\llbracket\mu\partial_3v_3\rrbracket|_{\Sigma_-}).
\end{equation}
The domain condition of $\mathbb{A}$ is given by
\begin{equation}
\diverge v=0\hbox{ in }\Omega,\
 S_{tan}(v)=0\hbox{ on }\Sigma_+,\ \llbracket v
\rrbracket=\llbracket S_{tan}(v)\rrbracket=0\hbox{ on
}\Sigma_-,\ v=0\hbox{ on }\Sigma_b.
\end{equation}
Now we define the linear operator $\mathcal{L}$ by
\begin{equation}
\mathcal{L}
\begin{pmatrix}  \bar{u}\\
\eta
 \end{pmatrix}
=\begin{pmatrix}
\rho^{-1}\left(\mathbb{A}\bar{u}+\nabla \mathcal{H}(\rho_+g\eta_+
-\sigma_+\Delta_\ast\eta_+, \rj g\eta_-
+\sigma_-\Delta_\ast\eta_-)\right)\\-u_3
 \end{pmatrix},
\end{equation}
which then allows us to rewrite \eqref{surfaceLP2} as
\begin{equation}\label{surfaceLP3}
\partial_t\begin{pmatrix}  \bar{u}\\
\eta
 \end{pmatrix}
 +\mathcal{L}\begin{pmatrix}  \bar{u}\\
\eta
 \end{pmatrix}
=\begin{pmatrix}   \mathfrak{N}\\0
 \end{pmatrix} ,
\end{equation}
where the nonlinear term $\mathfrak{N}$ is given by
\begin{equation}\label{Ndef}
\mathfrak{N}:=\rho^{-1}\left(\mathcal{P}\bar{F}^1-\nabla\mathcal{H}(\tilde{F}^3_{3,+},-\tilde{F}^3_{3,-})\right).
\end{equation}
Employing Duhamel's principle, we can then solve \eqref{surfaceLP3} via
\begin{equation}
\begin{pmatrix}   \bar{u}(t)\\ \eta(t)
 \end{pmatrix}
 =e^{t\mathcal{L}}
\begin{pmatrix} \bar{u}(0)\\  \eta(0)
 \end{pmatrix}+\int_0^te^{(t-s)\mathcal{L}}
\begin{pmatrix}   \mathfrak{N}(s)\\ 0
 \end{pmatrix}\,ds.
\end{equation}
We can then go back to $u=\bar{u}+w+F^2$ to see that
\begin{equation}\label{duhamel}
\begin{pmatrix}    u(t)\\\eta(t)
 \end{pmatrix} =e^{t\mathcal{L}}
\begin{pmatrix}     \bar{u}(0)\\\eta(0)
 \end{pmatrix}
+\begin{pmatrix}   w(t)+F^2(t)\\  0
 \end{pmatrix}
 +\int_0^te^{(t-s)\mathcal{L}}
\begin{pmatrix}  \mathfrak{N}(s)\\   0
 \end{pmatrix}\,ds.
\end{equation}

 \subsection{Unified estimates}

We now define the norm $\Lvert3\cdot\Rvert3_{0}$ appearing in the Theorems \ref{maintheorem} and \ref{maintheorem2} by
\begin{equation}\label{norm1}
\Lvert3 \begin{pmatrix}     u\\\eta
 \end{pmatrix} \Rvert3_{0} =\sqrt{ \| u\|_0^2+\| \eta\|_{0}^2 }.
 \end{equation}
We define the term $\Lvert3\cdot\Rvert3_{00}$ appearing in the Theorem \ref{maintheorem} by
\begin{equation}\label{norm2}
\Lvert3 \begin{pmatrix}     u\\\eta
 \end{pmatrix} \Rvert3_{00} := \sqrt{\mathcal{E}},
 \end{equation}
where $\mathcal{E}$ is defined by \eqref{energy00}. We define the term $\Lvert3\cdot\Rvert3_{00}$ appearing in the Theorem \ref{maintheorem2} by
\begin{equation}\label{norm3}
\Lvert3 \begin{pmatrix}     u\\\eta
 \end{pmatrix} \Rvert3_{00} := \sqrt{\mathcal{E}_{2N}+\mathcal{F}_{2N}}
\end{equation}
for an integer $N\ge 3$, where $\mathcal{E}_{2N}$ and $\mathcal{F}_{2N}$ are given by \eqref{E_2N} and \eqref{F_2N}, respectively.  We also define another norm $\norm{\cdot}$ by
\begin{equation}\label{norm4}
\left\|\begin{pmatrix}     u\\\eta
 \end{pmatrix} \right\|:= \sqrt{\| u\|_2^2+\| \eta\|_0^2}.
 \end{equation}

\begin{Remark}\label{norm_remark}
Note that the expression for $\Lvert3 \begin{pmatrix}     u\\\eta \end{pmatrix} \Rvert3_{00}$ also involves sums of norms of $p$ and its temporal derivatives, and as such is not actually a  norm for $u$ and $\eta$.   If we were to view $\Lvert3 \cdot \Rvert3_{00}$ as acting  on triples $(u,p,\eta)$ in the obvious way, then this quantity would function as an actual norm.  Here we abuse notation in this manner in order to suppress the appearance of $p$, which we want to view as being determined by $u$ and $\eta$ in a nonlinear fashion.  The reader troubled by this abuse of notation could simply replace the notation $\Lvert3 \begin{pmatrix}     u\\\eta \end{pmatrix} \Rvert3_{00}$ by a nonlinear functional $\mathfrak{E}(u,\eta)$ defined in the same manner.
\end{Remark}

We now restate the main results of Sections \ref{linear instability} and \ref{energy} in our new notation.  Note that our notation has allowed us to unify the cases  $\sigma_\pm=0$ and $\sigma_\pm>0$.

\begin{Proposition}\label{prop1}
 Suppose that $(u, p, \eta)$ is the solution to \eqref{surface} with surface tension or is the solution to \eqref{nosurface2} without surface tension. Let the terms $\Lvert3\cdot\Rvert3_{0}$, $\Lvert3\cdot\Rvert3_{00}$ and $\norm{\cdot}$ be defined by \eqref{norm1}--\eqref{norm4}. Then we have the following.
\begin{enumerate}
\item Let $\Lambda$ be defined by \eqref{Lambda}.  Then we have
\begin{equation}\label{grownth}
\Lvert3 e^{t\mathcal{L}}\begin{pmatrix}     u(0)\\\eta(0)
 \end{pmatrix} \Rvert3_{0}^2 \le C e^{2\Lambda
t} \left\|  \begin{pmatrix}     u(0)\\\eta(0)
 \end{pmatrix} \right\|^2.
\end{equation}
\item There is a growing mode $\begin{pmatrix}     u_\star\\\eta_\star
 \end{pmatrix} $ satisfying $\Lvert3 \begin{pmatrix}     u_\star\\\eta_\star
 \end{pmatrix}\Rvert3_{0}= 1$,
$\Lvert3\begin{pmatrix}     u_\star\\\eta_\star
 \end{pmatrix}\Rvert3_{00}<\infty$, and
\begin{equation}\label{growing}
 e^{Lt} \begin{pmatrix}     u_\star\\\eta_\star
 \end{pmatrix}=e^{\lambda t}\begin{pmatrix}     u_\star\\\eta_\star
 \end{pmatrix}.
 \end{equation}
\item There exists a small constant $\delta$ such that  if $\Lvert3\begin{pmatrix}     u(t)\\\eta(t)
 \end{pmatrix}  \Rvert3_{00}\le\delta$ for all $t \in [0,T]$, then there exists $C_\delta> 0$ so
that the following  energy estimate holds for $t \in [0,T]$:
\begin{equation}\label{energyes}
\begin{split} \Lvert3\begin{pmatrix}     u(t)\\\eta(t)
 \end{pmatrix}  \Rvert3_{00}^2&\le  C_\delta\Lvert3\begin{pmatrix}     u(0)\\\eta(0)
 \end{pmatrix}  \Rvert3_{00}^2
   +  \frac{\lambda}{2}\int_0^t\Lvert3\begin{pmatrix}     u(t)\\\eta(t)
 \end{pmatrix}  \Rvert3_{00}^2\,ds \\&\quad+   C_\delta\int_0^t\Lvert3\begin{pmatrix}     u(t)\\\eta(t)
 \end{pmatrix}  \Rvert3_{00}^3\,ds
   + C_\delta\int_0^t \Lvert3\begin{pmatrix}     u(t)\\\eta(t)
 \end{pmatrix} \Rvert3_{0}^2\,ds.
 \end{split}
 \end{equation}
\end{enumerate}
Here, in the statements of $2$ and $3$, $\lambda>0$ is a number satisfying $\frac{\Lambda}{2}<\lambda\le \Lambda $.
 \end{Proposition}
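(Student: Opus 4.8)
The plan is to observe that all three assertions of Proposition \ref{prop1} are, after translating into the notation $\Lvert3\cdot\Rvert3_0$, $\Lvert3\cdot\Rvert3_{00}$, $\norm{\cdot}$, essentially restatements of results already in hand, and to supply the few elementary verifications needed to connect them. For part (1), I would first note that when $F^1=F^2=F^3=0$ the reductions of Section \ref{unified operator} collapse: $\tilde F^3=0$ forces the corrector $w=0$, and $F^2=0$ forces $\bar u=\tilde u=u$, so the Duhamel representation \eqref{duhamel} degenerates to $(u(t),\eta(t))=e^{t\mathcal{L}}(u(0),\eta(0))$. Hence $e^{t\mathcal{L}}$ applied to data is exactly the solution of the linearized problem \eqref{linear}, and Theorem \ref{lineargrownth} applies. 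Then I would bound $\Lvert3 e^{t\mathcal{L}}(u_0,\eta_0)\Rvert3_0^2=\norm{u(t)}_0^2+\norm{\eta(t)}_0^2\le\norm{u(t)}_1^2+\norm{\eta(t)}_0^2$ and invoke \eqref{result1}--\eqref{result2} to get $\lesssim e^{2\Lambda t}(\norm{u_0}_2^2+\norm{\eta_0}_0^2)=Ce^{2\Lambda t}\norm{(u_0,\eta_0)}^2$, which is \eqref{grownth}.

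For part (2), I would take the normal-mode solution produced in Theorem \ref{growingmode} at a frequency $|\xi|$ realizing $\lambda(|\xi|)=\Lambda$ when $\sigma_->0$, or $\lambda(|\xi|)=\Lambda_\ast$ when $\sigma_-=0$; call its time-zero profile $(w,\tilde\pi,\zeta)$, so the solution is $e^{\lambda t}(w,\tilde\pi,\zeta)$ with $\lambda=\Lambda$ or $\Lambda_\ast$, and $\Lambda/2<\lambda\le\Lambda$ by \eqref{littlelambda}. Four small checks are needed: (i) finiteness of $\Lvert3(w,\zeta)\Rvert3_{00}$ — this follows because the minimizer $\psi$ of \eqref{min} is $H^k$ on each of $(-b,0)$ and $(0,1)$ for every $k$ and the horizontal profile is a single Fourier mode $e^{ix'\cdot\xi}$, so $(w,\tilde\pi,\zeta)$ lies in $\ddot H^k(\Omega)\times\ddot H^k(\Omega)\times H^k(\Sigma)$ for all $k$, while $\partial_t^\ell$ acting on a normal mode merely multiplies by $\lambda^\ell$; (ii) $(w,\zeta)$ lies in the domain of $\mathcal{L}$, which amounts to reading off from \eqref{linear3} that the tangential-stress conditions $S_{tan}(w_+)=0$ on $\Sigma_+$ and $\Lbrack S_{tan}(w)\Rbrack=0$ on $\Sigma_-$ hold; (iii) $e^{t\mathcal{L}}(w,\zeta)=e^{\lambda t}(w,\zeta)$, by the identification in part (1); and (iv) $(w,\zeta)\neq 0$, so that $(u_\star,\eta_\star):=(w,\zeta)/\Lvert3(w,\zeta)\Rvert3_0$ is well defined, has $\Lvert3\cdot\Rvert3_0=1$, and inherits \eqref{growing}.

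For part (3), I would split into the two cases. With surface tension, $\Lvert3\cdot\Rvert3_{00}^2=\mathcal{E}$, so the smallness hypothesis is $\mathcal{E}(t)\le\delta^2$ on $[0,T]$, and Theorem \ref{energywith} yields $\mathcal{E}(t)\le\mathcal{E}(t)+\int_0^t\mathcal{D}\le C\mathcal{E}(0)+C\int_0^t\norm{\eta}_0^2$; since $\norm{\eta}_0\le\Lvert3(u,\eta)\Rvert3_0$ and the $\tfrac{\lambda}{2}\int\Lvert3\cdot\Rvert3_{00}^2$ and cubic terms in \eqref{energyes} are nonnegative, this gives \eqref{energyes} with $C_\delta=C$. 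Without surface tension, $\Lvert3\cdot\Rvert3_{00}^2=\mathcal{E}_{2N}+\mathcal{F}_{2N}$, the hypothesis becomes $\mathcal{E}_{2N}(t)+\mathcal{F}_{2N}(t)\le\delta^2$, and \eqref{fulles} of Theorem \ref{energywithoutout} already has exactly the required shape with $\lambda=\Lambda_\ast$, once one bounds $\sqrt{\mathcal{E}_{2N}}\,\mathcal{F}_{2N}\le(\mathcal{E}_{2N}+\mathcal{F}_{2N})^{3/2}=\Lvert3\cdot\Rvert3_{00}^3$ and uses $\norm{\eta}_0\le\Lvert3\cdot\Rvert3_0$ and nonnegativity of $\int\mathcal{D}_{2N}$ on the left.

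The point to be most careful about — the only genuinely non-bookkeeping step — is the consistency of the operator reduction of Section \ref{unified operator} in the linear case, i.e. the assertion that $e^{t\mathcal{L}}$ reproduces the solutions of \eqref{linear} and that the normal-mode profiles belong to the domain of $\mathcal{L}$; this is what lets Theorems \ref{lineargrownth} and \ref{growingmode} be transferred verbatim. Everything else is a matter of unwinding the definitions of $\mathcal{E}$, $\mathcal{E}_{2N}$, $\mathcal{F}_{2N}$ and the norms \eqref{norm1}--\eqref{norm4}. I would also spend a line confirming that $\Lvert3(u_\star,\eta_\star)\Rvert3_{00}$ remains finite in the (arbitrarily large) $4N$-regularity class entering Theorem \ref{maintheorem2}, which is again immediate from the smoothness of the single-frequency normal mode.
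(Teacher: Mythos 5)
Your proof is correct and takes the same route as the paper, which simply cites Theorems \ref{lineargrownth}, \ref{growingmode}, \ref{energywith} and \ref{energywithoutout} and sets $\lambda=\Lambda$ or $\lambda=\Lambda_\ast$ according to the case. The additional verifications you supply — that the Duhamel formula \eqref{duhamel} reduces to $e^{t\mathcal{L}}$ when the nonlinearities vanish, that the normal-mode profile satisfies the domain conditions of $\mathcal{L}$, and the elementary bound $\sqrt{\mathcal{E}_{2N}}\,\mathcal{F}_{2N}\le(\mathcal{E}_{2N}+\mathcal{F}_{2N})^{3/2}$ — are exactly the points the paper leaves implicit, and they are all correct.
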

\begin{proof}
Statement 1 follows from Theorem \ref{lineargrownth}. Statement 2 follows from Theorem \ref{growingmode}.  Statement 3 follows from Theorem \ref{energywith} by taking $\lambda=\Lambda$ for the case with surface tension and from Theorem \ref{energywithoutout} by taking $\lambda=\Lambda_\ast$ for the case without surface tension.
\end{proof}

Now we  will state an estimate for the solution to \eqref{surfaceLP}.

\begin{Proposition} \label{lineargrownth2}
Suppose that  $(u,p, \eta)$ solve \eqref{surfaceLP} and that $\bar{u}=u-w-F^2$ with $w$ constructed in Section \ref{unified operator}. Then we have the following estimates: for both $\sigma_\pm=0$ and $\sigma_\pm>0$, there exists a small constant $\delta$ such that  if $\Lvert3\begin{pmatrix}     u(t)\\\eta(t)  \end{pmatrix}  \Rvert3_{00}\le\delta$ for all  $t \in [0,T]$, then
\begin{equation}\label{lgth}
\begin{split}
\Lvert3\begin{pmatrix}   u(t)\\\eta(t)
 \end{pmatrix} - e^{t\mathcal{L}}\begin{pmatrix}    \bar{u}(0)\\\eta(0)
 \end{pmatrix} \Rvert3_{0}   \le C
\Lvert3\begin{pmatrix}   u(t)\\\eta(t)
 \end{pmatrix}\Rvert3_{00}^2 +C\int_0^te^{\Lambda(t-s) }\Lvert3\begin{pmatrix}   u(s)\\\eta(s)
 \end{pmatrix}\Rvert3_{00}^2 ds
\end{split}
\end{equation}
for all $t\in[0,T]$.
\end{Proposition}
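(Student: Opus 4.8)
The plan is to read the difference $\binom{u(t)}{\eta(t)}-e^{t\mathcal L}\binom{\bar u(0)}{\eta(0)}$ straight off the Duhamel formula \eqref{duhamel} and estimate the two ``extra'' pieces it produces. Indeed \eqref{duhamel} gives
\[
\begin{pmatrix} u(t)\\ \eta(t)\end{pmatrix}-e^{t\mathcal L}\begin{pmatrix}\bar u(0)\\ \eta(0)\end{pmatrix}=\begin{pmatrix} w(t)+F^2(t)\\ 0\end{pmatrix}+\int_0^t e^{(t-s)\mathcal L}\begin{pmatrix}\mathfrak N(s)\\ 0\end{pmatrix}\,ds,
\]
so by the triangle inequality for $\Lvert3\cdot\Rvert3_0$ and Minkowski's integral inequality it suffices to prove $\|w(t)+F^2(t)\|_0\lesssim \Lvert3\binom{u(t)}{\eta(t)}\Rvert3_{00}^2$ and $\Lvert3 e^{(t-s)\mathcal L}\binom{\mathfrak N(s)}{0}\Rvert3_0\lesssim e^{\Lambda(t-s)}\Lvert3\binom{u(s)}{\eta(s)}\Rvert3_{00}^2$. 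Here I write, in the notation of Section \ref{energy}, $\Lvert3\cdot\Rvert3_{00}^2=\mathcal E$ when $\sigma_\pm>0$ and $\Lvert3\cdot\Rvert3_{00}^2=\mathcal E_{2N}+\mathcal F_{2N}$ when $\sigma_\pm=0$, and I use the smallness hypothesis $\Lvert3\cdot\Rvert3_{00}\le\delta$ freely.

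For the first piece, since $\Lvert3\binom{v}{0}\Rvert3_0=\|v\|_0\le\|v\|_2$, I invoke \eqref{westimate} with $r=2$ to get $\|w\|_0\le\|w\|_2\lesssim\|\tilde F^3\|_{1/2}$ with $\tilde F^3=F^3+\mu\mathbb D(F^2)$. When $\sigma_\pm>0$ we have $F^2=0$ and $F^3=g$, so $\|\tilde F^3\|_{1/2}=\|g\|_{1/2}\le\|g\|_{5/2}\lesssim\mathcal E$ by \eqref{nonl1}; when $\sigma_\pm=0$ we have $F^2_i=(\delta_{ji}-J\mathcal A_{ji})u_j$ and $F^3=G^3$, and Lemma \ref{Poi} (comparing $J\mathcal A-I$ with $\eta$) together with Lemma \ref{Gesle} controls both $\|F^2\|_0$ and $\|\tilde F^3\|_{1/2}$ quadratically in $\sqrt{\mathcal E_{2N}+\mathcal F_{2N}}$. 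In either case $\|w(t)+F^2(t)\|_0\lesssim\Lvert3\binom{u(t)}{\eta(t)}\Rvert3_{00}^2$.

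For the second piece I apply the linear growth bound \eqref{grownth} of Proposition \ref{prop1} to $e^{(t-s)\mathcal L}$ acting on $\binom{\mathfrak N(s)}{0}$, obtaining $\Lvert3 e^{(t-s)\mathcal L}\binom{\mathfrak N(s)}{0}\Rvert3_0\le C e^{\Lambda(t-s)}\big\|\binom{\mathfrak N(s)}{0}\big\|=C e^{\Lambda(t-s)}\|\mathfrak N(s)\|_2$ with $\|\cdot\|$ the norm \eqref{norm4}. It then remains to show $\|\mathfrak N(s)\|_2\lesssim\Lvert3\binom{u(s)}{\eta(s)}\Rvert3_{00}^2$. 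From \eqref{Ndef}, $\mathfrak N=\rho^{-1}\big(\mathcal P\bar F^1-\nabla\mathcal H(\tilde F^3_{3,+},-\tilde F^3_{3,-})\big)$; I use that $\mathcal P$ is bounded on $\ddot H^2(\Omega)$, that the harmonic transmission problem \eqref{harmonic} with Dirichlet data in $H^{5/2}(\Sigma)$ yields $\ddot H^3(\Omega)$ regularity, so $\|\nabla\mathcal H(\cdot)\|_2\lesssim\|\tilde F^3\|_{5/2}$, and the identities $\bar F^1=F^1-\rho\partial_t F^2+\mu\Delta F^2-\rho\partial_t w+\mu\Delta w$ together with \eqref{westimate} (giving $\|\Delta w\|_2\le\|w\|_4\lesssim\|\tilde F^3\|_{5/2}$ and $\|\partial_t w\|_2\lesssim\|\partial_t\tilde F^3\|_{1/2}$). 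After these reductions $\|\mathfrak N\|_2$ is bounded by a finite sum of the quantities $\|F^1\|_2$, $\|F^2\|_4$, $\|\partial_t F^2\|_2$, $\|\tilde F^3\|_{5/2}$, $\|\partial_t\tilde F^3\|_{1/2}$, each of which is among the norms already controlled in Section \ref{energy}: by \eqref{nonl1}--\eqref{nonl2} when $\sigma_\pm>0$, and by Lemma \ref{Gesle} (with Lemma \ref{Poi} for $F^2$) when $\sigma_\pm=0$, all quadratically in $\Lvert3\cdot\Rvert3_{00}$ after absorbing a power of $\delta$. This gives $\|\mathfrak N(s)\|_2\lesssim\Lvert3\binom{u(s)}{\eta(s)}\Rvert3_{00}^2$, and integrating in $s$ produces exactly the integral term of \eqref{lgth}.

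The main obstacle is a careful bookkeeping carried out in two regularity scales at once: one must verify that every low-order norm of $F^1,F^2,\tilde F^3$ (and of $\partial_tF^2,\partial_t\tilde F^3$) demanded by the $\mathcal P$--$\mathcal H$--$w$ reductions is genuinely among those controlled by $\mathcal E,\mathcal D$ (in the framework \eqref{energy00}--\eqref{dissipation00}) and by $\mathcal E_{2N},\mathcal D_{2N},\mathcal F_{2N}$ (in the framework \eqref{E_2N}--\eqref{F_2N}), and that the resulting bound is truly quadratic in $\Lvert3\cdot\Rvert3_{00}$. The most sensitive term is $\rho\partial_t w$, equivalently the time derivatives $\partial_t F^2$ and $\partial_t\tilde F^3$: controlling $\|\partial_t w\|_2$ forces a bound on $\|\partial_t\tilde F^3\|_{1/2}$, which is precisely why the $\ell=1$ contributions $\|\partial_t g\|_{1/2}$ in \eqref{nonl1} and $\|\partial_t G^3\|$, $\|\partial_t G^4\|$ in Lemma \ref{Gesle} were built into those estimates. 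A secondary, easily overlooked point is to track exactly which norm of $\binom{\mathfrak N}{0}$ the linear bound \eqref{grownth} requires --- namely the $H^2\times L^2$ norm \eqref{norm4} --- so that the regularity asked of $\mathfrak N$ matches what the nonlinear estimates of Section \ref{energy} actually supply.
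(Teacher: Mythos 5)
Your proposal is correct and follows essentially the same route as the paper: expand the Duhamel formula \eqref{duhamel}, bound $\|w+F^2\|_0$ via \eqref{westimate}, reduce $\|\mathfrak N\|_2$ through the $\mathcal P$, $\mathcal H$, and $w$ estimates to the quantities $\|F^1\|_2,\|\partial_tF^2\|_2,\|F^2\|_4,\|F^3\|_{5/2}$, and close with the nonlinear estimates \eqref{nonl1}--\eqref{nonl2} and Lemma \ref{Gesle}. The chain of intermediate bounds you give coincides with the paper's \eqref{es0}--\eqref{es5}.
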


\begin{proof}
From the  formula \eqref{duhamel} and \eqref{grownth}, we have
\begin{equation}\label{es0}
\begin{split}
\Lvert3\begin{pmatrix}   u(t)\\\eta(t)
 \end{pmatrix} - e^{t\mathcal{L}}\begin{pmatrix}    \bar{u}(0)\\\eta(0)
 \end{pmatrix} \Rvert3_{0} &\lesssim \Lvert3\begin{pmatrix}w(t)+F^2(t)\\   0
 \end{pmatrix} \Rvert3_{0}+\int_0^te^{\Lambda(t-s)
}\left\|  \begin{pmatrix}     \mathfrak{N}(s)\\0
 \end{pmatrix} \right\|\,ds
\\&\lesssim \|w(t)\|_{0}+\|F^2(t)\|_{0}+\int_0^te^{\Lambda(t-s)
} \|\mathfrak{N}(s)\|_2
  \,ds.
\end{split}
\end{equation}
On the other hand, by the construction of $\mathfrak{N}$ and $w$ in Section \ref{unified operator}, we can estimate
\begin{equation}\label{es2}
\begin{split}
\|w\|_0\le \|w\|_2\lesssim \|\tilde{F}^3\|_{1/2}\lesssim \|F^3\|_{1/2}+\|F^2\|_2.
\end{split}
\end{equation}
and
\begin{equation}\label{es1}
\begin{split}
\|\mathfrak{N} \|_2&\lesssim
\|\bar{F}^1\|_2+\|
\nabla\mathcal{H}(\tilde{F}^3_{3,+},-\tilde{F}^3_{3,-})\|_{2}
 \\ &\lesssim \|\tilde{F}^1\|_2+\|\partial_tw\|_2+\|w\|_4
+\| \tilde{F}^3\|_{5/2}  \\ &\lesssim \|F^1\|_2+\|\partial_t
F^2\|_2+\|F^2\|_4+\|\partial_tw\|_2+\|w\|_4+
\|F^3\|_{5/2}+\|\mathbb{D}(F^2)\|_{5/2}  \\&\lesssim
\|F^1\|_2+\|\partial_t
F^2\|_2+\|F^2\|_4+\|F^3\|_{5/2}+\|\partial_t\tilde{F}^3\|_{1/2}+\|\tilde{F}^3\|_{5/2}
 \\&\lesssim \|F^1\|_2+\|\partial_t
F^2\|_2+\|F^2\|_4+\|F^3\|_{5/2}.
\end{split}
\end{equation}

Plugging \eqref{es2}--\eqref{es1} into \eqref{es0}, we obtain
\begin{equation}
\begin{split}&
\Lvert3\begin{pmatrix}   u(t)\\\eta(t)
 \end{pmatrix} - e^{t\mathcal{L}}\begin{pmatrix}
 \bar{u}(0)\\\eta(0)
 \end{pmatrix} \Rvert3_{0}\label{es3}\\&\quad \lesssim \|F^2(t)\|_2+\|F^3(t)\|_{1/2}+\int_0^te^{\Lambda(t-s)
} (\|F^1(s)\|_2+\|\partial_t F^2(s)\|_2+\|F^2(s)\|_4+\|F^3(s)\|_{5/2})
  \,ds.
\end{split}
\end{equation}
In both cases $\sigma_\pm=0$ and $\sigma_\pm>0$, we may estimate the nonlinear terms as we did in  Section \ref{energy}.  Indeed,   we have
\begin{equation}\label{es4}
\|F^2(t)\|_2+\|F^3(t)\|_{1/2}\lesssim \Lvert3\begin{pmatrix}   u(t)\\\eta(t)
 \end{pmatrix}\Rvert3_{00}^2
 \end{equation}
 and
 \begin{equation}\label{es5}
 \|F^1(s)\|_2+\|\partial_t F^2(s)\|_2+\|F^2(s)\|_4+\|F^3(s)\|_{5/2} \lesssim \Lvert3\begin{pmatrix}   u(s)\\\eta(s)
 \end{pmatrix}\Rvert3_{00}^2
\end{equation}
where we recall that  $\Lvert3\cdot\Rvert3_{00}$  is given by \eqref{norm2} when $\sigma_\pm>0$  and \eqref{norm3} when $\sigma_\pm>0$.

Substituting \eqref{es4}--\eqref{es5} into \eqref{es3}, we get \eqref{lgth}.
\end{proof}

Thus far we have not elaborated on the local well-posedness theory for our problem that we developed in \cite{WTK}.  We now want to state a version of it specially tailored to our uniform operator setting.  In the result we will refer to the ``necessary compatibility conditions'' required for the local well-posedness in our energy spaces.  These are cumbersome to write out explicitly.  In the case without surface tension we refer to Theorem 2.1 of \cite{WTK} for their explicit statement.  In the case with surface tension we refer to Theorem 2.5 of \cite{WTK} for a lower-regularity version of the conditions; the conditions in the present case can be deduced similarly as those stated in \cite{WTK}.

\begin{theorem}\label{LWP}
Suppose that the initial data $(u_0,\eta_0)$ satisfying the necessary compatibility conditions.  There exist $\delta_0 , T>0$ so that if
\begin{equation}\label{lwp_01}
\Lvert3 \begin{pmatrix} u_0 \\ \eta_0 \end{pmatrix} \Rvert3_{00} < \delta_0,
\end{equation}
then there exists a unique solution $(u,p,\eta)$ to \eqref{surfaceLP} on $[0,T]$ that satisfies the estimate
\begin{equation}\label{lwp_02}
 \Lvert3 \begin{pmatrix} u(t) \\ \eta(t) \end{pmatrix} \Rvert3_{00} \lesssim \sqrt{1+T} \Lvert3 \begin{pmatrix} u_0 \\ \eta_0 \end{pmatrix} \Rvert3_{00}
\end{equation}
for all $t \in [0,T]$.
\end{theorem}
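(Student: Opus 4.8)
The plan is to obtain Theorem \ref{LWP} as a specialization of the local well-posedness theory of \cite{WTK}, after translating between the unified system \eqref{surfaceLP} and the two concrete systems \eqref{surface} and \eqref{nosurface2}. Recall from Section \ref{unified operator} that, with the stated choices of $F^1,F^2,F^3$, the system \eqref{surfaceLP} is literally the system \eqref{surface} when $\sigma_\pm>0$ (taking $F^1=f$, $F^2=0$, $F^3=g$, with $f,g$ given by \eqref{f}--\eqref{g_-^3}) and is literally the system \eqref{nosurface2} when $\sigma_\pm=0$ (with $F^2_i=(\delta_{ji}-J\mathcal{A}_{ji})u_j$ encoding the geometric perturbation of the divergence and of the kinematic boundary condition, so that $\diverge F^2=G^2$ and $F^2\cdot e_3=-G^4$). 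Moreover $\Lvert3\,\cdot\,\Rvert3_{00}=\sqrt{\mathcal{E}}$ in the surface tension case and $\Lvert3\,\cdot\,\Rvert3_{00}=\sqrt{\mathcal{E}_{2N}+\mathcal{F}_{2N}}$ in the case without, by \eqref{norm2}--\eqref{norm3}. Hence it suffices to prove existence, uniqueness, and the bound \eqref{lwp_02} for \eqref{surface} in the $\mathcal{E}$--framework of \eqref{energy00} and for \eqref{nosurface2} in the $(\mathcal{E}_{2N}+\mathcal{F}_{2N})$--framework of \eqref{E_2N}--\eqref{F_2N}.

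For the case without surface tension, the energy $\mathcal{E}_{2N}$, the dissipation $\mathcal{D}_{2N}$, and the auxiliary quantity $\mathcal{F}_{2N}$ are precisely those of \cite{GT_per,WTK}, so the asserted statement is exactly Theorem 2.1 of \cite{WTK}: for data of size $<\delta_0$ in $\Lvert3\,\cdot\,\Rvert3_{00}$ satisfying the compatibility conditions recorded there, there is a unique solution on a fixed interval $[0,T]$ with $\Lvert3\,(u(t),\eta(t))\,\Rvert3_{00}\lesssim\sqrt{1+T}\,\Lvert3\,(u_0,\eta_0)\,\Rvert3_{00}$. For the case with surface tension, \cite{WTK} (Theorem 2.5) establishes local well-posedness at one lower level of regularity; the version required here, phrased in terms of the energy $\mathcal{E}$ of \eqref{energy00}, is proved by the same construction — a linearized iteration scheme (solving the linear Stokes problems of the type \eqref{jellip1} at each step) whose convergence is closed using the two-phase Stokes regularity of Lemma \ref{cStheorem}, the one-phase regularity of Lemma \ref{cS1phaselemma2}, the trace theorem, and the nonlinear forcing estimates \eqref{nonl1}--\eqref{nonl2}, simply carried out with one additional temporal derivative. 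Uniqueness follows by running the corresponding energy estimate on the difference of two solutions. Finally, the bound \eqref{lwp_02} follows from the a priori estimate of Theorem \ref{energywith}: restricting it to $[0,T]$ and using $\|\eta(s)\|_0^2\le\mathcal{E}(s)$ gives $\mathcal{E}(t)\le C\mathcal{E}(0)+C\int_0^t\mathcal{E}(s)\,ds$, so Gronwall's inequality yields $\mathcal{E}(t)\le C(1+T)\mathcal{E}(0)$ on a fixed short interval, provided the data is small enough that the continuation hypothesis $\mathcal{E}\le\delta^2$ is maintained.

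The only genuinely new point — and the main technical obstacle — is verifying that the surface-tension local theory of \cite{WTK} goes through at the regularity of $\mathcal{E}$ in \eqref{energy00} rather than at the lower regularity treated there. This reduces to three routine but lengthy checks: (i) that the nonlinear terms $f$ and $g$ of \eqref{f}--\eqref{g_-^3} obey estimates adequate to close the iteration at this level, which is already contained in \eqref{nonl1}--\eqref{nonl2} and their proof; (ii) that the compatibility conditions can be formulated at this regularity and are preserved by the iteration, which is done exactly as for the conditions stated in Theorem 2.5 of \cite{WTK}; and (iii) that the elliptic and trace estimates invoked apply at the shifted Sobolev indices, which they do since they hold for all $r\ge 2$. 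Since the structure of the equations is identical to that treated in \cite{WTK} and only the Sobolev indices are shifted upward to accommodate the extra temporal derivative, we only indicate the argument and refer the reader to \cite{WTK} for the full details of the iteration scheme and the compatibility analysis.
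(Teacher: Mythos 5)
Your proposal takes essentially the same route as the paper's own proof: the case $\sigma_\pm=0$ is Theorem~2.1 of \cite{WTK}, the case $\sigma_\pm>0$ rests on Theorem~2.5 of \cite{WTK} plus the observation that the iteration closes at the higher regularity of $\mathcal{E}$ using the new a priori bound of Theorem~\ref{energywith}, and the translation between \eqref{surfaceLP} and \eqref{surface}/\eqref{nosurface2} is exactly the one spelled out in Section~\ref{unified operator}. One minor imprecision worth flagging: Gronwall applied to $\mathcal{E}(t)\le C\mathcal{E}(0)+C\int_0^t\mathcal{E}(s)\,ds$ yields $\mathcal{E}(t)\le C\mathcal{E}(0)e^{Ct}$, not $C(1+T)\mathcal{E}(0)$; for fixed $T$ this is immaterial (both give a $T$-dependent constant, so \eqref{lwp_02} still holds with a different constant), but the $\sqrt{1+T}$ factor stated in the theorem comes from the construction in \cite{WTK} rather than from this Gronwall step.
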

\begin{proof}
 The result without surface tension follows from Theorem 2.1 of \cite{WTK}.  In the case with surface tension, we proved a  lower-regularity version of this result in Theorem 2.5 of \cite{WTK}.  The higher-regularity version we state here can be proved using a straightforward modification of our method in \cite{WTK}, based on our new a priori energy estimates stated in Theorem \ref{energywith}.
\end{proof}

 \subsection{Unified data analysis}\label{data}

In order to prove our nonlinear instability result, we want to use the linear growing mode solutions constructed in Theorem \ref{growingmode} (cf. \eqref{growing}) to construct small initial data for the nonlinear problem, written in the unified equivalent perturbed operator form \eqref{surfaceLP3}. Due to the nonstandard growth rate estimates stated in Theorem \ref{lineargrownth} (cf. \eqref{grownth}) which force us to derive the nonlinear estimates in the higher-order regularity context (cf. \eqref{energyes}), we cannot simply set the initial data for the nonlinear problem to be a small constant times the linear growing modes. The reason for this is that the initial data for the nonlinear problem must satisfy certain nonlinear compatibility conditions in order for us to guarantee local existence in the  space corresponding to norm $\Lvert3\cdot\Rvert3_{00}$, which the linear growing mode  solutions do not satisfy.

To get around this obstacle, it is noticed that the nonlinear problem   is  slightly perturbed from the linearized problem and so  their compatibility conditions for the small initial data should be  close to each other. In the context of the nonlinear compressible Navier-Stokes-Poisson  problem, Jang and Tice \cite{JT} used the implicit function theorem to produce  a curve of small initial data satisfying the compatibility conditions for the nonlinear problem which are close to the linear growing modes. Such argument is general and is also suitable for our problem,  so we may state this result for our setting without the proof.

\begin{Proposition}\label{intialle}
  Let $u_\star,\eta_\star$ be the growing mode  solution stated in Proposition \ref{prop1} and assume  the normalization $\Lvert3\begin{pmatrix} u_\star\\\eta_\star \end{pmatrix}\Rvert3_0=1$. Then there exists  a number $\iota_0>0$ and a family of initial data
  \begin{equation}\label{initial0}
\begin{pmatrix}
u_0^\iota\\\eta_0^\iota
\end{pmatrix}
=\iota \begin{pmatrix}
u_\star\\\eta_\star
\end{pmatrix}+\iota^2\begin{pmatrix}
\tilde{u}(\iota)\\ \tilde{\eta}(\iota)
\end{pmatrix}
\end{equation}
for $\iota\in [0,\iota_0)$ so that the followings  hold.

1. $u_0^\iota,\eta_0^\iota$ satisfy the nonlinear compatibility conditions required by Theorem \ref{LWP} for a solution to the nonlinear problem \eqref{surfaceLP3} to exist in the norm $\Lvert3\cdot\Rvert3_{00}$.

2. There exists a constant $C>0$ independent of $\iota$ so that
\begin{equation}\label{initial1}
\Lvert3\begin{pmatrix}  \tilde{u}(\iota)\\\tilde{\eta}(\iota)
\end{pmatrix}\Rvert3_0\le\left\|\begin{pmatrix}  \tilde{u}(\iota)\\\tilde{\eta}(\iota)
\end{pmatrix}\right\|\le
C
\end{equation}
and
\begin{equation}\label{initial2}
\Lvert3\begin{pmatrix}
u_0^\iota\\\eta_0^\iota
\end{pmatrix}\Rvert3_{00}^2\le C\iota^2.
\end{equation}
 \end{Proposition}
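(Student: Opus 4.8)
The plan is to invoke the implicit function theorem on the map sending initial data to the collection of quantities that must vanish for the nonlinear compatibility conditions to hold, exactly as in the argument of Jang and Tice \cite{JT}. First I would write out abstractly the finite list of nonlinear compatibility conditions required by Theorem \ref{LWP}; these take the schematic form $\mathcal{C}_k(u_0,\eta_0)=0$ for $k=0,1,\dots,m$, where each $\mathcal{C}_k$ is built from the equations \eqref{surfaceLP3}, the divergence and boundary conditions, and their successive time derivatives evaluated at $t=0$ (with temporal derivatives rewritten in terms of spatial derivatives of $(u_0,\eta_0)$ via the equations). The key observation is that the linearized problem \eqref{linear} has its own set of \emph{linear} compatibility conditions, say $\mathcal{C}_k^{\mathrm{lin}}(u_0,\eta_0)=0$, which the growing mode $(u_\star,\eta_\star)$ satisfies exactly; moreover $\mathcal{C}_k = \mathcal{C}_k^{\mathrm{lin}} + (\text{higher order in }(u_0,\eta_0))$, so the nonlinear conditions are a small perturbation of the linear ones near the origin.

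The construction then proceeds as follows. I would decompose $(u_0,\eta_0) = \iota(u_\star,\eta_\star) + (v,\zeta)$ with $(v,\zeta)$ lying in a suitable complement (inside the space with norm $\Lvert3\cdot\Rvert3_{00}$) of the span of the growing mode within the kernel of the linear compatibility operator, and regard the nonlinear compatibility map $\Psi(\iota,(v,\zeta)) := \big(\mathcal{C}_0,\dots,\mathcal{C}_m\big)\big(\iota(u_\star,\eta_\star)+(v,\zeta)\big)$ as a function of $\iota$ near $0$ and $(v,\zeta)$ near $0$. One checks $\Psi(0,0)=0$ (the zero data trivially satisfies all conditions) and, crucially, that the partial Fréchet derivative $D_{(v,\zeta)}\Psi(0,0)$ equals the linearized compatibility operator, which is surjective onto the target finite-dimensional (or fixed finite-codimensional) space and has the chosen complement as a bounded-invertible restriction. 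The implicit function theorem in Banach spaces then yields $\iota_0>0$ and a $C^1$ curve $\iota\mapsto(v(\iota),\zeta(\iota))$ with $(v(0),\zeta(0))=0$ and $\Psi(\iota,(v(\iota),\zeta(\iota)))=0$ for $\iota\in[0,\iota_0)$. Since the curve is $C^1$ and vanishes at $\iota=0$, a Taylor expansion gives $(v(\iota),\zeta(\iota)) = \iota^2(\tilde u(\iota),\tilde\eta(\iota))$ with $(\tilde u(\iota),\tilde\eta(\iota))$ bounded uniformly in $\iota$ in the strong norms, which is precisely \eqref{initial0} and \eqref{initial1}. (One arranges the decomposition so that the first-order term in $\iota$ is exactly $\iota(u_\star,\eta_\star)$, absorbing any $O(\iota)$ contribution from $(v,\zeta)$ into the growing mode direction and adjusting the complement accordingly; this is where the argument of \cite{JT} is used verbatim.) Finally, \eqref{initial2} follows from \eqref{initial1} together with $\Lvert3(u_\star,\eta_\star)\Rvert3_{00}<\infty$ from Proposition \ref{prop1}(2): one bounds $\Lvert3(u_0^\iota,\eta_0^\iota)\Rvert3_{00} \le \iota\Lvert3(u_\star,\eta_\star)\Rvert3_{00} + \iota^2\Lvert3(\tilde u(\iota),\tilde\eta(\iota))\Rvert3_{00} \le C\iota$ for $\iota\in[0,\iota_0)$.

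The main obstacle, and the only place requiring genuine care, is verifying that the linearized compatibility operator $D_{(v,\zeta)}\Psi(0,0)$ is indeed surjective with bounded inverse on the complement, in the function-space setting adapted to $\Lvert3\cdot\Rvert3_{00}$ (i.e. with $u\in\ddot H^4$ / $\ddot H^{4N}$ regularity). This amounts to checking that given arbitrary target values for the compatibility functionals one can solve a sequence of stationary (two-phase Stokes and elliptic interface) problems for the correctors, with the correct trace regularity — a statement that ultimately rests on the two-phase Stokes regularity theory (Lemma \ref{cStheorem}) already invoked throughout Section \ref{energy}. Since our linear operator $\mathcal{L}$ and its compatibility structure are, up to the benign reductions of Section \ref{unified operator}, identical in form to those in \cite{JT}, this verification is routine rather than novel, and for that reason we follow \cite{JT} and omit the detailed proof.
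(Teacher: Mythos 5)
Your overall plan --- implicit function theorem around the compatibility map, exploiting that the growing mode satisfies the \emph{linear} compatibility conditions exactly while the nonlinear conditions differ only by quadratic remainders --- is precisely the Jang--Tice argument that the paper's proof consists entirely of citing, so you have identified the right method and the key observation. However, two execution points are wrong. First, the complement where $(v,\zeta)$ lives is incorrectly described: for $D_{(v,\zeta)}\Psi(0,0)=\mathcal{C}^{\mathrm{lin}}\vert_Y$ to be a bounded isomorphism onto the target, you need $Y$ to be a complement of $\ker\mathcal{C}^{\mathrm{lin}}$ in the full data space. Your ``complement of the span of the growing mode within the kernel of the linear compatibility operator,'' on any reading, fails: if $(v,\zeta)$ stays inside the kernel, then $D_{(v,\zeta)}\Psi(0,0)$ vanishes identically on that domain; if it merely complements $\mathrm{span}(u_\star,\eta_\star)$ in the whole space, the restricted derivative still annihilates the rest of $\ker\mathcal{C}^{\mathrm{lin}}$, which is an infinite-dimensional subspace (it is the full set of admissible linear data, not a one-dimensional space). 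The target of $\mathcal{C}^{\mathrm{lin}}$ is likewise not finite-dimensional --- it is a product of Sobolev and trace spaces, since each of the finitely many compatibility \emph{conditions} is an equation valued in such a space.

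Second, ``since the curve is $C^1$ and vanishes at $\iota=0$, Taylor expansion gives $\iota^2$'' is false as stated: a $C^1$ curve vanishing at the origin gives only $O(\iota)$. The $O(\iota^2)$ decay must come from actually deploying the two facts you stated at the outset but then did not use: since $\mathcal{C}^{\mathrm{lin}}(u_\star,\eta_\star)=0$ and $Q:=\mathcal{C}-\mathcal{C}^{\mathrm{lin}}$ is at least quadratic, the implicit equation reads $\mathcal{C}^{\mathrm{lin}}(v,\zeta)=-Q\bigl(\iota(u_\star,\eta_\star)+(v,\zeta)\bigr)$, and inverting $\mathcal{C}^{\mathrm{lin}}$ on the (correctly chosen) complement gives $\|(v,\zeta)\|\lesssim\bigl(\iota+\|(v,\zeta)\|\bigr)^2$, hence $\|(v,\zeta)\|=O(\iota^2)$ once $\iota$ is small. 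Your parenthetical about ``absorbing the $O(\iota)$ contribution into the growing-mode direction and adjusting the complement'' is unnecessary patchwork: once the complement and the quadratic-remainder argument are set up correctly, the first-order term in the correction vanishes automatically --- that is the entire content of the observation you called ``key'' but then did not plug in.
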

 \begin{proof}
 See the abstract argument before  Lemma 5.3 of \cite{JT}.
\end{proof}

 \subsection{Proof of main theorems}

With Propositions \ref{prop1}--\ref{lineargrownth2}, Theorem \ref{LWP} and Proposition \ref{intialle} in hand, we can now present together the

\begin{proof}[Proof of Theorems \ref{maintheorem} and \ref{maintheorem2}]
We shall prove the two main theorems together with the understanding that when we write $\lambda$ we take $\lambda=\Lambda$  defined by \eqref{Lambda} for Theorem \ref{maintheorem}, while for  Theorem \ref{maintheorem2} we take $\lambda=\Lambda_\ast$ defined by \eqref{littlelambda}.  In each case, we have that $\frac{\Lambda}{2}<\lambda\le \Lambda $ as stated in Proposition \ref{prop1}.  It is this property that all we need in our unified proof.

First, we restrict that $0<\iota<\iota_0\le \theta_0$, where $\iota_0$ is as small as in Proposition \ref{intialle}  and  the value of $\theta_0$ will be fixed later to be sufficiently small. For $0<\iota\le \iota_0$, we let $u_0^\iota$ and $\eta_0^\iota$ be the initial data given in Proposition \ref{intialle}.  By further restricting $\iota$ we may use \eqref{initial2} to verify that \eqref{lwp_01} holds, which then allows us to use Theorem \ref{LWP} to find  $\begin{pmatrix}  u^\iota \\ \eta^\iota\end{pmatrix}$,  solutions  to the system \eqref{surfaceLP3} with
 \begin{equation}
 \left.\begin{pmatrix}  u^\iota\\\eta^\iota \end{pmatrix}\right|_{t=0}=
\begin{pmatrix}
u_0^\iota\\\eta_0^\iota
\end{pmatrix}
=\iota \begin{pmatrix}
u_\star\\\eta_\star
\end{pmatrix}+\iota^2\begin{pmatrix}
\tilde{u}(\iota)\\ \tilde{\eta}(\iota)
\end{pmatrix}.
\end{equation}
 By \eqref{grownth} and \eqref{initial1}, we can estimate
\begin{equation}\label{ins0}
\begin{split}
 \Lvert3 e^{t\mathcal{L}}\begin{pmatrix} \tilde{u}(\iota)\\\tilde{\eta}(\iota)
 \end{pmatrix}\Rvert3_0  \le Ce^{\Lambda t}\left\|\begin{pmatrix} \tilde{u}(\iota)\\\tilde{\eta}(\iota)\end{pmatrix}\right\| \le C_1
e^{\Lambda t}
\end{split}
\end{equation}
for some constant $C_1>0$ independent of $\iota$.

Let us now fix $\delta>0$ as small as in both Propositions \ref{prop1} and \ref{lineargrownth2}, and let $C_\delta>0$ be the constant appearing in Proposition \ref{prop1} for this fixed choice of $\delta$.   We then define $\tilde{\delta}=\min\{\delta,\frac{\lambda}{2C_\delta}\}$.  Denote
 \begin{equation}
 \begin{split}
 T^\ast=\sup_t\left\{ \Lvert3\begin{pmatrix}  u^\iota(t) \\  \eta^\iota(t)
 \end{pmatrix}\Rvert3_{00}   \le\tilde{\delta}\right\}\hbox{ and }
 T^{\ast\ast}=\sup_t\left\{ \Lvert3\begin{pmatrix}  u^\iota(t) \\  \eta^\iota(t)
 \end{pmatrix}\Rvert3_{0}\le 2 \iota e^{\lambda t} \right\}.
 \end{split}
 \end{equation}
 With $\iota_0$ small enough, \eqref{initial0}--\eqref{initial2} and \eqref{lwp_02} guarantee that $T^\ast$, $T^{\ast\ast}>0$. Then for all $t\le \min\{T^\iota,T^\ast,T^{\ast\ast}\}$, we deduce from the estimate \eqref{energyes} of Proposition \ref{prop1}, the definitions of $T^\ast$ and $T^{\ast\ast}$, and \eqref{initial2} that
 \begin{equation}\label{ins1}
 \begin{split}\Lvert3\begin{pmatrix}    u^\iota(t)\\ \eta^\iota(t)
 \end{pmatrix}\Rvert3_{00}^2
&\le  C_\delta\Lvert3\begin{pmatrix}     u^\iota_0\\\eta^\iota_0
 \end{pmatrix}  \Rvert3_{00}^2
   +  \frac{\lambda}{2}\int_0^t\Lvert3\begin{pmatrix}    u^\iota(s)\\ \eta^\iota(s)
 \end{pmatrix} \Rvert3_{00}^2\,ds \\&\quad+   C_\delta\int_0^t\Lvert3\begin{pmatrix}    u^\iota(s)\\ \eta^\iota(s)
 \end{pmatrix}\Rvert3_{00}^3\,ds
   + C_\delta\int_0^t \Lvert3\begin{pmatrix}    u^\iota(s)\\ \eta^\iota(s)
 \end{pmatrix}\Rvert3_{0}^2\,ds
 \\&\le \left(\frac{\lambda}{2} + \tilde{\delta} C_\delta \right) \int_0^t\Lvert3\begin{pmatrix}    u^\iota(s)\\ \eta^\iota(s)
 \end{pmatrix} \Rvert3_{00}^2\,ds+C_\delta C\iota^2+\frac{C_\delta(2\iota)^2}{2\lambda}e^{2\lambda t}
  \\&\le \lambda\int_0^t\Lvert3\begin{pmatrix}    u^\iota(s)\\ \eta^\iota(s)
 \end{pmatrix} \Rvert3_{00}^2\,ds+C_2 \iota^2e^{2\lambda t}.
\end{split}
\end{equation}
for some constant $C_2>0$ independent of $\iota$. We may view \eqref{ins1} as a differential inequality.  Then Gronwall's lemma implies that
\begin{equation}\label{ins11}
 \begin{split}\Lvert3\begin{pmatrix}    u^\iota(t)\\ \eta^\iota(t)
 \end{pmatrix}\Rvert3_{00}^2
&\le   C_2 \iota^2e^{2\lambda t}+C_2\iota^2e^{\lambda t}\int_0^t \lambda e^{\lambda s}\,ds
 \\&\le   C_2 \iota^2e^{2\lambda t}+ C_2\iota^2 e^{2\lambda t}  =  2C_2\iota^2e^{2\lambda t}.
\end{split}
\end{equation}
We then deduce from Proposition  \ref{lineargrownth2} and \eqref{ins11} that
\begin{equation}\label{ins2}
\begin{split}
 &\Lvert3\begin{pmatrix}     u^\iota(t)\\\eta^\iota(t)
 \end{pmatrix}-\iota e^{\lambda t }\begin{pmatrix}  u_\star \\\eta_\star\end{pmatrix}
  -\iota^2e^{t\mathcal{L}} \begin{pmatrix}  \tilde{u}(\iota)\\\tilde{\eta}(\iota)
 \end{pmatrix}\Rvert3_{0}
   \\&\quad\le C \Lvert3\begin{pmatrix}    u^\iota(s)\\ \eta^\iota(s)
 \end{pmatrix}\Rvert3_{00}^2 +C \int_0^te^{\Lambda(t-s) }\Lvert3\begin{pmatrix}    u^\iota(s)\\ \eta^\iota(s)
 \end{pmatrix} \Rvert3_{00}^2 ds
\\&\quad\le 2C C_2\iota^2 e^{ 2\lambda t} +2CC_2\iota^2e^{\Lambda t }
\int_0^t e^{ (2\lambda-\Lambda) s} ds
 \\&\quad\le 2C C_2\iota^2 e^{ 2\lambda t} +\frac{2CC_2\iota^2}{2\lambda-\Lambda}e^{2\lambda t } :=
C_3\iota^2 e^{ 2\lambda t}.
\end{split}
\end{equation}
Here  we have used the fact that $2\lambda-\Lambda>0$.

Now we claim that
\begin{equation}\label{Tmin}
T^\iota= \min\{T^\iota,T^\ast,T^{\ast\ast}\}
\end{equation}
by fixing $\theta_0$ small enough, namely, setting
 \begin{equation}
 \theta_0=\min\left\{\frac{\tilde{\delta}}{2\sqrt{2C_2}},\frac{1}{2(C_1+C_3)}\right\}.
 \end{equation}
 Indeed, if $T^\ast= \min\{T^\iota,T^\ast,T^{\ast\ast}\}$, then by \eqref{ins11}, we have
  \begin{equation}
  \begin{split}
  \Lvert3\begin{pmatrix}     u^\iota(T^\ast)\\\eta^\iota(T^\ast)
 \end{pmatrix}\Rvert3_{00}
\le    \sqrt{2C_2}\iota e^{ \lambda T^\ast} \le   \sqrt{2C_2}\iota e^{ \lambda
T^\iota}=\sqrt{2C_2}\theta_0\le \frac{\tilde{\delta}}{2} < \tilde{\delta},
\end{split}
\end{equation}
which is a contradiction to the definition of $T^\ast$.  If $T^{\ast\ast} =\min\{T^\iota,T^\ast,T^{\ast\ast}\}$, then by \eqref{ins0}, \eqref{ins2}, and the fact that $\Lambda/2 < \lambda \le\Lambda$, we have that
\begin{equation}
\begin{split}
\Lvert3\begin{pmatrix}     u^\iota(T^{\ast\ast})\\\eta^\iota(T^{\ast\ast})
 \end{pmatrix}\Rvert3_{0}   &\le \iota e^{\lambda T^{\ast\ast} }\Lvert3 \begin{pmatrix}  u_\star\\\eta_\star \end{pmatrix}
  \Rvert3_{0}+\iota^2\Lvert3 e^{T^{\ast\ast}\mathcal{L}} \begin{pmatrix}  \tilde{u}(\iota)\\\tilde{\eta}(\iota)
 \end{pmatrix}\Rvert3_{0}
+ C_3\iota^2 e^{2\lambda T^{\ast\ast}}
\\
&\le \iota e^{\lambda T^{\ast\ast}} +C_1\iota^2 e^{\Lambda T^{\ast\ast}} + C_3\iota^2 e^{2\lambda T^{\ast\ast}}
\\
&\le \iota e^{\lambda T^{\ast\ast}} +C_1\iota^2 e^{2\lambda T^{\ast\ast}} + C_3\iota^2 e^{2\lambda T^{\ast\ast}}
\\
& \le  \iota e^{\lambda T^{\ast\ast}} (1 +C_1\iota e^{ \lambda T^{\iota}} +C_3\iota  e^{ \lambda T^\iota})
 \\&\le \iota e^{\lambda T^{\ast\ast}}(1 +C_1\theta_0
+C_3\theta_0)<2\iota e^{\lambda T^{\ast\ast}},
\end{split}
\end{equation}
which is a contradiction to the definition of $T^{\ast\ast}$.  Hence \eqref{Tmin} must hold, proving the claim.

Now we again use \eqref{ins2} and \eqref{ins0}  to find that
\begin{equation}
\begin{split}
\Lvert3\begin{pmatrix}
 u^\iota(T^\iota) \\ \eta^\iota(T^\iota)
 \end{pmatrix}\Rvert3_{0}
&\ge \Lvert3 \iota e^{\lambda T^\iota }\begin{pmatrix}
u_\star\\\eta_\star
   \end{pmatrix}\Rvert3_{0}-\Lvert3 \iota^2e^{T^\iota\mathcal{L}} \begin{pmatrix}  \tilde{u}(\iota)\\\tilde{\eta}(\iota)
 \end{pmatrix}\Rvert3_{0}
  \\&\quad-\Lvert3\begin{pmatrix}  u^\iota(T^\iota)\\ \eta^\iota(T^\iota)
 \end{pmatrix}-\iota e^{\lambda T^\iota }\begin{pmatrix} u_\star\\  \eta_\star\end{pmatrix}
  -\iota^2e^{T^\iota\mathcal{L}} \begin{pmatrix} \tilde{u}(\iota)\\ \tilde{\eta}(\iota)
 \end{pmatrix}\Rvert3_{0}
  \\&\ge\iota e^{\lambda T^\iota }-\iota^2 C_1 e^{\Lambda T^\iota}-C_3\iota^2 e^{ 2\lambda T^\iota}
  \\&\ge\iota e^{\lambda T^\iota }-\iota^2 C_1 e^{2\lambda T^\iota}-C_3\iota^2 e^{ 2\lambda T^\iota}
  \\&\ge\theta_0- C_1\theta_0^2- C_3\theta_0^2 \ge
 \frac{\theta_0}{2}.
\end{split}
\end{equation}
This completes the proof of Theorem \ref{maintheorem} and Theorem \ref{maintheorem2} in the unified setting.
\end{proof}

\appendix

\section{Analytic tools}\label{section_appendix}

\subsection{Poisson extension}

We will now define the appropriate Poisson integrals that allow us to extend $\eta_\pm$, defined on the surfaces $\Sigma_\pm$, to functions defined on $\Omega$, with ``good'' boundedness.

Suppose that $\Sigma_+ = \mathrm{T}^2\times \{1\}$, where $\mathrm{T}^2:=(2\pi L_1 \mathbb{T}) \times (2\pi L_2 \mathbb{T})$. We define the Poisson integral in $\mathrm{T}^2 \times (-\infty,1)$ by
\begin{equation}\label{P-1def}
\mathcal{P}_{-,1}f(x) = \sum_{\xi \in    (L_1^{-1} \mathbb{Z}) \times
(L_2^{-1} \mathbb{Z}) }  \frac{e^{i \xi \cdot x' }}{2\pi \sqrt{L_1 L_2}} e^{|\xi|(x_3-1)} \hat{f}(\xi),
\end{equation}
where for $\xi \in  (L_1^{-1} \mathbb{Z}) \times (L_2^{-1} \mathbb{Z})$ we have written
\begin{equation}
 \hat{f}(\xi) = \int_{\mathrm{T}^2} f(x')  \frac{e^{- i \xi \cdot x' }}{2\pi \sqrt{L_1 L_2}} dx'.
\end{equation}
Here ``$-$'' stands for extending downward and ``$1$'' stands for extending at $x_3=1$, etc. It is well-known that $\mathcal{P}_{-,1}:H^{s}(\Sigma_+) \rightarrow H^{s+1/2}(\mathrm{T}^2 \times (-\infty,1))$ is a bounded linear operator for $s>0$. However, if restricted to the domain $\Omega$, we can have the following improvements.

\begin{lemma}\label{Poi}
Let $\mathcal{P}_{-,1}f$ be the Poisson integral of a function $f$ that is either in $\dot{H}^{q}(\Sigma_+)$ or
$\dot{H}^{q-1/2}(\Sigma_+)$ for $q \in \mathbb{N}=\{0,1,2,\dots\}$, where we have written $\dot{H}^s(\Sigma_+)$ for the homogeneous Sobolev space of order $s$.  Then
\begin{equation}
 \|\nabla^q \mathcal{P}_{-,1}f \|_{0} \lesssim \|f\|_{\dot{H}^{q-1/2}(\mathrm{T}^2)}^2 \text{ and }
 \|\nabla^q \mathcal{P}_{-,1}f \|_{0} \lesssim \|f\|_{\dot{H}^{q}(\mathrm{T}^2)}^2.
\end{equation}
\end{lemma}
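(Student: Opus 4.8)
\textbf{Proof proposal for Lemma \ref{Poi}.} The plan is to compute everything explicitly on the Fourier side, where the Poisson extension diagonalizes. First I would fix $q \in \mathbb{N}$ and recall that, by \eqref{P-1def}, we have for any multi-index $\alpha \in \mathbb{N}^3$ with $|\alpha| = q$ the formula
\begin{equation}
 \partial^\alpha \mathcal{P}_{-,1}f(x) = \sum_{\xi} \frac{e^{i\xi\cdot x'}}{2\pi\sqrt{L_1L_2}} (i\xi_1)^{\alpha_1}(i\xi_2)^{\alpha_2} |\xi|^{\alpha_3} e^{|\xi|(x_3-1)} \hat f(\xi),
\end{equation}
so that $|\widehat{\partial^\alpha \mathcal{P}_{-,1}f}(\xi,x_3)| \le |\xi|^q e^{|\xi|(x_3-1)} |\hat f(\xi)|$ for each $x_3 \in (-b,1)$. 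The main computation is then to integrate the square of this over $x_3 \in (-b,1)$: using the Fubini–Tonelli theorem and the Parseval identity on $\mathrm{T}^2$,
\begin{equation}
 \|\partial^\alpha \mathcal{P}_{-,1}f\|_{L^2(\Omega)}^2 \le \sum_\xi |\xi|^{2q} |\hat f(\xi)|^2 \int_{-b}^1 e^{2|\xi|(x_3-1)}\,dx_3 \le \sum_\xi |\xi|^{2q} |\hat f(\xi)|^2 \cdot \frac{1}{2|\xi|} = \frac{1}{2}\sum_\xi |\xi|^{2q-1} |\hat f(\xi)|^2,
\end{equation}
where I bound $\int_{-b}^1 e^{2|\xi|(x_3-1)}\,dx_3 \le \int_{-\infty}^1 e^{2|\xi|(x_3-1)}\,dx_3 = (2|\xi|)^{-1}$. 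The right-hand side is exactly $\tfrac12\|f\|_{\dot H^{q-1/2}(\mathrm{T}^2)}^2$ up to the normalization of the homogeneous Sobolev norm, and summing over the finitely many $\alpha$ with $|\alpha|=q$ gives $\|\nabla^q \mathcal{P}_{-,1}f\|_0^2 \lesssim \|f\|_{\dot H^{q-1/2}(\mathrm{T}^2)}^2$. (I would note that the $\xi=0$ mode contributes nothing when $q \ge 1$ since the symbol vanishes, and when $q=0$ the claimed inequality $\|\mathcal{P}_{-,1}f\|_0 \lesssim \|f\|_{\dot H^{-1/2}}$ still holds with the same computation, interpreting $|\xi|^{-1}\hat f(\xi)$ appropriately for $\xi \ne 0$ and handling $\xi=0$ separately by $\int_{-b}^1 1 = b+1$.)

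For the second inequality I would instead keep one extra power of $|\xi|$ inside the $x_3$-integral: bound $\int_{-b}^1 e^{2|\xi|(x_3-1)}\,dx_3$ by $(b+1)$ crudely, giving $\|\partial^\alpha \mathcal{P}_{-,1}f\|_{L^2(\Omega)}^2 \le (b+1)\sum_\xi |\xi|^{2q}|\hat f(\xi)|^2 = (b+1)\|f\|_{\dot H^q(\mathrm{T}^2)}^2$. Summing over $\alpha$ with $|\alpha|=q$ yields $\|\nabla^q \mathcal{P}_{-,1}f\|_0^2 \lesssim \|f\|_{\dot H^q(\mathrm{T}^2)}^2$, which is the second claim (the statement has a typo, writing $\|f\|^2_{\dot H^q}$ on the right of an unsquared norm on the left; I would silently read both sides as squared, matching the inequality as written).

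The only genuinely delicate point — and the one I would be most careful about — is the treatment of the zero frequency $\xi = 0$ and the precise meaning of the homogeneous Sobolev norms at negative or half-integer order on the torus. For $q \ge 1$ the symbols $(i\xi_1)^{\alpha_1}(i\xi_2)^{\alpha_2}|\xi|^{\alpha_3}$ all vanish at $\xi=0$, so the zero mode drops out and there is nothing to worry about; the subtlety is entirely in the borderline case $q=0$ paired with the $\dot H^{-1/2}$ norm, where one must either restrict the sum to $\xi \ne 0$ in defining $\|f\|_{\dot H^{-1/2}}$ or absorb the $\xi=0$ term into a lower-order bound. Since this lemma is applied in the paper only to $\bar\eta = \mathcal{P}\eta$ with $\eta$ having zero average (by \eqref{zerot}), $\hat\eta(0)=0$ and the issue does not arise in practice; I would remark on this rather than belabor it. Everything else is the routine Fourier-side computation sketched above.
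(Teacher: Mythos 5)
Your proof is correct; this is the standard Fourier-side computation, bounding $\int_{-b}^1 e^{2|\xi|(x_3-1)}\,dx_3$ by $(2|\xi|)^{-1}$ for the first estimate and by the constant $1+b$ for the second, which is also precisely what the cited reference (Lemma A.3 of \cite{GT_per}) does. The paper itself merely cites that lemma rather than reproducing the argument, so your self-contained proof supplies the detail the paper omits, and your remarks on the zero mode and on reading the mismatched squared/unsquared norms as a typo are both appropriate.
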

\begin{proof}
 See Lemma A.3 of \cite{GT_per}.
\end{proof}

We extend $\eta_+$ to be defined on $\Omega$ by
\begin{equation}\label{P+def}
\bar{\eta}_+(x',x_3)=\mathcal{P}_+\eta_+(x',x_3):=\mathcal{P}_{-,1}\eta_+(x',x_3),\text{ for } x_3\le 1.
\end{equation}
Then Lemma \ref{Poi} implies in particular that if $\eta_+\in H^{s-1/2}(\Sigma_+)$ for $s\ge 0$, then $\bar{\eta}_+\in H^{s}(\Omega)$.

Similarly, for $\Sigma_- = \mathrm{T}^2\times \{0\}$ we define the Poisson integral in $\mathrm{T}^2 \times (-\infty,0)$ by
\begin{equation}\label{P-0def}
\mathcal{P}_{-,0}f(x) = \sum_{\xi \in    (L_1^{-1} \mathbb{Z}) \times (L_2^{-1} \mathbb{Z}) }  \frac{e^{ i \xi \cdot x' }}{2\pi \sqrt{L_1 L_2}} e^{ |\xi|x_3} \hat{f}(\xi).
\end{equation}
It is clear that $\mathcal{P}_{-,0}$ has the  same regularity properties as $\mathcal{P}_{-,1}$. This allows us to extend $\eta_-$ to be defined on $\Omega_-$. However, we do not extend $\eta_-$ to the upper domain $\Omega_+$ by the reflection  since this will result in the discontinuity of the partial derivatives in $x_3$ of the extension. For our purposes,  we instead to do the extension through the following. Let $0<\lambda_0<\lambda_1<\cdots<\lambda_m<\infty$ for $m\in \mathbb{N}$ and define the $(m+1) \times (m+1)$ Vandermonde matrix $V(\lambda_0,\lambda_1,\dots,\lambda_m)$ by $V(\lambda_0,\lambda_1,\dots,\lambda_m)_{ij} = (-\lambda_j)^i$ for $i,j=0,\dotsc,m$.  It is well-known that the Vandermonde matrices are invertible, so we are free to let $\alpha=(\alpha_0,\alpha_1,\dots,\alpha_m)^T$ be the solution to
\begin{equation}\label{Veq}
V(\lambda_0,\lambda_1,\dots,\lambda_m)\,\alpha=q_m,
\end{equation}
$q_m=(1,1,\dots,1)^T$.  Now we define the specialized Poisson integral in $\mathrm{T}^2 \times (0,\infty)$
by
\begin{equation}\label{P+0def}
\mathcal{P}_{+,0}f(x) = \sum_{\xi \in    (L_1^{-1} \mathbb{Z}) \times
(L_2^{-1} \mathbb{Z}) }  \frac{e^{ i \xi \cdot x' }}{2\pi \sqrt{L_1 L_2}}  \sum_{j=0}^m\alpha_j
e^{- |\xi|\lambda_jx_3} \hat{f}(\xi).
\end{equation}
It is easy to check that, due to \eqref{Veq}, $\partial_3^l\mathcal{P}_{+,0}f(x',0)=
\partial_3^l\mathcal{P}_{-,0}f(x',0)$  for all $0\le l\le m$ and hence
\begin{equation}
\partial^\alpha\mathcal{P}_{+,0}f(x',0)=
\partial^\alpha\mathcal{P}_{-,0}f(x',0) \, \forall\, \alpha\in \mathbb{N}^3 \text{ with }0\le |\alpha|\le m.\end{equation}
These facts allow us to  extend $\eta_-$ to be defined on $\Omega$
by
\begin{equation}\bar{\eta}_-(x',x_3)=
\mathcal{P}_-\eta_-(x',x_3):=\left\{\begin{array}{lll}\mathcal{P}_{+,0}\eta_-(x',x_3),\quad
x_3> 0 \\
\mathcal{P}_{-,0}\eta_-(x',x_3),\quad x_3\le
0.\end{array}\right.\label{P-def}\end{equation} It is clear now that if $\eta_-\in H^{s-1/2}(\Sigma_-)$ for $ 0\le s\le m$, then $\bar{\eta}_-\in H^{s}(\Omega)$.  Since we will only work with $s$ lying in a finite interval, we may assume that $m$ is sufficiently large in \eqref{Veq} for $\bar{\eta}_- \in H^s(\Omega)$ for all $s$ in the interval.

\subsection{Some inequalities}

We will need some estimates of the product of functions in Sobolev spaces.

\begin{lemma}\label{sobolev}
Let $U$ denote a domain either of the form $\Omega_\pm$ or of the form $\Sigma_\pm$.
\begin{enumerate}
 \item Let $0\le r \le s_1 \le s_2$ be such that  $s_1 > n/2$.  Let $f\in H^{s_1}(U)$, $g\in H^{s_2}(U)$.  Then $fg \in H^r(U)$ and
\begin{equation}\label{i_s_p_01}
 \norm{fg}_{H^r} \lesssim \norm{f}_{H^{s_1}} \norm{g}_{H^{s_2}}.
\end{equation}

\item Let $0\le r \le s_1 \le s_2$ be such that  $s_2 >r+ n/2$.  Let $f\in H^{s_1}(U)$, $g\in H^{s_2}(U)$.  Then $fg \in H^r(U)$ and
\begin{equation}\label{i_s_p_02}
 \norm{fg}_{H^r} \lesssim \norm{f}_{H^{s_1}} \norm{g}_{H^{s_2}}.
\end{equation}

\item Let $0\le r \le s_1 \le s_2$ be such that  $s_2 >r+ n/2$. Let $f \in H^{-r}(\Sigma),$ $g \in H^{s_2}(\Sigma)$.  Then $fg \in H^{-s_1}(\Sigma)$ and
\begin{equation}\label{i_s_p_03}
 \norm{fg}_{-s_1} \ls \norm{f}_{-r} \norm{g}_{s_2}.
\end{equation}
\end{enumerate}
\end{lemma}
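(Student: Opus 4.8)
\textbf{Proof plan for Lemma \ref{sobolev}.}

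The plan is to prove the three statements by reducing them all to the standard Sobolev multiplication (and duality) theory on a flat torus or on a Lipschitz domain, after a preliminary reduction dealing with the geometry of $U$. First I would observe that when $U = \Sigma_\pm$, the space $H^s(\Sigma_\pm)$ is by definition (see the Notation section) equivalent to $H^s(\mathrm{T}^2)$, so the estimates are exactly the classical product estimates on $\mathrm{T}^n$ with $n=2$; when $U = \Omega_\pm$, which is a bounded domain with Lipschitz (indeed smooth, up to corners) boundary, one uses a bounded extension operator $E : H^s(\Omega_\pm) \to H^s(\mathbb{R}^n)$ for all $s$ in the relevant finite range, proves the product estimate on $\mathbb{R}^n$, and restricts back. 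Thus it suffices to establish \eqref{i_s_p_01}--\eqref{i_s_p_03} with $U = \mathbb{R}^n$ (or $\mathrm{T}^n$), which is where all the real content lies.

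For part (1): with $s_1 > n/2$, $H^{s_1}$ is a Banach algebra that acts by multiplication boundedly on $H^r$ for every $0 \le r \le s_1$; since $s_2 \ge s_1$ we have $\norm{g}_{H^{s_1}} \le \norm{g}_{H^{s_2}}$, so $\norm{fg}_{H^r} \lesssim \norm{f}_{H^{s_1}}\norm{g}_{H^{s_1}} \le \norm{f}_{H^{s_1}}\norm{g}_{H^{s_2}}$. The cleanest way to carry this out is via the Kato--Ponce / Leibniz-rule estimate $\norm{fg}_{H^r}\lesssim \norm{f}_{L^\infty}\norm{g}_{H^r} + \norm{f}_{H^r}\norm{g}_{L^\infty}$ for $r\ge 0$ combined with the Sobolev embedding $H^{s_1}\hookrightarrow L^\infty$; alternatively one can dualize and use the fractional Leibniz rule directly on Bessel potentials. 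For part (2): now $s_1$ need not exceed $n/2$, but the compensating hypothesis $s_2 > r + n/2$ gives $H^{s_2}\hookrightarrow C^0$ with $\norm{g}_{L^\infty}\lesssim \norm{g}_{H^{s_2}}$, and moreover $H^{s_2}$ acts boundedly on $H^{-s_1}$ hence (since $s_1 \le s_2$) the paraproduct decomposition $fg = T_f g + T_g f + R(f,g)$ shows the low-high term $T_g f$ is controlled by $\norm{g}_{L^\infty}\norm{f}_{H^r}$, the high-low term $T_f g$ by $\norm{f}_{H^{s_1}}\norm{g}_{H^{s_2-?}}$, and the resonant term by $\norm{f}_{H^{s_1}}\norm{g}_{H^{s_2}}$ using $s_1 + s_2 - r > n/2$; summing gives \eqref{i_s_p_02}. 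Part (3) follows from part (2) by duality: for $f \in H^{-r}(\Sigma)$, $g\in H^{s_2}(\Sigma)$ and any test function $\varphi \in H^{s_1}(\Sigma)$, one writes $\langle fg, \varphi\rangle = \langle f, g\varphi\rangle$ and estimates $\norm{g\varphi}_{r} \lesssim \norm{g}_{s_2}\norm{\varphi}_{s_1}$ by \eqref{i_s_p_02} (with the roles of the exponents matching, since $s_2 > r + n/2$ and $r \le s_1 \le s_2$), then takes the supremum over $\norm{\varphi}_{s_1}\le 1$ to get $\norm{fg}_{-s_1}\lesssim \norm{f}_{-r}\norm{g}_{s_2}$.

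The main obstacle, and the step I would be most careful with, is bookkeeping the exponent inequalities in the paraproduct estimates for part (2): one must check that in each of the three pieces of the Littlewood--Paley trichotomy the sum of the regularities involved stays above $n/2$ (so that the frequency sums converge) while never asking more smoothness of $f$ than $s_1$ or of $g$ than $s_2$, and that the borderline case $r = s_1$ or $s_1 = s_2$ causes no loss. Everything else is a routine invocation of standard extension operators, the definition of $H^s(\Sigma_\pm)$, and classical product/duality estimates; since these are textbook facts (and this is an appendix lemma), I would state the reduction to the Euclidean case explicitly and then cite the standard references for the Euclidean product estimates rather than reproducing their proofs.
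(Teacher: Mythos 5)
Your proposal is correct, and consistent with the paper's approach: the paper does not actually supply a proof of this lemma, stating only that the estimates are standard and may be derived from the Fourier characterization of the $H^s$ spaces. Your reduction to the Euclidean/torus case followed by Kato--Ponce (for part (1)), a paraproduct bookkeeping argument (for part (2)), and duality via part (2) (for part (3)) is a perfectly valid instantiation of that standard Fourier-side argument.
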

\begin{proof}
These results are standard and may be derived, for example, by use of the Fourier characterization of the $H^s$ spaces.
\end{proof}

The estimates in $(2)$ and $(3)$ in Lemma \ref{sobolev} above are not the best possible estimates of that form.  We now record one specific improvement that we will use for products in $\H(\Omega)$.

\begin{lemma}\label{products}
Suppose that $f \in H^{s}(\Omega)$ with $s > 3/2$ and $g \in \H(\Omega)$.  Then $f g\in \H(\Omega)$ with $\|fg\|_{1} \lesssim \|f\|_{s} \|g\|_{1}$.  Moreover, if $g \in (\H(\Omega))^\ast$ then $fg \in (\H(\Omega))^\ast$ via $\langle fg,\varphi\rangle_{\ast} := \langle g,f\varphi\rangle_{\ast}$.  In this case we have the estimate $\|fg\|_{ \Hd } \lesssim \|f\|_{s} \|g\|_{\Hd}$.
\end{lemma}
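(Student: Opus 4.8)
\textbf{Proof proposal for Lemma \ref{products}.}

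The plan is to reduce both assertions to the product estimate \eqref{i_s_p_01} of Lemma \ref{sobolev} on the pieces $\Omega_\pm$, together with a duality argument for the negative-index claim, being careful about the extra boundary/trace information encoded in the space $\H(\Omega) = {}_0H^1(\Omega)$. First I would record the characterization stated in the Notation section: $g \in \H(\Omega)$ if and only if $g \in \ddot H^1(\Omega)$ with $\Lbrack g\Rbrack = 0$ on $\Sigma_-$ and $g=0$ on $\Sigma_b$. For the first claim, given $f \in H^s(\Omega)$ with $s>3/2$ and $g \in \H(\Omega)$, I would apply \eqref{i_s_p_01} separately on $\Omega_+$ and on $\Omega_-$ with the choice $r = s_1 = 1$, $s_2 = s$ (note $s_1 = 1 > 3/2 = n/2$ fails — so instead take $r=1$, $s_1 = s$, $s_2 = s$ is wrong too). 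The correct application is $r=1 \le s_1 = s \le s_2 = s$ with $s_1 = s > 3/2 = n/2$, giving $\|f_\pm g_\pm\|_{H^1(\Omega_\pm)} \lesssim \|f_\pm\|_{H^s(\Omega_\pm)}\|g_\pm\|_{H^1(\Omega_\pm)}$; summing the squares yields $\|fg\|_1 \lesssim \|f\|_s\|g\|_1$. Then I must check that $fg$ actually lies in $\H(\Omega)$ and not merely $\ddot H^1(\Omega)$: since $f$ is continuous across $\Sigma_-$ (as $f \in H^s(\Omega) \subset C^0(\bar\Omega)$ for $s>3/2$, so in particular $\Lbrack f \Rbrack = 0$) and $\Lbrack g\Rbrack = 0$ there, we get $\Lbrack fg\Rbrack = \Lbrack f\Rbrack g_+ + f_-\Lbrack g\Rbrack = 0$ on $\Sigma_-$ (using the trace of $g$ from below, or symmetrically), and $fg = 0$ on $\Sigma_b$ because $g$ vanishes there. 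Hence $fg \in \H(\Omega)$.

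For the second claim, suppose $g \in (\H(\Omega))^\ast$. I would first verify that the pairing $\langle fg,\varphi\rangle_\ast := \langle g, f\varphi\rangle_\ast$ is well-defined for $\varphi \in \H(\Omega)$: by the first part of the lemma, $f\varphi \in \H(\Omega)$ with $\|f\varphi\|_1 \lesssim \|f\|_s\|\varphi\|_1$, so $|\langle g, f\varphi\rangle_\ast| \le \|g\|_{\Hd}\|f\varphi\|_1 \lesssim \|f\|_s\|g\|_{\Hd}\|\varphi\|_1$. Taking the supremum over $\varphi$ with $\|\varphi\|_1 \le 1$ gives $\|fg\|_{\Hd} \lesssim \|f\|_s\|g\|_{\Hd}$, which is exactly the claimed estimate. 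The only subtlety here is that the map $\varphi \mapsto f\varphi$ is indeed a bounded linear operator $\H(\Omega) \to \H(\Omega)$, which is precisely the content of the first half of the lemma; once that is in hand the functional analysis is routine.

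The main obstacle, such as it is, is bookkeeping rather than depth: one must be attentive to the fact that $\H(\Omega)$ is not simply $\ddot H^1$ but carries the transmission condition $\Lbrack \cdot\Rbrack = 0$ on $\Sigma_-$ and the Dirichlet condition on $\Sigma_b$, so the product estimate on the two subdomains alone is not enough — the verification that multiplication by $f$ preserves these boundary conditions (which hinges on the continuity of $f$ across $\Sigma_-$, a consequence of the Sobolev embedding $H^s(\Omega) \hookrightarrow C^0(\bar\Omega)$ for $s > 3/2$) is the one place care is needed. Everything else follows from Lemma \ref{sobolev}(1) applied piecewise and a one-line duality argument.
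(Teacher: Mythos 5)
The structure of your argument --- piecewise product estimate on $\Omega_\pm$, verification that multiplication by $f$ preserves the transmission condition on $\Sigma_-$ and the Dirichlet condition on $\Sigma_b$, then a duality argument --- is sound, and the boundary-check and duality paragraphs are both correct. The gap is in the product estimate itself. With $r=1$ and $s_1 = s_2 = s$, Lemma \ref{sobolev}(1) yields $\|f_\pm g_\pm\|_{H^1(\Omega_\pm)} \lesssim \|f_\pm\|_{H^s(\Omega_\pm)}\|g_\pm\|_{H^s(\Omega_\pm)}$, with $\|g\|_{H^s}$ on the right --- a quantity that may be infinite, since $g$ is only assumed to lie in $H^1$. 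Your own parenthetical correctly flagged this choice as wrong, and then the prose endorsed it anyway. In fact neither clause of Lemma \ref{sobolev} covers the present case: part (1) requires the lower-regularity factor to have exponent $s_1 > n/2 = 3/2$, while $g$ has only $s_1 = 1$; and part (2) with $r = 1$ requires the higher-regularity factor to satisfy $s_2 > r + n/2 = 5/2$, which is stronger than the hypothesis $s > 3/2$.

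The estimate $\|fg\|_{H^1(\Omega_\pm)} \lesssim \|f\|_{H^s(\Omega_\pm)}\|g\|_{H^1(\Omega_\pm)}$ for $s > 3/2$ in a three-dimensional slab is true, but needs a direct Leibniz--Sobolev argument rather than an appeal to Lemma \ref{sobolev}. Write $\nabla(fg) = (\nabla f)g + f\nabla g$. Since $H^s \hookrightarrow L^\infty$ for $s > 3/2$, one has $\|f\nabla g\|_0 + \|fg\|_0 \lesssim \|f\|_{L^\infty}\|g\|_1 \lesssim \|f\|_s\|g\|_1$. For $(\nabla f)g$, when $3/2 < s < 5/2$ use $H^{s-1}(\Omega_\pm) \hookrightarrow L^p$ with $1/p = 1/2 - (s-1)/3$ and $H^1(\Omega_\pm) \hookrightarrow L^q$ with $q = 3/(s-1) \le 6$ (valid precisely because $s - 1 \ge 1/2$), so that by H\"older $\|(\nabla f)g\|_0 \le \|\nabla f\|_{L^p}\|g\|_{L^q} \lesssim \|f\|_s\|g\|_1$; when $s \ge 5/2$ one may instead use $\nabla f \in L^\infty$. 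With this corrected, the rest of your argument --- the jump identity $\Lbrack fg\Rbrack = \Lbrack f\Rbrack g_+ + f_-\Lbrack g\Rbrack = 0$ via $H^s \hookrightarrow C^0(\bar\Omega_\pm)$, vanishing on $\Sigma_b$, and the one-line dualization --- goes through unchanged.
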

\begin{proof}
See Lemma A.3 of \cite{WTK}.
\end{proof}

We will also use the following lemma.

\begin{lemma}\label{-1norm}
The following hold.
\begin{enumerate}
\item Let $f\in L^2(\Omega),\ g\in H^1(\Omega)$, then  $fg\in (\H(\Omega))^\ast$ and
\begin{equation}\label{i_s_p_06}
\|fg\|_{(\H(\Omega))^\ast}\lesssim \|f\|_0\|g\|_{1}.
\end{equation}

\item Let $f\in L^2(\Sigma),\ g\in H^{1/2}(\Sigma)$, then  $fg\in H^{-1/2}(\Sigma)$ and
\begin{equation}\label{i_s_p_07}
\|fg\|_{-1/2}\lesssim \|f\|_0\|g\|_{0}.
\end{equation}

\end{enumerate}
\end{lemma}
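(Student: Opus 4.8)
The plan is to prove both parts by the standard duality argument: represent the product $fg$ as a linear functional on the relevant predual space, use Hölder's inequality to peel off the $L^2$ factor $f$, and then control the remaining bilinear piece through a Sobolev embedding into $L^4$. No tools beyond Hölder's inequality, the Sobolev embedding theorem, and the trace theorem are needed, so the proof is short.

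For part (1), fix $\varphi\in{}_0H^1(\Omega)$ and define the pairing $\langle fg,\varphi\rangle_\ast := \int_\Omega fg\varphi$. First I would note that this integral is absolutely convergent by the estimate to follow, and that $\varphi\mapsto\langle fg,\varphi\rangle_\ast$ is linear; the bound then shows it is bounded. By Hölder's inequality, $\int_\Omega|fg\varphi|\le\|f\|_0\,\|g\varphi\|_{L^2(\Omega)}\le\|f\|_0\,\|g\|_{L^4(\Omega)}\,\|\varphi\|_{L^4(\Omega)}$. Since $\Omega=\mathrm{T}^2\times(-b,1)$ is three-dimensional, the Sobolev embedding $H^1(\Omega)\hookrightarrow L^4(\Omega)$ (which follows from $H^1(\Omega)\hookrightarrow L^6(\Omega)$ by interpolation with $L^2$) gives $\|g\|_{L^4(\Omega)}\lesssim\|g\|_1$ and $\|\varphi\|_{L^4(\Omega)}\lesssim\|\varphi\|_1$. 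Hence $|\langle fg,\varphi\rangle_\ast|\lesssim\|f\|_0\|g\|_1\|\varphi\|_1$, and taking the supremum over $\varphi$ with $\|\varphi\|_1\le1$ yields $fg\in({}_0H^1(\Omega))^\ast$ with $\|fg\|_{({}_0H^1(\Omega))^\ast}\lesssim\|f\|_0\|g\|_1$.

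For part (2) the argument is the same, now carried out on the two-dimensional surface $\Sigma$, which is a disjoint union of copies of $\mathrm{T}^2$. Given $\varphi\in H^{1/2}(\Sigma)$, I would estimate $\int_\Sigma|fg\varphi|\le\|f\|_0\,\|g\varphi\|_{L^2(\Sigma)}\le\|f\|_0\,\|g\|_{L^4(\Sigma)}\,\|\varphi\|_{L^4(\Sigma)}$. The essential input is the (endpoint) Sobolev embedding $H^{1/2}(\mathrm{T}^2)\hookrightarrow L^4(\mathrm{T}^2)$, valid precisely because $\Sigma$ has dimension two; applying it to both $g$ and $\varphi$ gives $\int_\Sigma|fg\varphi|\lesssim\|f\|_0\,\|g\|_{H^{1/2}(\Sigma)}\,\|\varphi\|_{H^{1/2}(\Sigma)}$. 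Taking the supremum over $\|\varphi\|_{1/2}\le1$ shows $fg\in H^{-1/2}(\Sigma)$ together with the bound $\|fg\|_{-1/2}\lesssim\|f\|_0\|g\|_{1/2}$, which is the form consistent with the hypothesis $g\in H^{1/2}(\Sigma)$ and is all that is invoked in the applications (e.g.\ in the estimate of $\|\partial_t^2 g\|_{-1/2}$, where the factor playing the role of $g$ is a trace estimated via $\|g\|_{1/2}\lesssim\|u\|_2$).

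There is no genuine obstacle in this lemma; the only matters deserving a sentence of care are the well-definedness of the dual pairing (immediate once the quantitative bound is established) and citing the correct Sobolev embeddings, namely $H^1\hookrightarrow L^4$ on the three-dimensional $\Omega$ for part (1) and the endpoint $H^{1/2}\hookrightarrow L^4$ on the two-dimensional $\Sigma$ for part (2). Both are standard, so I expect the entire proof to occupy only a few lines.
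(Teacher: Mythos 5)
Your argument is the standard duality--H\"older--Sobolev chain, and since the paper's own proof is only a one-line citation to Lemma A.4 of [WTK] there is no competing method in this paper to compare against; I have no doubt that the cited lemma is proved exactly as you propose. Part (1) is fine as you wrote it: pairing with $\varphi\in{}_0H^1(\Omega)$, peeling off $\|f\|_0$ by H\"older, and finishing with $H^1(\Omega)\hookrightarrow L^4(\Omega)$ in three dimensions applied to both $g$ and $\varphi$.

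Your remark on part (2) is the more substantive contribution, and it is correct. The inequality as printed, $\|fg\|_{-1/2}\lesssim\|f\|_0\|g\|_0$ with both norms taken in $L^2(\Sigma)$, cannot hold: if it did, then taking $f=g=\sqrt{h+\epsilon}$ for smooth $h\ge 0$ and letting $\epsilon\to 0$ would force $\|h\|_{H^{-1/2}(\Sigma)}\lesssim\|h\|_{L^1(\Sigma)}$, i.e.\ $L^1(\mathrm{T}^2)\hookrightarrow H^{-1/2}(\mathrm{T}^2)$, which is equivalent by duality to the false embedding $H^{1/2}(\mathrm{T}^2)\hookrightarrow L^\infty(\mathrm{T}^2)$. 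The bound you prove, $\|fg\|_{-1/2}\lesssim\|f\|_0\|g\|_{1/2}$, via the critical embedding $H^{1/2}(\mathrm{T}^2)\hookrightarrow L^4(\mathrm{T}^2)$, is the correct statement, and (as you note) it is all that is ever invoked downstream: in each application the factor playing the role of $g$ is a trace that is immediately bounded through $\|g\|_{H^{1/2}(\Sigma)}\lesssim\|u\|_2$, so the displayed $\|g\|_0$ is simply a typographical slip for $\|g\|_{1/2}$ (or should at least be read as such). In short, the proof is correct; the only issue is with the lemma's wording, which you diagnosed accurately.
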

\begin{proof}
See Lemma A.4 of \cite{WTK}.
\end{proof}

In the following lemma we let $U$ denote a periodic domain of the form $\Omega_\pm$ with flat upper boundary $\Gamma_{u}$ and lower boundary $\Gamma_{l}$, which may not be flat.  We now record some Poincar\'e-type inequalities for such domains.

\begin{lemma}\label{poincare}
 The following hold.
\begin{enumerate}
 \item $\|f\|_{L^2(U)}^2 \lesssim \|f\|_{L^2(\Gamma_u)}^2 +  \|\partial_3f\|_{L^2(U)}^2$ for all $f \in H^1(U)$.

 \item $\|f\|_{L^2(\Gamma_u)} \lesssim \|\partial_3f\|_{L^2(U)}$ for $f \in H^1(U)$ so that $f =0$ on $\Gamma_l$.

 \item $\|f\|_{0} \lesssim \|f\|_{1}\lesssim \|\nabla f\|_{0}$ for all $f \in H^1(U)$ so that $f=0$ on $\Gamma_l$.
\end{enumerate}
\end{lemma}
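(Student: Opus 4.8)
The plan is to derive all three bounds from the one–dimensional fundamental theorem of calculus in the vertical variable $x_3$, using crucially that the upper boundary is flat. Write $\Gamma_u = \mathrm{T}^2 \times \{h\}$ for the constant height $h$ and, since $U$ is of the form $\Omega_\pm$, write its lower boundary as the graph $\Gamma_l = \{x_3 = \ell(x')\}$ with $\ell_0 := \inf_{x'} \ell(x')$ and $M := h - \ell_0$, so that for every $x' \in \mathrm{T}^2$ the vertical segment $\{x'\} \times (\ell(x'), h)$ lies in $U$. By the boundedness of the trace operators $H^1(U) \to L^2(\Gamma_u)$ and $H^1(U) \to L^2(\Gamma_l)$ and the density of $C^1(\bar U)$ in $H^1(U)$, it suffices to establish the estimates for $f \in C^1(\bar U)$, for which the boundary values are taken classically.

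For (1) I would begin from $f(x', x_3) = f(x', h) - \int_{x_3}^{h} \partial_3 f(x', t)\, dt$, valid for $(x', x_3) \in U$; applying $(a+b)^2 \le 2a^2 + 2b^2$ and Cauchy–Schwarz gives
\[
|f(x', x_3)|^2 \le 2|f(x', h)|^2 + 2M \int_{\ell(x')}^{h} |\partial_3 f(x', t)|^2\, dt .
\]
Integrating over $U$ (first in $x_3$, then in $x'$) and using $h - \ell(x') \le M$ yields $\|f\|_{L^2(U)}^2 \le 2M \|f\|_{L^2(\Gamma_u)}^2 + 2M^2 \|\partial_3 f\|_{L^2(U)}^2$, which is (1). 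For (2) the hypothesis $f = 0$ on $\Gamma_l$ lets me instead write $f(x', h) = \int_{\ell(x')}^{h} \partial_3 f(x', t)\, dt$, so Cauchy–Schwarz gives $|f(x', h)|^2 \le M \int_{\ell(x')}^{h} |\partial_3 f(x', t)|^2\, dt$, and integration in $x'$ over $\mathrm{T}^2$ produces $\|f\|_{L^2(\Gamma_u)}^2 \le M \|\partial_3 f\|_{L^2(U)}^2$, i.e. (2). Finally, (3) is immediate from the first two: by (1) followed by (2), $\|f\|_{L^2(U)}^2 \lesssim \|f\|_{L^2(\Gamma_u)}^2 + \|\partial_3 f\|_{L^2(U)}^2 \lesssim \|\partial_3 f\|_{L^2(U)}^2 \le \|\nabla f\|_0^2$, whence $\|f\|_1^2 = \|f\|_0^2 + \|\nabla f\|_0^2 \lesssim \|\nabla f\|_0^2$, while $\|f\|_0 \le \|f\|_1$ is trivial.

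There is no serious analytic difficulty here; the only points deserving care are geometric and soft-analytic, namely the verification that each vertical fiber of $U$ reaching up to the flat top $\Gamma_u$ is contained in $U$ — so that the fundamental theorem of calculus can be applied along it — and the density/trace argument reducing the $H^1$ statements to the case $f \in C^1(\bar U)$. These constitute the main obstacle only in a formal sense; everything else is a direct computation.
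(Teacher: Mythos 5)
Your proof is correct, and it is the standard argument for this type of vertical Poincar\'e inequality: reduce to $C^1(\bar U)$ by density and boundedness of the trace, apply the fundamental theorem of calculus along vertical fibers reaching the flat top $\Gamma_u$, and use Cauchy--Schwarz. The paper itself does not present a proof but merely cites Appendix A.4 of the Guo--Tice reference \cite{GT_per}; that appendix uses essentially this same fiberwise argument, so your self-contained derivation matches the intended route.
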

\begin{proof}
See Appendix A.4 of \cite{GT_per}.
\end{proof}

We will need the following version of Korn's inequality.

\begin{lemma}\label{korn}
It holds that $\|u\|_{1} \lesssim \|\mathbb{D}u \|_{0}$ for all $u \in \H(\Omega)$.
\end{lemma}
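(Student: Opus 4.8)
The plan is to combine the classical second Korn inequality on a slab with a compactness argument that exploits the no-slip condition on $\Sigma_b$ and the horizontal periodicity. First I would reduce the problem to the connected domain $\tilde\Omega := \mathrm{T}^2\times(-b,1)$: since, as noted right after \eqref{0H}, an element of $\H(\Omega)$ has no jump across $\Sigma_-$, one has $\H(\Omega) = \{u\in H^1(\tilde\Omega)\mid u\mid_{\Sigma_b}=0\}$, and both $\|u\|_1$ and $\|\mathbb{D}u\|_0$ coincide with the corresponding norms computed on $\tilde\Omega$. (Recall $\mathbb{D}(u)_{ij}=\partial_iu_j+\partial_ju_i$ is twice the symmetric gradient, so factors of $1/2$ below are harmless.)

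Next I would invoke the second Korn inequality on $\tilde\Omega$: for every $u\in H^1(\tilde\Omega)$,
\begin{equation}\label{korn2}
 \|u\|_{1}^2 \lesssim \|u\|_0^2 + \|\mathbb{D}u\|_0^2 .
\end{equation}
This is a standard fact; it follows from Lions' lemma (or the Ne\v{c}as inequality) applied to the distributional identity $\partial_k\partial_j u_i = \partial_j\bigl(\tfrac12\mathbb{D}(u)_{ki}\bigr)+\partial_k\bigl(\tfrac12\mathbb{D}(u)_{ji}\bigr)-\partial_i\bigl(\tfrac12\mathbb{D}(u)_{kj}\bigr)$, which exhibits the distributional gradient of each first derivative $\partial_j u_i$ as an element of $H^{-1}(\tilde\Omega)$ with norm controlled by $\|\mathbb{D}u\|_0$; Lions' lemma then returns $\partial_j u_i\in L^2$ with the quantitative bound \eqref{korn2}. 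A careful write-up would either cite a reference for \eqref{korn2} or include this short derivation.

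It then remains to remove the $\|u\|_0$ term for $u$ vanishing on $\Sigma_b$, which I would do by contradiction and compactness. If the claimed inequality fails, pick $u_n\in\H(\Omega)$ with $\|u_n\|_1=1$ and $\|\mathbb{D}u_n\|_0\to 0$. By weak compactness in $H^1(\tilde\Omega)$ and the Rellich theorem, along a subsequence $u_n\rightharpoonup u$ in $H^1(\tilde\Omega)$ and $u_n\to u$ in $L^2(\tilde\Omega)$; since $v\mapsto\mathbb{D}v$ is bounded linear, hence weakly continuous, and $\mathbb{D}u_n\to 0$ in $L^2$, uniqueness of weak limits gives $\mathbb{D}u=0$ in $L^2(\tilde\Omega)$, so $u$ is an infinitesimal rigid motion $u(x)=a+\omega\times x$ on the connected set $\tilde\Omega$. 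Horizontal periodicity forces $\omega=0$ (the affine terms $\omega_1 x_2-\omega_2 x_1$ and $\pm\omega_3 x_j$ cannot be periodic in $x_1,x_2$ unless every $\omega_i$ vanishes), so $u\equiv a$ is constant; then weak continuity of the trace map $H^1(\tilde\Omega)\to L^2(\Sigma_b)$ and $u_n\mid_{\Sigma_b}=0$ force $a=u\mid_{\Sigma_b}=0$, i.e.\ $u=0$. Applying \eqref{korn2} to $u_n$ then yields $1=\|u_n\|_1^2\lesssim\|u_n\|_0^2+\|\mathbb{D}u_n\|_0^2\to\|u\|_0^2=0$, a contradiction. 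The only genuinely non-elementary ingredient is \eqref{korn2} — Korn's inequality does not decouple across the horizontal Fourier modes, so there is no easy mode-by-mode proof — while the rest is soft functional analysis; the one point requiring a little care is the (weak) continuity of the trace onto $\Sigma_b$ so that the boundary condition survives in the limit.
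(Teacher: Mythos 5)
The paper's proof of this lemma is simply a citation: ``See Lemma 2.7 of \cite{B1}.'' You instead give a self-contained argument, so the comparison is really between deferring to Beale and reproving the statement from scratch. Your route is the standard one for Korn inequalities with a partial Dirichlet condition: establish the second Korn inequality $\|u\|_1^2 \lesssim \|u\|_0^2 + \|\mathbb{D}u\|_0^2$ on the connected slab $\tilde\Omega = \mathrm{T}^2\times(-b,1)$ via Lions'/Ne\v{c}as' lemma, then remove the $\|u\|_0$ term by a compactness-and-contradiction argument, observing that the kernel of $\mathbb{D}$ consists of infinitesimal rigid motions, which horizontal periodicity reduces to constants, which the trace condition on $\Sigma_b$ kills. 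The argument is correct; the key steps (compact Rellich embedding on the slab, weak continuity of $\mathbb{D}$, compactness of the trace on $\Sigma_b$ so the boundary condition passes to the limit, and the elimination of $\omega\times x$ by periodicity) all hold, and you are right to flag that the only genuinely nontrivial ingredient is the second Korn inequality itself, which must be either cited or derived from Lions' lemma. What the paper's citation buys is brevity; what your proof buys is a transparent, self-contained account exhibiting exactly which structural features of $\Omega$ (connectedness after gluing across $\Sigma_-$, horizontal periodicity, no-slip on $\Sigma_b$) are responsible for the inequality.
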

\begin{proof}
 See Lemma 2.7 of \cite{B1}.
\end{proof}

 \subsection{Elliptic estimates}

Considering the  two-phase stationary Stokes problem
\begin{equation}\label{cS}\left\{\begin{array}{ll}
-\mu \Delta u +\nabla {p} =F^1  &\hbox{ in }\Omega
\\ \diverge{u} =F^2    &\hbox{ in}\ \Omega\\
(p_+I-\mu_+\mathbb{D}(u_+))e_3=F^3_+   &\hbox{ on }\Sigma_+
\\
 \Lbrack u\Rbrack=0,\quad \Lbrack(pI-\mu\mathbb{D}(u))e_3\Rbrack=-F^3_-  &\hbox{ on }\Sigma_-
\\  u_-=0 &\hbox{ on }\Sigma_b,\end{array}\right.
\end{equation}
we have the following elliptic regularity theory.
\begin{lemma}\label{cStheorem}
Let $r\ge 2$.  If $F^1\in \ddot{H}^{r-2}(G),\ F^2\in
\ddot{H}^{r-1}(G),\ F^3\in  {H}^{r-3/2}(\Gamma)$, then the problem
\eqref{cS} admits a unique strong solution $(u,p)\in ({}_0H^1(G)\cap
\ddot{H}^r(G))\times \ddot{H}^{r-1}(G)$.
 Moreover,\begin{equation}
 \label{cSresult}\|u\|_{r}+\| p\|_{r-1} \lesssim \|F^1\|_{r-2}+\|F^2\|_{r-1}+\|F^3\|_{r-3/2}.
 \end{equation}
\end{lemma}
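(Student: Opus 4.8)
The plan is to first construct a weak solution together with a low-regularity pressure, and then to bootstrap to the stated strong regularity, exploiting the fact that $\Sigma_+$, $\Sigma_-$ and $\Sigma_b$ are all flat horizontal planes together with the Agmon--Douglis--Nirenberg elliptic structure of the Stokes system, argued phase by phase. Throughout I write $\Omega$ and $\Sigma=\Sigma_+\cup\Sigma_-$ for the domain and its boundary.

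\emph{Step 1 -- weak solution and pressure.} First I would reduce to $F^2=0$: solving the divergence equation $\diverge\bar u=F^2$ in $\Omega$ with $\bar u=0$ on $\Sigma_b$ (a Bogovskii-type construction carried out in $\Omega_\pm$ and glued so that $\Lbrack\bar u\Rbrack=0$ on $\Sigma_-$) produces $\bar u$ with $\|\bar u\|_{r}\lesssim\|F^2\|_{r-1}$; replacing $u$ by $u-\bar u$ modifies $F^1$ and $F^3$ without worsening their norms. On the space ${}_0H_\sigma^1(\Omega)=\{v\in{}_0H^1(\Omega)\mid\diverge v=0\}$ the symmetric bilinear form $\tfrac12\int_\Omega\mu\,\mathbb{D}v:\mathbb{D}\varphi$ is bounded and, by Korn's inequality (Lemma \ref{korn}), coercive, so the Lax--Milgram lemma yields a unique $v\in{}_0H_\sigma^1(\Omega)$ satisfying the natural weak formulation against all $\varphi\in{}_0H_\sigma^1(\Omega)$, into which the stress boundary condition on $\Sigma_+$ and the stress-jump transmission condition on $\Sigma_-$ enter as data; this gives $\|v\|_{1}\lesssim\|F^1\|_{r-2}+\|F^2\|_{r-1}+\|F^3\|_{r-3/2}$. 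The residual functional annihilates divergence-free test fields, so by the classical de Rham-type lemma (equivalently, the inf--sup condition for this mixed-boundary slab with interface, as in \cite{B1}) there is $p\in L^2(\Omega)$, piecewise with its jump $\Lbrack p\Rbrack$ on $\Sigma_-$ determined by $F^3_-$, representing it; this furnishes a weak solution with $\|u\|_{1}+\|p\|_{0}\lesssim\|F^1\|_{r-2}+\|F^2\|_{r-1}+\|F^3\|_{r-3/2}$.

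\emph{Step 2 -- regularity bootstrap.} Since the three interfaces are flat, horizontal difference quotients and horizontal derivatives $\na$ preserve the structure of \eqref{cS}: the pair $(\na^k u,\na^k p)$ solves the same system with data $\na^k F^i$, so Step 1 gives $\|\na^k u\|_{1}+\|\na^k p\|_{0}\lesssim\|\na^k F^1\|_{-1}+\|\na^k F^2\|_{0}+\|\na^k F^3\|_{-1/2}$ for all $k$, controlling every derivative of $(u,p)$ that carries at least two horizontal derivatives. To lift the remaining vertical regularity I would work in each phase $\Omega_\pm$: the equation $\diverge u_\pm=F^2_\pm$ expresses $\partial_3 u_{3,\pm}$ through horizontal derivatives of $u$; the $i=1,2$ momentum equations express $\partial_3^2 u_{i,\pm}$ through $F^1_\pm$, $\partial_i p_\pm$ and horizontal second derivatives; the third momentum equation then expresses $\partial_3 p_\pm$. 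Iterating this ODE-in-$x_3$ recursion, with the traces of $u$ on $\Sigma_\pm$ already controlled in $H^{r-1/2}$ by the horizontal estimates and the missing Neumann-type data supplied exactly by the condition on $\Sigma_+$ and by $\Lbrack u\Rbrack=0$, $\Lbrack(pI-\mu\mathbb{D}(u))e_3\Rbrack=-F^3_-$ on $\Sigma_-$, a finite induction upgrades $\ddot H^{k}\times\ddot H^{k-1}$ control of $(u,p)$ to $\ddot H^{k+1}\times\ddot H^{k}$, reaching the level $r$ and yielding \eqref{cSresult}. Uniqueness is immediate from Step 1: with $F^i=0$ one takes $\bar u=0$, Lax--Milgram forces $v=0$, and then $\nabla p_\pm=0$ with $\Lbrack p\Rbrack=0$.

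The main obstacle is Step 1: carrying out the inf--sup / de Rham argument rigorously on this slab with mixed (Dirichlet, stress) boundary conditions and an internal interface across which the pressure jumps, so that $p$ must live in the piecewise space $\ddot H^{r-1}(\Omega)$ with prescribed jump; and, in Step 2, verifying that propagation across $\Sigma_-$ loses nothing, i.e. that the stress-jump condition together with $\Lbrack u\Rbrack=0$ supplies precisely the vertical-derivative information needed at each induction step. Both points are classical for the one-phase Stokes system; the delicate part is the bookkeeping for the two-phase coupling.
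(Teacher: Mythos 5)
The paper itself does not supply a direct proof; it defers entirely to \cite[Theorem~3.1]{WTK} and merely hints that the argument exploits the flatness of $\Sigma_-$ and invokes the one-phase Dirichlet Stokes regularity of Lemma~\ref{cS1phaselemma2}. Your Step~1 (Bogovskii reduction of $F^2$, Lax--Milgram on ${}_0H^1_\sigma$ via Korn, de Rham/inf--sup recovery of the piecewise $L^2$ pressure) and the start of Step~2 (horizontal difference quotients preserve the structure because $\Sigma_\pm$ and $\Sigma_b$ are flat, giving control of $\na^k(u,p)$ for $k\le r-1$) match the strategy behind the cited result. Where you diverge is in how the vertical regularity is recovered. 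The paper's route, as its hint and Lemma~\ref{cS1phaselemma2} indicate, uses the horizontal estimates and the trace theorem to show $u\vert_{\Sigma_\pm},\ u\vert_{\Sigma_b}\in H^{r-1/2}$, and then treats each layer $\Omega_\pm$ as a separate one-phase Stokes problem with Dirichlet data, citing the classical $H^r\times H^{r-1}$ regularity (Ladyzhenskaya/Temam) instead of re-deriving it. You instead run an explicit ODE-in-$x_3$ recursion directly from the PDE: read off $\partial_3 u_3$ from incompressibility, $\partial_3^2 u_i$ from the tangential momentum equations (once $\nabla_\ast p$ is known), and $\partial_3 p$ from the normal momentum equation, then iterate. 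This is a valid alternative and avoids importing a black-box one-phase theorem, but it requires more delicate bookkeeping (the chicken-and-egg problem at the first step -- you need $\nabla_\ast p\in L^2$ before $\partial_3^2 u$ can be read off, which is exactly what the $k=1$ horizontal estimate supplies -- and tracking which mixed derivatives are available at each stage). In short: same skeleton, but the paper buys the vertical-regularity step for free from the cited one-phase Dirichlet theory, while you rebuild it by hand from the equations.

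One small caution worth flagging in your Step~2: the assertion ``Step~1 gives $\|\na^k u\|_1+\|\na^k p\|_0\lesssim\cdots$ for all $k$'' should be qualified to $k\le r-1$, since for larger $k$ the right-hand norms $\|\na^k F^1\|_{-1}$, etc.\ are not controlled by the hypotheses; this is consistent with what you actually use, but as written the sentence overclaims.
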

\begin{proof}
 See \cite[Theorem 3.1]{WTK}. The proof follows by  using the flatness of the interface $\Sigma_-$ and applying Lemma
\ref{cS1phaselemma2} below.
\end{proof}

We let $G$ denote a horizontal periodic slab and write $\Gamma$ for
its boundary (not necessarily flat),  consisting of two smooth
pieces $\Gamma_1,\ \Gamma_2$.  We shall recall the classical
regularity theory for the
  Stokes problem
with Dirichlet boundary conditions on both $\Gamma_1$ and
$\Gamma_2$.
\begin{eqnarray}\label{cS1phase2}
  \left\{\begin{array}{lll}-\mu \Delta u +\nabla p =f  \quad &\hbox{in }
  G
  \\   \diverge{u} =h  \quad  &\hbox{in } G
\\ u=\varphi_1\quad &\hbox{on }\Gamma_1
\\ u=\varphi_2\quad &\hbox{on }\Gamma_2.\end{array}\right.
\end{eqnarray}
The following records the regularity theory for this problem.

\begin{lemma}\label{cS1phaselemma2}Let $r\ge 2$.  Let $f\in H^{r-2}(G),\ h\in H^{r-1}(G),\ \varphi_1\in H^{r-1/2}(\Gamma_1), \ \varphi_2\in H^{r-1/2}(\Gamma_2)$ be given such that
\begin{eqnarray}\label{cS1phasecomp}\int_G h =\int_{\Gamma_1} \varphi_1\cdot\nu +\int_{\Gamma_2} \varphi_2\cdot\nu ,\end{eqnarray}
then there exists unique $u\in H^r(G),\  p\in H^{r-1}(G)$(up to
constants) solving \eqref{cS1phase2}. Moreover,
\begin{equation}\|u\|_{H^r(G)}+\|\nabla p\|_{H^{r-2}(G)}
\lesssim\|f\|_{H^{r-2}(G)}+\|h\|_{H^{r-1}(G)}+\|\varphi_1\|_{H^{r-1/2}(\Gamma_1)}+\|\varphi_2\|_{H^{r-1/2}(\Gamma_2)}.\end{equation}
\end{lemma}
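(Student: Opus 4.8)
The plan is to reduce \eqref{cS1phase2} to the case of vanishing divergence and boundary data and then invoke the classical variational and regularity theory for the stationary Stokes system; the horizontal periodic slab geometry makes the argument especially transparent, since the two horizontal directions are tangential at every point of $G$.

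\textbf{Step 1: reduction to $h=0$, $\varphi_1=\varphi_2=0$.} First I would choose a bounded linear extension operator producing $\bar\varphi\in H^r(G)$ with $\bar\varphi=\varphi_i$ on $\Gamma_i$ for $i=1,2$ and $\|\bar\varphi\|_{H^r(G)}\lesssim\|\varphi_1\|_{H^{r-1/2}(\Gamma_1)}+\|\varphi_2\|_{H^{r-1/2}(\Gamma_2)}$, which is possible because $\Gamma_1$ and $\Gamma_2$ are disjoint smooth pieces. The function $h-\diverge\bar\varphi\in H^{r-1}(G)$ has zero average over $G$ by the compatibility condition \eqref{cS1phasecomp} and the divergence theorem, so a Bogovskii-type operator yields $w\in H^r(G)$ with $w=0$ on $\Gamma$, $\diverge w=h-\diverge\bar\varphi$, and $\|w\|_{H^r(G)}\lesssim\|h-\diverge\bar\varphi\|_{H^{r-1}(G)}$. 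Writing $v=\bar\varphi+w$ and $u=v+z$, the pair $(z,p)$ solves the Stokes system with right-hand side $f+\mu\Delta v\in H^{r-2}(G)$, zero divergence, and zero Dirichlet data on $\Gamma$; once the estimate for this reduced problem is established, the full estimate follows by the triangle inequality.

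\textbf{Step 2: existence and regularity for the reduced problem.} For the reduced problem I would apply the Lax--Milgram theorem on the closed subspace of divergence-free vector fields in $H^1_0(G)$: the form $(z,\phi)\mapsto\mu\int_G\nabla z:\nabla\phi$ is bounded and, by Poincar\'e's inequality on $G$, coercive, while $\phi\mapsto\int_G(f+\mu\Delta v)\cdot\phi$ is a bounded functional, producing a unique divergence-free $z\in H^1_0(G)$. The functional $\phi\mapsto\mu\int_G\nabla z:\nabla\phi-\int_G(f+\mu\Delta v)\cdot\phi$ annihilates all divergence-free test fields, so by de Rham's theorem it equals $\phi\mapsto\int_G p\,\diverge\phi$ for some $p\in L^2(G)$, unique up to an additive constant. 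Regularity is then obtained by the standard bootstrap: tangential (horizontal) regularity is free because difference quotients in $x_1,x_2$ commute with the periodic slab structure, and the missing $x_3$-derivatives are recovered algebraically from $-\mu\Delta z+\nabla p=f+\mu\Delta v$ and $\diverge z=0$; near the boundary pieces $\Gamma_1,\Gamma_2$ one flattens the boundary by a smooth diffeomorphism and repeats the tangential difference-quotient argument. Summing the local estimates and absorbing lower-order terms gives $\|z\|_{H^r(G)}+\|\nabla p\|_{H^{r-2}(G)}\lesssim\|f+\mu\Delta v\|_{H^{r-2}(G)}$. These are all classical facts; detailed proofs appear in the monographs of Temam, Galdi, and Solonnikov, and a difference-quotient bootstrap in a closely related geometry is carried out in the appendix of \cite{GT_per}.

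\textbf{Main obstacle.} The genuinely technical point is the boundary regularity near the possibly non-flat pieces $\Gamma_1,\Gamma_2$: one must carefully track how the flattening diffeomorphism distorts the Stokes operator and check that the resulting perturbation terms are of strictly lower order, so that they can be absorbed into the left-hand side, all while the pressure enters the estimate only through $\nabla p$, consistent with $p$ being determined only modulo constants. Everything else is a direct appeal to Lax--Milgram and de Rham or a routine tangential difference-quotient computation.
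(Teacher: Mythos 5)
Your argument is correct and is essentially the paper's approach: the paper simply says ``See \cite{L,T},'' and what you have written out is precisely the classical reduction (extension plus Bogovskii to kill the divergence and boundary data) followed by Lax--Milgram, de~Rham for the pressure, and tangential difference-quotient bootstrap that those references contain. The one small imprecision --- your opening remark that the horizontal directions are tangential ``at every point of $G$'' is false near the non-flat pieces $\Gamma_1,\Gamma_2$ --- is harmless, since you correctly note at the end that one must flatten the boundary locally and absorb the lower-order perturbations there.
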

\begin{proof}
 See \cite{L,T}.
\end{proof}

\subsection{Pressure estimate}

 By the following lemma, the pressure can be viewed as a Lagrange multiplier, which in turn gives an $L^2$ estimate for the pressure.

 \begin{lemma}\label{Pressure}
If $\Lambda \in ( {}_0H^1(\Omega))^\ast$ is such that $\Lambda (v) =
0$ for all $v \in   {}_0H_\sigma^1(\Omega)$, then there exists a
unique $p  \in L^2(\Omega)$ so that
\begin{equation}
(p , \diverge  v) = \Lambda (v) \text{ for all } v\in
{}_0H^1(\Omega)
\end{equation}
and we have the estimate
\begin{equation}\|p\|_{0} \lesssim
\|\Lambda\|_{( {}_0H^1(\Omega))^\ast}.\end{equation}
\end{lemma}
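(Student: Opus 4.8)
\textbf{Proof strategy for Lemma \ref{Pressure}.}
The plan is to realize $p$ as the function produced by the surjectivity of the divergence operator from ${}_0H^1(\Omega)$ onto $L^2_0(\Omega)$ (or a suitable co-dimension-one subspace), together with the Riesz representation theorem. The key structural input is a Bogovskii-type result: the divergence map $\diverge : {}_0H^1(\Omega) \to L^2(\Omega)$ has closed range equal to some subspace $R$ (typically $L^2$ functions with the appropriate mean-zero constraint dictated by the boundary behavior on $\Sigma_b$ and the jump condition on $\Sigma_-$), and there is a bounded right inverse, i.e. for each $h \in R$ there is $v_h \in {}_0H^1(\Omega)$ with $\diverge v_h = h$ and $\|v_h\|_1 \lesssim \|h\|_0$. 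This is the classical fact used already in \cite{B1} to set up the Stokes theory, so I would quote it.

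First I would note that, by definition, ${}_0H^1_\sigma(\Omega)$ is precisely the kernel of $\diverge$ inside ${}_0H^1(\Omega)$, so the hypothesis says that $\Lambda$ annihilates $\ker(\diverge)$. Hence $\Lambda$ factors through the quotient ${}_0H^1(\Omega)/{}_0H^1_\sigma(\Omega)$, which via $\diverge$ is isomorphic (as a Banach space, with equivalent norms, by the open mapping theorem applied to the bounded right inverse) to the range $R \subset L^2(\Omega)$. Concretely, define a linear functional $\ell$ on $R$ by $\ell(h) := \Lambda(v_h)$ where $v_h$ is \emph{any} preimage of $h$; this is well-defined precisely because two preimages differ by an element of $\ker(\diverge)$, on which $\Lambda$ vanishes. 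The estimate $|\ell(h)| = |\Lambda(v_h)| \le \|\Lambda\|_{({}_0H^1(\Omega))^\ast} \|v_h\|_1 \lesssim \|\Lambda\|_{({}_0H^1(\Omega))^\ast} \|h\|_0$ shows $\ell$ is bounded on the closed subspace $R$ of the Hilbert space $L^2(\Omega)$, so by Hahn--Banach and Riesz representation there is $p \in L^2(\Omega)$ with $\ell(h) = (p,h)$ for all $h \in R$ and $\|p\|_0 \lesssim \|\Lambda\|_{({}_0H^1(\Omega))^\ast}$. Then for any $v \in {}_0H^1(\Omega)$, taking $h = \diverge v \in R$ gives $(p, \diverge v) = \ell(\diverge v) = \Lambda(v)$, which is the desired identity. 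For uniqueness, if $p'$ also works then $(p - p', \diverge v) = 0$ for all $v \in {}_0H^1(\Omega)$, i.e. $p - p'$ is orthogonal to all of $R$; since $R$ is dense in $L^2(\Omega)$ modulo the finite-dimensional constraint space, and since the constraint (mean-zero type conditions) is already respected by the construction, one concludes $p - p' = 0$ — if $R$ is a genuine proper subspace one normalizes $p$ by requiring it to lie in $R$ as well, which is the sense in which uniqueness holds.

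The main obstacle is identifying $R = \mathrm{range}(\diverge)$ and the associated right-inverse bound in the two-phase geometry with the condition $u|_{\Sigma_b} = 0$ but free behavior on $\Sigma_+$: one must check that $\diverge({}_0H^1(\Omega)) = L^2(\Omega)$ (no mean constraint, because the upper boundary $\Sigma_+$ is free, so $\int_\Omega \diverge v = \int_{\Sigma_+} v\cdot e_3$ can be anything), which makes $R = L^2(\Omega)$ and renders the uniqueness statement unconditional. This surjectivity with bounded right inverse is exactly the content of the Bogovskii construction adapted to the slab with one free boundary, and is established in \cite{B1}; once it is in hand the rest is the routine Hilbert-space argument above. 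I would therefore organize the write-up as: (i) recall the surjectivity-with-bound of $\diverge$; (ii) use it to descend $\Lambda$ to a bounded functional on $L^2(\Omega)$; (iii) apply Riesz; (iv) read off the identity and the norm bound; (v) dispatch uniqueness from density/surjectivity of $\diverge$.
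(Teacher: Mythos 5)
Your argument is correct and is precisely the standard way this pressure-reconstruction lemma is proved: the paper itself gives no proof but simply refers to Solonnikov--Skadilov \cite{SS}, where the same mechanism (surjectivity of $\diverge : {}_0H^1(\Omega) \to L^2(\Omega)$ with bounded right inverse, factoring $\Lambda$ through the range, then Riesz representation) is used. Your observation that the boundary condition is imposed only on $\Sigma_b$, so that $\int_\Omega \diverge v = \int_{\Sigma_+} v\cdot e_3$ is unconstrained and hence $R = L^2(\Omega)$ with no mean-zero normalization needed, is exactly the point that makes the uniqueness unconditional here; once that is in place the Hahn--Banach step is unnecessary and Riesz alone suffices, as you note.
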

\begin{proof}
See \cite{SS}.
\end{proof}

\subsection{Commutator estimate}

Let $\mathcal{J}= {(1-\Delta)}^{1/2}$ with $\Delta$  the Laplace operator on $\mathrm{T}^n$, $n\ge 1$.  We define the commutator
\begin{equation}
[\mathcal{J}^{s},f]g=\mathcal{J}^s(fg)-f\mathcal{J}^sg.
\end{equation}

We recall the following commutator estimate:
 \begin{lemma}\label{commutator}
\begin{equation}
\|[\mathcal{J}^{s},f]g\|_{L^2}\le \|\nabla f\|_{L^\infty}\|\mathcal{J}^{s-1}g\|_{L^2}
+ \|\mathcal{J}^s f\|_{L^2}\| g\|_{L^\infty}.
\end{equation}
\end{lemma}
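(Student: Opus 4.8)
We sketch the argument. The estimate is the Kato--Ponce commutator inequality on the torus $\mathrm{T}^n$, with the implied constant allowed to depend on $s$ and $n$ (we use $s>0$, the relevant case being $s=4N+1/2$). The plan is to recast the left side as a bilinear Fourier multiplier applied to $(f,g)$ and then to exploit Littlewood--Paley localization. Writing $\mathcal{J}^s$ as the Fourier multiplier with symbol $\langle\xi\rangle^s:=(1+|\xi|^2)^{s/2}$, a direct computation on Fourier series gives
\begin{equation}
\widehat{[\mathcal{J}^s,f]g}(\xi)=\sum_{\eta}\big(\langle\xi\rangle^s-\langle\xi-\eta\rangle^s\big)\,\hat f(\eta)\,\hat g(\xi-\eta),
\end{equation}
so $[\mathcal{J}^s,f]g$ is bilinear in $(f,g)$ with symbol $\sigma(\xi,\eta)=\langle\xi\rangle^s-\langle\xi-\eta\rangle^s$. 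I would then decompose $f=\sum_j P_jf$ and $g=\sum_k P_kg$ into Littlewood--Paley pieces ($P_{-1}$ low frequency, $P_j$ localizing to $|\xi|\sim 2^j$), write $[\mathcal{J}^s,f]g=\sum_{j,k}[\mathcal{J}^s,P_jf]P_kg$, and split into the low--high regime $j\le k-3$, the high--low regime $j\ge k+3$, and the comparable regime $|j-k|\le 2$.

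In the low--high regime, summing over $j\le k-3$ for fixed $k$ gives $[\mathcal{J}^s,P_{\le k-3}f]P_kg$, whose output is frequency-localized to $|\xi|\sim 2^k$. On the relevant frequency box one writes $\langle\xi\rangle^s-\langle\xi-\eta\rangle^s=\eta\cdot\int_0^1\nabla(\langle\cdot\rangle^s)(\xi-\eta+t\eta)\,dt$, and $2^{-k(s-1)}$ times the remaining multiplier is a Coifman--Meyer symbol of order $0$, so the Coifman--Meyer theorem yields $\|[\mathcal{J}^s,P_{\le k-3}f]P_kg\|_{L^2}\lesssim\|\nabla P_{\le k-3}f\|_{L^\infty}\,2^{k(s-1)}\|P_kg\|_{L^2}\lesssim\|\nabla f\|_{L^\infty}\|\mathcal{J}^{s-1}P_kg\|_{L^2}$, where the uniform $L^1$ boundedness of the Littlewood--Paley kernels removes $P_{\le k-3}$; summing in $k$ by output almost-orthogonality produces $\|\nabla f\|_{L^\infty}\|\mathcal{J}^{s-1}g\|_{L^2}$. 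In the high--low regime, summing over $k\le j-3$ for fixed $j$ gives $[\mathcal{J}^s,P_jf]P_{\le j-3}g$ with output $|\xi|\sim 2^j$; Bernstein's inequality bounds both $\|\mathcal{J}^s(P_jf\,P_{\le j-3}g)\|_{L^2}$ and $\|P_jf\,\mathcal{J}^sP_{\le j-3}g\|_{L^2}$ by $2^{js}\|P_jf\|_{L^2}\|g\|_{L^\infty}$, and summing in $j$ gives $\|\mathcal{J}^sf\|_{L^2}\|g\|_{L^\infty}$.

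In the comparable regime the products $P_jf\,P_kg$ with $|j-k|\le 2$ have frequency $\lesssim 2^k$ but need not avoid the origin, so orthogonality must be used in the \emph{output}: for each dyadic output $2^m$ one has, via Bernstein, $\|P_m\mathcal{J}^s(P_jf\,P_kg)\|_{L^2}\lesssim 2^{ms}\|P_jf\|_{L^\infty}\|P_kg\|_{L^2}\lesssim 2^{(m-k)s}\|\nabla f\|_{L^\infty}\|\mathcal{J}^{s-1}P_kg\|_{L^2}$, and since $m\lesssim k$ and $s>0$ the weight $2^{(m-k)s}\mathbf{1}_{k\gtrsim m}$ is $\ell^1$-summable, so Young's inequality together with output orthogonality gives $\|\nabla f\|_{L^\infty}\|\mathcal{J}^{s-1}g\|_{L^2}$; the term $\sum_{|j-k|\le 2}P_jf\,\mathcal{J}^sP_kg$ is handled identically. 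Adding the three contributions yields the claim.

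The main obstacle is making the first-order expansion of $\langle\xi\rangle^s$ rigorous in the low--high regime: one must invoke the Coifman--Meyer bilinear multiplier theorem, or, alternatively, expand the smooth localized symbol in a Fourier series on the frequency box and sum the resulting rank-one pieces by hand, using Bernstein to extract the $L^\infty$ norm of $\nabla f$; the only other delicate point is the dyadic bookkeeping in the comparable regime, as indicated above. Since this is precisely the commutator estimate of Kato and Ponce, one could alternatively simply cite the literature.
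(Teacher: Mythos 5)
The paper does not actually prove Lemma~\ref{commutator}: its ``proof'' cites Lemma~X1 of \cite{KP} for $\mathbb{R}^n$ and asserts the periodic case is analogous, so your paraproduct sketch goes well beyond what the paper offers. Your low--high and high--low regimes are correct and standard. The comparable regime, however, contains a genuine gap: the claim that $\sum_{|j-k|\le 2}P_jf\,\mathcal{J}^sP_kg$ is ``handled identically'' to $\sum_{|j-k|\le 2}\mathcal{J}^s(P_jf\,P_kg)$ is not right. Your decaying factor $2^{(m-k)s}$ in the latter arises because $\mathcal{J}^s$ acts on an \emph{output} localized to frequency $2^m$, yielding the factor $2^{ms}$; in the former, $\mathcal{J}^s$ acts on the \emph{input} $P_kg$, so the analogous Bernstein computation yields only $\|P_m(P_jf\,\mathcal{J}^sP_kg)\|_{L^2}\lesssim\|\nabla f\|_{L^\infty}\|\mathcal{J}^{s-1}P_kg\|_{L^2}$, uniformly in the output scale $m\lesssim k$. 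The kernel ``$m\lesssim k$'' is not $\ell^1$-summable, so Young's convolution inequality does not close the $\ell^2(m)$ sum. This is a real obstruction, not dyadic bookkeeping.

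The correct treatment of this piece is different in kind. One observes that $\sum_{|j-k|\le 2}P_jf\,\mathcal{J}^sP_kg$ coincides, up to an order-zero Coifman--Meyer correction (replace $P_j$ and $\mathcal{J}^sP_k$ by $2^{-js}\mathcal{J}^sP_j$ and $2^{ks}\widetilde P_k$), with the high--high paraproduct $\Pi(\mathcal{J}^sf,g)$, and then invokes the bilinear Coifman--Meyer theorem once more, now in the endpoint form $\Pi:L^2\times L^\infty\to L^2$, to bound it by $\|\mathcal{J}^sf\|_{L^2}\|g\|_{L^\infty}$. So the comparable regime feeds \emph{both} terms on the right-hand side by two distinct mechanisms: output-frequency gain for the first piece, Coifman--Meyer for the second. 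Since you already have Coifman--Meyer available for the low--high regime, and since you, the paper, and indeed Kato--Ponce themselves ultimately defer to it, the gap is easily repaired; but ``handled identically'' mischaracterizes what must be done.
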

\begin{proof}
See \cite[Lemma X1]{KP} for the case $\mathbb{R}^n$.  The case of $\mathrm{T}^n$ can be handled similarly.
\end{proof}

\section*{Acknowledgement}

We would like to thank Yan Guo for his attention on this subject and for his constant encouragement.  Part of this work was completed while I. Tice was visiting Peking University.  We would like to thank them for their hospitality.  Y. J. Wang would like to express his gratitude for the hospitality of the Division of Applied Mathematics at Brown University during his visit, where this work was initiated.

\end{document}